\numberwithin{equation}{section}
\numberwithin{figure}{section}
\theoremstyle{plain}
\newtheorem{thm}{\protect\theoremname}[section]
\theoremstyle{plain}
\newtheorem{lem}[thm]{\protect\lemmaname}
\theoremstyle{definition}
\newtheorem{defn}[thm]{\protect\definitionname}
\theoremstyle{plain}
\newtheorem{prop}[thm]{\protect\propositionname}
\theoremstyle{plain}
\newtheorem{cor}[thm]{\protect\corollaryname}
\theoremstyle{remark}
\newtheorem{rem}[thm]{\protect\remarkname}
\theoremstyle{plain}
\newtheorem{assumption}[thm]{\protect\assumptionname}
\providecommand{\assumptionname}{Assumption}
\providecommand{\corollaryname}{Corollary}
\providecommand{\definitionname}{Definition}
\providecommand{\lemmaname}{Lemma}
\providecommand{\propositionname}{Proposition}
\providecommand{\remarkname}{Remark}
\providecommand{\theoremname}{Theorem}
\begin{document}
\title{Stability and singular set of the two-convex level set flow}
\author{Siao-Hao Guo}
\begin{abstract}
\thanks{The research was partially supported by the grant 112-2115-M-002-016-MY3
of the National Science and Technology Council of Taiwan. \medskip{}
} 

The level set flow of a mean-convex closed hypersurface is stable
off singularities, in the sense that the level set flow of the perturbed
hypersurface would be close in the smooth topology to the original
flow wherever the latter is regular. To study the behavior near singularities,
we further assume that the initial hypersurface is two-convex and
that the flow has finitely many singular times. In this case, the
singular set of the flow would have finitely many connected components,
each of which is either a point or a compact $C^{1}$ embedded curve.
Then under additional conditions, we show that near each connected
component of the singular set of the original flow, the perturbed
flow would have ``the same type'' of singular set as that of the
singular component.
\end{abstract}

\maketitle
\tableofcontents{}

\section{\label{introduction}Introduction}

Let $\Sigma_{0}$ be a closed connected (smoothly embedded) hypersurface
in $\mathbb{R}^{n}$ with $n\geq3$. There exists a unique number
$T_{1}\in\left(0,\infty\right)$, called the first singular time,
so that the mean curvature flow (MCF) $\left\{ \Sigma_{t}\right\} _{t\in\left[0,T_{1}\right)}$
starting from $\Sigma_{0}$ at time $0$ cannot be smoothly extended
any further; in fact, the second fundamental form of $\Sigma_{t}$
blows up as $t\nearrow T_{1}$ (see \cite{H1}). The \textbf{level
set flow} (LSF) $\left\{ \Sigma_{t}\right\} _{t\in\left[0,\infty\right)}$
starting from $\Sigma_{0}$ at time $0$ is a continuation of the
MCF past the first singular time via the ``level set method'' (cf.
\cite{OS}, \cite{CGG}, \cite{ES}). Such a flow is uniquely determined
by $\Sigma_{0}$ and agrees with the MCF for $t\in\left[0,T_{1}\right)$.
The flow would vanish after a finite time; the unique number $T_{ext}\in\left(0,\infty\right)$
is called the ``time of extinction'' provided the flow does not
vanish until after time $T_{ext}$. 

In this paper we assume that $\Sigma_{0}$ is \textbf{mean-convex},
namely, its mean curvature is positive with respect to the inward
unit normal vector. In this case, the LSF $\left\{ \Sigma_{t}\right\} $
would stay in $\Omega_{0}$ for $t>0$, where $\Omega_{0}$ is the
open connected set bounded by $\Sigma_{0}$ (see the Jordan-Brouwer
separation theorem in \cite{GP}), and it would sweep out $\Omega_{0}$
monotonically. Specifically, for each point $p\in\Omega_{0}$, the
flow would pass through $p$ at some moment and then it would never
return. Let us denote by $u\left(p\right)$ the unique time when the
flow $\left\{ \Sigma_{t}\right\} $ passes through $p$.\footnote{When $p\in\partial\Omega_{0}=\Sigma_{0}$, set $u\left(p\right)=0$.}
As such, the LSF can be described as the level sets of the function
$u$ as follows:
\begin{equation}
\Sigma_{t}=\left\{ x\in\bar{\Omega}_{0}:u\left(x\right)=t\right\} ,\,\,\,t\geq0.\label{level sets and flow}
\end{equation}
Such a function $u$, called the \textbf{arrival time function} of
the flow, is actually Lipschitz continuous on $\bar{\Omega}_{0}$;
moreover, it is the unique viscosity solution\footnote{See \cite{ES} for the precise definition of viscosity solutions to
equation (\ref{level set flow}).} to the following Dirichlet problem (cf. \cite{ES}):
\begin{equation}
-\left(\mathrm{I}-\frac{\nabla u}{\left|\nabla u\right|}\varotimes\frac{\nabla u}{\left|\nabla u\right|}\right)\cdotp\nabla^{2}u=1\quad\textrm{in}\,\,\,\Omega_{0},\label{level set flow}
\end{equation}
\[
u=0\quad\textrm{on}\,\,\,\partial\Omega_{0}=\Sigma_{0}.
\]
In this case, the flow is called a \textbf{mean-convex LSF}. Note
that the superlevel set 
\begin{equation}
\Omega_{t}\coloneqq\left\{ x\in\Omega_{0}:u\left(x\right)>t\right\} ,\quad t>0\label{super level set}
\end{equation}
is contracting in the sense that 
\begin{equation}
t_{1}<t_{2}\,\Rightarrow\,\Omega_{t_{1}}\supset\bar{\Omega}_{t_{2}}.\label{shrinking}
\end{equation}
Note also that 
\[
T_{ext}=\max_{\Omega_{0}}\,u.
\]

A point $p\in\Omega_{0}$ is called a \textbf{regular point} of $\left\{ \Sigma_{t}\right\} $
provided the flow is locally smooth and monotonic\footnote{That is, $\partial_{t}x\cdot N>0$ for some choice of locally smooth
unit normal vector field $N$ of $\left\{ \Sigma_{t}\right\} $.} near the point $p$ for $t$ close to $u\left(p\right)$; by (\ref{level sets and flow})
and the implicit function theorem, this is equivalent to saying that
the arrival time function $u$ is locally smooth near $p$ with $\nabla u\left(p\right)\neq0$.
In that case, $u$ would satisfy (\ref{level set flow}) in the classical
sense near $p$ and $\left\{ \Sigma_{t}\right\} $ would be a MCF
near $p$ for $t$ close to $u\left(p\right)$, whose mean curvature
with respect to the unit normal $\frac{\nabla u}{\left|\nabla u\right|}$
would be
\begin{equation}
H=-\nabla\cdot\frac{\nabla u}{\left|\nabla u\right|}=\frac{-1}{\left|\nabla u\right|}\left(\mathrm{I}-\frac{\nabla u}{\left|\nabla u\right|}\varotimes\frac{\nabla u}{\left|\nabla u\right|}\right)\cdotp\nabla^{2}u=\frac{1}{\left|\nabla u\right|};\label{mean curvature}
\end{equation}
in particular, $\left\{ \Sigma_{t}\right\} $ is mean-convex with
respect to $N=\frac{\nabla u}{\left|\nabla u\right|}$.

On the other hand, if a point $p\in\Omega_{0}$ is not a regular point
of $\left\{ \Sigma_{t}\right\} $, it would be called a \textbf{singular
point} of the flow. The set of all singular points of $\left\{ \Sigma_{t}\right\} $,
denoted by $\mathcal{S}$, is of Hausdorff dimension at most $n-2$
(cf. \cite{W1}). As $\left\{ \mathcal{H}^{n-1}\lfloor\Sigma_{t}\right\} $
is a Brakke flow (cf. \cite{I1}, \cite{W1}), we may consider the
tangent flows at the singular points (cf. \cite{I2}). It turns out
that the tangent flows at each singular point is unique; it is either
a shrinking sphere or a shrinking $k$-cylinder\footnote{That is, up to a rotation, the self-shrinking of $S_{\sqrt{2\left(n-k-1\right)}}^{n-k-1}\times\mathbb{R}^{k}$,
where $k\in\left\{ 1,\cdots,n-2\right\} $.} (cf. \cite{W2}, \cite{W4}, \cite{CM2}). Following \cite{G2},
singular points are called \textbf{round points} and \textbf{$k$-cylindrical
points} if the tangent flows are shrinking spheres and shrinking $k$-cylinders,
respectively. 

By \cite{CM4}, at every singular point of $\left\{ \Sigma_{t}\right\} $
the arrival time function $u$ is twice differentiable and $\nabla u$
vanishes. As a corollary, singular points of $\left\{ \Sigma_{t}\right\} $
are precisely the critical points of $u$, namely,
\begin{equation}
\mathcal{S}=\left\{ x\in\Omega_{0}:\nabla u\left(x\right)=0\right\} .\label{critical points}
\end{equation}
By Lemma \ref{closedness of singular set}, the singular set $\mathcal{S}$
is a compact set. 

In this paper we would like to investigate the stability of mean-convex
LSF. Based on the uniqueness theorem of viscosity solutions to the
Dirichlet problem (\ref{level set flow}) in \cite{ES} and also the
smooth estimates for a mean-convex LSF in \cite{HK}, we have the
following result:
\begin{thm}
\label{stability theorem}Given a sequence of closed connected hypersurfaces
$\left\{ \Sigma_{0}^{k}\right\} _{k\in\mathbb{N}}$ tending in the
$C^{4}$ topology to a mean-convex closed connected hypersurface $\Sigma_{0}$,
i.e.,
\[
\Sigma_{0}^{k}\,\stackrel{C^{4}}{\rightarrow}\,\Sigma_{0}\quad\textrm{as}\,\,\,k\rightarrow\infty,
\]
their respective LSFs $\left\{ \Sigma_{t}^{k}\right\} $, $k\in\mathbb{N}$,
would converge locally smoothly to the mean-convex LSF $\left\{ \Sigma_{t}\right\} $
in $\left(\Omega_{0}\setminus\mathcal{S}\right)\times\left(0,\infty\right)$
as $k\rightarrow\infty$.
\end{thm}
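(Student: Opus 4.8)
The plan is to combine the stability of viscosity solutions of the degenerate Dirichlet problem (\ref{level set flow}) with the local regularity theory for mean-convex flows, in two stages: first obtain $C^{0}$ (Hausdorff) convergence of the flows from the PDE side, then upgrade to smooth convergence on the regular set. To begin, note that mean-convexity is an open condition in the $C^{2}$ topology and $\Sigma_{0}$ is compact with $H\geq c>0$; hence for all $k$ large $\Sigma_{0}^{k}$ is itself mean-convex, with $\min_{\Sigma_{0}^{k}}H\geq c/2$ and with all relevant geometry (second fundamental form, interior/exterior ball radius, circumradius) converging to that of $\Sigma_{0}$. Thus each $\{\Sigma_{t}^{k}\}$ is a mean-convex LSF with arrival time function $u_{k}$, the unique viscosity solution of (\ref{level set flow}) on $\Omega_{0}^{k}$ with $u_{k}=0$ on $\Sigma_{0}^{k}$. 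Since $0\leq u_{k}\leq T$ for a uniform $T$ (comparison with a large shrinking sphere containing all $\bar\Omega_{0}^{k}$), and since $|\nabla u_{k}|=1/H_{k}\leq 2/c$ on the regular set (the spatial minimum of $H$ is non-decreasing along a mean-convex MCF by the maximum principle applied to $\partial_{t}H=\Delta H+|A|^{2}H$, and remains bounded below along the LSF by the regularity theory of \cite{HK,W1}), the family $\{u_{k}\}$, extended by $0$ outside $\bar\Omega_{0}^{k}$, is uniformly bounded and uniformly Lipschitz on $\mathbb{R}^{n}$.

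Next I would extract the limit. By Arzel\`a--Ascoli and a diagonal argument a subsequence of $\{u_{k}\}$ converges locally uniformly to a Lipschitz function $u_{\infty}$ on $\mathbb{R}^{n}$. Using $\Sigma_{0}^{k}\to\Sigma_{0}$ in $C^{0}$, the uniform Lipschitz bound, and comparison with small interior spheres, one checks that $u_{\infty}=0$ on $\Sigma_{0}$, that $u_{\infty}=0$ outside $\bar\Omega_{0}$, and that $u_{\infty}>0$ on $\Omega_{0}$, so that $\{u_{\infty}>0\}=\Omega_{0}$. By the standard stability of viscosity solutions under local uniform convergence (the operator in (\ref{level set flow}) is unchanged along the sequence and the domains $\Omega_{0}^{k}$ converge to $\Omega_{0}$), $u_{\infty}$ is a viscosity solution of (\ref{level set flow}) in $\Omega_{0}$ with the stated boundary value, so by the uniqueness theorem of \cite{ES} it equals $u$. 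Since every subsequence admits a further subsequence with this same limit, the full sequence satisfies $u_{k}\to u$ uniformly on compact subsets of $\Omega_{0}$ and $T_{ext}^{k}\to T_{ext}$; consequently $\Sigma_{t}^{k}=\partial\{u_{k}>t\}\to\Sigma_{t}=\partial\{u>t\}$ in Hausdorff distance, locally uniformly in $t\in(0,\infty)$.

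Finally I would promote this to local smooth convergence on $(\Omega_{0}\setminus\mathcal{S})\times(0,\infty)$. Fix $p_{0}\in\Omega_{0}\setminus\mathcal{S}$. Away from the surfaces (i.e.\ for $t\neq u(p_{0})$, or $u(p_{0})\geq T_{ext}$) there is nothing to prove, by the Hausdorff convergence of the previous paragraph, so assume $t_{0}:=u(p_{0})\in(0,T_{ext})$. Near $(p_{0},t_{0})$ the limit flow $\{\Sigma_{t}\}$ is a smooth, graphical, strictly mean-convex MCF with bounds on $H$ and on all derivatives of $A$ (equivalently, $u$ is smooth with $\nabla u\neq 0$ there). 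The flows $\{\Sigma_{t}^{k}\}$ are $\alpha$-noncollapsed with a \emph{uniform} $\alpha>0$ (noncollapsing holds at $t=0$ for $\Sigma_{0}$ with a constant stable under $C^{4}$-convergence, and is preserved along a mean-convex MCF \cite{HK}), and by the second step they lie in an arbitrarily small spacetime neighborhood of $\{\Sigma_{t}\}$ near $(p_{0},t_{0})$ once $k$ is large. Invoking the local curvature estimates of \cite{HK} for $\alpha$-noncollapsed mean-convex flows (or, alternatively, White's local regularity theorem together with the upper semicontinuity of Gaussian density, using $\Theta(p_{0},t_{0})=1$ and the no-mass-loss convergence of the associated Brakke flows), one obtains \emph{uniform} bounds $|A^{k}|\leq C$ and then $|\nabla^{m}A^{k}|\leq C_{m}$ on a fixed parabolic neighborhood of $(p_{0},t_{0})$. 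Uniform smooth bounds together with the Hausdorff convergence then force $\Sigma_{t}^{k}\to\Sigma_{t}$ in $C^{\infty}_{loc}$ near $(p_{0},t_{0})$; in particular $\nabla u_{k}\neq 0$ there and $u_{k}\to u$ smoothly. Since $p_{0}$ was arbitrary, the theorem follows.

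I expect the last step to be the main obstacle: the viscosity-solution argument alone only yields $C^{0}$ (Hausdorff) convergence, and converting this to genuine smooth convergence on the regular set hinges on a uniform curvature bound for the \emph{perturbed} flows near regular points of the limit flow. This is precisely where the uniformity of the noncollapsing constant and the local estimates of \cite{HK} are indispensable; in the density-based variant, the delicate point is to know that the perturbed mean-convex Brakke flows converge to the limit flow without loss of mass, which again relies on the mean-convex structure.
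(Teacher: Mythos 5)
Your first stage coincides with the paper's Proposition \ref{C0 compactness}: uniform bounds and equicontinuity for the zero-extended arrival time functions, Arzel\`a--Ascoli, and identification of the limit through the uniqueness of viscosity solutions of (\ref{level set flow}). Note that the uniform Lipschitz bound $\sup|\nabla u^{k}|\le 2/c$ you invoke is itself one of the paper's nontrivial inputs (Proposition \ref{gradient estimate}, proved via elliptic regularization rather than by a direct maximum principle past singularities), though citing it as known is defensible.

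Where you genuinely diverge is in upgrading $C^{0}$/Hausdorff convergence to smooth convergence. The paper inserts an intermediate step: a uniform bound on $\nabla^{2}u^{k}$ (Proposition \ref{Hessian estimate}) yields $\nabla u^{k}\to\nabla u$ locally uniformly (Proposition \ref{C1 compactness}); since the smooth scale of $\left\{ \Sigma_{t}^{k}\right\}$ at a regular point $p$ is bounded below by $\gamma\left|\nabla u^{k}\left(p\right)\right|\to\gamma\left|\nabla u\left(p\right)\right|>0$ (Proposition \ref{interior estimate}), the perturbed flows are uniformly graphical with all derivatives controlled on a fixed neighborhood of $p$, and the limit is pinned down by squeezing against the $C^{0}$ convergence. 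You instead pass directly from spacetime closeness to curvature bounds. Your density-based variant (Brakke compactness, unit density of the limit via one-sided minimization of mean-convex flows, then White's regularity theorem) is sound, and is in fact the mechanism the paper deploys elsewhere, in the proof of Proposition \ref{gradient estimate} for the regularized flows. Your noncollapsing variant, as literally stated, has a gap: the Haslhofer--Kleiner local curvature estimate requires $H\le r^{-1}$ at the center point of the parabolic ball, and lying in an arbitrarily small spacetime neighborhood of a regular limit flow does not by itself supply such a point --- a priori $\left\{ \Sigma_{t}^{k}\right\}$ could develop a tiny high-curvature neck near $\left(p_{0},t_{0}\right)$ compatible with Hausdorff closeness. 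One must first exhibit a point near $\left(p_{0},t_{0}\right)$ where $\left|\nabla u^{k}\right|$ is bounded below, e.g.\ from a difference quotient of $u^{k}$ along a segment transverse to $\Sigma_{t_{0}}$ using the $C^{0}$ convergence, or via the paper's $C^{1}$ convergence; with that point in hand either variant closes. Finally, the paper treats times near $0$ separately by classical parabolic stability of MCF (Proposition \ref{time of smooth existence}, Corollary \ref{smooth convergence near initial time}), since its interior estimates are only stated on $\Omega_{\dot{T}}$; in your scheme this is the harmless case of small $t_{0}$ and deserves at least a sentence.
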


Note that when $k$ is large, $\Sigma_{0}^{k}$ would also be mean-convex
and so $\left\{ \Sigma_{t}^{k}\right\} $ is a mean-convex LSF that
will stay in $\Omega_{0}^{k}$, where $\Omega_{0}^{k}$ is the region
bounded by $\Sigma_{0}^{k}$. Proof of Theorem \ref{stability theorem}
will be given in Section \ref{smooth convergence off singularities}.

As a consequence of Theorem \ref{stability theorem}, suppose that
$\left\{ \Sigma_{t}\right\} $ is regular for $t\in\left[a,b\right]$,
where $0<a<b<T_{ext}$; in other words, $\left\{ \Sigma_{t}\right\} _{t\in\left[a,b\right]}$
is a mean-convex MCF. Then $\left\{ \Sigma_{t}^{k}\right\} _{t\in\left[a,b\right]}$
would also be regular when $k$ is large and converge smoothly to
$\left\{ \Sigma_{t}\right\} _{t\in\left[a,b\right]}$ as $k\rightarrow\infty$
(see Corollary \ref{closedness during regular times}). Notice that
in the case where $b<T_{1}$, this is the well-known stability theorem
for MCF. So what's new is when $a>T_{1}$.

What's more, when $k$ is sufficiently large, Theorem \ref{stability theorem}
implies that the singular set $\mathcal{S}^{k}$ of the mean-convex
LSF $\left\{ \Sigma_{t}^{k}\right\} $ would be contained in an arbitrarily
small neighborhood of $\mathcal{S}$ (see Corollary \ref{regular space}). 

In order to compare the flows near $\mathcal{S}$, we need to work
out the structure of the singular set first. To this end, in what
follows we shall focus on the case where $\Sigma_{0}$ is  \textbf{two-convex},\footnote{$\kappa_{1}+\kappa_{2}>0$, where $\kappa_{1}\leq\kappa_{2}\leq\cdots\leq\kappa_{n-1}$
denote the principal curvatures of a closed hypersurface with respect
to the inward unit normal vector field.} which is a condition stronger than the mean-convexity\footnote{When $n=3,$ two-convex is just mean-convex.}
but weaker than the convexity. The motivation is that by \cite{CHN}
(see also \cite{HK}), the flow $\left\{ \Sigma_{t}\right\} $ would
be uniformly two-convex at every regular point with respect to the
unit normal $\frac{\nabla u}{\left|\nabla u\right|}$ (see (\ref{uniformly two-convex}));
consequently, every singular point of $\left\{ \Sigma_{t}\right\} $
must be either a round point or a $1$-cylindrical point, the latter
of which will henceforth be abbreviated to a \textbf{cylindrical point}.
Such a flow will be referred to as a \textbf{two-convex LSF}.

Note that $t\in\left(0,T_{ext}\right]$ is called a singular time
of the flow $\left\{ \Sigma_{t}\right\} $ provided 
\[
\mathcal{S}\cap\Sigma_{t}\neq\emptyset;
\]
by (\ref{critical points}), a singular time of $\left\{ \Sigma_{t}\right\} $
amounts to a critical value of $u$. Otherwise, $t$ would be called
a regular time of the flow, i.e., a regular value of $u$. It is conjectured
in \cite{CM4} that a mean-convex LSF has finitely many singular times
(see \cite{AAG} for the rotationally symmetric examples). Under the
hypothesis of finitely many singular times, we prove the following
theorem on the basis of \cite{CM2} and \cite{CM3}:
\begin{thm}
\label{structure of singular set}If the LSF $\left\{ \Sigma_{t}\right\} $
starting from a two-convex closed connected hypersurface $\Sigma_{0}$
has finitely many singular times 
\[
T_{1}\leq\cdots\leq T_{ext},
\]
then its singular set $\mathcal{S}$ is a finite disjoint union of
points and/or compact $C^{1}$ embedded curves.
\end{thm}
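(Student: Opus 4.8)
The strategy is to combine the stratification of the singular set by tangent-flow type with the local structure theorems near each type of singularity, and then to use the finiteness of singular times to obtain a finite decomposition into connected pieces. First I would record that, since $\Sigma_0$ is two-convex, every singular point of $\left\{\Sigma_t\right\}$ is either a round point or a cylindrical point (as observed in the excerpt, via \cite{CHN}, \cite{HK}, \cite{CM2}). So $\mathcal{S}=\mathcal{S}_{\mathrm{rd}}\cup\mathcal{S}_{\mathrm{cyl}}$, the round and cylindrical strata. Around a round point, the flow is asymptotic to a shrinking sphere, so by the uniqueness of tangent flows \cite{CM2} together with the mean-convex neighborhood structure \cite{HK} (or directly the arguments of \cite{W2}), an entire small ball around such a point becomes regular immediately before the singular time and the singular set meets that ball in the single isolated point; hence $\mathcal{S}_{\mathrm{rd}}$ consists of isolated points. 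The substance of the theorem is therefore the structure near a cylindrical point.

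Near a cylindrical point the plan is to invoke the fine analysis of \cite{CM3} on the structure of the singular set at cylindrical singularities of MCF: there, using the \L{}ojasiewicz-type inequality for the cylinder and the uniqueness of cylindrical tangent flows from \cite{CM2}, it is shown that in a spacetime neighborhood of such a point the singular set at the singular time is contained in (and, after matching with the flow on the regular side, locally coincides with) a $C^1$ embedded curve — essentially the ``axis'' direction of the cylindrical model is locally well-defined and varies in a $C^1$ fashion along $\mathcal{S}$. I would state this as a lemma extracted from \cite{CM3}: for each cylindrical point $p$ there is $r>0$ so that $\mathcal{S}\cap B_r(p)$ is a connected $C^1$ embedded arc through $p$ (possibly degenerating to the point $p$ itself if $p$ is an endpoint), all of whose points have the same singular time $u(p)$, and which is relatively closed in $B_r(p)$. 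Combined with (\ref{critical points}), this gives a local $C^1$-manifold-with-boundary picture for $\mathcal{S}_{\mathrm{cyl}}$ near each of its points.

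Next I would globalize. Since the LSF has finitely many singular times $T_1\le\cdots\le T_{ext}$, the singular set splits as a finite disjoint union $\mathcal{S}=\bigsqcup_j \mathcal{S}\cap\Sigma_{T_j}$, and each $\mathcal{S}\cap\Sigma_{T_j}$ is compact (by Lemma \ref{closedness of singular set}, intersected with the closed slice $\Sigma_{T_j}$). Fix one such slice-singular-set $\mathcal{S}_j$. By the local descriptions above, $\mathcal{S}_j$ is covered by finitely many balls in each of which it is either a single point or a $C^1$ embedded arc; the overlap compatibility — that two such arcs agree on the intersection of their domains — follows because the $C^1$ curve is intrinsically characterized (e.g.\ as the set of points of $\mathcal{S}_j$ together with the common tangent direction coming from the tangent flow, which is continuous along $\mathcal{S}$). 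A standard covering/patching argument for one-dimensional $C^1$ sets then shows that each connected component of $\mathcal{S}_j$ is either an isolated point or a compact connected $C^1$ embedded curve (a circle, since a compact connected $1$-manifold without boundary is a circle; the ``arc with endpoints'' case cannot occur for a component of the full singular set because an endpoint of an arc would still be a singular point and hence interior to some chart). Finiteness of the number of components in each slice follows from compactness of $\mathcal{S}_j$ plus the fact that components are relatively open (each point has a neighborhood meeting $\mathcal{S}_j$ in a connected set), and there are finitely many slices, giving the claimed finite disjoint union for $\mathcal{S}$.

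The main obstacle is the cylindrical case: one must genuinely extract from \cite{CM3} the $C^1$-regularity of the singular set (not merely its rectifiability or its Hausdorff dimension $\le n-2$, which by itself would only give a $1$-rectifiable set), and one must verify the compatibility of the local $C^1$ parametrizations — equivalently, that the ``axis direction'' furnishing the tangent line is globally well-defined and continuous along $\mathcal{S}_{\mathrm{cyl}}$ within a single singular slice. A secondary technical point is ruling out that $\mathcal{S}_{\mathrm{rd}}$ and $\mathcal{S}_{\mathrm{cyl}}$ meet, or that a round point is a limit of cylindrical points along a curve; this is handled by upper semicontinuity of Gaussian density together with the fact that the density of the round model is strictly below that of the cylinder, so the round stratum is open in $\mathcal{S}$ and the two strata are separated.
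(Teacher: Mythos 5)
Your outline follows the paper's skeleton (round points are isolated; cylindrical points lie on Lipschitz graphs over the axis by \cite{CM2}, \cite{CM3}; decompose by singular time and use compactness), but it has two genuine gaps at exactly the places where the paper does real work.

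First, the ``lemma extracted from \cite{CM3}'' that you rely on --- that for each cylindrical point $p$ there is $r>0$ with $\mathcal{S}\cap B_{r}\left(p\right)$ a \emph{connected} $C^{1}$ embedded arc --- is not what \cite{CM3} provides. What one actually gets (see Section \ref{cylindrical point}) is that all cylindrical points near $p$ are \emph{contained in} a $\theta$-Lipschitz graph $y=\psi\left(z\right)$ over the axis, and that $\psi$ is $C^{1}$ on an interval $\left[a,b\right]$ \emph{provided} every height $z\in\left[a,b\right]$ already carries a cylindrical point. Nothing in \cite{CM3} rules out that the singular set is, say, a convergent sequence of points on that graph together with its limit --- which would be a compact set with infinitely many components and would defeat the theorem. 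Closing this gap is precisely the content of Lemma \ref{local max scale} (if $0$ is a local maximum on the upside and another cylindrical point sits at height $z_{*}$, then \emph{every} intermediate height carries a cylindrical point, proved by a continuation argument pushing paths on $\Sigma_{t}$ through the neck up to the singular time) and Lemma \ref{saddle scale} (if $0$ is not a local maximum on that side, it is isolated there). Your proposal has no substitute for this dichotomy, and the subsequent ``covering/patching'' step presupposes local connectedness that has not been established.

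Second, your conclusion that a curve component must be a circle (``the arc-with-endpoints case cannot occur \ldots an endpoint would still be a singular point and hence interior to some chart'') is false, and contradicts both the statement being proved and the paper's later classification. Curves with endpoints do occur: an endpoint is a cylindrical point at which the singular curve terminates --- either a local maximum of $u$ or a one-sided saddle point (Definition \ref{singularity types}, splitting and bumpy types) --- and the correct local picture there is case (3) of Proposition \ref{structure of singular set at first singular time}: $\mathcal{S}\cap B_{\varepsilon}\left(p\right)$ is a $C^{1}$ arc with $p$ a \emph{boundary} point. A minor further difference: the paper only proves the local structure at the first singular time and then iterates via Proposition \ref{domain after first singular time} (restarting the flow on each component of $\Omega_{T_{l}}$), because the key lemmas need the relevant time to be locally the unique singular time and the flow to be a genuine MCF beforehand; your direct slice-by-slice decomposition would need an equivalent localization, which is obtainable from finiteness of singular times but must be said.
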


Note that by (\ref{critical points}) and the mean value theorem,
$u$ is constant on each connected component of $\mathcal{S}$, meaning
that singularities on each component of $\mathcal{S}$ occur at the
same time. Theorem \ref{structure of singular set} is related to
a speculation in \cite{CM3}, which conjectures that the spacetime
singular set has only finitely many connected components. The proof
of Theorem \ref{structure of singular set} will be give in Section
\ref{singular set at subsequent singular times}. 

Recall that singular points of $\left\{ \Sigma_{t}\right\} $ are
critical points of $u$. In general, critical points of a function
can be classified into local maximum points, local minimum points,
and saddle points. In the case of the arrival time function $u$,
there are no local minimum points (see (\ref{level set flow})). Moreover,
by Corollary \ref{isolated round point} and Corollary \ref{saddle criterion},
a point $p\in\Omega_{0}$ is a saddle point of $u$ if and only if
$p$ is not only a cylindrical point of $\left\{ \Sigma_{t}\right\} $
but also a boundary point of the superleverl set $\Omega_{u\left(p\right)}$
(see (\ref{super level set})). In that case, owing to the asymptotically
cylindrical behavior of $\left\{ \Sigma_{t}\right\} $ near $p$ (see
Section \ref{cylindrical point}), for any small $\phi>0$ there exists
$r=r\left(\phi\right)>0$ so that
\begin{equation}
\Omega_{u\left(p\right)}\cap B_{r}\left(p\right)\,\subset\,\mathscr{C}_{\phi}\left(p\right),\label{cusp singularity}
\end{equation}
where $\mathscr{C}_{\phi}\left(p\right)$ is a double cone with vertex
$p$, axis parallel to the tangent cylinder of $\left\{ \Sigma_{t}\right\} $
at $p$, and angle $\phi$ (see (\ref{cone})); hence, $\Omega_{u\left(p\right)}$
has a cusp singularity at $p$.

By contrast, if a point $p\in\Omega_{0}$ is a local maximum point
of $u$, then $\Omega_{u\left(p\right)}$ vanishes near $p$. According
to Remark \ref{one-sided local max}, when a connected component of
the singular set $\mathcal{S}$ (see Theorem \ref{structure of singular set})
is a curve, all its interior points must be local maximum points of
$u$. Thus, whether the flow has the same kind of behavior near these
curves depends crucially on the their endpoints. We then classify
the curves in Theorem \ref{structure of singular set} into the following
three types: 
\begin{enumerate}
\item \textbf{Vanishing type}: if either the curve has no endpoints (i.e.,
the curve is closed) or both of its endpoints are local maximum points
of $u$;
\item \textbf{Splitting type}: if both endpoints of the curve are saddle
points of $u$;
\item \textbf{Bumpy type}: if one endpoint of the curve is a local maximum
point of $u$ and the other is a saddle point of $u$.
\end{enumerate}
When a connected component of $\mathcal{S}$ is a point, it can be
regarded as a ``degenerate'' curve and so it can also be classified
into the above three categories (see the comment following Definition
\ref{singularity types}).

Now let us write 
\begin{equation}
\mathcal{S}\,=\,\bigsqcup_{j}\,\mathcal{S}_{j},\label{singular components}
\end{equation}
where each $\mathcal{S}_{j}$ is a connected component of $\mathcal{S}$
(called a \textbf{singular component} for short). Let $\hat{\delta}>0$
be a sufficiently small constant such that (\ref{delta 1}), (\ref{delta 2}),
and (\ref{delta 3}) hold. As mentioned earlier, when $k$ is large,
the singular set $\mathcal{S}^{k}$ of the flow $\left\{ \Sigma_{t}^{k}\right\} $
in Theorem \ref{stability theorem} would satisfy
\begin{equation}
\mathcal{S}^{k}\,\subset\,\bigsqcup_{j}\,\mathcal{S}_{j}^{\hat{\delta}},\label{singular set of perturbed flow}
\end{equation}
where $\mathcal{S}_{j}^{\hat{\delta}}$ is the $\hat{\delta}$-neighborhood
of $\mathcal{S}_{j}$. In addition, the singular times of $\left\{ \Sigma_{t}^{k}\right\} $
in $\mathcal{S}_{j}^{\hat{\delta}}$ would approximate to $u\left(\mathcal{S}_{j}\right)$
(see Proposition \ref{C0 compactness}). In the following theorem
we show that under certain conditions, the flow $\left\{ \Sigma_{t}^{k}\right\} $
in $\mathcal{S}_{j}^{\hat{\delta}}$ would have the same type of singular
set as $\mathcal{S}_{j}$. 
\begin{thm}
\label{stability of singular types}Let $\left\{ \Sigma_{t}\right\} $
be the two-convex LSF in Theorem \ref{structure of singular set}
with singular set (\ref{singular components}), and let $\left\{ \Sigma_{t}^{k}\right\} $,
$k\in\mathbb{N}$, be the LSFs in Theorem \ref{stability theorem}.
Given a small constant $\hat{\delta}>0$ fulfilling (\ref{delta 1}),
(\ref{delta 2}), and (\ref{delta 3}), suppose that for every sufficiently
large $k$ the following hold:
\begin{enumerate}
\item For each $j$, there is at most one singular time of $\left\{ \Sigma_{t}^{k}\right\} $
in $\mathcal{S}_{j}^{\hat{\delta}}$ (see Assumption \ref{local singular times hypothesis});
\item When the singular component $\mathcal{S}_{j}$ is of the splitting
type, the flow $\left\{ \Sigma_{t}\right\} $ ``does not get into
the singular set near the endpoints'' (see Assumption \ref{not getting into});
\item When the singular component $\mathcal{S}_{j}$ is of the bumpy type,
there are singularities of $\left\{ \Sigma_{t}^{k}\right\} $ in $\mathcal{S}_{j}^{\hat{\delta}}$
(see Assumption \ref{existence of singularities near bumpy}).
\end{enumerate}
Then there exists $k_{\hat{\delta}}\in\mathbb{N}$ so that for every
$k\geq k_{\hat{\delta}}$, $\left\{ \Sigma_{t}^{k}\right\} $ is a
two-convex LSF with singular set (\ref{singular set of perturbed flow});
moreover, in each $\mathcal{S}_{j}^{\hat{\delta}}$, $\left\{ \Sigma_{t}^{k}\right\} $
has precisely one singular component, which is of the same type as
$\mathcal{S}_{j}$.
\end{thm}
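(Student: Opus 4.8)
The plan is to dispose of the structural claims by soft arguments and then run a case analysis over the three singularity types. First, since two-convexity ($\kappa_{1}+\kappa_{2}>0$) is an open condition on a compact hypersurface, the $C^{4}$-convergence $\Sigma_{0}^{k}\to\Sigma_{0}$ forces $\Sigma_{0}^{k}$ to be two-convex, hence $\{\Sigma_{t}^{k}\}$ a two-convex LSF, for all large $k$; Corollary \ref{regular space} gives $\mathcal{S}^{k}\subset\bigsqcup_{j}\mathcal{S}_{j}^{\hat{\delta}}$ as soon as $\hat{\delta}$ is small enough that the $\mathcal{S}_{j}^{\hat{\delta}}$ are pairwise disjoint, which is (\ref{singular set of perturbed flow}). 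Combined with Assumption \ref{local singular times hypothesis}, this shows $\{\Sigma_{t}^{k}\}$ has finitely many singular times, so Theorem \ref{structure of singular set} applies to it and $\mathcal{S}^{k}\cap\mathcal{S}_{j}^{\hat{\delta}}$ is a finite union of points and compact $C^{1}$ curves, all at one time $\tau_{k}$ (if nonempty), with $\tau_{k}\to\tau:=u(\mathcal{S}_{j})$ by Proposition \ref{C0 compactness}. It then suffices to fix $j$ and prove that $\mathcal{S}^{k}\cap\mathcal{S}_{j}^{\hat{\delta}}$ is nonempty, connected, and of the same type as $\mathcal{S}_{j}$.

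Fix a neighborhood $U_{j}\supset\mathcal{S}_{j}^{\hat{\delta}}$ and a short interval $[\tau-\varepsilon,\tau+\varepsilon]$ such that $\{\Sigma_{t}\}$ is a smooth mean-convex MCF on $(U_{j}\setminus\mathcal{S}_{j})\times[\tau-\varepsilon,\tau+\varepsilon]$; by Theorem \ref{stability theorem} together with the $C^{0}$-convergence of the arrival time functions, $\{\Sigma_{t}^{k}\}\to\{\Sigma_{t}\}$ smoothly there and $\Omega_{t}^{k}\to\Omega_{t}$ in the Hausdorff sense. From the type of $\mathcal{S}_{j}$ — using Corollary \ref{isolated round point}, Corollary \ref{saddle criterion}, Remark \ref{one-sided local max}, and the asymptotics near cylindrical points in Section \ref{cylindrical point} — I would record the local model of $\Omega_{t}\cap U_{j}$ near $\mathcal{S}_{j}$ for $t\nearrow\tau$ and for $t\searrow\tau$: for the vanishing type a thin tubular region about $\mathcal{S}_{j}$ (a topological ball or solid torus) collapsing to nothing; for the splitting type a connected thin neck joining two larger regions, which for $t>\tau$ separates into distinct pieces with cusps (\ref{cusp singularity}) at the endpoints of $\mathcal{S}_{j}$; for the bumpy type a thin tube capped at the local-max endpoint and opening into the ambient flow at the saddle endpoint, which for $t>\tau$ loses its cap and core and retains only the cusp continuation at the saddle end. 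By the smooth and Hausdorff convergence, the same descriptions hold for $\Omega_{t}^{k}\cap U_{j}$ with $t$ slightly below and above $\tau_{k}$. Non-emptiness of $\mathcal{S}^{k}\cap\mathcal{S}_{j}^{\hat{\delta}}$ is then automatic for the vanishing type (a closed component of $\Omega_{t}^{k}$ near $\mathcal{S}_{j}$, staying inside $\mathcal{S}_{j}^{\hat{\delta}}$ by a barrier argument, vanishes in finite time by comparison with shrinking spheres, and its vanishing point is a local-maximum singular point of $u^{k}$) and for the splitting type (the number of local components of $\Omega_{t}^{k}$ near $\mathcal{S}_{j}$ jumps, which in a mean-convex LSF forces a singular time in between), while for the bumpy type, where no topological change is forced, Assumption \ref{existence of singularities near bumpy} provides the singularity.

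To identify the type, I would use that every interior point of a singular curve of $\{\Sigma_{t}^{k}\}$ is a local maximum of $u^{k}$ (Remark \ref{one-sided local max}) and that an endpoint is a saddle exactly when it is a cylindrical point lying on $\partial\Omega_{\tau_{k}}^{k}$, equivalently when the flow continues past it with a cusp (Corollary \ref{saddle criterion}). Near a local-max endpoint of $\mathcal{S}_{j}$, the original flow is empty for $t\geq\tau$ in a punctured neighborhood, so by smooth convergence $\Omega_{t}^{k}$ is empty there for $t$ slightly above $\tau_{k}$; hence the corresponding perturbed endpoint cannot be a saddle and must be a local max — which for the bumpy type, together with the nonemptiness of the cusp continuation at the saddle end (again by smooth convergence away from $\mathcal{S}_{j}$), already rules out the vanishing and splitting types and forces the bumpy type by elimination. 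For the splitting type, Assumption \ref{not getting into} is what lets me propagate the cusp structure (\ref{cusp singularity}) at the endpoints of $\mathcal{S}_{j}$ to the perturbed flow and conclude that both perturbed endpoints are again saddles. Finally, matching the pre-$\tau_{k}$ model — a single thin region near $\mathcal{S}_{j}$ — shows that $\mathcal{S}^{k}\cap\mathcal{S}_{j}^{\hat{\delta}}$ cannot split into two distinct singular components, so it is connected, and matching the post-$\tau_{k}$ model shows it is a curve (resp. point) of the declared type.

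I expect the main obstacle to be the endpoint analysis in the splitting and bumpy cases, namely showing that the saddle property — being a cylindrical point \emph{and} a boundary point of the superlevel set, with the cusp (\ref{cusp singularity}) — is inherited by the perturbed flow, since Theorem \ref{stability theorem} gives no control in any neighborhood of these points and one must instead exploit the quantitative asymptotics of the flow near cylindrical points from Section \ref{cylindrical point} together with Assumptions \ref{not getting into} and \ref{existence of singularities near bumpy}. A secondary difficulty is the uniqueness (connectedness) statement, which requires upgrading the $C^{0}$-convergence of the flows to enough structural control to exclude that a single thin region develops two separate singular components at the time $\tau_{k}$.
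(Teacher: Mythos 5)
Your high-level architecture matches the paper's: case analysis by type, existence of perturbed singularities from a topological change (vanishing and splitting) or from Assumption \ref{existence of singularities near bumpy} (bumpy), and type identification via the endpoints. The vanishing case as you sketch it is essentially the paper's Proposition \ref{stability of vanishing type} (the paper phrases it via Proposition \ref{global maximum points }: vanishing type $\Leftrightarrow$ the component's time is the extinction time of the trapped sub-flow). But for the splitting and bumpy cases the proposal has genuine gaps at exactly the steps you flag as "the main obstacle," and the mechanism you invoke to close them does not work. You repeatedly transfer the local model of $\Omega_{t}$ near $\mathcal{S}_{j}$ for $t$ slightly below/above $\tau$ to $\Omega_{t}^{k}$ for $t$ slightly below/above $\tau_{k}$ "by the smooth and Hausdorff convergence." Theorem \ref{stability theorem} gives no control in any neighborhood of $\mathcal{S}_{j}$, and Proposition \ref{C0 compactness} only controls $\Omega_{t}^{k}$ at times a fixed distance from $\tau_{k}$, not at times within $o(1)$ of $\tau_{k}$ where the singularity forms. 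In particular, your claim that near a local-max endpoint of $\mathcal{S}_{j}$ "by smooth convergence $\Omega_{t}^{k}$ is empty there for $t$ slightly above $\tau_{k}$, hence the perturbed endpoint cannot be a saddle" does not follow: a saddle for the perturbed flow would continue into a thin cone hugging the axis, a region where no convergence is available. The paper's substitute is twofold: (i) a \emph{uniform cylindrical scale} for the perturbed flows (Proposition \ref{uniform cylindrical scale}, via the Colding--Minicozzi uniqueness/\L ojasiewicz machinery applied uniformly in $k$), which gives quantitative asymptotically cylindrical behavior of $\left\{ \Sigma_{t}^{k}\right\} $ in a $k$-independent ball around each of its cylindrical points; and (ii) trapping arguments that convert any wrong configuration into a \emph{second} critical value of $u^{k}$ in $\mathcal{S}_{j}^{\hat{\delta}}$, contradicting Assumption \ref{local singular times hypothesis} — e.g., a would-be saddle facing the barrier cap $\mathcal{P}_{+}$ (on which $\max u^{k}=\bar{t}_{j}^{k}<T_{1,j}^{k}$) produces a closed level set enclosed in a truncated tube whose interior maximum of $u^{k}$ exceeds $T_{1,j}^{k}$ (Propositions \ref{no saddle toward right} and \ref{rule out bumpy type}). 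Neither ingredient appears in your proposal.

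Two further specific gaps. First, your existence argument for the splitting type ("the number of local components of $\Omega_{t}^{k}$ jumps, which forces a singular time in between") is the content of Lemma \ref{existence of singular time}, but that lemma is not soft: the components are counted inside an artificially capped tube, and without the cap condition (\ref{existence of singular time: boundary condition}) a component could touch or exit through a cap with no singularity forming inside. This is precisely where Assumption \ref{not getting into} enters in the paper — not, as you propose, to "propagate the cusp structure to conclude both perturbed endpoints are saddles." Second, the uniqueness/connectedness of the perturbed singular component does not follow from "matching the pre-$\tau_{k}$ model": a single thin tube can a priori pinch at two disjoint places at the same time. The paper excludes this (Propositions \ref{classification of singularities near splitting type } and \ref{uniqueness of bumpy}) again by the trapping-plus-second-singular-time contradiction applied to the truncated tube between two hypothetical components, using the saddle behavior at their facing endpoints. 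As it stands, the proposal is a correct plan with the two hardest steps — endpoint-type inheritance and uniqueness — identified but not proved, and the tool proposed for them (convergence off the singular set) is insufficient in principle.
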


The proof of Theorem \ref{stability of singular types} will be divided
into three parts according to the type of $\mathcal{S}_{j}$: Proposition
\ref{stability of vanishing type} (in Section \ref{vanishing type})
is when $\mathcal{S}_{j}$ belongs to the vanishing type, Proposition
\ref{rule out bumpy type} (in Section \ref{splitting type}) is when
$\mathcal{S}_{j}$ belongs to the splitting type, and Proposition
\ref{uniqueness of bumpy} (in Section \ref{bumpy type}) is when
$\mathcal{S}_{j}$ belongs to the bumpy type. 

\section{\uuline{Stabilit\label{stability of mean-convex LSF off singulariites}y
of mean-convex LSF off singularities}}

The objective of this section is to establish Theorem \ref{stability theorem}.
To accomplish that, we manage to obtain uniform estimates for the
mean-convex LSFs away from singularities. This can be divided into
two parts: the estimates near the boundary (i.e., near the initial
time $0$) in Section \ref{classical theory of MCF} using the classical
theory of MCF, and the interior estimates at the regular points in
Section \ref{interior estimates for mean-convex LSF} based on the
noncollapsing property of mean-convex LSF (cf. \cite{HK}). What we
do in Section \ref{estimates for initial hypersurface} is to give
estimates of the noncollapsing constant and the entropy bound, which
are essential in Section \ref{interior estimates for mean-convex LSF}
and also Section \ref{cylindrical point} (for the cylindrical scales).
Lastly, in Section \ref{smooth convergence off singularities} we
employ these uniform estimates combined with the uniqueness theorem
of viscosity solutions to the Dirichlet problem (\ref{level set flow})
to prove Theorem \ref{stability theorem}.

\subsection{Estimates for initial hypersurface\label{estimates for initial hypersurface}}

We shall estimate the noncollapsing constant $\alpha$ in Section
\ref{Noncollapsing} and the entropy bound $\lambda$ in Section \ref{Entropy}
of the initial hypersurface $\Sigma_{0}$. Note that the constants
$\alpha$ and $\lambda$ will be preserved by the mean-convex LSF
due to the maximum principle and Huisken's monotonicity formula, respectively
(see the comments following Proposition \ref{noncollapsing} and Proposition
\ref{entropy}).

\subsubsection{Noncollapsing\label{Noncollapsing}}

Recall that $\Sigma_{0}$ is called \textbf{$\alpha$-noncollapsing}
for some $\alpha>0$ provided for every $p\in\Sigma_{0}$, there exist
a pair of open balls of radius $\alpha/H\left(p\right)$ on each side\footnote{One of the balls is on the inside, i.e., $\Omega_{0}$, and the other
is on the outside, i.e., $\mathbb{R}^{n}\setminus\bar{\Omega}_{0}$,
where $\Omega_{0}$ is the region bounded by $\Sigma_{0}$.} of $\Sigma_{0}$ that kiss at $p$ (cf. \cite{An}, \cite{ALM},
\cite{HK}). Such a constant $\alpha$ always exists and can be estimated
as follows. 
\begin{lem}
\label{rolling ball condition}There exists a constant $\vartheta\left(n\right)\in\left(0,\frac{1}{2}\right]$
with the following property: If $\Sigma$ is a hypersurface in $\mathbb{R}^{n}$
such that $0\in\Sigma$, $T_{0}\Sigma=\mathbb{R}^{n-1}\times\left\{ 0\right\} $
and that for some $r>0$, $\Sigma\cap B_{r}$ is a graph 
\[
x^{n}=f\left(x^{1},\cdots,x^{n-1}\right)
\]
over (some part of) $T_{0}\Sigma$ with 
\[
\max_{\Sigma\cap B_{r}}\,r\left|A\right|\,\leq\,1,
\]
where $A$ is the second fundamental form of $\Sigma$. Then 
\[
B_{+}\coloneqq B_{\vartheta\left(n\right)r}\left(0,\vartheta\left(n\right)r\right),\quad B_{-}\coloneqq B_{\vartheta\left(n\right)r}\left(0,-\vartheta\left(n\right)r\right)
\]
are contained in $B_{r}$ and located on the upside and downside of
$\Sigma\cap B_{r}$, respectively.
\end{lem}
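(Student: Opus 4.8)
The plan is to reduce to the case $r=1$ by the dilation $x\mapsto x/r$, so that $\Sigma\cap B_1$ is a graph $x^n=f(x')$, $x'=(x^1,\dots,x^{n-1})$, over an open set $U\subset T_0\Sigma=\mathbb{R}^{n-1}$ with $0\in U$, $f(0)=0$, $\nabla f(0)=0$, and $|A|\le1$ along the graph. The core of the argument is to upgrade the curvature bound $|A|\le1$ into pointwise control of $f$ itself: a gradient bound $|\nabla f|\le\tfrac12$ and a quadratic bound $|f(x')|\le C_n|x'|^2$, both valid on a ball $B^{n-1}_{\varepsilon_n}(0)\subset U$ whose radius $\varepsilon_n>0$ depends only on $n$. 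Granting this, the two kissing balls come out of an elementary estimate.

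For the upgrade I would first record the algebraic fact that, writing $g_{ij}=\delta_{ij}+\partial_if\,\partial_jf$ for the induced metric and $h_{ij}=\partial_{ij}f/\sqrt{1+|\nabla f|^2}$ for the second fundamental form, the principal curvatures are the eigenvalues of the $g$-self-adjoint operator $g^{-1}h$; hence $|A|\le1$ forces $-g\le h\le g$ as bilinear forms, and wherever $|\nabla f|\le1$ one has $g\le 2\,\mathrm{I}$, which yields a Hessian bound $|\partial_{ij}f|\le C_n$ there. I would then run a continuity (open--closed) argument: let $\rho_*$ be the supremum of radii $\rho\le\varepsilon_n$ for which $B^{n-1}_\rho\subset U$ and $|\nabla f|\le\tfrac12$ on $B^{n-1}_\rho$, with $\varepsilon_n$ chosen small in terms of $C_n$ (say $\varepsilon_n\le\min\{1,1/(4C_n)\}$). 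Integrating $|\partial_{ij}f|\le C_n$ along the segments issuing from $0$ gives $|\nabla f|\le C_n\rho_*\le\tfrac14$ and $|f(x')|\le\tfrac{C_n}{2}|x'|^2$ on $B^{n-1}_{\rho_*}$; if $\rho_*<\varepsilon_n$, this puts the graph points approaching $\partial B^{n-1}_{\rho_*}$ strictly inside $B_1$, and, the unit normal of $\Sigma$ there making angle at most $\arctan\tfrac14$ with the $x^n$-axis, $\Sigma$ remains a horizontal graph near those points with the local graph contained in $B_1$; hence $U$ contains a neighbourhood of $\bar B^{n-1}_{\rho_*}$, and by continuity $|\nabla f|<\tfrac12$ persists on a slightly larger ball, contradicting the definition of $\rho_*$. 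Therefore $\rho_*=\varepsilon_n$, and $B^{n-1}_{\varepsilon_n}(0)\subset U$ with $|\nabla f|\le\tfrac12$ and $|f(x')|\le\tfrac{C_n}{2}|x'|^2$ for $|x'|\le\varepsilon_n$.

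With this in hand I would set $\vartheta(n)\coloneqq\min\{\tfrac12,\ \tfrac{\varepsilon_n}{2},\ \tfrac1{2C_n}\}$ and, returning to the original scale, $B_\pm\coloneqq B_{\vartheta(n)r}(0,\pm\vartheta(n)r)$. Then $B_\pm\subset B_{2\vartheta(n)r}\subset B_{\varepsilon_nr}\subset B_r$, and $0\in\partial B_\pm$. To see that $B_\pm$ is disjoint from the graph, note that a graph point with $|x'|>\varepsilon_nr$ has norm $>\varepsilon_nr\ge2\vartheta(n)r$ and so lies outside $B_{2\vartheta(n)r}$; while for $|x'|\le\varepsilon_nr$ the bound $|f(x')|\le\tfrac{C_n}{2r}|x'|^2$ gives $|x'|^2+f(x')^2\mp2\vartheta(n)r\,f(x')\ge(1-C_n\vartheta(n))|x'|^2\ge0$, i.e.\ the graph point is not in $B_{\vartheta(n)r}(0,\pm\vartheta(n)r)$. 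Finally, comparing the lower part $\vartheta(n)r-\sqrt{(\vartheta(n)r)^2-|x'|^2}\ge|x'|^2/(2\vartheta(n)r)$ of $B_+$ with $f(x')\le\tfrac{C_n}{2r}|x'|^2<|x'|^2/(2\vartheta(n)r)$ shows that $B_+$ lies strictly above the graph, and symmetrically $B_-$ lies strictly below it; thus $B_+$ and $B_-$ sit on opposite sides of $\Sigma\cap B_r$ and kiss at $0$, as required.

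I expect the main obstacle to be the bookkeeping in the continuity argument. Because $\Sigma\cap B_r$ is only assumed to be a graph over \emph{some} part of $T_0\Sigma$, one cannot differentiate $f$ twice before first establishing that its domain contains a ball of definite size, and this domain control is entangled with the gradient bound $|\nabla f|\le\tfrac12$, which is in turn exactly what makes $|A|\le1$ usable as a Hessian bound (one also uses here that $\Sigma$ is relatively closed in $B_r$, as is automatic in the applications). Arranging the dimensional constants $\varepsilon_n$ and $\vartheta(n)$ so that the open--closed step closes and the kissing-ball estimates go through is the only delicate point; the remaining computations are routine.
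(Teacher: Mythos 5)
Your proposal is correct and follows essentially the same route as the paper's proof: use $|A|\le 1/r$ together with a smallness assumption on $\nabla f$ to get a Hessian bound, bootstrap this to show the gradient stays small (and the graph persists) on a ball of radius a definite multiple of $r$, deduce the quadratic bound $|f(x')|\lesssim |x'|^2/r$ by Taylor, and place the two kissing balls by elementary geometry. Your treatment of the open--closed step and of the final ball comparison is somewhat more explicit than the paper's, but the underlying argument is the same.
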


\begin{proof}
Note that $f\left(0\right)=0$ and $\nabla f\left(0\right)=0$. Let
$\rho\in\left(0,\frac{r}{2}\right]$ be a radius so that the local
graph function $f$ is defined on $B_{\rho}^{n-1}$ with $\left|\nabla f\right|\leq\varepsilon$,
where $\varepsilon\in\left(0,1\right)$ is a small constant to be
determined. With the local graph parametrization, the metric and the
second fundamental form of $\Sigma$ can be written as
\[
g_{ij}=\delta_{ij}+\partial_{i}f\,\partial_{j}f,\quad A_{ij}=\frac{\partial_{ij}f}{\sqrt{1+\left|\nabla f\right|^{2}}}.
\]
Let 
\[
K=\max_{\Sigma\cap B_{r}}\left|A\right|.
\]
Then it follows from 
\[
\left|A_{ij}v^{i}v^{j}\right|\,\leq\,Kg_{ij}v^{i}v^{j}\quad\forall\,v\in\mathbb{R}^{n-1}
\]
that on $B_{\rho}^{n-1}$ we have for every $v\in\mathbb{R}^{n-1},$
\begin{equation}
\left|\partial_{ij}f\,v^{i}v^{j}\right|\,\leq\,\sqrt{1+\left|\nabla f\right|^{2}}\,K\left(\delta_{ij}+\partial_{i}f\,\partial_{j}f\right)v^{i}v^{j}\,\leq\,2K\left|v\right|^{2}\label{rolling ball condition: Hessian}
\end{equation}
provided $\varepsilon=\varepsilon\left(n\right)\in\left(0,1\right)$
is sufficiently small. In particular, we obtain
\[
\left|\nabla f\right|\,\leq\,\hat{C}\left(n\right)K\rho\quad\textrm{on}\,\,B_{\rho}^{n-1},
\]
where $\hat{C}\left(n\right)\geq1$ is a constant. As a result, we
deduce that the ``optimal'' radius $\rho$ in the above must be
bounded below by
\begin{equation}
\hat{\rho}\coloneqq\frac{\varepsilon\left(n\right)}{2\hat{C}\left(n\right)K}\,\geq\,\frac{\varepsilon\left(n\right)}{2\hat{C}\left(n\right)}r.\label{rolling ball condition: radius}
\end{equation}
In addition, by Taylor's theorem we have 
\[
f\left(x'\right)=\int_{0}^{1}\partial_{ij}f\left(sx'\right)\,x^{i}x^{j}\,\left(1-s\right)ds
\]
for $x'=\left(x^{1},\cdots,x^{n-1}\right)\in B_{\hat{\rho}}^{n-1}$.
It follows from (\ref{rolling ball condition: Hessian}) that 
\begin{equation}
\left|f\left(x'\right)\right|\,\leq\,K\left|x'\right|^{2}\,\leq\,\frac{1}{r}\left|x'\right|^{2}\,\leq\,\frac{\hat{C}\left(n\right)}{\varepsilon\left(n\right)r}\left|x'\right|^{2}\quad\forall\,x'\in B_{\hat{\rho}}^{n-1}.\label{rolling ball coniditon: parabola}
\end{equation}
Set 
\[
\vartheta\left(n\right)=\frac{\varepsilon\left(n\right)}{2\hat{C}\left(n\right)}.
\]
By (\ref{rolling ball condition: radius}) and (\ref{rolling ball coniditon: parabola}),
the two open balls $B_{+}$ and $B_{-}$ are contained in $B_{r}\cap\left\{ \left|x'\right|<\vartheta\left(n\right)r\right\} $
and located on each side of the graph $x^{n}=f\left(x'\right)$. 
\end{proof}
In order to apply Lemma \ref{rolling ball condition} to every point
of $\Sigma_{0}$, we introduce the following definition:
\begin{defn}
\label{injectivity radius}For a closed hypersurface $\Sigma$, let
$\mathfrak{R}$ be the set of every $r\in\left(0,K^{-1}\right]$,
where 
\[
K=\max_{\Sigma}\left|A\right|,
\]
such that that for every $p\in\Sigma$, $\Sigma\cap B_{r}\left(p\right)$
is a graph over (some part of) $T_{p}\Sigma$. Note that $\mathfrak{R}\neq\emptyset$
due to the embeddedness and compactness of $\Sigma$. Then define
\[
\textrm{rad}\,\,\Sigma=\sup\,\mathfrak{R}.
\]
\end{defn}

The following proposition follows from applying Lemma \ref{rolling ball condition}
to every point of $\Sigma_{0}$.
\begin{prop}
\label{noncollapsing}Let $\imath,\kappa$ be positive constants such
that 
\[
\textrm{rad}\,\,\Sigma_{0}\geq\imath,\quad\min_{\Sigma_{0}}H\geq\kappa.
\]
Then there is a constant $\alpha=\alpha\left(n,\imath,\kappa\right)>0$
\footnote{The constant $\alpha$ can be chosen to be any number in $\left(0,\vartheta\left(n\right)\iota\kappa\right)$.}such
that $\Sigma_{0}$ is $\alpha$-noncollaspsing. 
\end{prop}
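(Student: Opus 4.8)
The plan is to apply Lemma \ref{rolling ball condition} at each point of $\Sigma_{0}$ and then shrink the kissing balls it produces down to the radius demanded by the noncollapsing condition. Fix an arbitrary $\alpha\in\left(0,\vartheta\left(n\right)\imath\kappa\right)$; we will show $\Sigma_{0}$ is $\alpha$-noncollapsing. Set $K=\max_{\Sigma_{0}}\left|A\right|$, which is finite and positive, and recall $\imath\leq\textrm{rad}\,\Sigma_{0}\leq K^{-1}$ by Definition \ref{injectivity radius}. Choose a radius $r\in\mathfrak{R}$ with $\frac{\alpha}{\vartheta\left(n\right)\kappa}<r\leq\imath$; such $r$ exists because $\mathfrak{R}$ is downward closed (a graph over part of $T_{p}\Sigma$ restricted to a smaller concentric ball is again such a graph) and $\sup\mathfrak{R}=\textrm{rad}\,\Sigma_{0}\geq\imath>\frac{\alpha}{\vartheta\left(n\right)\kappa}$. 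With this choice, $r\left|A\right|\leq rK\leq\imath K\leq1$ everywhere on $\Sigma_{0}$.

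Next, for each $p\in\Sigma_{0}$ I would apply Lemma \ref{rolling ball condition} to $\Sigma_{0}$ after composing with an ambient isometry that carries $p$ to the origin and $T_{p}\Sigma_{0}$ to $\mathbb{R}^{n-1}\times\left\{ 0\right\} $: since $r\in\mathfrak{R}$, the set $\Sigma_{0}\cap B_{r}\left(p\right)$ is a graph over part of $T_{p}\Sigma_{0}$, and $\max_{\Sigma_{0}\cap B_{r}\left(p\right)}r\left|A\right|\leq1$ by the previous paragraph. The lemma then furnishes two open balls $B_{+}\left(p\right),B_{-}\left(p\right)$ of radius $\vartheta\left(n\right)r$, each contained in $B_{r}\left(p\right)$, each tangent to $T_{p}\Sigma_{0}$ at $p$ (as is visible from the explicit formula $B_{\pm}=B_{\vartheta\left(n\right)r}\left(0,\pm\vartheta\left(n\right)r\right)$ in the lemma), and lying on opposite sides of the graph $\Sigma_{0}\cap B_{r}\left(p\right)$.

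To finish, note that $\Sigma_{0}\cap B_{r}\left(p\right)$ separates $B_{r}\left(p\right)$ into its two connected pieces $B_{r}\left(p\right)\cap\Omega_{0}$ and $B_{r}\left(p\right)\setminus\bar{\Omega}_{0}$, so that one of $B_{\pm}\left(p\right)$ lies entirely in $\Omega_{0}$ and the other entirely in $\mathbb{R}^{n}\setminus\bar{\Omega}_{0}$. Since $\frac{\alpha}{H\left(p\right)}\leq\frac{\alpha}{\kappa}<\vartheta\left(n\right)r$, each $B_{\pm}\left(p\right)$ contains the ball of radius $\frac{\alpha}{H\left(p\right)}$ tangent to $T_{p}\Sigma_{0}$ at $p$ on the corresponding side; this rests on the elementary fact that a ball tangent to a hyperplane at a point contains every smaller ball tangent to that hyperplane at the same point and on the same side, which follows from the triangle inequality applied to the centers. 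These two sub-balls of radius $\frac{\alpha}{H\left(p\right)}$ then sit on opposite sides of $\Sigma_{0}$ and kiss at $p$, which is exactly the $\alpha$-noncollapsing condition at $p$; as $p$ was arbitrary, $\Sigma_{0}$ is $\alpha$-noncollapsing.

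Beyond Lemma \ref{rolling ball condition} the argument has essentially no hard content; the only points requiring care are that the balls supplied by the lemma, which a priori are only known to be tangent to the tangent plane $T_{p}\Sigma_{0}$, genuinely lie inside $\Omega_{0}$ and $\mathbb{R}^{n}\setminus\bar{\Omega}_{0}$ respectively --- handled via their containment in $B_{r}\left(p\right)$ together with the separation property of the local graph --- and the bookkeeping of radii that passes from $\vartheta\left(n\right)r$ down to $\alpha/H\left(p\right)$ and forces the constraint $\alpha<\vartheta\left(n\right)\imath\kappa$ recorded in the footnote.
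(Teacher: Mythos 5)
Your proof is correct and follows exactly the route the paper intends: the paper's own proof of this proposition is the single sentence that it ``follows from applying Lemma \ref{rolling ball condition} to every point of $\Sigma_{0}$,'' and you have supplied precisely the missing details (the uniform choice of $r\in\mathfrak{R}$ with $\alpha/(\vartheta\left(n\right)\kappa)<r\leq\imath$, the identification of the two sides of the local graph with $\Omega_{0}$ and its exterior, and the shrinking of the kissing balls from radius $\vartheta\left(n\right)r$ down to $\alpha/H\left(p\right)$). The only cosmetic imprecision is the assertion that $\Sigma_{0}\cap B_{r}\left(p\right)$ separates $B_{r}\left(p\right)$ into the \emph{two connected} pieces $B_{r}\left(p\right)\cap\Omega_{0}$ and $B_{r}\left(p\right)\setminus\bar{\Omega}_{0}$ (these need not be connected); what your argument actually uses --- that each of $B_{\pm}\left(p\right)$ is connected and disjoint from $\Sigma_{0}$, hence lies in a single component of $\mathbb{R}^{n}\setminus\Sigma_{0}$, and that points just above and just below $p$ along the normal lie in different components by Jordan--Brouwer --- is all that is needed and is correct.
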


According to \cite{HK}, the $\alpha$-noncollaspsing property of
$\Sigma_{0}$ can be preserved by the mean-convex LSF in the sense
that near every regular point $p\in\Omega_{0}$, there exist two open
balls of radius 
\[
\frac{\alpha}{H\left(p\right)}=\alpha\left|\nabla u\left(p\right)\right|
\]
kissing at $p$, one of which is inside $\Omega_{u\left(p\right)}$
and the other is outside $\bar{\Omega}_{u\left(p\right)}$.

\subsubsection{Entropy\label{Entropy}}

The area ratios of $\Sigma_{0}$ can be estimated as follows.
\begin{lem}
\label{area ratio}Let $K,\mathfrak{A},\mathfrak{D}$ be positive
constants such that
\[
\max_{\Sigma_{0}}H\leq K,\quad\mathcal{H}^{n-1}\left(\Sigma_{0}\right)\leq\mathfrak{A},\quad\textrm{diam}\,\Sigma_{0}\leq\mathfrak{D}.
\]
Then for every $p\in\mathbb{R}^{n}$ and $r>0$ we have
\[
\frac{\mathcal{H}^{n-1}\left(\Sigma_{0}\cap B_{r}\left(p\right)\right)}{r^{n-1}}\,\leq\,2^{n-1}e^{K\left(\mathfrak{D}+1\right)}\mathfrak{A}.
\]
\end{lem}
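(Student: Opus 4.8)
The plan is to use Huisken's monotonicity formula, exploiting the fact that $\{\mathcal{H}^{n-1}\lfloor\Sigma_t\}$ is a (smooth, up to the first singular time) mean curvature flow starting from $\Sigma_0$. First I would fix $p\in\mathbb{R}^n$ and $r>0$ and, without loss of generality, reduce to the case $r\le\mathfrak{D}+1$ (if $r>\mathfrak{D}+1$ then, since $\Sigma_0$ has diameter at most $\mathfrak{D}$, one has $\Sigma_0\cap B_r(p)\subset\Sigma_0$, so the area ratio is bounded by $\mathcal{H}^{n-1}(\Sigma_0)/r^{n-1}\le\mathfrak{A}/(\mathfrak{D}+1)^{n-1}$, which is dominated by the claimed bound). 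Next I would run the MCF for a short time: more precisely, consider the backward heat kernel $\Phi_{(p,t_0)}(x,t)=(4\pi(t_0-t))^{-(n-1)/2}\exp\!\bigl(-|x-p|^2/(4(t_0-t))\bigr)$ centered at the spacetime point $(p,t_0)$, and evaluate Huisken's monotonicity at the initial time against this kernel. The natural choice is $t_0=r^2$, so that at time $0$ the Gaussian weight at scale $\sqrt{t_0}=r$ is, up to dimensional constants, comparable to $1$ on $B_r(p)$.

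The key steps, in order, are: (i) write down Huisken's monotonicity formula on $[0,t_0)$ for the smooth MCF $\{\Sigma_t\}$, which gives that $t\mapsto\int_{\Sigma_t}\Phi_{(p,t_0)}(x,t)\,d\mathcal{H}^{n-1}(x)$ is nonincreasing; (ii) as $t\nearrow t_0$ the limit is the Gaussian density of the flow at $(p,t_0)$, which for a smooth flow is either $0$ (if $(p,t_0)$ is not reached) or at least $1$ by the lower density bound / the fact that a nontrivial tangent flow has Gaussian density $\ge1$; hence $\int_{\Sigma_0}\Phi_{(p,t_0)}(x,0)\,d\mathcal{H}^{n-1}(x)\ge$ (something), but actually I want the inequality in the other direction, so the real input is step (iii): bound the area ratio at time $0$ by the Gaussian integral at time $0$. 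Indeed, on $B_r(p)$ one has $\Phi_{(p,r^2)}(x,0)=(4\pi r^2)^{-(n-1)/2}e^{-|x-p|^2/(4r^2)}\ge(4\pi r^2)^{-(n-1)/2}e^{-1/4}$, so
\[
\frac{\mathcal{H}^{n-1}(\Sigma_0\cap B_r(p))}{r^{n-1}}\,\le\,(4\pi)^{(n-1)/2}e^{1/4}\int_{\Sigma_0}\Phi_{(p,r^2)}(x,0)\,d\mathcal{H}^{n-1}(x).
\]
Then (iv) I would control the right-hand side by comparing with the Gaussian integral at an earlier ``time,'' i.e. use monotonicity in a scale parameter, or more directly bound $\int_{\Sigma_0}\Phi_{(p,r^2)}\le e^{C}\int_{\Sigma_0}\Phi_{(p,\rho^2)}$ for $\rho\ge r$ and then send $\rho^2$ large enough that the Gaussian is essentially constant over $\Sigma_0$ (using $\mathrm{diam}\,\Sigma_0\le\mathfrak{D}$), giving a bound in terms of $\mathcal{H}^{n-1}(\Sigma_0)\le\mathfrak{A}$; the mean curvature bound $\max_{\Sigma_0}H\le K$ enters because the monotonicity formula has an error term $\int|\vec H-(x-p)^\perp/(2(t_0-t))|^2\Phi$ on a smooth flow — wait, that term has a good sign, so instead the exponential factor $e^{K(\mathfrak{D}+1)}$ should come from estimating how much the Gaussian weight can grow as we change the center-time, combined with a crude bound $\int_{\Sigma_0}\Phi_{(p,\rho^2)}\le(4\pi\rho^2)^{-(n-1)/2}\mathcal{H}^{n-1}(\Sigma_0)$ together with a lower bound $\int_{\Sigma_0}\Phi_{(p,\rho^2)}\ge\frac12(4\pi\rho^2)^{-(n-1)/2}\mathcal{H}^{n-1}(\Sigma_0)$ once $\rho^2\gtrsim\mathfrak{D}^2$. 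Tracking the constants carefully, choosing $\rho^2=r^2+(\mathfrak{D}+1)^2$ or similar, and using $1+x\le e^x$ should produce exactly the factor $2^{n-1}e^{K(\mathfrak{D}+1)}$.

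The main obstacle I anticipate is matching the precise constant $2^{n-1}e^{K(\mathfrak{D}+1)}$: the $2^{n-1}$ suggests a dyadic/doubling argument on Gaussian integrals at scales $r$ and $2r$ rather than the crude approach above, and the exponential $e^{K(\mathfrak{D}+1)}$ strongly suggests that the clean route is not Huisken monotonicity at all but rather the first variation / divergence-theorem estimate: on $\Sigma_0$ one has the monotonicity of $r\mapsto e^{Kr}r^{-(n-1)}\mathcal{H}^{n-1}(\Sigma_0\cap B_r(p))$ (the standard consequence of $|\vec H|\le K$ via the first variation formula, as in the monotonicity formula for hypersurfaces with bounded mean curvature), so that this quantity is nondecreasing in $r$; letting $r$ increase up to $r=\mathfrak{D}$ (beyond which $\Sigma_0\cap B_r(p)$ is all of $\Sigma_0$ or empty) and comparing the value at $r$ with the value at $2r$ or at $\mathfrak{D}+1$ gives
\[
\frac{\mathcal{H}^{n-1}(\Sigma_0\cap B_r(p))}{r^{n-1}}\le e^{K(\mathfrak{D}+1-r)}\frac{\mathcal{H}^{n-1}(\Sigma_0\cap B_{\mathfrak{D}+1}(p))}{(\mathfrak{D}+1)^{n-1}}\cdot\Bigl(\tfrac{\mathfrak{D}+1}{\mathfrak{D}+1}\Bigr)^{?},
\]
and then bounding $\mathcal{H}^{n-1}(\Sigma_0\cap B_{\mathfrak{D}+1}(p))\le\mathcal{H}^{n-1}(\Sigma_0)\le\mathfrak{A}$ and absorbing $(\mathfrak{D}+1)^{-(n-1)}$ versus the worst case $r$ small into the factor $2^{n-1}$ requires one to also handle the regime $r\ge\mathfrak{D}+1$ separately (done above) and to be a little careful when $r$ is comparable to $\mathfrak{D}+1$; the factor $2^{n-1}$ then most naturally arises by splitting at $r\lessgtr(\mathfrak{D}+1)/2$ or by the inequality $(\mathfrak{D}+1)^{n-1}\le 2^{n-1}\max(r,\mathfrak{D}+1-r)^{n-1}$ type bookkeeping. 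I would write up the first-variation monotonicity version, since it cleanly produces the stated $e^{K(\cdot)}$ factor and is elementary.
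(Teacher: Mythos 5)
Your final plan --- the first-variation/Allard monotonicity of $r\mapsto e^{Kr}r^{-\left(n-1\right)}\mathcal{H}^{n-1}\left(\Sigma_{0}\cap B_{r}\left(p\right)\right)$, compared against the radius $\mathfrak{D}+1$ at which the ball contains all of $\Sigma_{0}$, with the large-$r$ regime handled trivially --- is exactly the paper's proof, and you are right to abandon the Huisken-monotonicity detour of your opening paragraphs. The one detail you leave vague is the source of the $2^{n-1}$: in the paper it arises from re-centering at a point $q\in\Sigma_{0}\cap B_{r}\left(p\right)$ and doubling the radius (so the monotonicity is invoked at a point of $\Sigma_{0}$) when $p\notin\Sigma_{0}$, together with the trivial bound when $r>\frac{1}{2}\left(\mathfrak{D}+1\right)$, rather than from a split at $r\lessgtr\frac{1}{2}\left(\mathfrak{D}+1\right)$ alone.
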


\begin{proof}
Set $R=\mathfrak{D}+1$. 

\noindent\uline{Case 1}: For $p\in\Sigma_{0}$ and $r\in\left(0,R\right]$,
by the monotonicity formula (cf. \cite{Al}) we have 
\begin{equation}
\frac{\mathcal{H}^{n-1}\left(\Sigma_{0}\cap B_{r}\left(p\right)\right)}{r^{n-1}}\,\leq\,e^{K\left(R-r\right)}\frac{\mathcal{H}^{n-1}\left(\Sigma_{0}\cap B_{R}\left(p\right)\right)}{R^{n-1}}\,\leq\,e^{KR}\,\frac{\mathcal{H}^{n-1}\left(\Sigma_{0}\right)}{R^{n-1}}.\label{area ratio: monotonicity formula}
\end{equation}
\uline{Case 2}: For $p\in\Sigma_{0}$ and $r>R$, we have 
\[
\frac{\mathcal{H}^{n-1}\left(\Sigma_{0}\cap B_{r}\left(p\right)\right)}{r^{n-1}}\,\leq\,\frac{\mathcal{H}^{n-1}\left(\Sigma_{0}\right)}{R^{n-1}}.
\]
\uline{Case 3}: For $p\notin\Sigma_{0}$ and $r\in\left(0,\frac{1}{2}R\right]$,
either $\Sigma_{0}\cap B_{r}\left(p\right)=\emptyset$, in which case
we have $\mathcal{H}^{n-1}\left(\Sigma_{0}\cap B_{r}\left(p\right)\right)=0$,
or we can find $q\in\Sigma_{0}\cap B_{r}\left(p\right)$ so that (\ref{area ratio: monotonicity formula})
yields
\[
\frac{\mathcal{H}^{n-1}\left(\Sigma_{0}\cap B_{r}\left(p\right)\right)}{r^{n-1}}\,\leq\,\frac{\mathcal{H}^{n-1}\left(\Sigma_{0}\cap B_{2r}\left(q\right)\right)}{r^{n-1}}\,\leq\,2^{n-1}e^{KR}\,\frac{\mathcal{H}^{n-1}\left(\Sigma_{0}\right)}{R^{n-1}}.
\]
\uline{Case 4}: For $p\notin\Sigma_{0}$ and $r>\frac{1}{2}R$,
we have 
\[
\frac{\mathcal{H}^{n-1}\left(\Sigma_{0}\cap B_{r}\left(p\right)\right)}{r^{n-1}}\leq\frac{\mathcal{H}^{n-1}\left(\Sigma_{0}\right)}{\left(\frac{1}{2}R\right)^{n-1}}=2^{n-1}\frac{\mathcal{H}^{n-1}\left(\Sigma_{0}\right)}{R^{n-1}}.
\]

Therefore, for every $p\in\mathbb{R}^{n}$ and $r>0$ we have 
\[
\frac{\mathcal{H}^{n-1}\left(\Sigma_{0}\cap B_{r}\left(p\right)\right)}{r^{n-1}}\leq2^{n-1}e^{KR}\frac{\mathcal{H}^{n-1}\left(\Sigma_{0}\right)}{R^{n-1}}\leq2^{n-1}e^{K\left(\mathfrak{D}+1\right)}\mathfrak{A}.
\]
\end{proof}
Recall that the entropy of $\Sigma_{0}$ is defined as (cf. \cite{CM1})
\[
E\left[\Sigma_{0}\right]=\sup_{p\in\mathbb{R}^{n},\,r>0}F\left(\frac{1}{r}\left(\Sigma_{0}-p\right)\right),
\]
where $F$ is the Gaussian area, namely,
\[
F\left(\Sigma\right)\coloneqq\int_{\Sigma}\frac{1}{\left(4\pi\right)^{\frac{n-1}{2}}}\,e^{-\frac{\left|x\right|^{2}}{4}}\,d\mathcal{H}^{n-1}\left(x\right).
\]
As the area ratios of $\Sigma_{0}$ are uniformly bounded, $\Sigma_{0}$
has finite entropy (cf. \cite{E}); moreover, by a simple calculation
we have
\begin{equation}
E\left[\Sigma_{0}\right]\,\leq\,C\left(n\right)\sup_{p\in\mathbb{R}^{n},\,r>0}\frac{\mathcal{H}^{n-1}\left(\Sigma_{0}\cap B_{r}\left(p\right)\right)}{r^{n-1}}.\label{entropy estimate}
\end{equation}
Combining (\ref{entropy estimate}) with Lemma \ref{area ratio} give
the following proposition.
\begin{prop}
\label{entropy}Let $K,\mathfrak{A},\mathfrak{D}$ be positive constants
such that 
\[
\sup_{\Sigma_{0}}H\leq K,\quad\mathcal{H}^{n-1}\left(\Sigma_{0}\right)\leq\mathfrak{A},\quad\textrm{diam}\,\Sigma_{0}\leq\mathfrak{D}.
\]
Then there exists a constant $\lambda=\lambda\left(n,K,\mathfrak{A},\mathfrak{D}\right)>0$
so that $E\left[\Sigma_{0}\right]\leq\lambda$. 
\end{prop}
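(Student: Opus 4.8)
The plan is to obtain Proposition~\ref{entropy} as an immediate consequence of Lemma~\ref{area ratio} and the bound (\ref{entropy estimate}); all the substance sits in those two ingredients, so the proof itself is just a combination.

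First I would record why (\ref{entropy estimate}) holds, as this is the only computational point. Fix $p\in\mathbb{R}^{n}$, $r>0$, and set $\Sigma\coloneqq\tfrac{1}{r}(\Sigma_{0}-p)$; then the Euclidean density ratios of $\Sigma$ at every point and every scale are bounded by $\Theta\coloneqq\sup_{q,\rho}\mathcal{H}^{n-1}(\Sigma_{0}\cap B_{\rho}(q))/\rho^{n-1}$, since that quantity is invariant under translation and dilation. Split $\mathbb{R}^{n}$ into $B_{1}$ and the dyadic annuli $A_{j}\coloneqq B_{2^{j}}\setminus B_{2^{j-1}}$ for $j\geq1$: on $A_{j}$ the Gaussian weight $(4\pi)^{-(n-1)/2}e^{-|x|^{2}/4}$ does not exceed $(4\pi)^{-(n-1)/2}e^{-4^{j-1}}$, while $\mathcal{H}^{n-1}(\Sigma\cap A_{j})\leq\mathcal{H}^{n-1}(\Sigma\cap B_{2^{j}})\leq 2^{(n-1)j}\,\Theta$. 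Summing the convergent series $\sum_{j\geq1}2^{(n-1)j}e^{-4^{j-1}}$ and adding the contribution of $B_{1}$ (bounded by $(4\pi)^{-(n-1)/2}\Theta$) yields $F(\Sigma)\leq C(n)\,\Theta$; taking the supremum over $p$ and $r$ gives (\ref{entropy estimate}).

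Next I would apply Lemma~\ref{area ratio} with the hypotheses $\sup_{\Sigma_{0}}H\leq K$, $\mathcal{H}^{n-1}(\Sigma_{0})\leq\mathfrak{A}$, and $\mathrm{diam}\,\Sigma_{0}\leq\mathfrak{D}$, which gives $\Theta\leq 2^{n-1}e^{K(\mathfrak{D}+1)}\mathfrak{A}$. Combining this with (\ref{entropy estimate}),
\[
E[\Sigma_{0}]\;\leq\;C(n)\,2^{n-1}e^{K(\mathfrak{D}+1)}\,\mathfrak{A}\;\eqqcolon\;\lambda(n,K,\mathfrak{A},\mathfrak{D}),
\]
so $\lambda$ is a positive constant depending only on the stated data, as required.

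There is no genuine obstacle here: the result is a bookkeeping corollary of the two preceding statements. The only place meriting attention is making the dyadic estimate for (\ref{entropy estimate}) explicit enough that the resulting constant is truly dimensional (hence independent of $\Sigma_{0}$), and that is routine.
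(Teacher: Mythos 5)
Your proposal is correct and follows the same route as the paper: Proposition \ref{entropy} is obtained by combining Lemma \ref{area ratio} with the bound (\ref{entropy estimate}), and your dyadic-annulus computation is exactly the "simple calculation" the paper alludes to for (\ref{entropy estimate}). (Only a cosmetic slip: on $A_{j}$ one has $e^{-\left|x\right|^{2}/4}\leq e^{-4^{j-2}}$ rather than $e^{-4^{j-1}}$, which does not affect convergence of the series.)
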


Since $\left\{ \mathcal{H}^{n-1}\lfloor\Sigma_{t}\right\} _{t\geq0}$
is a Brakke flow (cf. \cite{I1}, \cite{W1}), it follows from Huisken's
monotonicity formula (cf. \cite{H2}, \cite{I2}) that the entropy
is nonincreasing along $\left\{ \Sigma_{t}\right\} $; in particular,
\[
E\left[\Sigma_{t}\right]\leq\lambda\quad\forall\,t\geq0.
\]

\subsection{Classical theory of MCF\label{classical theory of MCF} }

In this subsection we shall get smooth estimates for $\left\{ \Sigma_{t}\right\} $
when $t$ is close to $0$, which corresponds to the estimates at
points near $\partial\Omega_{0}=\Sigma_{0}$. As the LSF $\left\{ \Sigma_{t}\right\} $
is a MCF for $t\in\left[0,T_{1}\right)$, where $T_{1}$ is the first
singular time, such estimates are primarily based on the classical
theory of MCF. The main results are Proposition \ref{time of smooth existence},
Proposition \ref{moving distance}, (\ref{short time curvature estimate}),
and (\ref{short time covariant derivatives of curvature estimate}).

Recall that one of the ways to estimate the MCF near the initial time
is to describe the flow as a evolving graph over the initial hypersurface
and then study the graph function via the PDE it satisfies. Specifically,
when $t$ is sufficiently close to $0$, $\Sigma_{t}$ can parametrized
as a normal graph of $v_{t}=v\left(\cdot,t\right)$ over $\Sigma_{0}$,
namely, 
\begin{equation}
x_{t}=x\left(\cdot,t\right)=x_{0}+v_{t}\,N_{0},\label{normal graph parametrization}
\end{equation}
where $x_{0}$ and $N_{0}$ are the position vector and the inward
unit normal vector of $\Sigma_{0}$, respectively. Thus, $\left\{ \Sigma_{t}\right\} $
being a MCF starting from $\Sigma_{0}$ at time $0$ corresponds to
that the function $v$ satisfying a certain quasilinear parabolic
PDE with $v\left(\cdot,0\right)=0$. More precisely, under the condition
that 
\begin{equation}
\left|\nabla v\right|\,+\,\frac{\left|v\right|}{\textrm{rad}\,\,\Sigma_{0}}\,\leq\,\varepsilon\left(n\right),\label{small C1 norm}
\end{equation}
where $\nabla$ is the Levi-Civita connection on $\Sigma_{0}$, the
norm $\left|\cdot\right|$ is defined by the induced metric $g_{0}$
on $\Sigma_{0}$, and $\textrm{rad}\,\,\Sigma_{0}$ is defined in
Definition \ref{injectivity radius}, the PDE is given by
\begin{equation}
\partial_{t}v\,=\,H_{0}\,+\,a^{ij}\nabla_{ij}v\,+\,\left|A_{0}\right|^{2}v\,+\,b,\label{quasilinear parabolic equation}
\end{equation}
with 
\[
a^{ij}\,=\,g_{0}^{ij}\,+\,2A_{0}^{ij}v\,+\,Q^{ij}\left(\nabla v,A_{0}v\right),
\]
\[
b\,=\,v\,\nabla A_{0}*\nabla v\,+\,A*Q\left(\nabla v,A_{0}v\right)\,+\,v\,\nabla A_{0}*Q\left(\nabla v,A_{0}v\right)
\]
(see Lemma 3.14 in \cite{G1}), where
\begin{enumerate}
\item $H_{0}$ is the mean curvature of $\Sigma_{0}$;
\item $\nabla_{ij}v$ is the Hessian of $v$ on $\Sigma_{0}$;
\item the notation $Q$ means some kind of analytic function/tensor that
is at least ``quadratic'' (in the form of contraction via the metric
$g_{0}$) in its arguments;
\item the notation $\ast$ means some kind of contraction of tensors. 
\end{enumerate}
The short time existence of MCF, the uniqueness of MCF, the stability
of MCF, etc., all follow from studying (\ref{quasilinear parabolic equation})
using the classical theory of parabolic PDE (cf. \cite{HP}). The
following proposition is one of the direct results. 
\begin{prop}
\label{time of smooth existence}Let $n,\iota,K,K',K''$ be positive
constants such that 
\begin{equation}
\textrm{rad}\,\,\Sigma_{0}\geq\iota,\quad\max_{\Sigma_{0}}\left|A\right|\leq K,\quad\max_{\Sigma_{0}}\left|\nabla A\right|\leq K',\quad\max_{\Sigma_{0}}\left|\nabla^{2}A\right|\leq K''.\label{time of smooth existence: constatns}
\end{equation}
Then there exists $\dot{T}=\dot{T}\left(n,\iota,K,K',K''\right)>0$
so that $\left\{ \Sigma_{t}\right\} _{0\leq t\leq2\dot{T}}$ is a
MCF;\footnote{In particular, $T_{1}>2\dot{T}$.} moreover, for every
$t\in\left[0,2\dot{T}\right]$, the hypersurface $\Sigma_{t}$ stays
in a tubular neighborhood of $\Sigma_{0}$ and is a normal graph of
$v\left(\cdot,t\right)$ over $\Sigma_{0}$ with $v\in C^{\infty}\left(\Sigma_{0}\times\left[0,2\dot{T}\right]\right)$
satisfying (\ref{small C1 norm}), (\ref{quasilinear parabolic equation}),
and 
\begin{equation}
\left|\nabla^{2}v\right|\,\leq\,C\left(n,\iota,K,K',K''\right).\label{time of smooth existence: Hessian}
\end{equation}

Furthermore, if $\left\{ \Sigma_{0}^{k}\right\} _{k\in\mathbb{N}}$
is a sequence of closed connected hypersurfaces tending in the $C^{4}$
topology to $\Sigma_{0}$ as $k\rightarrow\infty$, then for every
sufficiently large $k$ the following hold:
\begin{enumerate}
\item The LSF $\left\{ \Sigma_{t}^{k}\right\} $ starting from $\Sigma_{0}^{k}$
at time $0$ is a MCF during $t\in\left[0,2\dot{T}\right]$; in addition,
the flow $\left\{ \Sigma_{t}^{k}\right\} _{0\leq t\leq2\dot{T}}$
converges in the $C^{2}$ topology to $\left\{ \Sigma_{t}\right\} _{0\leq t\leq2\dot{T}}$
as $k\rightarrow\infty$. 
\item The initial hypersurface $\Sigma_{0}^{k}$ is mean-convex;\footnote{When $\Sigma_{0}$ is two-convex, $\Sigma_{0}^{k}$ would be two-convex
when $k$ is large (see Proposition \ref{two-convexity of approximations}).} hence the associated arrival time function $u^{k}:\Omega_{0}^{k}\rightarrow\left[0,\infty\right)$
of $\left\{ \Sigma_{t}^{k}\right\} $ is defined, where $\Omega_{0}^{k}$
is the open connected region bounded by $\Sigma_{0}^{k}$.
\end{enumerate}
\end{prop}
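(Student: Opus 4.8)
The plan is to study the evolving normal graph function $v$ through the quasilinear parabolic equation (\ref{quasilinear parabolic equation}) on $\Sigma_0$ with zero initial data, using only the $C^4$ bounds in (\ref{time of smooth existence: constatns}) on $\Sigma_0$ as a priori control. First I would show short-time existence with quantitative estimates: choosing $\varepsilon(n)$ as in (\ref{small C1 norm}) and setting up a fixed-point / continuity argument for (\ref{quasilinear parabolic equation}) on $\Sigma_0\times[0,2\dot T]$, I would use parabolic Schauder/$L^p$ estimates (cf.\ \cite{HP}) together with the bounds $\mathrm{rad}\,\Sigma_0\ge\iota$, $|A_0|\le K$, $|\nabla A_0|\le K'$, $|\nabla^2 A_0|\le K''$ to produce a time $\dot T=\dot T(n,\iota,K,K',K'')>0$ and a smooth solution $v$ with $\|v\|_{C^0}+\|\nabla v\|_{C^0}\le\varepsilon(n)$ and the Hessian bound (\ref{time of smooth existence: Hessian}). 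The point that the smooth solution of (\ref{quasilinear parabolic equation}) actually \emph{is} the level set flow for $t\in[0,2\dot T]$ follows because a classical MCF is unique and agrees with the LSF before the first singular time (cf.\ \cite{ES}), which also gives $T_1>2\dot T$; the graph radius condition (\ref{small C1 norm}) guarantees $\Sigma_t$ stays in a fixed tubular neighborhood of $\Sigma_0$.

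For the second part, given $\Sigma_0^k\stackrel{C^4}{\to}\Sigma_0$, I would first note that all the quantities in (\ref{time of smooth existence: constatns}) are $C^4$-continuous functionals of the hypersurface: $\mathrm{rad}$ is lower semicontinuous (it depends on the embedding and on $\max|A|$, both of which pass to the limit), and $\max|A|,\max|\nabla A|,\max|\nabla^2 A|$ converge. Hence for $k$ large the constants $(\iota,K,K',K'')$ can be chosen \emph{uniformly} (say, $\iota/2$, $2K$, etc.), so the first part applies with a \emph{single} $\dot T$ working for all large $k$ simultaneously: each $\{\Sigma_t^k\}_{0\le t\le2\dot T}$ is a MCF, graphical over $\Sigma_0^k$ with graph function $v^k$ solving (\ref{quasilinear parabolic equation}) (now with coefficients built from $g_0^k,A_0^k$) and obeying the same bounds (\ref{small C1 norm})--(\ref{time of smooth existence: Hessian}). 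For the convergence $\{\Sigma_t^k\}\to\{\Sigma_t\}$ in $C^2$: after fixing diffeomorphisms $\Phi_k:\Sigma_0\to\Sigma_0^k$ converging to the identity in $C^4$, I would pull everything back to $\Sigma_0$ and compare $\tilde v^k:=v^k\circ\Phi_k$ with $v$. Both solve parabolic equations of the form (\ref{quasilinear parabolic equation}) whose coefficients differ by quantities that tend to $0$ in $C^2$ (because $g_0^k\to g_0$, $A_0^k\to A_0$, $H_0^k\to H_0$, $\nabla A_0^k\to\nabla A_0$ in the relevant topologies), with initial data $0=0$; subtracting and applying the linear parabolic estimates to the difference $\tilde v^k-v$, driven by the small coefficient errors and the uniform $C^{2}$ bounds already in hand, yields $\tilde v^k\to v$ in, say, $C^{2,1}(\Sigma_0\times[0,2\dot T])$, which is the stated $C^2$ convergence of the flows. (Alternatively one can invoke the uniqueness/stability theory for viscosity solutions of (\ref{level set flow}) as in \cite{ES} together with the uniform curvature bounds to upgrade $C^0$ to $C^2$ convergence; either route works.)

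Finally, the two enumerated consequences: mean-convexity of $\Sigma_0^k$ is immediate from $H_0^k\to H_0$ in $C^0$ and $\min_{\Sigma_0}H_0>0$, so $\min_{\Sigma_0^k}H_0^k>0$ for $k$ large; the two-convex case is deferred to Proposition \ref{two-convexity of approximations}. Once $\Sigma_0^k$ is mean-convex and closed connected, the general theory recalled in the introduction gives a mean-convex LSF $\{\Sigma_t^k\}$ staying in $\Omega_0^k$ and sweeping it out, hence a well-defined Lipschitz arrival time function $u^k:\bar\Omega_0^k\to[0,\infty)$ solving the Dirichlet problem (\ref{level set flow}); nothing more is needed for this item.

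\emph{Main obstacle.} The genuinely delicate point is not the PDE estimates but getting \emph{uniform} control of the structural constants — chiefly the uniform lower bound on $\mathrm{rad}\,\Sigma_0^k$ and the fact that a single tubular neighborhood and a single $\dot T$ work for all large $k$ — together with organizing the comparison of $v^k$ and $v$ on different manifolds $\Sigma_0^k,\Sigma_0$ cleanly (the choice of approximating diffeomorphisms $\Phi_k$ and checking that all coefficient errors in (\ref{quasilinear parabolic equation}) are $o(1)$ in $C^2$). The rest is a routine application of the classical parabolic machinery underlying short-time existence, uniqueness, and stability of MCF.
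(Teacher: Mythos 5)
Your proposal is correct and follows exactly the route the paper intends: the paper gives no explicit proof of this proposition, asserting it as a "direct result" of applying the classical parabolic theory (cf.\ \cite{HP}) to the graphical equation (\ref{quasilinear parabolic equation}), which is precisely the argument you flesh out. Your identification of the real work — the uniform lower bound on $\textrm{rad}\,\Sigma_{0}^{k}$ and the other structural constants under $C^{4}$ convergence, so that a single $\dot{T}$ and a single tubular neighborhood serve all large $k$, plus the coefficient-comparison after pulling back by diffeomorphisms — matches the paper's implicit setup (see the standing convention at the start of Section \ref{smooth convergence off singularities}).
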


By virtue of the smooth compactness of MCF, the sense of convergence
in Proposition \ref{time of smooth existence} can be improved away
from the initial time as follows.
\begin{cor}
\label{smooth convergence near initial time}In Proposition \ref{time of smooth existence},
for any given $\tau\in\left(0,\dot{T}\right)$, the MCF $\left\{ \Sigma_{t}^{k}\right\} _{t\in\left[\tau,2\dot{T}\right]}$
would converge smoothly to $\left\{ \Sigma_{t}\right\} _{t\in\left[\tau,2\dot{T}\right]}$
as $k\rightarrow\infty$. 
\end{cor}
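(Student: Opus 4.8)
The plan is to upgrade the $C^{2}$ convergence furnished by Proposition \ref{time of smooth existence} to smooth convergence on the parabolic slab $\left[\tau,2\dot{T}\right]$ by producing, for every $m$, a uniform-in-$k$ bound on the $m$-th derivatives of the flows $\left\{ \Sigma_{t}^{k}\right\} $ away from the initial time, and then concluding by compactness. First I would record a uniform curvature bound: since $\Sigma_{0}^{k}\stackrel{C^{4}}{\rightarrow}\Sigma_{0}$, for $k$ large the quantities $\max_{\Sigma_{0}^{k}}\left|A\right|$ and $\max_{\Sigma_{0}^{k}}\left|\nabla A\right|$ are uniformly bounded and $\mathrm{rad}\,\Sigma_{0}^{k}$ is bounded below; because $\Sigma_{t}^{k}$ is a normal graph of $v^{k}\left(\cdot,t\right)$ over $\Sigma_{0}^{k}$ with $\left\Vert v^{k}\right\Vert _{C^{2}\left(\Sigma_{0}^{k}\times\left[0,2\dot{T}\right]\right)}$ bounded uniformly (by (\ref{small C1 norm}) and (\ref{time of smooth existence: Hessian})), and the second fundamental form of such a graph is a universal smooth expression in $v^{k},\nabla v^{k},\nabla^{2}v^{k}$ and the metric and second fundamental form of $\Sigma_{0}^{k}$, we obtain $\sup_{0\le t\le2\dot{T}}\max_{\Sigma_{t}^{k}}\left|A\right|\le C_{0}$ for all large $k$, with $C_{0}$ independent of $k$.

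Fix $\tau\in\left(0,\dot{T}\right)$. Applying the interior estimates for mean curvature flow (Ecker--Huisken; or the Brakke--White local regularity theorem combined with the uniform entropy bound of Proposition \ref{entropy}) to the MCFs $\left\{ \Sigma_{t}^{k}\right\} _{t\in\left[\tau/2,2\dot{T}\right]}$, the uniform curvature bound $\left|A\right|\le C_{0}$ propagates forward in time to uniform bounds
\[
\sup_{\tau\le t\le2\dot{T}}\ \max_{\Sigma_{t}^{k}}\left|\nabla^{m}A\right|\ \le\ C_{m}\qquad\left(m\ge1\right),
\]
with each $C_{m}$ depending only on $m,n,\tau,C_{0}$; the margin $\tau/2$ is exactly what lets the parabolic interior estimates reach $t=\tau$. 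Equivalently, and more hands-on, one may rewrite each $\Sigma_{t}^{k}$ with $t\in\left[\tau/2,2\dot{T}\right]$ as a normal graph $\tilde{v}^{k}\left(\cdot,t\right)$ over the \emph{fixed} smooth hypersurface $\Sigma_{0}$ -- legitimate for $k$ large because $\Sigma_{t}^{k}\rightarrow\Sigma_{t}$ in $C^{2}$ and $\Sigma_{t}$ is already a graph over $\Sigma_{0}$ with small gradient for all $t\in\left[0,2\dot{T}\right]$ -- and observe that $\tilde{v}^{k}$ solves the quasilinear parabolic equation (\ref{quasilinear parabolic equation}), whose structural coefficients now involve only the smooth, $k$-independent geometry of $\Sigma_{0}$. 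A standard parabolic interior Schauder bootstrap then yields uniform bounds on $\left\Vert \tilde{v}^{k}\right\Vert _{C^{m}\left(\Sigma_{0}\times\left[\tau,2\dot{T}\right]\right)}$ for every $m$.

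With uniform bounds on all derivatives in hand, the Arzel\`a--Ascoli theorem (applied to $\tilde{v}^{k}$, or to the local graph representations of $\Sigma_{t}^{k}$) shows that every subsequence of $\left\{ \Sigma_{t}^{k}\right\} _{t\in\left[\tau,2\dot{T}\right]}$ admits a further subsequence converging in $C^{\infty}$ on $\Sigma_{0}\times\left[\tau,2\dot{T}\right]$ to a smooth mean curvature flow; since we already know $\Sigma_{t}^{k}\rightarrow\Sigma_{t}$ in $C^{2}$, this limit is necessarily $\left\{ \Sigma_{t}\right\} _{t\in\left[\tau,2\dot{T}\right]}$, and as it is independent of the chosen subsequence the whole sequence converges smoothly. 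Since $\tau\in\left(0,\dot{T}\right)$ was arbitrary, this is the assertion. The main obstacle is the step of extracting uniform higher-order bounds from a mere curvature bound: in the geometric formulation this is the interior/local regularity theory for mean curvature flow, so the work lies in checking its hypotheses (the $\Sigma_{t}^{k}$ are compact, embedded, and uniformly controlled in curvature, diameter, and entropy); in the PDE formulation the subtlety is that the bootstrap must be run over the fixed smooth $\Sigma_{0}$ rather than over $\Sigma_{0}^{k}$, since $\left\Vert \Sigma_{0}^{k}\right\Vert _{C^{m}}$ is not controlled for $m\ge5$.
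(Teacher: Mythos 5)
Your argument is correct and follows essentially the same route as the paper: a uniform curvature (and area/diameter) bound on $\left[0,2\dot{T}\right]$ coming from Proposition \ref{time of smooth existence}, interior smoothing estimates to get uniform higher-order bounds on $\left[\tau,2\dot{T}\right]$, and a compactness-plus-uniqueness-of-limit (subsequence) argument identifying the limit via the already known $C^{2}$ convergence. The only difference is presentational: the paper invokes the compactness theorem for MCF of \cite{H2} as a black box, whereas you unpack it into the Ecker--Huisken interior estimates (or a Schauder bootstrap over the fixed $\Sigma_{0}$) followed by Arzel\`a--Ascoli.
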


\begin{proof}
Choose a large open ball $\mathcal{B}$ in $\mathbb{R}^{n}$ so that
\[
\Sigma_{t}\subset\mathcal{B}\quad\forall\,t\in\left[0,2\dot{T}\right].
\]
By Proposition \ref{time of smooth existence}, there exists $k_{0}\in\mathbb{N}$
so that for every $k\geq k_{0}$, the MCF $\left\{ \Sigma_{t}^{k}\right\} _{t\in\left[0,2\dot{T}\right]}$
satisfies 
\[
\Sigma_{t}^{k}\subset\mathcal{B}\quad\forall\,\,0\leq t\leq2\dot{T},
\]
\[
\max_{0\leq t\leq2\dot{T}}\max_{\Sigma_{t}^{k}}\left|A\right|\,\leq\,2\max_{0\leq t\leq2\dot{T}}\max_{\Sigma_{t}}\left|A\right|,
\]
\[
\mathcal{H}^{n}\left(\Sigma_{0}^{k}\right)\leq2\,\mathcal{H}^{n}\left(\Sigma_{0}\right).
\]
Fix $\tau\in\left(0,\dot{T}\right)$. It follows from the compactness
theorem for MCF (cf. \cite{H2}) that any subsequence of 
\[
\left\{ \left\{ \Sigma_{t}^{k}\right\} _{t\in\left[\tau,2\dot{T}\right]}\right\} _{k\geq k_{0}}
\]
has a smoothly convergent subsequence, whose limit by Proposition
\ref{time of smooth existence} is indeed $\left\{ \Sigma_{t}\right\} _{t\in\left[\tau,2\dot{T}\right]}$.
Thus, the whole sequence $\left\{ \left\{ \Sigma_{t}^{k}\right\} _{t\in\left[\tau,2\dot{T}\right]}\right\} _{k\geq k_{0}}$
must converge smoothly to $\left\{ \Sigma_{t}\right\} _{t\in\left[\tau,2\dot{T}\right]}$
as $k\rightarrow\infty$, completing the proof.
\end{proof}
Due to the maximum principle, the mean-convexity is preserved by the
MCF (cf. \cite{M}) and hence $\left\{ \Sigma_{t}\right\} _{0\leq t\leq2\dot{T}}$
moves monotonically toward the inside. Below we estimate how far the
flow can be away from $\partial\Omega_{0}=\Sigma_{0}$.
\begin{prop}
\label{moving distance}Let $\kappa$ be a positive constant such
that 
\begin{equation}
\min_{\Sigma_{0}}H\geq\kappa.\label{moving distance: mean curvature}
\end{equation}
Then the normal graph of $v_{t}=v\left(\cdot,t\right)$ in Proposition
\ref{time of smooth existence} moves monotonically toward the inside
with
\[
\partial_{t}v\geq\kappa
\]
on $\Sigma_{0}\times\left[0,2\dot{T}\right]$. In particular, 
\[
\textrm{dist}\left(\Omega_{\dot{T}},\Sigma_{0}\right)\,\geq\,\kappa\dot{T}.
\]
\end{prop}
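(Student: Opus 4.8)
The plan is to derive a differential inequality for $v = v(\cdot, t)$ from the quasilinear equation (\ref{quasilinear parabolic equation}) and then apply the parabolic maximum principle, treating $H_0 + |A_0|^2 v$ as the driving term and bounding the error term $b$ against $\partial_t v$ itself. First I would recall from Proposition \ref{time of smooth existence} that on $\Sigma_0 \times [0, 2\dot T]$ the graph function $v$ is smooth, satisfies (\ref{small C1 norm}) and the Hessian bound (\ref{time of smooth existence: Hessian}), and hence that the matrix $a^{ij} = g_0^{ij} + 2A_0^{ij}v + Q^{ij}(\nabla v, A_0 v)$ is uniformly elliptic (comparable to $g_0^{ij}$) once $\varepsilon(n)$ in (\ref{small C1 norm}) is small enough. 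The key qualitative input is that along a mean-convex MCF the mean curvature stays positive and in fact the minimum of $H$ over $\Sigma_t$ is nondecreasing in $t$; this is the standard consequence of the maximum principle applied to the evolution equation $\partial_t H = \Delta H + |A|^2 H$ for MCF, valid here because $\{\Sigma_t\}_{0 \le t \le 2\dot T}$ is a genuine MCF by Proposition \ref{time of smooth existence}. Combined with (\ref{moving distance: mean curvature}), this gives $H \ge \kappa$ on every $\Sigma_t$, $0 \le t \le 2\dot T$.

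Next I would translate this back to $v$. By (\ref{mean curvature})-type identities, or more directly by the geometric meaning of the normal-graph equation, the quantity $\partial_t v$ equals the normal speed of $\Sigma_t$ times the factor $\langle N_0, N_t\rangle^{-1}$ (the graph-to-ambient correction), and the normal speed of $\Sigma_t$ is exactly its mean curvature $H$ with respect to the inward normal. Since (\ref{small C1 norm}) forces $\langle N_0, N_t\rangle$ close to $1$, one already gets $\partial_t v \ge \tfrac12 H \ge \tfrac12 \kappa$ with no further work — but to land the clean bound $\partial_t v \ge \kappa$ I would instead argue at the PDE level: differentiate (\ref{quasilinear parabolic equation}) in $t$, set $w = \partial_t v$, and observe that $w$ satisfies a linear parabolic equation of the form $\partial_t w = a^{ij}\nabla_{ij} w + \langle \vec{c}, \nabla w\rangle + \beta w$ obtained by differentiating the coefficients; here $\vec c$ and $\beta$ are bounded on $\Sigma_0 \times [0, 2\dot T]$ by (\ref{time of smooth existence: Hessian}) and the structure of $a^{ij}$, $b$. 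At $t = 0$ we have $w(\cdot, 0) = \partial_t v|_{t=0} = H_0 \ge \kappa$ from (\ref{quasilinear parabolic equation}) with $v = 0$, $\nabla v = 0$. The maximum principle alone does not immediately give $w \ge \kappa$ because of the zeroth-order term $\beta w$, so the cleaner route is the geometric one: apply the maximum principle directly to $H$ along the flow (no error terms) to get $H \ge \kappa$ on $[0, 2\dot T]$, then use the exact identity $\partial_t v = H \cdot \langle N_0, N_t \rangle^{-1} \ge H \ge \kappa$, where the inequality $\langle N_0, N_t\rangle^{-1} \ge 1$ holds trivially since the inner product of two unit vectors is at most $1$. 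This yields $\partial_t v \ge \kappa$ on $\Sigma_0 \times [0, 2\dot T]$.

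Finally, integrating $\partial_t v \ge \kappa$ from $0$ to $\dot T$ gives $v(\cdot, \dot T) \ge \kappa \dot T$ pointwise on $\Sigma_0$, i.e. $\Sigma_{\dot T}$ is the normal graph of a function at least $\kappa \dot T$ in the inward direction, so $\textrm{dist}(\Omega_{\dot T}, \Sigma_0) \ge \kappa \dot T$ after checking that the tubular-neighborhood coordinates from Proposition \ref{time of smooth existence} are distance-comparable (again a consequence of (\ref{small C1 norm}), which bounds the deviation of normal-graph distance from Euclidean distance). I expect the only real subtlety to be the bookkeeping needed to pass from $\partial_t v$ to the geometric normal speed cleanly — i.e. verifying the identity $\partial_t v = H / \langle N_0, N_t\rangle$ and the distance comparison — both of which are routine once (\ref{small C1 norm}) and the smoothness from Proposition \ref{time of smooth existence} are in hand; the substantive point, preservation of the lower mean curvature bound, is immediate from the maximum principle for the scalar equation $\partial_t H = \Delta H + |A|^2 H$.
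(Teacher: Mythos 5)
Your proposal is correct and follows essentially the same route as the paper: the maximum principle applied to $\partial_{t}H=\triangle H+\left|A\right|^{2}H$ preserves $H\geq\kappa$, and the identity $\partial_{t}v\,\left(N_{0}\cdot N\right)=H$ together with $\left|N_{0}\cdot N\right|\leq1$ yields $\partial_{t}v\geq\kappa$. The only (immaterial) difference is how the sign of $\partial_{t}v$ is pinned down — you use positivity of $N_{0}\cdot N$ from (\ref{small C1 norm}), whereas the paper combines $\left|\partial_{t}v\right|\geq\kappa$ with $\partial_{t}v\left(\cdot,0\right)=H_{0}\geq\kappa$ and the intermediate value theorem.
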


\begin{proof}
Along a MCF we have $\partial_{t}x\cdot N=H$ (cf. \cite{M}). In
the normal graph parametrization (\ref{normal graph parametrization}),
this means that 
\[
\partial_{t}v\,N_{0}\cdot N=H.
\]
Then it follows from the maximum principle for the mean curvature
along MCF that
\begin{equation}
\left|\partial_{t}v\right|\,\geq\,\left|\partial_{t}v\,N_{0}\cdot N\right|\,=\,H\,\geq\,\kappa\label{moving distance: mean-convexity}
\end{equation}
on $\Sigma_{0}\times\left[0,2\dot{T}\right]$ (cf. \cite{M}). In
particular, $\partial_{t}v\neq0$ everywhere on on $\Sigma_{0}\times\left[0,2\dot{T}\right]$.
On the other hand, since 
\[
\partial_{t}v\left(\cdot,0\right)=H_{0}\geq\kappa,
\]
by the intermediate value theorem we deduce that $\partial_{t}v\left(\cdot,t\right)>0$
for every $t\in\left[0,2\dot{T}\right]$; hence by (\ref{moving distance: mean-convexity})
we obtain $\partial_{t}v\geq\kappa$ on $\Sigma_{0}\times\left[0,2\dot{T}\right]$.
\end{proof}
Note that Proposition \ref{time of smooth existence} yields the estimates
of the second fundamental form of $\left\{ \Sigma_{t}\right\} _{0\leq t\leq2\dot{T}}$
and its covariant derivatives. More specifically, by Lemma 3.14 in
\cite{G1}, under the condition (\ref{small C1 norm}), for every
$t\in\left[0,2\dot{T}\right]$ we have
\begin{equation}
\max_{\Sigma_{t}}\left|A\right|\,\leq\,C\left(n\right)\left(\max_{\Sigma_{0}}\left|A\right|\,+\,\max_{\Sigma_{0}}\left|\nabla^{2}v\left(\cdotp,t\right)\right|\,+\,\max_{\Sigma_{0}}\left|v\left(\cdotp,t\right)\right|\,\max_{\Sigma_{0}}\left|\nabla A\right|\right)\label{short time curvature}
\end{equation}
Let $\kappa$ be the constant in (\ref{moving distance: mean curvature}).
Note that (\ref{small C1 norm}) implies

\begin{equation}
\frac{\kappa}{\sqrt{n-1}}\left|v\right|\,\leq\,\frac{1}{\sqrt{n-1}}H_{0}\left|v\right|\,\leq\,\left|A_{0}v\right|\,\leq\,\frac{\left|v\right|}{\textrm{rad}\,\,\Sigma_{0}}\,\leq\,\varepsilon\left(n\right).\label{short time deviation}
\end{equation}
Thus, (\ref{time of smooth existence: Hessian}), (\ref{short time curvature}),
and (\ref{short time deviation}) yield
\begin{equation}
\max_{0\leq t\leq2\dot{T}}\max_{\Sigma_{t}}\left|A\right|\,\leq\,C\left(n,\iota,\kappa,K,K',K''\right),\label{short time curvature estimate}
\end{equation}
where the notations are as defined in Proposition \ref{time of smooth existence}
and Proposition \ref{moving distance}. Then it follows the maximum
principle (for instance, see Proposition 6.2 in \cite{G1}) that 
\[
\max_{\Sigma_{t}}\left|\nabla A\right|\,+\,\max_{\Sigma_{t}}\left|\nabla^{2}A\right|\,\leq\,C\left(n,\,\max_{0\leq\tau\leq2\dot{T}}\max_{\Sigma_{\tau}}\left|A\right|,\,\,\max_{\Sigma_{0}}\left|\nabla A\right|,\,\,\max_{\Sigma_{0}}\left|\nabla^{2}A\right|\right)
\]
for $0\leq t\leq2\dot{T}$. So by (\ref{short time curvature estimate})
we obtain 
\begin{equation}
\max_{0\leq t\leq2\dot{T}}\max_{\Sigma_{t}}\left|\nabla A\right|\,+\,\max_{0\leq t\leq2\dot{T}}\max_{\Sigma_{t}}\left|\nabla^{2}A\right|\,\leq\,C\left(n,\iota,\kappa,K,K',K''\right).\label{short time covariant derivatives of curvature estimate}
\end{equation}

\subsection{Interior estimates for mean-convex LSF\label{interior estimates for mean-convex LSF}}

In this subsection we shall obtain smooth estimates for $\left\{ \Sigma_{t}\right\} $
at regular points that are away from $\partial\Omega_{0}=\Sigma_{0}$.
Such estimates are based on the noncollapsing property of mean-convex
LSF (cf. \cite{HK}), saying that the smooth scale (see Definition
\ref{smooth scale}) at a regular point $p$ is bounded below by,
up to a multiplicative constant, 
\[
H\left(p\right)^{-1}\,=\,\left|\nabla u\left(p\right)\right|.
\]
(see Proposition \ref{interior estimate}, Proposition \ref{smooth estimate},
and Corollary \ref{local graph estimate}). In the end of the subsection
there is an estimate of the Hessian $\nabla^{2}u$ (see Proposition
\ref{Hessian estimate}), which is pivotal in getting the convergence
of $\nabla u^{k}$'s\footnote{$u^{k}$ is the arrival time function of $\left\{ \Sigma_{t}^{k}\right\} $,
see Proposition \ref{time of smooth existence}.} in Section \ref{smooth convergence off singularities} (see Proposition
\ref{C1 compactness}).

To begin with, we give an estimate of $\left|\nabla u\right|$ from
above in Proposition \ref{gradient estimate}, which will be used
in Section \ref{smooth convergence off singularities} to prove the
convergence of $u^{k}$'s (see Proposition \ref{C0 compactness}).
In \cite{ES} there is already a gradient estimate for $u$, which
depends implicitly on $\Sigma_{0}$. Here we improve the estimate
such that it depends more explicitly on $\Sigma_{0}$. Proposition
\ref{gradient estimate} can also be regarded as a generalization
of the maximum principle for the mean curvature along a mean-convex
LSF. 
\begin{prop}
\label{gradient estimate}Let $\kappa$ be a positive constant such
that 
\[
\min_{\Sigma_{0}}H\,\geq\,\kappa.
\]
Then the gradient of the arrival time function $u$ satisfies 
\[
\sup_{\Omega_{0}}\left|\nabla u\right|\,\leq\,\kappa^{-1}.
\]
As a consequence, at every regular point $p\in\Omega_{0}$ we have
\[
H\left(p\right)=\left|\nabla u\left(p\right)\right|^{-1}\,\geq\,\kappa.
\]
\end{prop}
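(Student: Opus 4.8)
The plan is to prove the gradient bound $\sup_{\Omega_0}|\nabla u|\le\kappa^{-1}$ by a maximum-principle / comparison argument, exploiting the fact that along the regular part of a mean-convex LSF the mean curvature $H=|\nabla u|^{-1}$ satisfies a nice evolution equation and is bounded below by its initial minimum. Concretely, I would first record that the mean-convexity of $\Sigma_0$ is preserved: along the MCF portion $[0,T_1)$ the maximum principle for the scalar equation $\partial_t H=\Delta H+|A|^2 H$ gives $H(\cdot,t)\ge\min_{\Sigma_0}H\ge\kappa$, and past the first singular time the same bound persists on the regular part of the LSF because $\{\mathcal H^{n-1}\lfloor\Sigma_t\}$ is a unit-regular Brakke flow obtained as a limit of smooth mean-convex flows (e.g.\ via the surgery/approximation picture of \cite{HK}, or directly from the fact that $\Omega_t$ is swept out monotonically and $H$ can only increase as a consequence of avoidance with shrinking spheres). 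In terms of $u$ this says exactly $|\nabla u(p)|=H(p)^{-1}\le\kappa^{-1}$ at every regular point $p$.

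The remaining point is to upgrade this pointwise bound at regular points to the stated supremum bound over all of $\Omega_0$, including the singular set. Here I would use two facts already available in the excerpt: by \cite{CM4} (quoted right after \eqref{critical points}) the arrival time function $u$ is twice differentiable at every singular point with $\nabla u=0$ there, so the bound $|\nabla u|\le\kappa^{-1}$ holds trivially on $\mathcal S$; and $u$ is Lipschitz on $\bar\Omega_0$ with $\mathcal S$ closed of Hausdorff dimension $\le n-2$ (hence of measure zero), $u$ smooth on $\Omega_0\setminus\mathcal S$. Since $|\nabla u|\le\kappa^{-1}$ a.e.\ on $\Omega_0$ and $u$ is Lipschitz, $u$ is in fact $\kappa^{-1}$-Lipschitz, and therefore $|\nabla u|\le\kappa^{-1}$ at every point where $\nabla u$ exists — in particular everywhere on $\bar\Omega_0$ in the classical sense where defined, and in the viscosity sense elsewhere. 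This yields $\sup_{\Omega_0}|\nabla u|\le\kappa^{-1}$. The consequence $H(p)=|\nabla u(p)|^{-1}\ge\kappa$ at regular points is then just \eqref{mean curvature}.

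Alternatively — and this is probably the cleanest self-contained route, matching the remark in the statement that this ``generalizes the maximum principle for $H$'' — I would argue directly on the flow without invoking \cite{CM4}: rescale time by comparing with the barrier $\Sigma_0$ flowing under its own MCF for a short time and use the avoidance principle together with the explicit lower bound $\partial_t v\ge\kappa$ from Proposition \ref{moving distance} near $t=0$, then propagate. Actually the quickest version: for $s\ge 0$ let $\Phi_s(\Sigma_0)$ denote the LSF; the distance $t\mapsto\operatorname{dist}(\Sigma_t,\Sigma_0)$ grows at least linearly at speed $\kappa$ because the inner-kissing balls of radius $\alpha/H\ge \alpha\kappa^{-1}\cdot$(something) move in, but more simply, $u\ge\kappa^{-1}\operatorname{dist}(\cdot,\Sigma_0)$ is a viscosity comparison obtained by noting that the signed-distance function $d$ from $\Sigma_0$, suitably capped, is a subsolution of \eqref{level set flow} when $\min_{\Sigma_0}H\ge\kappa$; combined with the reverse inequality this pins down the Lipschitz constant of $u$.

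The main obstacle is the step that transfers the lower bound $H\ge\kappa$ (equivalently the upper bound on $|\nabla u|$) across the first singular time, i.e.\ justifying that mean-convexity with the quantitative lower bound $\kappa$ is preserved by the \emph{level set} flow and not merely by the smooth MCF. This is where one must lean on the structure theory of mean-convex LSF from \cite{HK} (or the Brakke-flow formulation together with Huisken's monotonicity, already invoked in Section \ref{Entropy}); once that is granted, everything else is the routine Lipschitz-extension argument above. I would therefore organize the proof as: (i) preservation of $H\ge\kappa$ on the regular part, citing \cite{HK}; (ii) $\nabla u=0$ on $\mathcal S$ by \cite{CM4} (or a direct barrier argument); (iii) conclude $u$ is $\kappa^{-1}$-Lipschitz on $\bar\Omega_0$, hence $\sup_{\Omega_0}|\nabla u|\le\kappa^{-1}$; (iv) read off $H\ge\kappa$ at regular points from \eqref{mean curvature}.
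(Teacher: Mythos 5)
Your reduction of the statement to two facts --- (i) $H\geq\kappa$ at every regular point of the LSF, and (ii) $\nabla u=0$ at every singular point --- is the right way to read the proposition, and (ii) together with the final step are unproblematic (indeed, once (i) and (ii) are known the supremum bound is immediate pointwise, so the Lipschitz-extension step is redundant). The genuine gap is that you never actually prove (i) past the first singular time: you defer it to ``the structure theory of \cite{HK}'' or to ``avoidance with shrinking spheres.'' But the quantitative preservation of the lower bound $\min_{\Sigma_{0}}H\geq\kappa$ across singularities is precisely the content of this proposition --- the paper says so explicitly just before the statement (``Proposition \ref{gradient estimate} can also be regarded as a generalization of the maximum principle for the mean curvature along a mean-convex LSF'') --- and it is not available as a black box in \cite{HK} in the form you need. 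The avoidance heuristic cannot work: comparison with shrinking spheres yields clearing-out and extinction-time estimates, not a pointwise lower bound on $H$. Your alternative barrier route also fails as stated: the parallel hypersurfaces of a merely mean-convex $\Sigma_{0}$ have mean curvature $\sum_{i}\kappa_{i}/(1-s\kappa_{i})$, which need not stay above $\kappa$ when some principal curvature is negative, so the (capped) distance function is not a valid sub/supersolution of (\ref{level set flow}) on all of $\Omega_{0}$; and even the correct one-sided inequality $u\leq\kappa^{-1}\,\mathrm{dist}(\cdot,\Sigma_{0})$ would not give the interior gradient bound.

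What the paper actually does to close this step is an elliptic-regularization argument: it takes the smooth solutions $u^{\epsilon}$ of the $\epsilon$-regularized equation from \cite{ES}, whose graphs are translating solitons with mean curvature $(|\nabla u^{\epsilon}|^{2}+\epsilon^{2})^{-1/2}$; differentiates the regularized equation to show by the maximum principle that $\max_{\bar{\Omega}_{\delta}}|\nabla u^{\epsilon}|$ is attained on $\Sigma_{\delta}$; controls the boundary value $\max_{\Sigma_{\delta}}|\nabla u^{\epsilon_{i}}|\leq(1+\sigma)/\kappa$ by proving locally smooth convergence of the translators to the cylindrical flow $\Sigma_{t}\times\mathbb{R}_{+}$ for small $t>0$, where the classical maximum principle for $H$ along the smooth MCF applies; and finally transfers the bound to $u$ at regular points via difference quotients and the uniform convergence $u^{\epsilon_{i}}\to u$. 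If you want to keep your outline, you must supply an argument of this kind (or an explicit theorem from the literature stating the preservation of $\min H$ for mean-convex level set flow) in place of the citation.
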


\begin{proof}
By \cite{ES}, for every $\epsilon\in\left(0,1\right]$ there is $u^{\epsilon}\in C^{\infty}\left(\bar{\Omega}_{0}\right)$
satisfying the $\epsilon$-elliptic regularization of (\ref{level set flow}),
that is,
\begin{equation}
-\left(\mathrm{I}-\frac{\nabla u^{\epsilon}\varotimes\nabla u^{\epsilon}}{\left|\nabla u^{\epsilon}\right|^{2}+\epsilon^{2}}\right)\cdotp\nabla^{2}u^{\epsilon}=1\quad\textrm{in}\,\,\Omega_{0},\label{gradient estimate: elliptic regularization}
\end{equation}
\[
u^{\epsilon}=0\quad\textrm{on}\,\,\Sigma_{0};
\]
and as $\epsilon\searrow0$ 
\begin{equation}
u^{\epsilon}\left(x\right)\,\stackrel{C^{0}}{\rightarrow}\,u\left(x\right)\quad\textrm{on}\,\,\bar{\Omega}_{0}.\label{gradient estimate: uniform convergence}
\end{equation}
Moreover, there exists a constant $C\geq1$ such that for every $x\in\bar{\Omega}_{0}$,
\[
\frac{1}{C}\,\textrm{dist}\left(x,\Sigma_{0}\right)\,\leq\,u^{\epsilon}\left(x\right)\,\leq\,C\,\textrm{dist}\left(x,\Sigma_{0}\right),
\]
\begin{equation}
\left|\nabla u^{\epsilon}\left(x\right)\right|\leq C.\label{gradient estimate: gradient bound}
\end{equation}
The geometric interpretation of the $\epsilon$-elliptic regularization
is that 
\begin{equation}
\Gamma_{t}^{\epsilon}\coloneqq\left\{ \left(x,x^{n+1}\right)\in\bar{\Omega}_{0}\times\mathbb{R}\,:\,\,x^{n+1}=\epsilon^{-1}u^{\epsilon}\left(x\right)-\epsilon^{-1}t\right\} ,\quad t\geq0\label{gradient estimate: translating soliton}
\end{equation}
is a translating MCF in higher dimensional space, which moves downward
(along the negative $x^{n+1}$-direction) with speed $\epsilon^{-1}$;
in other words, $\Gamma_{0}^{\epsilon}=\left\{ \,x^{n+1}=\epsilon^{-1}u^{\epsilon}\left(x\right)\right\} $
is a translating soliton. Note that its mean curvature with respect
to the downward unit normal 
\begin{equation}
\frac{\left(\nabla\epsilon^{-1}u^{\epsilon},-1\right)}{\sqrt{\left|\nabla\epsilon^{-1}u^{\epsilon}\right|^{2}+1}}=\frac{\left(\nabla u^{\epsilon},-\epsilon\right)}{\sqrt{\left|\nabla u^{\epsilon}\right|^{2}+\epsilon^{2}}}\label{gradient estimate: downward normal}
\end{equation}
is 
\[
-\nabla\cdot\frac{\nabla\epsilon^{-1}u^{\epsilon}}{\sqrt{\left|\nabla\epsilon^{-1}u^{\epsilon}\right|^{2}+1}}=\frac{-1}{\sqrt{\left|\nabla\epsilon^{-1}u^{\epsilon}\right|^{2}+1}}\left(\mathrm{I}-\frac{\nabla\epsilon^{-1}u^{\epsilon}\varotimes\nabla\epsilon^{-1}u^{\epsilon}}{\left|\nabla\epsilon^{-1}u^{\epsilon}\right|^{2}+1}\right)\cdotp\nabla^{2}\epsilon^{-1}u^{\epsilon}
\]
\begin{equation}
=\frac{-1}{\sqrt{\left|\nabla u^{\epsilon}\right|^{2}+\epsilon^{2}}}\left(\mathrm{I}-\frac{\nabla u^{\epsilon}\varotimes\nabla u^{\epsilon}}{\left|\nabla u^{\epsilon}\right|^{2}+\epsilon^{2}}\right)\cdotp\nabla^{2}u^{\epsilon}=\frac{1}{\sqrt{\left|\nabla u^{\epsilon}\right|^{2}+\epsilon^{2}}}.\label{gradient estimate: approximate mean curvature}
\end{equation}
By (\ref{gradient estimate: uniform convergence}), as $\epsilon\searrow0$
\begin{equation}
u^{\epsilon}\left(x\right)-\epsilon\,x^{n+1}\,\stackrel{C_{loc}^{0}}{\longrightarrow}\,u\left(x\right)\quad\textrm{on}\,\,\bar{\Omega}_{0}\times\mathbb{R}_{+},\label{gradient estimate: arrival time}
\end{equation}
meaning that the arrival time function of $\left\{ \Gamma_{t}^{\epsilon}\right\} $
(which can be obtained by solving the equation in (\ref{gradient estimate: translating soliton})
for $t$) converges locally uniformly to the arrival time function
of the ``cylindrical'' mean-convex LSF

\begin{equation}
\Gamma_{t}\coloneqq\Sigma_{t}\times\mathbb{R}_{+}=\left\{ \left(x,x^{n+1}\right)\in\bar{\Omega}_{0}\times\mathbb{R}_{+}:\,u\left(x\right)=t\right\} ,\quad t\geq0.\label{gradient estimate: cylindrical LSF}
\end{equation}
In fact, the convergence is locally smooth for small $t>0$ and away
from $x^{n+1}=0$ (as was mentioned in section 4.1 in \cite{HK}).
To see that, firstly let us get a uniform bound for the local area
as follows. By (\ref{gradient estimate: gradient bound}), Proposition
\ref{moving distance}, coarea formula, and the fact that the total
area is non-increasing along MCF $\left\{ \Sigma_{t}\right\} _{t\in\left[0,\dot{T}\right]}$,\footnote{See Proposition \ref{time of smooth existence} for the definition
of $\dot{T}$.} for each $0<\epsilon\leq\frac{\kappa}{2C}\dot{T}$ we have
\[
\mathcal{H}^{n}\left(\Gamma_{0}^{\epsilon}\cap\left\{ 0<x^{n+1}<2\right\} \right)
\]
\[
=\int_{0<u^{\epsilon}<2\epsilon}\sqrt{\left|\nabla\epsilon^{-1}u^{\epsilon}\right|^{2}+1}\,dx\,=\,\frac{1}{\epsilon}\int_{0<u^{\epsilon}<2\epsilon}\sqrt{\left|\nabla u^{\epsilon}\right|^{2}+\epsilon^{2}}\,dx
\]
\[
\leq\frac{1}{\epsilon}\int_{0<u^{\epsilon}<2\epsilon}\sqrt{C^{2}+\epsilon^{2}}\,dx\,\leq\,\frac{1}{\epsilon}\sqrt{C^{2}+\epsilon^{2}}\,\,\mathcal{H}^{n}\left\{ x\in\Omega_{0}\,:\,0<\textrm{dist}\left(x,\Sigma_{0}\right)<2C\epsilon\right\} 
\]
\[
\leq\frac{1}{\epsilon}\sqrt{C^{2}+\epsilon^{2}}\,\,\mathcal{H}^{n}\left\{ x\in\Omega_{0}\,:\,0<u\left(x\right)<\frac{2C}{\kappa}\epsilon\right\} 
\]
\[
=\frac{1}{\epsilon}\sqrt{C^{2}+\epsilon^{2}}\,\int_{0}^{\frac{2C}{\kappa}\epsilon}\int_{\Sigma_{t}}\frac{1}{\left|\nabla u\right|}\,d\mathcal{H}^{n-1}dt
\]
\[
\leq\frac{2C}{\kappa}\sqrt{C^{2}+1}\,K\,\mathcal{H}^{n-1}\left(\Sigma_{0}\right),
\]
where 
\[
K=\sup_{0<u<\dot{T}}\frac{1}{\left|\nabla u\right|}=\max_{0\leq u\leq\dot{T}}\max_{\Sigma_{t}}H.
\]
Let $\delta\in\left(0,\frac{1}{2}\dot{T}\right)$ be a sufficiently
small constant (depending on $n$, $\dot{T}$). It follows from the
local area bound (see Proposition 4.9 in \cite{E}) and Brakke's compactness
theorem (cf. \cite{I1}) that for some sequence $\epsilon_{i}\searrow0$,
the MCF $\left\{ \Gamma_{t}^{\epsilon_{i}}\right\} _{t\in\left(\frac{1}{16}\delta,\frac{31}{16}\delta\right)}$
converge in the weak topology to an integral Brakke flow $\left\{ \mu_{t}\right\} _{t\in\left(\frac{1}{16}\delta,\frac{31}{16}\delta\right)}$
in $\Omega_{0}\times\left(\frac{1}{16},\frac{31}{16}\right)$. In
view of (\ref{gradient estimate: arrival time}) we have $\textrm{spt}\,\mu_{t}\subset\Gamma_{t}$;
it then follows from the one-sided minimization property of mean-convex
MCF (see 3.9 in \cite{W1} and Remark 2.5 in \cite{HK}) that $\mu_{t}$
is of unit density. Thus, we have $\mu_{t}\leq\mathcal{H}^{n}\lfloor\Gamma_{t}$.
Basing on the curvature estimate for MCF (see Theorem 2.1 in \cite{G1}
for the modification of \cite{W3} and Theorem 5.6 in \cite{E}),
we would get local smooth estimates for $\left\{ \Gamma_{t}^{\epsilon_{i}}\right\} _{t\in\left(\frac{1}{4}\delta,\frac{7}{4}\delta\right)}$
in $\Omega_{0}\times\left(\frac{1}{4},\frac{7}{4}\right)$ (see Proposition
3.22 in \cite{E}) for $i$ sufficiently large. Upon passing to a
subsequence, the smooth compactness theorem gives that the MCF $\left\{ \Gamma_{t}^{\epsilon_{i}}\right\} _{t\in\left(\frac{1}{2}\delta,\frac{3}{2}\delta\right)}$
converge locally smoothly to a MCF $\left\{ \tilde{\Gamma}_{t}\right\} _{t\in\left(\frac{1}{2}\delta,\frac{3}{2}\delta\right)}$
in $\Omega_{0}\times\left(\frac{1}{2},\frac{3}{2}\right)$. Since
for each $t\in\left(\frac{1}{2}\delta,\frac{3}{2}\delta\right)$,
$\tilde{\Gamma}_{t}$ is an embedded hypersurface contained in the
embedded hypersurface $\Gamma_{t}$, we deduce that $\tilde{\Gamma}_{t}=\Gamma_{t}$
in $\Omega_{0}\times\left(\frac{1}{2},\frac{3}{2}\right)$. Therefore,
$\left\{ \Gamma_{t}^{\epsilon_{i}}\right\} _{t\in\left(\frac{1}{2}\delta,\frac{3}{2}\delta\right)}$
converge locally smoothly to $\left\{ \Gamma_{t}\right\} _{t\in\left(\frac{1}{2}\delta,\frac{3}{2}\delta\right)}$
in $\Omega_{0}\times\left(\frac{1}{2},\frac{3}{2}\right)$. 

By (\ref{gradient estimate: cylindrical LSF}), the infimum mean curvature
of $\left\{ \Gamma_{t}\right\} _{t\in\left(\frac{1}{2}\delta,\frac{3}{2}\delta\right)}$
in $\Omega_{0}\times\left(\frac{1}{2},\frac{3}{2}\right)$ is 
\[
\inf_{\frac{1}{2}\delta<t<\frac{3}{2}\delta}\min_{\Sigma_{t}}H\geq\kappa
\]
by the maximum principle for mean curvature (cf. \cite{M}). By (\ref{gradient estimate: translating soliton})
and (\ref{gradient estimate: approximate mean curvature}), the infimum
mean curvature of $\left\{ \Gamma_{t}^{\epsilon_{i}}\right\} _{t\in\left(\frac{1}{2}\delta,\frac{3}{2}\delta\right)}$
in $\Omega_{0}\times\left(\frac{1}{2},\frac{3}{2}\right)$ is 
\[
\inf_{\frac{1}{2}\delta+\frac{1}{2}\epsilon_{i}<u^{\epsilon_{i}}<\frac{3}{2}\delta+\frac{3}{2}\epsilon_{i}}\frac{1}{\sqrt{\left|\nabla u^{\epsilon_{i}}\right|^{2}+\epsilon_{i}^{2}}}.
\]
By the smooth convergence in the last paragraph, given $\sigma>0$
there exists $i_{\sigma}\in\mathbb{N}$ so that 
\[
\inf_{\frac{1}{2}\delta+\frac{1}{2}\epsilon_{i}<u^{\epsilon_{i}}<\frac{3}{2}\delta+\frac{3}{2}\epsilon_{i}}\frac{1}{\sqrt{\left|\nabla u^{\epsilon_{i}}\right|^{2}+\epsilon^{2}}}\,\geq\,\frac{\kappa}{1+\sigma}\quad\forall\,i\geq i_{\sigma}
\]
In addition, due to (\ref{gradient estimate: uniform convergence})
we may assume that 
\[
\left\{ \frac{1}{2}\delta+\frac{1}{2}\epsilon_{i}<u^{\epsilon_{i}}<\frac{3}{2}\delta+\frac{3}{2}\epsilon_{i}\right\} \,\supset\,\left\{ u=\delta\right\} \,=\,\Sigma_{\delta}\quad\forall\,i\geq i_{\sigma}
\]
and hence 
\[
\min_{\Sigma_{\delta}}\frac{1}{\sqrt{\left|\nabla u^{\epsilon_{i}}\right|^{2}+\epsilon^{2}}}\,\geq\,\frac{\kappa}{1+\sigma}\quad\forall\,i\geq i_{\sigma},
\]
which gives that

\begin{equation}
\max_{\Sigma_{\delta}}\left|\nabla u^{\epsilon_{i}}\right|\,\leq\,\max_{\Sigma_{\delta}}\sqrt{\left|\nabla u^{\epsilon_{i}}\right|^{2}+\epsilon^{2}}\,\leq\,\frac{1+\sigma}{\kappa}\quad\forall\,i\geq i_{\sigma}.\label{gradient estimate: approximate gradient}
\end{equation}

On the other hand, differentiating (\ref{gradient estimate: elliptic regularization})
with respect to $x^{k}$ following by multiplying $\partial_{k}u^{\epsilon}$
yields
\[
-a_{\epsilon}^{ij}\left(\nabla u^{\epsilon}\right)\,\partial_{ij}\left(\left|\nabla u^{\epsilon}\right|^{2}\right)\,-\,\frac{\partial a_{\epsilon}^{ij}}{\partial\xi^{l}}\left(\nabla u^{\epsilon}\right)\,\partial_{ij}u^{\epsilon}\,\partial_{l}\left(\left|\nabla u^{\epsilon}\right|^{2}\right)\,=\,-2a_{\epsilon}^{ij}\left(\nabla u^{\epsilon}\right)\,\partial_{ik}u^{\epsilon}\,\partial_{jk}u^{\epsilon}\,\leq\,0,
\]
where $a_{\epsilon}^{ij}\left(\xi\right)=\mathrm{\delta}^{ij}-\frac{\xi{}^{i}\varotimes\xi^{j}}{\left|\xi\right|^{2}+\epsilon^{2}}$.
It follows from the maximum principle that 
\begin{equation}
\max_{\bar{\Omega}_{\delta}}\left|\nabla u^{\epsilon}\right|=\max_{\partial\Omega_{\delta}=\Sigma_{\delta}}\left|\nabla u^{\epsilon}\right|\label{gradient estimate: maximum principle}
\end{equation}
Thus, combining (\ref{gradient estimate: approximate gradient}) with
(\ref{gradient estimate: maximum principle}) gives 
\begin{equation}
\max_{\bar{\Omega}_{\delta}}\left|\nabla u^{\epsilon_{i}}\right|\leq\frac{1+\sigma}{\kappa}\quad\forall\,i\geq i_{\sigma}.\label{gradient estimate: gradient}
\end{equation}
Finally, given a point $p\in\Omega_{0}$, we have the following three
cases to consider:

\uline{Case 1}: $p\in\Omega_{0}\setminus\Omega_{\delta}$. Then
$p$ must be a regular point of the flow since $u\left(p\right)\in\left(0,T_{1}\right)$
is a regular time. Moreover, since $\left\{ \Sigma_{t}\right\} _{t\in\left[0,T_{1}\right)}$
is a MCF, the maximum principle for the mean curvature gives 
\[
H\left(p\right)=\frac{1}{\left|\nabla u\left(p\right)\right|}\geq\kappa
\]
(cf. \cite{M}). So $\left|\nabla u\left(p\right)\right|\leq\kappa^{-1}$.

\uline{Case 2}: $p\in\Omega_{\delta}$ is a singular point of the
flow. In this case we have $\nabla u\left(p\right)=0$. 

\uline{Case 3}: $p\in\Omega_{\delta}$ is a regular point of the
flow. Let $v=\frac{\nabla u\left(p\right)}{\left|\nabla u\left(p\right)\right|}$.
Then by the mean value theorem, (\ref{gradient estimate: uniform convergence}),
and (\ref{gradient estimate: gradient}) we have 
\[
\frac{1}{s}\left|u\left(p+sv\right)-u\left(p\right)\right|\,=\,\lim_{i\rightarrow\infty}\frac{1}{s}\left|u^{\epsilon_{i}}\left(p+sv\right)-u^{\epsilon_{i}}\left(p\right)\right|\,\leq\,\frac{1+\sigma}{\kappa}
\]
for every $0<s<\textrm{dist}\,\left(p,\Sigma_{\delta}\right)$. Letting
$s\searrow0$ gives 
\[
\left|\nabla u\left(p\right)\right|=\left|\nabla u\left(p\right)\cdot v\right|\leq\frac{1+\sigma}{\kappa}
\]
Since $\sigma>0$ is arbitrary, we have $\left|\nabla u\left(p\right)\right|\leq\kappa^{-1}$.
\end{proof}
Next, we will be devoted to the smooth estimates for $\left\{ \Sigma_{t}\right\} _{t>\dot{T}}$
on its regular part. Before that, we have the following simple observation.
\begin{lem}
\label{closedness of singular set}The singular set $\mathcal{S}$
of the flow $\left\{ \Sigma_{t}\right\} $ is a closed set (and hence
is compact).
\end{lem}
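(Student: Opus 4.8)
The plan is to show that the set $\mathcal{R}\coloneqq\Omega_{0}\setminus\mathcal{S}$ of regular points is open in $\Omega_{0}$, and separately that $\mathcal{S}$ stays a positive distance away from the boundary $\partial\Omega_{0}=\Sigma_{0}$. Together these force $\bar{\mathcal{S}}\subset\Omega_{0}$ and $\bar{\mathcal{S}}=\mathcal{S}$, so that $\mathcal{S}$ is closed in $\mathbb{R}^{n}$; since $\mathcal{S}\subset\bar{\Omega}_{0}$ is bounded, it is then compact.

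To see that $\mathcal{R}$ is open, recall that $p\in\Omega_{0}$ is a regular point exactly when $u$ is smooth on some neighborhood of $p$ with $\nabla u\left(p\right)\neq0$. So if $p\in\mathcal{R}$, choose a ball $B_{r}\left(p\right)\subset\Omega_{0}$ on which $u\in C^{\infty}$; since $\nabla u$ is then continuous on $B_{r}\left(p\right)$ and nonzero at $p$, there is $r'\in\left(0,r\right]$ with $\nabla u\neq0$ throughout $B_{r'}\left(p\right)$. Every $q\in B_{r'}\left(p\right)$ then has $u$ smooth near $q$ and $\nabla u\left(q\right)\neq0$, hence $q\in\mathcal{R}$, so $B_{r'}\left(p\right)\subset\mathcal{R}$. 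Thus $\mathcal{R}$ is open in $\Omega_{0}$ and $\mathcal{S}$ is relatively closed in $\Omega_{0}$.

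It remains to exclude accumulation of $\mathcal{S}$ on $\Sigma_{0}$. For $t\in\left[0,T_{1}\right)$ the flow $\left\{ \Sigma_{t}\right\}$ is a smooth mean-convex MCF, so near every point $p\in\Omega_{0}$ with $u\left(p\right)<T_{1}$ the flow is locally smooth and monotone; hence such a $p$ is regular, and therefore $\mathcal{S}\subset\left\{ x\in\Omega_{0}:u\left(x\right)\geq T_{1}\right\}$. (Quantitatively, one may instead invoke Proposition \ref{moving distance}, which gives $\mathrm{dist}\left(\Omega_{\dot{T}},\Sigma_{0}\right)\geq\kappa\dot{T}>0$, together with $T_{1}>2\dot{T}$ from Proposition \ref{time of smooth existence}.) Now if $p_{k}\in\mathcal{S}$ and $p_{k}\to p$, then $p\in\bar{\Omega}_{0}$ and, by continuity of $u$ on $\bar{\Omega}_{0}$, $u\left(p\right)=\lim_{k\to\infty}u\left(p_{k}\right)\geq T_{1}>0$; since $u\equiv0$ on $\Sigma_{0}$, we conclude $p\in\Omega_{0}$, and then $p\in\mathcal{S}$ because $\mathcal{S}$ is relatively closed in $\Omega_{0}$. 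Hence $\mathcal{S}$ is closed in $\mathbb{R}^{n}$ and, being bounded, compact.

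I do not anticipate a genuine obstacle here. The two points that require a little care are that $u$ is only Lipschitz on $\bar{\Omega}_{0}$ as a whole, so the openness of $\mathcal{R}$ must be extracted from the local smooth-graph characterization of regular points rather than from any global regularity of $\nabla u$; and that one must prevent the singular set from reaching $\Sigma_{0}$, which is handled by the positivity of the first singular time $T_{1}$ (equivalently, by the inward-moving collar estimate of Proposition \ref{moving distance}).
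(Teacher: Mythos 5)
Your proof is correct, but the mechanism you use for the key step differs from the paper's. Both arguments have the same two-part structure: keep $\mathcal{S}$ a positive distance from $\Sigma_{0}$, and show that a limit of singular points is singular. For the first part you and the paper do essentially the same thing (you via $u\geq T_{1}>0$ on $\mathcal{S}$ or Proposition \ref{moving distance}; the paper via $\mathcal{S}\subset\Omega_{\dot{T}}$ from Proposition \ref{time of smooth existence}). For the second part, however, the paper invokes the characterization $\mathcal{S}=\left\{ \nabla u=0\right\} $ from (\ref{critical points}) together with the continuity of $\nabla u$ on $\bar{\Omega}_{\dot{T}}$ --- a consequence of the Colding--Minicozzi differentiability theorem for the arrival time \cite{CM4} --- so that $\mathcal{S}$ is the zero set of a continuous vector field and hence closed. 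You instead observe that the regular set is open directly from the definition of a regular point (local smoothness of $u$ plus $\nabla u\neq0$ being an open condition), which makes $\mathcal{S}$ relatively closed in $\Omega_{0}$ without appealing to any global $C^{1}$ regularity of $u$. Your route is more elementary and self-contained; the paper's is shorter given that (\ref{critical points}) and the $C^{1}$ regularity of $u$ are already on the table and are used repeatedly elsewhere. Both are complete proofs.
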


\begin{proof}
Given a sequence $\left\{ p_{i}\right\} _{i\in\mathbb{N}}\subset\mathcal{S}$
tending to a point $p\in\mathbb{R}^{n}$, note that by Proposition
\ref{time of smooth existence}, we have $p_{i}\in\Omega_{\dot{T}}$
(i.e., $u\left(p_{i}\right)>\dot{T}$) for every $i\in\mathbb{N}$.
Thus, $p\in\bar{\Omega}_{\dot{T}}$. 

On the other hand, recall that $\mathcal{S}$ is the set of critical
points of $u$. So $\nabla u\left(p_{i}\right)=0$ for every $i$.
By the continuity of $\nabla u$ on $\bar{\Omega}_{\dot{T}}$ (cf.
\cite{CM4}), we obtain 
\[
\nabla u\left(p\right)=\lim_{i\rightarrow\infty}\nabla u\left(p_{i}\right)=0.
\]
Therefore $p\in\mathcal{S}$.
\end{proof}
The set of all regular points in $\Omega_{0}$ is obviously an open
set. In fact, let $p\in\Omega_{0}$ be a regular point of $\left\{ \Sigma_{t}\right\} $,
since $u$ is smooth near $p$ with $\nabla u\left(p\right)\neq0$,
the implicit function theorem yields that there is a neighborhood
of $p$ where the level sets of $u$ are all regular and they jointly
constitute a MCF by (\ref{level set flow}). This motivates the following
definition.
\begin{defn}
\label{smooth scale}Let $p\in\Omega_{0}$ be a regular point of $\left\{ \Sigma_{t}\right\} $.
When a radius $r>0$ is sufficiently small, we have $B_{r}\left(p\right)\subset\Omega_{0}\setminus\mathcal{S}$
\footnote{Note that $u$ is smooth in $B_{r}\left(p\right)$ with $\nabla u\neq0$;
it follows that (\ref{level set flow}) would be satisfied in the
classical sense on $B_{r}\left(p\right)$ and hence the level sets
of $u$ in $B_{r}\left(p\right)$ form a MCF.}and that every level set $\Sigma_{t}=\left\{ u=t\right\} $ in $B_{r}\left(p\right)$
is a $\varepsilon\left(n\right)$-Lipschitz graph over $T_{p}\Sigma_{u\left(p\right)}$,
where $\varepsilon\left(n\right)\in\left(0,1\right)$ is a small constant
as required by Lemma 3.10 in \cite{G1} and Lemma \ref{rolling ball condition}
(see (\ref{rolling ball condition: Hessian})). The supremum of all
such radii is called the \textbf{smooth scale} of $\left\{ \Sigma_{t}\right\} $
at $p$.
\end{defn}

In the following proposition we estimates the smooth scales from below.
\begin{prop}
\label{interior estimate}Let $\iota,\kappa,K,K',K'',\mathfrak{A},\mathfrak{D}$
be positive constants such that 
\[
\textrm{rad}\,\,\Sigma_{0}\geq\iota,\quad\min_{\Sigma_{0}}H\geq\kappa,
\]
\[
\max_{\Sigma_{0}}\left|A\right|\leq K,\quad\max_{\Sigma_{0}}\left|\nabla A\right|\leq K',\quad\max_{\Sigma_{0}}\left|\nabla^{2}A\right|\leq K'',
\]
\[
\mathcal{H}^{n-1}\left(\Sigma_{0}\right)\leq\mathfrak{A},\quad\textrm{diam}\,\Sigma_{0}\leq\mathfrak{D}.
\]
Then there exists $\gamma=\gamma\left(n,\iota,\kappa,K,K',K'',\mathfrak{A},\mathfrak{D}\right)>0$
so that the smooth scale of $\left\{ \Sigma_{t}\right\} $ at a regular
point $p\in\Omega_{\dot{T}}$ is greater than $\gamma\left|\nabla u\left(p\right)\right|$,
where $\dot{T}>0$ is the constant in Proposition \ref{time of smooth existence}.
\end{prop}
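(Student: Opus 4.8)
The plan is to argue by contradiction via a blow-up (parabolic rescaling) argument, exploiting the noncollapsing property of the mean-convex LSF together with the interior curvature estimates for mean-convex MCF of White (in the form used in \cite{G1}, \cite{E}). Suppose the statement fails; then there is a sequence of regular points $p_{i}\in\Omega_{\dot{T}}$ whose smooth scale $r_{i}$ satisfies $r_{i}\,/\,|\nabla u(p_{i})|\to 0$. Set $H_{i}=|\nabla u(p_{i})|^{-1}$ (the mean curvature at $p_{i}$, by \eqref{mean curvature}); by Proposition \ref{gradient estimate} we have $H_{i}\geq\kappa$, and since $\bar{\Omega}_{\dot T}$ is compact and $u$ is Lipschitz we also have $H_{i}\leq C(\kappa,\dots)$, so after passing to a subsequence $H_{i}\to H_{\infty}\in[\kappa,\infty)$. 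Rescale the flow around $p_{i}$ parabolically by the factor $\lambda_{i}=H_{i}$ (or, alternatively, by $1/r_{i}$ — one of these two normalizations is the right one and I will choose the one that makes the rescaled smooth scale equal to a fixed constant while keeping curvature controlled), centering space at $p_{i}$ and time at $u(p_{i})$. Denote the rescaled flows by $\{\Sigma^{i}_{s}\}$.

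The key input is that each rescaled flow is still a mean-convex LSF that is $\alpha$-noncollapsed (Proposition \ref{noncollapsing} and the scale invariance of the noncollapsing condition) and has entropy bounded by $\lambda$ (Proposition \ref{entropy}, again scale invariant). By the one-sided minimization / noncollapsing estimates of \cite{HK} together with the local-area bound coming from the entropy bound, Brakke's compactness theorem gives a subsequential weak limit which, by the curvature estimate for mean-convex MCF (Theorem 2.1 in \cite{G1}, cf. \cite{W3}, and Theorem 5.6 in \cite{E}), is a \emph{smooth} mean-convex MCF $\{\Sigma^{\infty}_{s}\}$ defined on a parabolic ball of definite size around the origin, with the convergence $\Sigma^{i}_{s}\to\Sigma^{\infty}_{s}$ being locally smooth. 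The unit-density conclusion (needed to apply the smooth compactness cleanly) follows exactly as in the proof of Proposition \ref{gradient estimate}: the weak limit is supported in the Lipschitz graph region and the one-sided minimization property forces unit density. At the origin the limit flow has mean curvature $H_{\infty}$ (by smooth convergence of $1/|\nabla u^{i}|$), in particular $\nabla u^{\infty}\neq 0$ at the origin, so the origin is a regular point of the limit and the limit has a strictly positive smooth scale $r_{\infty}>0$.

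Now I would extract the contradiction from the definition of smooth scale. On one hand, by the locally smooth convergence, for $i$ large the rescaled flow $\{\Sigma^{i}_{s}\}$ is, in a fixed ball around the origin, a normal graph over $\Sigma^{\infty}_{0}$ with $C^{2}$-small graph function, hence in that ball every level set of $u^{i}$ is an $\varepsilon(n)$-Lipschitz graph over $T_{p_{i}}\Sigma^{i}_{0}$ and $|\nabla u^{i}|\neq 0$ there; this shows the rescaled smooth scale at the origin is bounded below by a fixed constant $\sigma_{0}>0$ independent of $i$. On the other hand, the smooth scale scales linearly, so the rescaled smooth scale at the origin equals $\lambda_{i}r_{i}$; if the rescaling factor is $\lambda_{i}=H_{i}=|\nabla u(p_{i})|^{-1}$, this is exactly $r_{i}/|\nabla u(p_{i})|\to 0$, contradicting the uniform lower bound $\sigma_{0}$. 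Tracking the dependence of all constants through Propositions \ref{noncollapsing}, \ref{entropy}, \ref{time of smooth existence}, \ref{moving distance}, and \ref{gradient estimate}, the resulting $\gamma$ depends only on $n,\iota,\kappa,K,K',K'',\mathfrak{A},\mathfrak{D}$, as claimed.

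The main obstacle I expect is \emph{making the blow-up limit non-trivial and smooth with the correct uniformity} — i.e.\ ensuring the rescaled flows exist on a parabolic neighborhood of definite size (they do, since $p_{i}\in\Omega_{\dot T}$ keeps us uniformly away from the initial time after rescaling, using Proposition \ref{moving distance}) and that the White-type curvature estimate applies with constants depending only on the listed data (this needs the scale-invariant noncollapsing constant $\alpha$ and entropy bound $\lambda$, which is exactly why Sections \ref{Noncollapsing} and \ref{Entropy} were set up). A secondary technical point is ruling out higher multiplicity in the Brakke limit, handled as in Proposition \ref{gradient estimate} via one-sided minimization; and one must be slightly careful that the ``smooth scale'' is defined through $\varepsilon(n)$-Lipschitz graph bounds on \emph{all} nearby level sets, not just $\Sigma_{u(p)}$, but this is automatic from $C^{2}$-closeness of the graph functions over the limit flow.
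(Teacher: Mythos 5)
Your proposal replaces the paper's direct quantitative argument with a blow-up/contradiction argument, and while the inputs you identify (noncollapsing, entropy bound, local regularity for mean-convex flows) are the right ones, the argument as written has a genuine gap and one outright error. The error first: the Lipschitz bound of Proposition \ref{gradient estimate} gives $\left|\nabla u\right|\leq\kappa^{-1}$, i.e.\ a \emph{lower} bound $H_{i}\geq\kappa$, not an upper bound; $H_{i}=\left|\nabla u\left(p_{i}\right)\right|^{-1}\rightarrow\infty$ exactly when $p_{i}$ approaches the singular set, which is the main case the proposition must handle. So the claim $H_{i}\leq C$ is false, and after rescaling by $\lambda_{i}=H_{i}$ the limit has mean curvature $1$ at the origin, not $H_{\infty}$.

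The more serious gap is the central step: locally smooth subconvergence of the rescaled flows on a parabolic ball of \emph{definite} size about the origin. Brakke compactness plus unit density plus the White/Ecker local regularity theorems give smoothness of the limit only where Gaussian density ratios are close to $1$; they do not show that the origin is such a point, nor do they supply uniform curvature bounds at a definite scale around the origin for the rescaled flows. Your assertion that ``the origin is a regular point of the limit (by smooth convergence of $1/\left|\nabla u^{i}\right|$)'' presupposes exactly the uniform smooth convergence being established --- in the analogous compactness argument inside the proof of Proposition \ref{gradient estimate}, the limit flow was known \emph{in advance} to be the smooth cylindrical flow, which is why unit density plus Brakke regularity sufficed there. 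What is missing is the Haslhofer--Kleiner local curvature estimate for $\alpha$-noncollapsed flows (regularity scale at a regular point at least $c\left(n,\alpha,\lambda\right)H\left(p\right)^{-1}$); once you invoke it, the blow-up is superfluous, and indeed the paper applies it directly (Theorem 1.8, Remark 2.7, Corollary 3.3 of \cite{HK}) to get $\left|A\right|$, $\left|\nabla A\right|$, and Harnack bounds on a ball of radius $\rho=\eta^{2}\left|\nabla u\left(p\right)\right|$, and then does the explicit remaining work --- tracking the trajectory of $p$, the drift of the unit normal, and the speed $\partial_{t}f\geq cH$ --- to convert those bounds into the $\varepsilon\left(n\right)$-Lipschitz-graph condition over the fixed plane $T_{p}\Sigma_{u\left(p\right)}$ that defines the smooth scale. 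Your hedge over which rescaling factor to use also leaves the argument unfinished: the two normalizations produce genuinely different limit objects and different contradictions, and neither is carried through.
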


\begin{proof}
By Proposition \ref{noncollapsing} and \cite{HK}, $\left\{ \Sigma_{t}\right\} $
is $\alpha$-noncollaspsing, where $\alpha=\alpha\left(n,\imath,\kappa\right)>0$;
by Proposition \ref{entropy} and \cite{H2}, the entropy of $\left\{ \Sigma_{t}\right\} $
is bounded above by $\lambda=\lambda\left(n,K,\mathfrak{A},\mathfrak{D}\right)>0$.

Let $p\in\Omega_{\dot{T}}$ be a regular point of the flow. By Proposition
\ref{moving distance} we have 
\[
B_{\kappa\dot{T}}\left(p\right)\subset\Omega_{0};
\]
additionally, Proposition \ref{gradient estimate} implies 
\[
\kappa\dot{T}\,=\,\kappa\dot{T}\,H\left(p\right)\,\left|\nabla u\left(p\right)\right|\,\geq\,\kappa^{2}\dot{T}\left|\nabla u\left(p\right)\right|,
\]
\[
\sqrt{\dot{T}}\,=\,\sqrt{\dot{T}}\,H\left(p\right)\,\left|\nabla u\left(p\right)\right|\,\geq\,\kappa\sqrt{\dot{T}}\left|\nabla u\left(p\right)\right|,
\]
which gives that 
\[
\min\left\{ \left|\nabla u\left(p\right)\right|,\,\kappa\dot{T},\,\sqrt{\dot{T}}\right\} \,\geq\,\min\left\{ 1,\,\kappa^{2}\dot{T},\,\kappa\sqrt{\dot{T}}\right\} \left|\nabla u\left(p\right)\right|.
\]
It follows from Haslhofer-Kleiner regularity theorem (see Theorem
1.8, Remark 2.7, and Corollary 3.3 in \cite{HK}) that $\left\{ \Sigma_{t}\right\} _{t\in\left(u\left(p\right)-\rho^{2},u\left(p\right)+\rho^{2}\right)}$
is regular in $B_{\rho}\left(p\right)$, where 
\[
\rho=\eta^{2}\left|\nabla u\left(p\right)\right|
\]
with
\[
\eta=\frac{\min\left\{ 1,\kappa^{2}\dot{T},\kappa\sqrt{\dot{T}}\right\} }{C\left(n,\alpha,\lambda\right)}\in\left(0,1\right),
\]
and that 
\begin{equation}
\sup_{t\in\left(u\left(p\right)-\rho^{2},u\left(p\right)+\rho^{2}\right)}\,\,\sup_{\Sigma_{t}\cap B_{\rho}\left(p\right)}\rho\left|A\right|\,\leq\,1,\label{interior estimate: curvature}
\end{equation}
\begin{equation}
\sup_{t\in\left(u\left(p\right)-\rho^{2},u\left(p\right)+\rho^{2}\right)}\,\,\sup_{\Sigma_{t}\cap B_{\rho}\left(p\right)}\rho^{2}\left|\nabla A\right|\,\leq\,1,\label{interior estimate: covariant derivative}
\end{equation}
\begin{equation}
\inf_{t\in\left(u\left(p\right)-\rho^{2},u\left(p\right)+\rho^{2}\right)}\,\,\inf_{\Sigma_{t}\cap B_{\rho}\left(p\right)}H\left|\nabla u\left(p\right)\right|\,\geq\,\frac{1}{C\left(n\right)}.\label{interior estimate: Harnack}
\end{equation}
Let 
\begin{equation}
x\left(t\right):\left(u\left(p\right)-\frac{\rho^{2}}{2\sqrt{n-1}},\,u\left(p\right)+\frac{\rho^{2}}{2\sqrt{n-1}}\right)\rightarrow B_{\frac{\rho}{2}}\left(p\right)\label{interior estimate: curve}
\end{equation}
be the ``trajectory'' of $p$ along the MCF $\left\{ \Sigma_{t}\right\} $,
namely, 
\[
x\left(u\left(p\right)\right)=p,\quad x\left(t\right)\in\Sigma_{t},\quad\frac{d}{dt}x\left(t\right)=\vec{H}\left(x\left(t\right)\right).
\]
Notice that in (\ref{interior estimate: curve}) we have used the
fact that 
\begin{equation}
\left|\frac{d}{dt}x\left(t\right)\right|\,=\,H\left(x\left(t\right)\right)\,\leq\,\sqrt{n-1}\left|A\left(x\left(t\right)\right)\right|\,\leq\,\frac{\sqrt{n-1}}{\rho}.\label{interior estimate: position}
\end{equation}
Let $\varepsilon=\varepsilon\left(n\right)>0$ be the small constant
as specified in Definition \ref{smooth scale}. By (\ref{interior estimate: curvature}),
and the $\alpha$-noncollapsing property of the flow, there exists
$\sigma=\sigma\left(n,\alpha,\varepsilon\right)\in\left(0,\frac{1}{2}\right)$
so that $\Sigma_{t}\cap B_{\sigma\rho}\left(x\left(t\right)\right)$
is a $\frac{\varepsilon}{3}$-Lipschtiz graph over $T_{x\left(t\right)}\Sigma_{t}$.
Using the evolution of the unit normal vector (cf. \cite{M}) and
(\ref{interior estimate: covariant derivative}) we get
\begin{equation}
\left|\frac{d}{dt}N\left(x\left(t\right)\right)\right|\,=\,\left|-\nabla H\left(x\left(t\right)\right)\right|\,\leq\,C\left(n\right)\left|\nabla A\left(x\left(t\right)\right)\right|\,\leq\,\frac{C\left(n\right)}{\rho^{2}}.\label{interior estimate: direction}
\end{equation}
By (\ref{interior estimate: position}) and (\ref{interior estimate: direction}),
there is $\theta=\theta\left(n,\sigma,\varepsilon\right)\in\left(0,1\right)$
so that 
\[
\Sigma_{t}\cap B_{\frac{1}{2}\sigma\rho}\left(p\right)\,\subset\,\Sigma_{t}\cap B_{\sigma\rho}\left(x\left(t\right)\right)
\]
is a $\frac{\varepsilon}{2}$-Lipschtiz graph over  $T_{p}\Sigma_{u\left(p\right)}$
for every $t\in\left(u\left(p\right)-\theta\rho^{2},\,u\left(p\right)+\theta\rho^{2}\right)$
and that 
\begin{equation}
\sqrt{n-1}\,\theta\,\leq\,\frac{\varepsilon\sigma}{4\hat{C}},\label{interior estimate: drift of center}
\end{equation}
where $\hat{C}=\hat{C}\left(n\right)$ is the constant in the proof
of Lemma \ref{rolling ball condition}. 

Moreover, by (\ref{interior estimate: curvature}), (\ref{interior estimate: drift of center}),
and the argument in the proof of Lemma \ref{rolling ball condition}
(with slight modifications), we infer that the domains of the aforementioned
local graphs over $T_{p}\Sigma_{u\left(p\right)}$ all contain a $(n-1)$-dimensional
ball of radius $\frac{\varepsilon\sigma}{4\hat{C}}\rho$ centered
at $p$. For ease of notations, let us assume that $p=0$, $T_{p}\Sigma_{u\left(p\right)}=\mathbb{R}^{n-1}\times\left\{ 0\right\} $,
and $\frac{\nabla u\left(p\right)}{\left|\nabla u\left(p\right)\right|}=\left(0,1\right)$.
Then 
\[
\mathcal{D}\coloneqq\bigsqcup_{t\in\left(u\left(p\right)-\theta\rho^{2},u\left(p\right)+\theta\rho^{2}\right)}\Sigma_{t}\cap B_{\frac{1}{2}\sigma\rho}\left(p\right)
\]
contains the graph $x^{n+1}=f\left(x',t\right)$ satisfying 
\begin{equation}
f\left(0,u\left(p\right)\right)=0,\quad\nabla f\left(0,u\left(p\right)\right)=0,\quad\left|\nabla f\left(x',t\right)\right|\leq\frac{\varepsilon}{2},\label{interior estimate: spacial derivative}
\end{equation}
where 
\begin{equation}
x'=\left(x^{1},\cdots,x^{n-1}\right)\in B_{\frac{\varepsilon\sigma}{4\hat{C}}\rho}^{n-1},\quad t\in\left(u\left(p\right)-\theta\rho^{2},\,u\left(p\right)+\theta\rho^{2}\right).\label{interior estimate: domain}
\end{equation}
It follows from the regularity of the flow and the equation of MCF
(cf. \cite{M}) that 
\begin{equation}
\frac{\partial_{t}f}{\sqrt{1+\left|\nabla f\right|^{2}}}=\nabla\cdot\frac{\nabla f}{\sqrt{1+\left|\nabla f\right|^{2}}}=H.\label{interior estimate: MCF equation}
\end{equation}
Note by (\ref{interior estimate: Harnack}) we have $H\geq\frac{\eta^{2}}{C\left(n\right)}\rho^{-1}$,
which gives that 
\begin{equation}
\partial_{t}f=\sqrt{1+\left|\nabla f\right|^{2}}\,H\,\geq\,H\,\geq\,\frac{\eta^{2}}{C\left(n\right)}\rho^{-1}.\label{interior estimate: temporal derivaitve}
\end{equation}
 By (\ref{interior estimate: spacial derivative}), (\ref{interior estimate: domain}),
and  (\ref{interior estimate: temporal derivaitve}), we conclude
that 
\[
\mathcal{D}\,\supset\,B_{\min\left\{ \frac{\theta\eta^{2}}{2\varepsilon C\left(n\right)},\frac{\varepsilon\sigma}{4\hat{C}}\right\} \rho}^{n-1}\times B_{\frac{\theta\eta^{2}}{2C\left(n\right)}\rho}^{1}\,\supset\,B_{\gamma\left|\nabla u\left(p\right)\right|}^{n},
\]
where $\gamma=\gamma\left(n,\iota,\kappa,K,K',K'',\mathfrak{A},\mathfrak{D}\right)$. 
\end{proof}
All the higher order covariant derivatives of the second fundamental
form of $\Sigma_{t}$ follow immediately by \cite{EH}. 
\begin{prop}
\label{smooth estimate}In Proposition \ref{interior estimate}, by
taking a smaller constant if necessary, we may assume that 
\begin{equation}
\gamma=\gamma\left(n,\iota,\kappa,K,K',K'',\mathfrak{A},\mathfrak{D}\right)\,\leq\,\frac{1}{2}\left(\sqrt{1+4\kappa^{2}\dot{T}}-1\right),\label{smooth estimate: refined constant}
\end{equation}
where $\dot{T}=\dot{T}\left(n,\iota,K,K',K''\right)>0$ is the constant
in Proposition \ref{time of smooth existence}. Then for every regular
point $p\in\Omega_{\dot{T}}$, the flow $\left\{ \Sigma_{t}\right\} $
satisfies 
\[
\sup_{\Sigma_{t}\cap B_{r}\left(p\right)}r^{m+1}\left|\nabla^{m}A\right|\,\leq\,C\left(n,m\right)\quad\forall\,m\geq0,
\]
where $r=\frac{1}{2}\gamma\left|\nabla u\left(p\right)\right|$. 
\end{prop}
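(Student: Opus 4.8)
The plan is to bootstrap from the $C^{2}$-bounds already secured in Proposition \ref{interior estimate} to all higher derivatives by invoking the interior estimates of Ecker--Huisken for graphical mean curvature flow. Recall from the proof of Proposition \ref{interior estimate} that, after the normalizations $p=0$, $T_{p}\Sigma_{u(p)}=\mathbb{R}^{n-1}\times\{0\}$, and $\tfrac{\nabla u(p)}{|\nabla u(p)|}=(0,1)$, the spacetime region $\mathcal{D}$ contains a graph $x^{n+1}=f(x',t)$ over a parabolic cylinder $B^{n-1}_{c\rho}\times B^{1}_{c'\rho}$ (with $c,c'$ dimensional multiples of $\eta^{2}$ etc.) solving the graphical MCF equation \eqref{interior estimate: MCF equation}, with the a priori gradient bound $|\nabla f|\leq\varepsilon/2<1$ throughout. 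Since $\gamma$ is comparable to $|\nabla u(p)|^{-1}\rho$ up to dimensional constants, it suffices to produce, on a slightly smaller parabolic cylinder, bounds of the form $\sup r^{m+1}|\nabla^{m}A|\leq C(n,m)$ with $r=\tfrac12\gamma|\nabla u(p)|$ comparable to $\rho$.

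The key steps are as follows. First I would rescale parabolically by $\rho$: set $\tilde f(y,s)=\rho^{-1}f(\rho y,\rho^{2}s)$, so that $\tilde f$ solves the same graphical MCF equation on a unit-size parabolic cylinder $B^{n-1}_{c}\times(-c'',c'')$ with $|\tilde D\tilde f|\leq\varepsilon/2$, hence the graph is a ``locally Lipschitz entire-like'' piece with uniformly bounded height gradient. Second, I would apply the Ecker--Huisken interior estimates (\cite{EH}): for a graphical solution of MCF on a parabolic ball with a uniform gradient bound, one gets $\sup|A|\leq C(n)$ on a smaller ball, and then, by the standard iteration, $\sup|\nabla^{m}A|\leq C(n,m)$ on successively smaller parabolic balls — these are exactly the derivative estimates stated in \cite{EH} (or alternatively one may combine the $\sup|A|\leq C(n)$ bound already present in \eqref{interior estimate: curvature} with Ecker--Huisken's higher-order estimates, or with Ecker--Huisken-type parabolic Schauder bootstrapping on the quasilinear equation \eqref{interior estimate: MCF equation}). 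Third, I would undo the rescaling: each $\tilde\nabla^{m}\tilde A$ bound of the form $\leq C(n,m)$ translates into $\sup\rho^{m+1}|\nabla^{m}A|\leq C(n,m)$ on the corresponding unrescaled parabolic cylinder. Finally, since $r=\tfrac12\gamma|\nabla u(p)|\leq C(n,\dots)\rho$ and $B_{r}(p)$ lies (after shrinking $\gamma$ as in \eqref{smooth estimate: refined constant}, which also guarantees $B_{r}(p)\subset\Omega_{\dot T}$ so the flow there is still the smooth MCF one controls) inside the spacetime region where the graphical description and the above bounds hold, replacing $\rho$ by $r$ only changes the constants, giving $\sup_{\Sigma_{t}\cap B_{r}(p)}r^{m+1}|\nabla^{m}A|\leq C(n,m)$ as claimed.

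The only point requiring a little care — the ``main obstacle,'' though it is really a bookkeeping issue rather than a conceptual one — is matching the \emph{shape} of the region. The Ecker--Huisken estimates are naturally stated on parabolic balls/cylinders centered at a spacetime point, whereas $\mathcal{D}$ in the proof of Proposition \ref{interior estimate} is the union of the graph pieces $\Sigma_{t}\cap B_{\frac12\sigma\rho}(p)$ over $t\in(u(p)-\theta\rho^{2},u(p)+\theta\rho^{2})$, and the constants $\sigma,\theta,\eta$ there depend on $n,\alpha,\lambda$ and hence on all of $\iota,\kappa,K,K',K'',\mathfrak A,\mathfrak D$. So one must: (i) check that after rescaling one genuinely has a graph over a fixed-size (dimensional) parabolic domain — this is exactly the containment $\mathcal{D}\supset B^{n-1}_{\min\{\cdots\}\rho}\times B^{1}_{\cdots\rho}$ established at the end of that proof, so the domain size is a definite fraction of $\rho$; (ii) absorb that fraction, together with the Ecker--Huisken ball-shrinking factors, into the final radius, which is harmless since \eqref{smooth estimate: refined constant} already allows us to take $\gamma$ as small as we like. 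Because $|\nabla f|\leq\varepsilon/2$ is a \emph{universal} (dimensional) bound, the resulting derivative constants $C(n,m)$ depend only on $n$ and $m$, not on the geometry of $\Sigma_{0}$ — which is precisely the content of the proposition. I would therefore organize the write-up as: rescale, quote \cite{EH}, rescale back, and invoke \eqref{smooth estimate: refined constant} to fit everything inside $B_{r}(p)$.
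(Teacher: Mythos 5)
Your overall strategy (reduce to a uniformly Lipschitz graphical MCF and quote Ecker--Huisken) is the same as the paper's, but there is a genuine gap: you never use the hypothesis (\ref{smooth estimate: refined constant}) for what it is actually for, and as a consequence your argument does not deliver constants depending only on $n$ and $m$. The Ecker--Huisken interior estimates are \emph{interior in time}: at time $t$ they carry a factor of the form $\bigl(\tfrac{1}{R^{2}}+\tfrac{1}{t-t_{0}}\bigr)^{(m+1)/2}$, where $t_{0}$ is the time since which the flow is known to be a controlled graph in the ball of radius $R$. Working on the region $\mathcal{D}$ from the proof of Proposition \ref{interior estimate}, the backward time extent available is only $\theta\rho^{2}$, where $\theta=\theta\left(n,\sigma,\varepsilon\right)$ and $\sigma=\sigma\left(n,\alpha,\varepsilon\right)$ depend on the noncollapsing constant $\alpha=\alpha\left(n,\iota,\kappa\right)$; the resulting constants therefore depend on $\iota,\kappa,\dots$ and not just on $\left(n,m\right)$, contrary to the statement. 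Your assertion that the universal gradient bound $\left|\nabla f\right|\leq\varepsilon/2$ alone forces $C\left(n,m\right)$ overlooks this $1/\left(t-t_{0}\right)$ term, and your reading of (\ref{smooth estimate: refined constant}) as guaranteeing $B_{r}\left(p\right)\subset\Omega_{\dot{T}}$ is not what that hypothesis does (nor is that containment needed anywhere).

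The actual role of (\ref{smooth estimate: refined constant}) is a waiting-time estimate: combined with the Lipschitz bound $\left|u\left(x\right)-u\left(p\right)\right|\leq\kappa^{-1}\left|x-p\right|$ from Proposition \ref{gradient estimate} and $u\left(p\right)>\dot{T}$, it yields $u>4r^{2}$ on all of $B_{2r}\left(p\right)$, i.e.\ $\Sigma_{t}\cap B_{2r}\left(p\right)=\emptyset$ for every $t\in\left[0,4r^{2}\right]$. Since the smooth scale at $p$ exceeds $2r$, every nonempty $\Sigma_{t}\cap B_{2r}\left(p\right)$ is an $\varepsilon\left(n\right)$-Lipschitz graph over $T_{p}\Sigma_{u\left(p\right)}$, so Ecker--Huisken applies on $B_{2r}\left(p\right)$ with initial time $0$, and at any time $t>4r^{2}$ at which the ball meets the flow the factor $1/t$ is at most $1/\left(4r^{2}\right)$; this is exactly what makes the constants purely dimensional. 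You could also salvage your route by working on the full Haslhofer--Kleiner parabolic cylinder $B_{\rho}\left(p\right)\times\left(u\left(p\right)-\rho^{2},u\left(p\right)+\rho^{2}\right)$, whose backward extent is a definite multiple of the square of its spatial radius; but as written, with the region $\mathcal{D}$ and no accounting of the time direction, the proof does not establish the stated estimate.
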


\begin{proof}
Fix a regular point $p\in\Omega_{\dot{T}}$ and let $r=\frac{1}{2}\gamma\left|\nabla u\left(p\right)\right|$.
By Proposition \ref{gradient estimate}, for every $x\in B_{2r}\left(p\right)\subset\Omega_{0}$
(see Proposition \ref{interior estimate}) we have 
\[
\left|u\left(x\right)-u\left(p\right)\right|\,\leq\,\kappa^{-1}\left|x-p\right|\,<\,2\kappa^{-1}r\,=\,\kappa^{-1}\gamma\left|\nabla u\left(p\right)\right|,
\]
which yields that 
\begin{equation}
u\left(x\right)\,>\,u\left(p\right)-\kappa^{-1}\gamma\left|\nabla u\left(p\right)\right|\,>\,\dot{T}-\kappa^{-1}\gamma\left|\nabla u\left(p\right)\right|\label{smooth estimate: time}
\end{equation}
\[
=\left(\dot{T}H\left(p\right)-\kappa^{-1}\gamma\right)\left|\nabla u\left(p\right)\right|\,\geq\,\left(\kappa\dot{T}-\kappa^{-1}\gamma\right)\left|\nabla u\left(p\right)\right|\,\geq\,4r^{2}.
\]
Note that the last inequality comes from Proposition \ref{gradient estimate}
and (\ref{smooth estimate: refined constant}). 

Since $\left\{ \Sigma_{t}\right\} _{t\geq0}$ is a MCF in $B_{2r}\left(p\right)$
with 
\[
0<\left(N\cdot N\left(p\right)\right)^{-1}\leq\sqrt{1+\varepsilon^{2}\left(n\right)}
\]
(see Definition \ref{smooth scale} and Proposition \ref{interior estimate}),
the Ecker-Huisken smooth estimates for MCF (cf. \cite{EH}) gives
that 
\[
\sup_{\Sigma_{t}\cap B_{r}\left(p\right)}r^{m+1}\left|\nabla^{m}A\right|\,\leq\,C\left(n,m\right)\quad\forall\,m\in\mathbb{N}\cup\left\{ 0\right\} 
\]
for every $t>4r^{2}$. The conclusion follows immediately by noting
that (\ref{smooth estimate: time}) gives that $\Sigma_{t}\cap B_{2r}\left(p\right)=\emptyset$
for $t\in\left[0,4r^{2}\right]$.
\end{proof}
We are in a position to give an estimate of the Hessian $\nabla^{2}u$,
which is essential to Proposition \ref{C1 compactness} in Section
\ref{smooth convergence off singularities}.
\begin{prop}
\label{Hessian estimate}Let $\iota,\kappa,K,K',K'',\mathfrak{A},\mathfrak{D}$
be positive constants such that 
\[
\textrm{rad}\,\,\Sigma_{0}\geq\iota,\quad\min_{\Sigma_{0}}H\geq\kappa,
\]
\[
\max_{\Sigma_{0}}\left|A\right|\leq K,\quad\max_{\Sigma_{0}}\left|\nabla A\right|\leq K',\quad\max_{\Sigma_{0}}\left|\nabla^{2}A\right|\leq K'',
\]
\[
\mathcal{H}^{n-1}\left(\Sigma_{0}\right)\leq\mathfrak{A},\quad\textrm{diam}\,\Sigma_{0}\leq\mathfrak{D}.
\]
Then the Hessian of the arrival time function $u$ satisfies 
\[
\sup_{\Omega_{0}}\left|\nabla^{2}u\right|\,\leq\,C\left(n,\iota,\kappa,K,K',K'',\mathfrak{A},\mathfrak{D}\right).
\]
\end{prop}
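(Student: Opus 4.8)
The plan is to establish a uniform bound on $|\nabla^2 u|$ by splitting $\bar\Omega_0$ into three regions and treating each separately: the ``collar'' near $\Sigma_0$ where the classical short-time theory applies, the regular interior points where the noncollapsing estimates of Proposition~\ref{interior estimate} and Proposition~\ref{smooth estimate} control the geometry of the level sets, and the singular set $\mathcal{S}$ together with its vicinity, where one invokes the twice-differentiability of $u$ from \cite{CM4} together with a second-derivative estimate coming from the cylindrical/spherical structure.

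First, for $x\in\bar\Omega_0$ with $u(x)\le \dot T$ (equivalently, $x$ close to $\Sigma_0$), I would use Proposition~\ref{time of smooth existence}: on $\Sigma_0\times[0,2\dot T]$ the flow is a normal graph of $v$, with $|\nabla^2 v|\le C(n,\iota,K,K',K'')$ by \eqref{time of smooth existence: Hessian} and $\partial_t v\ge\kappa>0$ by Proposition~\ref{moving distance}. Since the map $(\cdot,t)\mapsto x_0+v_t N_0$ is a $C^1$-diffeomorphism between $\Sigma_0\times[0,\dot T]$ and its image in $\bar\Omega_0$, one can express $u$ there as the (smooth) inverse of $v$ in the time variable; the bound $\partial_t v\ge\kappa$ keeps this inverse from degenerating, and a direct computation of $\nabla^2 u$ in terms of $\partial_t v$, $\nabla\partial_t v$, $\nabla^2 v$, and the ambient geometry of $\Sigma_0$ gives a bound $|\nabla^2 u|\le C(n,\iota,\kappa,K,K')$ on $\bar\Omega_0\setminus\Omega_{\dot T}$. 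This is the purely classical part.

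Second, for a regular point $p\in\Omega_{\dot T}$, set $r=\tfrac12\gamma|\nabla u(p)|$ as in Proposition~\ref{smooth estimate}. Writing the level sets near $p$ as a graph $x^{n+1}=f(x',t)$ over $T_p\Sigma_{u(p)}$ solving the MCF equation \eqref{interior estimate: MCF equation}, Proposition~\ref{smooth estimate} with $m=1$ controls $r^2|\nabla A|$ and $r|A|$; together with $H=|\nabla u|^{-1}\ge\kappa$ (Proposition~\ref{gradient estimate}) and the Harnack-type lower bound \eqref{interior estimate: Harnack}, this gives $C^2$ (indeed $C^\infty$) bounds on $f$ on a ball of radius $c\,r$ around $(0,u(p))$. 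Since $u$ is recovered by inverting $x^{n+1}=f(x',t)$ for $t$ — and $\partial_t f\ge H\ge\kappa>0$ keeps this inversion nondegenerate — one computes $\nabla^2 u(p)$ in terms of the derivatives of $f$ up to second order, all of which are bounded by $C(n)/r^{?}$ times the appropriate power; the key point is that the scale-invariant combination is $|\nabla u(p)|^2|\nabla^2 u(p)|\le C(n)$, but since also $|\nabla u(p)|\le\kappa^{-1}$ is bounded \emph{above}, one does \emph{not} directly get a bound — rather, one must use that the bound one actually needs is $|\nabla^2 u|\le C$, which follows from the graph computation because $\nabla^2 u$ is built from $\partial_t f$, $\partial_t^2 f$, $\nabla\partial_t f$, $\nabla^2 f$ divided by powers of $\partial_t f\ge\kappa$, and these are bounded by absolute constants once $r|A|\le 1$, $r^2|\nabla A|\le1$. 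Carrying out this inversion computation carefully is routine but must be done.

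Third — and this is the main obstacle — I must bound $|\nabla^2 u|$ at the singular points and uniformly as $p\to\mathcal{S}$. At a singular point $q$, \cite{CM4} gives that $u$ is twice differentiable at $q$ with $\nabla u(q)=0$, and in fact the Hessian is (up to the sign convention) $-\tfrac{1}{2(n-k-1)}$ times a projection onto the ``spherical'' directions for a round or cylindrical point — so $|\nabla^2 u(q)|$ is a universal constant. The difficulty is continuity/uniformity: one needs that for regular $p$ near $\mathcal{S}$, the estimate from step two does not blow up. Here the honest approach is to revisit the proof of the $C^{1,1}$ (or $C^{2}$) regularity of $u$ on the whole of $\bar\Omega_{\dot T}$ as in \cite{CM4} (or to use the structure theorems \cite{CM2}, \cite{CM3} together with Haslhofer--Kleiner \cite{HK}): the uniform two-convexity at regular points and the $\alpha$-noncollapsing bound force the level sets near any point of $\Omega_{\dot T}$ to be uniformly controlled at scale $H^{-1}=|\nabla u|$, and the arrival-time Hessian, being a scale-invariant quantity of the flow, inherits a bound depending only on $\alpha$, $\lambda$, $n$; combined with steps one and two (which handle the boundary collar and give the explicit constant dependence), this yields $\sup_{\Omega_0}|\nabla^2 u|\le C(n,\iota,\kappa,K,K',K'',\mathfrak A,\mathfrak D)$. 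I expect the delicate point to be making the passage from ``bounded at each regular point with a possibly $p$-dependent constant'' to ``uniformly bounded'' rigorous near $\mathcal{S}$; the resolution is that the graph estimates of step two are genuinely scale-invariant and the scale $|\nabla u(p)|$ only enters through $H^{-1}$, so after dividing out one obtains a bound with no dependence on how small $|\nabla u(p)|$ is.
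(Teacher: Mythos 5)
Your decomposition (boundary collar via the short-time graph estimates, regular interior points via the noncollapsing scale $r\sim\gamma\left|\nabla u\left(p\right)\right|$, singular points via the twice-differentiability result of \cite{CM4}) is exactly the paper's, and the argument goes through; but two local points need correcting. First, in your step two the cancellation does \emph{not} come from dividing by powers of $\partial_{t}f\geq\kappa$: with only that lower bound, a term like $\partial_{t}^{2}f/\left(\partial_{t}f\right)^{3}\lesssim r^{-3}\kappa^{-3}$ still blows up as $p\rightarrow\mathcal{S}$. What saves the argument is the scale-invariant lower bound $r\,\partial_{t}f\geq\frac{\gamma}{4\Lambda}$ from Corollary \ref{local graph estimate} (equivalently $rH\gtrsim\gamma$ from (\ref{interior estimate: Harnack})), so that each factor $r^{-\left(m+2l-1\right)}$ coming from $\partial_{t}^{l}\nabla^{m}f$ is cancelled by the matching power of $r\,\partial_{t}f$ in the denominator --- which is precisely the ``scale-invariance'' you correctly invoke only at the end of step three. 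The paper sidesteps the graph inversion entirely by quoting the identities $-\nabla^{2}u\cdot\left(e_{i}\otimes e_{j}\right)=A\left(e_{i},e_{j}\right)/H$, $-\nabla^{2}u\cdot\left(N\otimes e_{i}\right)=\nabla H\cdot e_{i}/H^{2}$, and $-\nabla^{2}u\cdot\left(N\otimes N\right)=\triangle H/H^{3}+\left|A\right|^{2}/H^{2}$ from \cite{CM4}, for which Proposition \ref{smooth estimate} together with $1/H=\left|\nabla u\right|\sim r/\gamma$ gives the bound immediately and makes the cancellation transparent. Second, your concern about uniformity as $p\rightarrow\mathcal{S}$ is moot: the statement is a pointwise supremum over $\Omega_{0}$, the regular-point bound is already independent of how small $\left|\nabla u\left(p\right)\right|$ is, and at singular points \cite{CM4} gives an explicit universal Hessian, so no $C^{1,1}$ regularity or continuity of $\nabla^{2}u$ near $\mathcal{S}$ is required.
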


\begin{proof}
If $x\in\Omega_{0}$ is regular point of the flow, say $x\in\Sigma_{t}$
with $t=u\left(x\right)$. Then let $N\left(x\right)=\frac{\nabla u\left(x\right)}{\left|\nabla u\left(x\right)\right|}$
be the unit normal vector and $\left\{ e_{1},\cdots,e_{n-1}\right\} $
be an orthonormal basis of $T_{x}$$\Sigma_{t}$, by the calculations
in \cite{CM4} we have 
\begin{equation}
-\nabla^{2}u\cdot\left(e_{i}\otimes e_{j}\right)\,=\,\frac{A\left(e_{i},e_{j}\right)}{H},\label{Hessian estimate: calculation}
\end{equation}
\[
-\nabla^{2}u\cdot\left(N\otimes e_{i}\right)\,=\,\frac{\nabla H\cdot e_{i}}{H^{2}},
\]
\[
-\nabla^{2}u\cdot\left(N\otimes N\right)\,=\,\frac{\triangle H}{H^{3}}\,+\,\frac{\left|A\right|^{2}}{H^{2}}.
\]
where $A$ denotes the second fundamental form of $\Sigma_{t}$ and
$\triangle$ is the Laplace-Beltrami operator on $\Sigma_{t}$. Let
$\dot{T}$ be as given in Proposition \ref{time of smooth existence}.
Given a point $p\in\Omega_{0}$, below we divide into three cases
to estimate $\left|\nabla^{2}u\left(p\right)\right|$.

\uline{Case 1}: $p\in\Omega_{0}\setminus\Omega_{\dot{T}}$, i.e.,
$u\left(p\right)\in\left(0,\dot{T}\right]$. By (\ref{short time curvature estimate}),
(\ref{short time covariant derivatives of curvature estimate}), and
Proposition \ref{gradient estimate}, we have
\[
\frac{\left|A\left(p\right)\right|}{H\left(p\right)}\,\leq\,\frac{C\left(n,\iota,\kappa,K,K',K''\right)}{\kappa},
\]
\[
\frac{\left|\nabla H\left(p\right)\right|}{H^{2}\left(p\right)}\,\leq\,C\left(n\right)\frac{\left|\nabla A\left(p\right)\right|}{H^{2}\left(p\right)}\,\leq\,\frac{C\left(n,\iota,\kappa,K,K',K''\right)}{\kappa^{2}},
\]
\[
\frac{\left|\triangle H\left(p\right)\right|}{H^{3}\left(p\right)}\,\leq\,C\left(n\right)\frac{\left|\nabla^{2}A\left(p\right)\right|}{H^{3}\left(p\right)}\,\leq\,\frac{C\left(n,\iota,\kappa,K,K',K''\right)}{\kappa^{3}}.
\]
It follows by (\ref{Hessian estimate: calculation}) that $\left|\nabla^{2}u\left(p\right)\right|\leq C\left(n,\iota,\kappa,K,K',K''\right)$.

\uline{Case 2}: $p\in\Omega_{\dot{T}}$ is a regular point. By
Definition \ref{smooth scale}, Proposition \ref{interior estimate},
and Proposition \ref{smooth estimate}, we have 
\[
\frac{\left|A\left(p\right)\right|}{H\left(p\right)}=\left|A\left(p\right)\right|\left|\nabla u\left(p\right)\right|\leq\frac{C\left(n\right)}{\gamma\left(n,\iota,\kappa,K,K',K'',\mathfrak{A},\mathfrak{D}\right)},
\]
\[
\frac{\left|\nabla H\left(p\right)\right|}{H^{2}\left(p\right)}\leq C\left(n\right)\left|\nabla A\left(p\right)\right|\left|\nabla u\left(p\right)\right|^{2}\leq\frac{C\left(n\right)}{\gamma^{2}\left(n,\iota,\kappa,K,K',K'',\mathfrak{A},\mathfrak{D}\right)},
\]
\[
\frac{\left|\triangle H\left(p\right)\right|}{H^{3}\left(p\right)}\leq C\left(n\right)\left|\nabla^{2}A\left(p\right)\right|\left|\nabla u\left(p\right)\right|^{3}\leq\frac{C\left(n\right)}{\gamma^{3}\left(n,\iota,\kappa,K,K',K'',\mathfrak{A},\mathfrak{D}\right)}.
\]
It follows by (\ref{Hessian estimate: calculation}) that $\left|\nabla^{2}u\left(p\right)\right|\leq C\left(n,\iota,\kappa,K,K',K'',\mathfrak{A},\mathfrak{D}\right)$.

\uline{Case 3}: $p\in\Omega_{\dot{T}}$ is a singular point. By
\cite{CM4}, 
\[
-\nabla^{2}u\left(p\right)=\frac{1}{n-1}I_{n}
\]
if $p$ is a round point and 
\[
-\nabla^{2}u\left(p\right)=\frac{1}{n-2}\mathcal{O}\left(\begin{array}{cc}
I_{n-1} & 0\\
0 & 0
\end{array}\right)\mathcal{O}^{-1},
\]
if $p$ is a cylindrical point, where 
\[
\mathcal{O}=\left[N\left(p\right),e_{1},\cdots,e_{n-1}\right]
\]
is the orthogonal matrix such that $e_{n}$ is an unit vector along
the axial direction of the tangent cylinder at $p$ and $\left\{ e_{1},\cdots,e_{n-1}\right\} $
is an orthonormal basis of the orthogonal complement of $\textrm{span}\left\{ N\left(p\right),e_{n-1}.\right\} $.
In either case, $\left|\nabla^{2}u\left(p\right)\right|\leq C\left(n\right).$
\end{proof}
The following corollary is based on Lemma 3.10 in \cite{G1}, Proposition
\ref{smooth estimate}, and Proposition \ref{Hessian estimate}. It
will be used in Lemma \ref{local convergence} in Section \ref{smooth convergence off singularities}. 
\begin{cor}
\label{local graph estimate}Let $p\in\Omega_{\dot{T}}$ be a regular
point of the flow $\left\{ \Sigma_{t}\right\} $ and choose 
\begin{equation}
r\in\left[\frac{1}{4}\gamma\left|\nabla u\left(p\right)\right|,\,\frac{1}{2}\gamma\left|\nabla u\left(p\right)\right|\right],\label{local graph estimate: scale}
\end{equation}
where $\gamma>0$ is the constant in Proposition \ref{interior estimate}
and Proposition \ref{smooth estimate}. 

If $\Sigma_{t}\cap B_{2r}\left(p\right)$ is a $2\varepsilon\left(n\right)$-Lipschitz
graph of $f\left(\cdot,t\right)$ over some hyperplane $\Pi$ (which
can be chosen as, but not restricted to, $T_{p}\Sigma_{u\left(p\right)}$
by Proposition \ref{interior estimate} and (\ref{local graph estimate: scale})).
Then the graph of $f\left(\cdot,t\right)$ restricted in $B_{r}\left(p\right)$
would satisfy
\begin{equation}
r^{m+2l-1}\left|\partial_{t}^{l}\nabla^{m}f\right|\,\leq\,C\left(n,m,l\right)\quad\textrm{whenever}\,\,m+2l\geq2\label{local graph estimate: derivatives}
\end{equation}
and
\[
r\,\partial_{t}f\,\geq\,\frac{\gamma}{4\Lambda},
\]
where $\Lambda=\Lambda\left(n,\iota,\kappa,K,K',K'',\mathfrak{A},\mathfrak{D}\right)>0$
is a constant.
\end{cor}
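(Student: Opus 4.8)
\emph{Proof proposal.} The plan is to recover the graph function $f$ from the arrival time function and differentiate, exploiting the scale-invariant control of $u$ and its derivatives near $p$ supplied by the previous results. Write $R=|\nabla u(p)|$, so that $r\in[\tfrac14\gamma R,\tfrac12\gamma R]$ and in particular $2r\le\gamma R$. Since $p\in\Omega_{\dot T}$ is regular, Proposition \ref{interior estimate} provides a smooth spacetime neighborhood of $p$ of parabolic size $\gtrsim r$, on which (after rescaling Proposition \ref{smooth estimate} to the scale $r$; the factor-$2$ slack in \eqref{local graph estimate: scale} only changes constants) one has, on $\Sigma_t\cap B_{2r}(p)$, the curvature bounds $r^{m+1}|\nabla^m A|\le C(n,m)$ together with $H\ge\tfrac1{C(n)}R^{-1}$ from \eqref{interior estimate: Harnack}. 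Through the Colding--Minicozzi identities \eqref{Hessian estimate: calculation} together with Proposition \ref{Hessian estimate}, these translate into
\[
c(n)\,r\ \le\ |\nabla u|\ \le\ C(n)\,R,\qquad |\nabla^2 u|\ \le\ C,\qquad |\nabla^m u|\ \le\ C\,R^{\,2-m}\ \ (m\ge 3)
\]
on $\Sigma_t\cap B_{2r}(p)$ (here $|\nabla u|=1/H$, and all constants other than $C(n)$ may depend on the geometric data of $\Sigma_0$, i.e. on $n,\iota,\kappa,K,K',K'',\mathfrak A,\mathfrak D$).

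Now assume $\Sigma_t\cap B_{2r}(p)$ is a $2\varepsilon(n)$-Lipschitz graph of $f(\cdot,t)$ over a hyperplane $\Pi$, and pick the unit normal $\nu_\Pi$ to $\Pi$ with $N\cdot\nu_\Pi>0$ along the flow (legitimate because the Lipschitz bound confines $N$ to a fixed cone about $\pm\nu_\Pi$, and the monotone flow moves in the $N$-direction). Over $B_r(p)\cap\Pi$ the graph is characterized implicitly by $u(\xi,f(\xi,t))=t$, so by the implicit function theorem---this is the mechanism behind Lemma 3.10 in \cite{G1}---each $\partial_t^l\nabla^m f$ is a universal polynomial in $\{\nabla^j u:j\le m+2l\}$ divided by a power of $\partial_{\nu_\Pi}u=\nabla u\cdot\nu_\Pi$. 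Since $|\partial_{\nu_\Pi}u|=|\nabla u|\,|N\cdot\nu_\Pi|$, the Lipschitz bound gives $c(n)\,r\le|\partial_{\nu_\Pi}u|\le|\nabla u|\le C(n)R$; substituting this and the $u$-estimates above and balancing powers of $r$ (using $R\le 4r/\gamma$) yields $r^{m+2l-1}|\partial_t^l\nabla^m f|\le C(n,m,l)$ whenever $m+2l\ge 2$, which is \eqref{local graph estimate: derivatives}.

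Finally, differentiating $u(\xi,f(\xi,t))=t$ in $t$ gives $\partial_t f=(\partial_{\nu_\Pi}u)^{-1}>0$, so, using $\partial_{\nu_\Pi}u\le|\nabla u|\le C(n)R$ and $r\ge\tfrac14\gamma R$,
\[
r\,\partial_t f\ =\ \frac{r}{\partial_{\nu_\Pi}u}\ \ge\ \frac{r}{C(n)R}\ \ge\ \frac{\gamma}{4\,C(n)},
\]
which is the last assertion with $\Lambda=C(n)$, or with $\Lambda=\Lambda(n,\iota,\kappa,K,K',K'',\mathfrak A,\mathfrak D)$ once it is rolled into the constants of the earlier steps.

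Granted Propositions \ref{smooth estimate} and \ref{Hessian estimate} and Lemma 3.10 of \cite{G1}, the whole argument is bookkeeping, and I expect the only points requiring care to be: (i) checking that allowing an arbitrary $2\varepsilon(n)$-Lipschitz hyperplane $\Pi$ in place of $T_p\Sigma_{u(p)}$ costs only constants---which holds because such a $\Pi$ lies within a controlled angle of $T_p\Sigma_{u(p)}$ (itself an $\varepsilon(n)$-graph direction by Definition \ref{smooth scale} and Proposition \ref{interior estimate}), so $\partial_{\nu_\Pi}u$ stays pinched between fixed multiples of $|\nabla u|$---and (ii) making sure the portion of spacetime over which $f$ is considered stays inside the regular neighborhood where the curvature and Harnack estimates of Propositions \ref{interior estimate}--\ref{smooth estimate} are available, which is exactly what keeping $2r\le\gamma|\nabla u(p)|$ guarantees.
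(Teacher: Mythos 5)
Your treatment of the spatial derivatives matches the paper's (both reduce to Lemma 3.10 of \cite{G1} applied to the scaled curvature bounds of Proposition \ref{smooth estimate}), but for the mixed derivatives and the final inequality you take a genuinely different route. The paper never goes back to the arrival time function for \eqref{local graph estimate: derivatives}: it uses that $f$ satisfies the graphical MCF equation $\partial_{t}f=\bigl(I-\frac{\nabla f\otimes\nabla f}{1+\left|\nabla f\right|^{2}}\bigr)\cdot\nabla^{2}f$ and differentiates it repeatedly in $t$, so each time derivative is traded for two spatial derivatives that are already controlled with purely dimensional constants --- which is exactly why the constant can be taken to be $C\left(n,m,l\right)$. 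Your route via implicit differentiation of $u\left(\xi,f\left(\xi,t\right)\right)=t$ instead needs the bounds $\left|\nabla^{j}u\right|\leq CR^{2-j}$ for $j\geq3$, which you attribute to \eqref{Hessian estimate: calculation} and Proposition \ref{Hessian estimate}; those only control $\nabla^{2}u$, and producing the higher derivatives means differentiating the Colding--Minicozzi identities while tracking the moving frame --- true but nontrivial work you have not done, and it is precisely the work the paper's PDE argument avoids. Relatedly, because your lower bound on $\partial_{\nu_{\Pi}}u$ and your higher-order $u$-estimates carry constants depending on the full geometric data, your version of \eqref{local graph estimate: derivatives} comes out with data-dependent constants rather than $C\left(n,m,l\right)$; this does not break the later applications, but it is weaker than what is claimed. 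For the last inequality your Harnack-based bound $\left|\nabla u\right|\leq C\left(n\right)\left|\nabla u\left(p\right)\right|$ would actually yield a dimensional $\Lambda$, which is stronger than the paper's (the paper instead integrates the Hessian bound of Proposition \ref{Hessian estimate} along $B_{r}\left(p\right)$, getting a data-dependent $\Lambda$); the one point to verify there is that every time $t$ for which $\Sigma_{t}$ meets $B_{2r}\left(p\right)$ lies inside the parabolic window where \eqref{interior estimate: Harnack} holds, which may require shrinking $\gamma$ further --- the Hessian route is valid at all regular points and sidesteps this.
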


\begin{proof}
By Lemma 3.10 in \cite{G1} and Proposition \ref{smooth estimate},
we have 
\begin{equation}
r^{m-1}\left|\nabla^{m}f\right|\leq C\left(n,m\right)\quad\forall\,m\geq2.\label{local graph estimate: spacial derivatives}
\end{equation}
Since $\left\{ \Sigma_{t}\right\} $ is regular and graphical in $B_{2r}\left(p\right)$
by Definition \ref{smooth scale}, Proposition \ref{interior estimate},
and (\ref{local graph estimate: scale}), the function $f$ would
satisfy 
\begin{equation}
\partial_{t}f=\sqrt{1+\left|\nabla f\right|^{2}}\,\nabla\cdot\frac{\nabla f}{\sqrt{1+\left|\nabla f\right|^{2}}}=\left(I-\frac{\nabla f\otimes\nabla f}{1+\left|\nabla f\right|^{2}}\right)\cdot\nabla^{2}f\label{local graph estimate: MCF equation}
\end{equation}
(cf. \cite{M}). Using (\ref{local graph estimate: spacial derivatives})
and keeping on differentiating (\ref{local graph estimate: MCF equation})
with respect to $t$, we then obtain (\ref{local graph estimate: derivatives}).

Additionally, by Definition \ref{smooth scale} and Proposition \ref{interior estimate},
we have $B_{2r}\left(p\right)\subset\Omega_{0}\setminus\mathcal{S}$.
It follows from Proposition \ref{Hessian estimate}, and (\ref{local graph estimate: scale})
that
\[
\left|\nabla u\left(x\right)\right|\,\leq\,\left|\nabla u\left(p\right)\right|+C\left(n\right)\left\Vert \nabla^{2}u\right\Vert _{C\left(\bar{B}_{r}\left(p\right)\right)}r\,\leq\,\Lambda\left|\nabla u\left(p\right)\right|\quad\forall\,x\in B_{r}\left(p\right),
\]
where $\Lambda=\Lambda\left(n,\iota,\kappa,K,K',K'',\mathfrak{A},\mathfrak{D}\right)>0$
is a constant. Thus, by (\ref{local graph estimate: scale}) and (\ref{local graph estimate: MCF equation})
we obtain
\[
r\,\partial_{t}f=r\sqrt{1+\left|\nabla f\right|^{2}}\,H\,\geq\,rH\,\geq\,\frac{\gamma}{4}\,\frac{\left|\nabla u\left(p\right)\right|}{\left|\nabla u\right|}\,\geq\,\frac{\gamma}{4\Lambda}.
\]
\end{proof}

\subsection{Smooth convergence off singularities\label{smooth convergence off singularities}}

Let $\iota,\kappa,K,K',K'',\mathfrak{A},\mathfrak{D}$ be positive
constants such that 
\begin{equation}
\textrm{rad}\,\,\Sigma_{0}>\iota,\quad\min_{\Sigma_{0}}H>\kappa,\label{uniform parameters}
\end{equation}
\[
\max_{\Sigma_{0}}\left|A\right|<K,\quad\max_{\Sigma_{0}}\left|\nabla A\right|<K',\quad\max_{\Sigma_{0}}\left|\nabla^{2}A\right|<K'',
\]
\[
\mathcal{H}^{n-1}\left(\Sigma_{0}\right)<\mathfrak{A},\quad\textrm{diam}\,\Sigma_{0}<\mathfrak{D}.
\]
Let $\left\{ \Sigma_{0}^{k}\right\} _{k\in\mathbb{N}}$ be a sequence
of closed connected hypersurfaces tending to $\Sigma_{0}$ in the
$C^{4}$ topology (see Proposition \ref{time of smooth existence}).
\uline{Due to the convergence, we may assume for simplicity that
(\mbox{\ref{uniform parameters}}) hold with \mbox{$\Sigma_{0}^{k}$}
in place of \mbox{$\Sigma_{0}$} for every \mbox{$k\in\mathbb{N}$}
and that Proposition \mbox{\ref{time of smooth existence}} holds
for every \mbox{$k\in\mathbb{N}$}.}

With the uniform estimates from Section \ref{classical theory of MCF}
and Section \ref{interior estimates for mean-convex LSF}, in this
subsection we shall be equipped to prove Theorem \ref{stability theorem}.
The critical part is to show that $u^{k}$ (i.e., the arrival time
function of $\left\{ \Sigma_{t}^{k}\right\} $, see Proposition \ref{time of smooth existence})
converges locally in the $C^{1}$ topology to $u$ (see Proposition
\ref{C0 compactness} and Proposition \ref{C1 compactness}). The
convergence signifies that if $p$ is a regular point of $\left\{ \Sigma_{t}\right\} $,
because of
\[
\lim_{k\rightarrow\infty}\left|\nabla u^{k}\left(p\right)\right|\,=\,\left|\nabla u\left(p\right)\right|
\]
and that the smooth scale of $\left\{ \Sigma_{t}^{k}\right\} $ at
$p$ depends on $\left|\nabla u^{k}\left(p\right)\right|$ (see Proposition
\ref{interior estimate}), it is not surprising that $\left\{ \Sigma_{t}^{k}\right\} $
would converge locally smoothly to $\left\{ \Sigma_{t}\right\} $
near $p$ as $k\rightarrow\infty$. This is the core of Corollary
\ref{local convergence}, from which Theorem \ref{stability theorem}
follows.

Let us start with the uniform convergence of the arrival time functions.
Note that $u:\bar{\Omega}_{0}\rightarrow\left[0,\infty\right)$ can
be extended continuously to $\mathbb{R}^{n}$ simply by interpreting
it to be zero on $\mathbb{R}^{n}\setminus\bar{\Omega}_{0}$. The same
is true for every $u^{k}:\bar{\Omega}_{0}^{k}\rightarrow\left[0,\infty\right)$.
The following proposition is based on Proposition \ref{gradient estimate}
and the uniqueness of viscosity solutions to the Dirichlet problem
(\ref{level set flow}) (cf. \cite{ES}).
\begin{prop}
\label{C0 compactness}Upon identifying the arrival time function
$u^{k}$ with its natural continuous extension (by interpreting it
to be zero outside $\bar{\Omega}_{0}^{k}$), $u^{k}$ converges uniformly
to $u$ in $\mathbb{R}^{n}$ as $k\rightarrow\infty$.
\end{prop}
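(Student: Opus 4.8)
The plan is to establish the uniform convergence $u^k \to u$ on $\mathbb{R}^n$ in two stages: first obtain a uniform bound on the $u^k$'s together with equicontinuity, so that a subsequence converges uniformly to some limit function; then show any such limit must be a viscosity solution of the Dirichlet problem (\ref{level set flow}), so by the uniqueness theorem of \cite{ES} the limit equals $u$, forcing the whole sequence to converge.

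First I would record the uniform Lipschitz bound. By our standing assumption (\ref{uniform parameters}) holds with $\Sigma_0^k$ in place of $\Sigma_0$, so Proposition \ref{gradient estimate} applies to each $u^k$ and gives $\sup_{\Omega_0^k}|\nabla u^k| \le \kappa^{-1}$, uniformly in $k$. Hence each (extended) $u^k$ is $\kappa^{-1}$-Lipschitz on $\bar\Omega_0^k$ and vanishes outside; since the $\Sigma_0^k$ converge in $C^4$, they are all contained in a fixed large ball, so $\sup_k \sup_{\mathbb{R}^n} u^k \le \sup_k T_{ext}^k < \infty$ (the extinction time being the max of $u^k$, controlled by the uniform diameter bound and the lower mean-curvature bound $\kappa$). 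One also checks the extended $u^k$ are uniformly Lipschitz across $\Sigma_0^k$: if $x \in \bar\Omega_0^k$ and $y \notin \bar\Omega_0^k$, pick $z$ on the segment $[x,y]$ lying on $\Sigma_0^k$, and use $u^k(x) = |u^k(x) - u^k(z)| \le \kappa^{-1}|x-z| \le \kappa^{-1}|x-y|$. Thus $\{u^k\}$ is uniformly bounded and uniformly Lipschitz on $\mathbb{R}^n$, so by Arzel\`a--Ascoli every subsequence has a further subsequence converging locally uniformly — and then uniformly, by the uniform compact support — to some $\kappa^{-1}$-Lipschitz function $u_\infty$ on $\mathbb{R}^n$, vanishing outside $\bar\Omega_0$ (using $\Sigma_0^k \to \Sigma_0$, so $\bar\Omega_0^k \to \bar\Omega_0$ in Hausdorff distance).

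Next I would identify $u_\infty$ with $u$. The function $u_\infty$ vanishes on $\partial\Omega_0 = \Sigma_0$ (Hausdorff convergence of the domains plus uniform convergence of the $u^k$), so it satisfies the boundary condition. That $u_\infty$ is a viscosity solution of (\ref{level set flow}) in $\Omega_0$ follows from the standard stability of viscosity solutions under uniform convergence together with convergence of the domains: each $u^k$ is the viscosity solution of (\ref{level set flow}) in $\Omega_0^k$ (by \cite{ES}), and one checks in the usual way that if a smooth test function touches $u_\infty$ from above (resp.\ below) at an interior point $p \in \Omega_0$, then slightly perturbed test functions touch $u^k$ at nearby points $p_k \in \Omega_0^k$ for large $k$, yielding the required differential inequality in the limit; the singular coefficient $-(\mathrm{I} - \nu\otimes\nu)$ is handled exactly as in the definition of viscosity solution to (\ref{level set flow}) in \cite{ES}, by allowing arbitrary unit vectors $\nu$ at critical points of the test function. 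Then by the uniqueness of viscosity solutions to the Dirichlet problem (\ref{level set flow}) proved in \cite{ES}, $u_\infty = u$ on $\bar\Omega_0$, hence on all of $\mathbb{R}^n$. Since every subsequence of $\{u^k\}$ has a further subsequence converging uniformly to the \emph{same} limit $u$, the full sequence $u^k$ converges uniformly to $u$ on $\mathbb{R}^n$.

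The main obstacle here is the bookkeeping in the viscosity-solution stability argument across \emph{varying} domains $\Omega_0^k$: one must be careful that the perturbation of the test function and the choice of contact points $p_k$ stay inside $\Omega_0^k$ and accumulate at the interior point $p \in \Omega_0$, and that the degenerate/singular nature of the operator in (\ref{level set flow}) (the $\frac{\nabla u}{|\nabla u|}\otimes\frac{\nabla u}{|\nabla u|}$ term, undefined where $\nabla u = 0$) is respected in passing to the limit. This is routine but is where all the care goes; the uniform Lipschitz estimate from Proposition \ref{gradient estimate} is precisely what makes the compactness step immediate, and the uniqueness theorem of \cite{ES} is what upgrades subsequential convergence to convergence of the whole sequence.
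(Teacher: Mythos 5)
Your proposal is correct and follows essentially the same route as the paper: uniform bound plus equicontinuity (you package this as a global $\kappa^{-1}$-Lipschitz bound via Proposition \ref{gradient estimate} and a segment-crossing argument, while the paper runs a three-case tubular-neighborhood estimate, but both rest on the same gradient bound), then Arzel\`a--Ascoli, identification of any subsequential limit as the viscosity solution via the stability argument of \cite{ES}, and the uniqueness theorem there to upgrade to convergence of the full sequence. The only cosmetic difference is that the paper bounds $\max u^k$ by the shrinking-sphere barrier $\Sigma_t^k\subset B_{\sqrt{R^2-2(n-1)t}}$ rather than by the extinction-time estimate you invoke; both are valid.
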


\begin{proof}
Choose $R>0$ large so that $\bar{\Omega}_{0}$ is contained in the
open ball $B_{R}$. For convenience, let us assume that for every
$k\in\mathbb{N}$, $\Sigma_{0}^{k}$ is also contained in $B_{R}$.
Note that by \cite{ES} we have
\begin{equation}
\Sigma_{t}^{k}\,\subset\,B_{\sqrt{R^{2}-2\left(n-1\right)t}}\quad\forall\,t\geq0,\label{C0 compactness: support}
\end{equation}
which yields that
\begin{equation}
\max_{\mathbb{R}^{n}}u^{k}=\max_{\bar{\Omega}_{0}^{k}}u^{k}\leq\frac{R^{2}}{2\left(n-1\right)}.\label{C0 compactness: bound}
\end{equation}
Given $\epsilon\in\left(0,\dot{T}\right)$, where $\dot{T}>0$ is
as given in Proposition \ref{time of smooth existence}, choose $\delta>0$
so small that $\Omega_{\epsilon/2}\subset\Omega_{0}$ is outside the
$4\delta$-tubular-neighborhood of $\Sigma_{0}$. By Proposition \ref{time of smooth existence},
there exists $k_{\epsilon}\in\mathbb{N}$ so that for every $k\geq k_{\epsilon}$,
$\Sigma_{0}^{k}$ is contained in the $\delta$-tubular-neighborhood
of $\Sigma_{0}$ and that $\Omega_{\epsilon/2}^{k}\subset\Omega_{0}$
\footnote{$\Omega_{\epsilon/2}^{k}=\left\{ u^{k}>\frac{\epsilon}{2}\right\} $
is the region bounded by $\Sigma_{\epsilon/2}^{k}$.}is outside the $3\delta$-neighborhood of $\Sigma_{0}$. Then for
each $x_{0}\in\mathbb{R}^{n}$, we claim that 
\begin{equation}
\left|u^{k}\left(x\right)-u^{k}\left(x_{0}\right)\right|\,\leq\,\epsilon\quad\forall\,x\in B_{\min\left\{ \delta,\,\kappa\epsilon\right\} }\left(x_{0}\right),\,\,k\geq k_{\epsilon}.\label{C0 compactness: equicontinuity}
\end{equation}
Let $k\geq k_{\epsilon}$. Below we divide into three cases to verify: 

\uline{Case 1}: $x_{0}$ belongs to the $2\delta$-tubular-neighborhood
of $\Sigma_{0}$ (so $x_{0}\notin\Omega_{\epsilon/2}^{k}$). Then
for every $x\in B_{\delta}\left(x_{0}\right)$, it is within the $3\delta$-tubular-neighborhood
of $\Sigma_{0}$ and hence falls outside $\Omega_{\epsilon/2}^{k}$;
thus, 
\[
\left|u^{k}\left(x\right)-u^{k}\left(x_{0}\right)\right|\,\leq\,u^{k}\left(x\right)+u^{k}\left(x_{0}\right)\,\leq\,\frac{\epsilon}{2}+\frac{\epsilon}{2}=\epsilon.
\]

\uline{Case 2}: $x_{0}\in\Omega_{0}$ is outside the $2\delta$-tubular-neighborhood
of $\Sigma_{0}$ (so $x_{0}\in\Omega_{0}^{k}$). Then for every $x\in B_{\min\left\{ \delta,\kappa\epsilon\right\} }\left(x_{0}\right)$,
it is in $\Omega_{0}$ and outside the $\delta$-tubular-neighborhood
of $\Sigma_{0}$ and so falls within $\Omega_{0}^{k}$. Thus, $B_{\min\left\{ \delta,\kappa\epsilon\right\} }\left(x_{0}\right)\subset\Omega_{0}^{k}$
and for every $x\in B_{\min\left\{ \delta,\kappa\epsilon\right\} }\left(x_{0}\right)$,
by (\ref{uniform parameters}) and Proposition \ref{gradient estimate}
we have 
\[
\left|u^{k}\left(x\right)-u^{k}\left(x_{0}\right)\right|\,\leq\,\kappa^{-1}\left|x-x_{0}\right|\,\leq\,\epsilon.
\]

\uline{Case 3}: $x_{0}\notin\Omega_{0}$ is outside the $2\delta$-tubular-neighborhood
of $\Sigma_{0}$ (so $x_{0}\notin\Omega_{0}^{k}$). For every $x\in B_{\delta}\left(x_{0}\right)$,
it is outside $\Omega_{0}$ and the $\delta$-tubular-neighborhood
of $\Sigma_{0}$ and hence falls outside $\Omega_{0}^{k}$; thus,
\[
\left|u^{k}\left(x\right)-u^{k}\left(x_{0}\right)\right|=0.
\]

By (\ref{C0 compactness: support}), (\ref{C0 compactness: bound}),
and (\ref{C0 compactness: equicontinuity}), Arzelà-Ascoli compactness
theorem implies that for any subsequence of $\left\{ u^{k}\right\} _{k\in\mathbb{N}}$
there would exist a further subsequence that converges uniformly to
some continuous function $\hat{u}:\mathbb{R}^{n}\rightarrow\left[0,\infty\right)$.
Notice that $\hat{u}$ vanishes on $\mathbb{R}^{n}\setminus\bar{\Omega}_{0}$,
which (by the continuity of $\hat{u}$) implies that $\left.\hat{u}\right|_{\Sigma_{0}}=0$.
Moreover, by the argument in the proof of Theorem 2.7 in \cite{ES},
$\hat{u}$ would satisfy the equation (\ref{level set flow}) in the
viscosity sense on $\Omega_{0}$. Thus, by the uniqueness of viscosity
solutions to the Dirichlet problem of (\ref{level set flow}) (see
Theorem 7.5 in \cite{ES}), we deduce that $\hat{u}=u$. 

By the last paragraph, any subsequence of $\left\{ u^{k}\right\} _{k\in\mathbb{N}}$
has a further subsequence converging uniformly to $u$. Therefore,
the whole sequence $\left\{ u^{k}\right\} _{k\in\mathbb{N}}$ must
converge uniformly to $u$.
\end{proof}
We immediately have the following corollary. 
\begin{cor}
\label{extinction time}Let $T_{ext}^{k}$ be the time of extinction
of $\left\{ \Sigma_{t}^{k}\right\} $. Then we have 
\[
\lim_{k\rightarrow\infty}T_{ext}^{k}=\lim_{k\rightarrow\infty}\max_{\Omega_{0}^{k}}u^{k}=\max_{\Omega_{0}}u=T_{ext}.
\]
\end{cor}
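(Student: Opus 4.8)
The plan is to read Corollary \ref{extinction time} off directly from the uniform convergence established in Proposition \ref{C0 compactness}, using only the elementary fact that the supremum is $1$-Lipschitz with respect to the $C^{0}$-norm. No new geometry is needed; everything substantive (the uniform bound (\ref{C0 compactness: bound}), the equicontinuity estimate (\ref{C0 compactness: equicontinuity}), and the uniqueness of viscosity solutions) has already been carried out in the proof of Proposition \ref{C0 compactness}.

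First I would convert the times of extinction into $C^{0}$-norms of the zero-extended arrival time functions. Since $u\geq0$ on $\bar{\Omega}_{0}$ and $u\equiv0$ on $\mathbb{R}^{n}\setminus\bar{\Omega}_{0}$, we have $\max_{\mathbb{R}^{n}}u=\max_{\bar{\Omega}_{0}}u=\max_{\Omega_{0}}u=T_{ext}$, the last equality being the formula for the extinction time noted in the introduction (and the maxima are attained by continuity and compactness of $\bar{\Omega}_{0}$). For every sufficiently large $k$, $\Sigma_{0}^{k}$ is mean-convex by Proposition \ref{time of smooth existence}, so $\{\Sigma_{t}^{k}\}$ is a mean-convex LSF with arrival time function $u^{k}$, and the same reasoning gives $\max_{\mathbb{R}^{n}}u^{k}=\max_{\Omega_{0}^{k}}u^{k}=T_{ext}^{k}$. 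In particular, extending $u^{k}$ by zero outside $\bar{\Omega}_{0}^{k}$ introduces no new values larger than those already attained on $\bar{\Omega}_{0}^{k}$.

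Next I would invoke Proposition \ref{C0 compactness}: the zero-extensions of $u^{k}$ converge to $u$ uniformly on $\mathbb{R}^{n}$. Combining this with the elementary inequality
\[
\left|\max_{\mathbb{R}^{n}}u^{k}-\max_{\mathbb{R}^{n}}u\right|\,\leq\,\sup_{\mathbb{R}^{n}}\left|u^{k}-u\right|
\]
and the two identities of the previous paragraph yields $T_{ext}^{k}=\max_{\Omega_{0}^{k}}u^{k}\to\max_{\Omega_{0}}u=T_{ext}$ as $k\to\infty$, which is exactly the chain of equalities in the statement. There is no real obstacle here; the only point requiring a moment's care is that the maxima over $\mathbb{R}^{n}$ of the extended functions genuinely coincide with the maxima over the respective bounded regions, which is immediate from the nonnegativity of the arrival time function.
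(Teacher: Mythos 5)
Your proof is correct and follows exactly the route the paper intends: the corollary is stated as an immediate consequence of Proposition \ref{C0 compactness}, and your argument (identifying the extinction times with the suprema of the zero-extended arrival time functions and using that the supremum is $1$-Lipschitz in the $C^{0}$-norm) is precisely the omitted justification.
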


Given the convergence of the arrival time functions in Proposition
\ref{C0 compactness}, we can proceed to prove the convergence of
gradients with the help of Proposition \ref{gradient estimate} and
Proposition \ref{Hessian estimate}.
\begin{prop}
\label{C1 compactness}$\nabla u^{k}$ converges locally uniformly
to $\nabla u$ in $\Omega_{0}$ as $k\rightarrow\infty$. 
\end{prop}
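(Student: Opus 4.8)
The plan is to show that the family $\{u^{k}\}$ is bounded in $C^{1,1}$ on compact subsets of $\Omega_{0}$ with $k$-independent constants, extract a subsequence converging in $C^{1}_{loc}$ by the Arzel\`a--Ascoli theorem, identify the limit gradient with $\nabla u$ using Proposition \ref{C0 compactness}, and then deduce convergence of the full sequence by the usual subsequence argument. The essential point is that all the relevant estimates are uniform in $k$: by the normalization adopted after (\ref{uniform parameters}), the hypotheses of Proposition \ref{gradient estimate} and Proposition \ref{Hessian estimate} hold for each $\Sigma_{0}^{k}$ with the same constants $\iota,\kappa,K,K',K'',\mathfrak{A},\mathfrak{D}$, so that
\[
\sup_{\Omega_{0}^{k}}|\nabla u^{k}|\,\leq\,\kappa^{-1},\qquad\sup_{\Omega_{0}^{k}}|\nabla^{2}u^{k}|\,\leq\,C_{0}\quad\textrm{for all }k\in\mathbb{N},
\]
where $C_{0}=C_{0}(n,\iota,\kappa,K,K',K'',\mathfrak{A},\mathfrak{D})>0$ does not depend on $k$.

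Next I would fix a point $p\in\Omega_{0}$ and a radius $r>0$ with $\bar{B}_{r}(p)\subset\Omega_{0}$. Since $\Sigma_{0}^{k}\to\Sigma_{0}$ in the $C^{4}$ (hence $C^{0}$) topology, the regions $\Omega_{0}^{k}$ converge to $\Omega_{0}$, so there is $k_{p}\in\mathbb{N}$ with $\bar{B}_{r}(p)\subset\Omega_{0}^{k}$ for every $k\geq k_{p}$; on this ball $|\nabla u^{k}|\leq\kappa^{-1}$ and $\nabla u^{k}$ is $C_{0}$-Lipschitz. Covering $\Omega_{0}$ by countably many such balls, applying the Arzel\`a--Ascoli theorem on each of them, and running a diagonal extraction, we see that any subsequence of $\{u^{k}\}$ admits a further subsequence $\{u^{k_{j}}\}$ along which $\nabla u^{k_{j}}\to V$ locally uniformly in $\Omega_{0}$ for some locally Lipschitz vector field $V$; moreover $u^{k_{j}}\to u$ uniformly on $\mathbb{R}^{n}$ by Proposition \ref{C0 compactness}.

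It remains to identify $V$ with $\nabla u$ and to pass from subsequences to the full sequence. For $x\in\Omega_{0}$ and $h\in\mathbb{R}^{n}$ with $|h|$ small enough that the segment $[x,x+h]$ stays in $\Omega_{0}^{k_{j}}$ for all large $j$, the fundamental theorem of calculus gives
\[
u^{k_{j}}(x+h)-u^{k_{j}}(x)\,=\,\int_{0}^{1}\nabla u^{k_{j}}(x+th)\cdot h\,dt;
\]
letting $j\to\infty$ and using the two convergences above yields $u(x+h)-u(x)=\int_{0}^{1}V(x+th)\cdot h\,dt$. Since $u$ is already $C^{1}$ on $\Omega_{0}$ (the continuity of $\nabla u$ being part of \cite{CM4}), the same identity holds with $\nabla u$ in place of $V$; subtracting, dividing by $|h|$, and letting $h\to0$ forces $V(x)=\nabla u(x)$ by continuity. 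Thus every subsequence of $\{\nabla u^{k}\}$ has a further subsequence converging locally uniformly to the \emph{same} limit $\nabla u$, so the whole sequence $\nabla u^{k}$ converges locally uniformly to $\nabla u$ in $\Omega_{0}$, which is the assertion of Proposition \ref{C1 compactness}. The only point requiring care is the bookkeeping around the varying domains $\Omega_{0}^{k}$ and the $k$-independence of the $C^{1,1}$ bound for $u^{k}$; the analytic substance is entirely contained in Proposition \ref{Hessian estimate} (together with the uniform gradient bound of Proposition \ref{gradient estimate}).
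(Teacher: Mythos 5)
Your proof is correct and follows essentially the same route as the paper: uniform gradient and Hessian bounds from Proposition \ref{gradient estimate} and Proposition \ref{Hessian estimate}, Arzel\`a--Ascoli on compact subsets exhausting $\Omega_{0}$, identification of the limit with $\nabla u$ via the uniform convergence of Proposition \ref{C0 compactness}, and the standard subsequence argument. The only (immaterial) difference is in the identification step: you use the fundamental theorem of calculus along segments, whereas the paper integrates by parts against test functions $\zeta\in C_{c}^{1}\left(\mathcal{B}\right)$ to recognize the limit as the weak (hence, by continuity, classical) gradient of $u$.
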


\begin{proof}
Let $\mathcal{B}$ be any open ball in $\mathbb{R}^{n}$ such that
$\bar{\mathcal{B}}\subset\Omega_{0}$. Choose $k_{\mathcal{B}}\in\mathbb{N}$
sufficiently large so that $\bar{\mathcal{B}}\subset\Omega_{0}^{k}$
for every $k\geq k_{\mathcal{B}}$. By Proposition \ref{gradient estimate}
and Proposition \ref{Hessian estimate}, any subsequence of $\left\{ \left.\nabla u^{k}\right|_{\bar{\mathcal{B}}}\right\} _{k\geq k_{\mathcal{B}}}$
has a further subsequence, say $\left\{ \left.\nabla u^{k_{j}}\right|_{\bar{\mathcal{B}}}\right\} _{j\in\mathbb{N}},$
that converges uniformly to some continuous vector-valued function
$F=\left(F^{1},\cdots,F^{n}\right)$ on $\bar{\mathcal{B}}$ by Arzelà-Ascoli
compactness theorem. Note that for every function $\zeta\in C_{c}^{1}\left(\mathcal{B}\right)$,
by Proposition \ref{C0 compactness} there holds
\[
\int_{\mathcal{B}}F^{i}\,\zeta\,dx=\lim_{j\rightarrow\infty}\int_{\mathcal{B}}\partial_{i}u^{k_{j}}\,\zeta\,dx=-\lim_{j\rightarrow\infty}\int_{\mathcal{B}}u^{k_{j}}\,\partial_{i}\zeta\,dx
\]
\[
=-\int_{\mathcal{B}}u\,\partial_{i}\zeta\,dx=\int_{\mathcal{B}}\partial_{i}u\,\zeta\,dx\quad\forall\,i\in\left\{ 1,\cdots,n\right\} ,
\]
we deduce that $F=\left.\nabla u\right|_{\mathcal{\bar{B}}}$. Therefore,
the whole sequence $\left\{ \left.\nabla u^{k}\right|_{\bar{\mathcal{B}}}\right\} _{k\geq k_{\mathcal{B}}}$
must converges uniformly to $\left.\nabla u\right|_{\bar{\mathcal{B}}}$.
\end{proof}
What follow are two corollaries concerning the singular sets and singular
times of $\left\{ \Sigma_{t}^{k}\right\} $, respectively. Note that
by Proposition \ref{time of smooth existence} and Proposition \ref{moving distance},
we may assume that the singular set of the mean-convex LSF $\left\{ \Sigma_{t}^{k}\right\} $
is contained in $\Omega_{\dot{T}}$ when $k$ is large.
\begin{cor}
\label{regular space}Given an open set $U$ strictly contained in
$\Omega_{0}\setminus\mathcal{S}$, there is $k_{U}\in\mathbb{N}$
so that for every $k\geq k_{U}$, the flow $\left\{ \Sigma_{t}^{k}\right\} $
is regular in $U$.

As an illustration, if $U=\Omega_{\dot{T}}\setminus\overline{\mathcal{S}^{\delta}}$
with $\delta>0$, where 
\[
\mathcal{S}^{\delta}\coloneqq\left\{ x:\textrm{dist}\left(x,\mathcal{S}\right)<\delta\right\} ,
\]
then for every $k\geq k_{U}$, the singular set of $\left\{ \Sigma_{t}^{k}\right\} $
would be contained in $\overline{\mathcal{S}^{\delta}}$.
\end{cor}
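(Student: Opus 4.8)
The plan is to argue by contradiction using the local uniform convergence of the gradients established in Proposition~\ref{C1 compactness}, together with the lower bound on the smooth scale from Proposition~\ref{interior estimate} (and the higher-order estimates from Proposition~\ref{smooth estimate}). Suppose the conclusion fails for the open set $U\Subset\Omega_{0}\setminus\mathcal{S}$: then there is a subsequence $k_{j}\to\infty$ and points $p_{j}\in U$ at which $\left\{\Sigma_{t}^{k_{j}}\right\}$ is singular, i.e.\ $\nabla u^{k_{j}}\left(p_{j}\right)=0$ by (\ref{critical points}). Since $\bar{U}$ is compact and contained in $\Omega_{0}\setminus\mathcal{S}$, and since (by Proposition~\ref{time of smooth existence} and Proposition~\ref{moving distance}) the singular set of $\left\{\Sigma_{t}^{k}\right\}$ lies in $\Omega_{\dot{T}}$ for large $k$, we may pass to a further subsequence so that $p_{j}\to p\in\bar{U}\cap\bar{\Omega}_{\dot{T}}$; in particular $p\in\Omega_{0}\setminus\mathcal{S}$, so $p$ is a regular point of $\left\{\Sigma_{t}\right\}$ and $\left|\nabla u\left(p\right)\right|>0$.

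The key step is to derive a contradiction from $\nabla u^{k_{j}}\left(p_{j}\right)=0$. First I would note that by continuity of $\nabla u$ near $p$ (Proposition~\ref{Hessian estimate} gives a uniform Hessian bound, hence local Lipschitz control) there is a small ball $\bar{B}_{\rho}\left(p\right)\subset\Omega_{0}\setminus\mathcal{S}$ on which $\left|\nabla u\right|\geq\tfrac{1}{2}\left|\nabla u\left(p\right)\right|>0$. For $j$ large, $p_{j}\in B_{\rho}\left(p\right)$ and $\bar{B}_{\rho}\left(p\right)\subset\Omega_{0}^{k_{j}}$. By Proposition~\ref{C1 compactness}, $\nabla u^{k_{j}}\to\nabla u$ uniformly on $\bar{B}_{\rho}\left(p\right)$, so for $j$ large $\left|\nabla u^{k_{j}}\left(p_{j}\right)\right|\geq\tfrac{1}{4}\left|\nabla u\left(p\right)\right|>0$. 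This contradicts $\nabla u^{k_{j}}\left(p_{j}\right)=0$, and the contradiction proves the first assertion: there exists $k_{U}\in\mathbb{N}$ so that $\left\{\Sigma_{t}^{k}\right\}$ is regular in $U$ for all $k\geq k_{U}$.

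For the illustrative statement, I would apply the first part with $U=\Omega_{\dot{T}}\setminus\overline{\mathcal{S}^{\delta}}$, which is indeed strictly contained in $\Omega_{0}\setminus\mathcal{S}$ since $\overline{\mathcal{S}^{\delta}}$ is a neighborhood of the compact set $\mathcal{S}$ (Lemma~\ref{closedness of singular set}). Combining the resulting regularity of $\left\{\Sigma_{t}^{k}\right\}$ on $\Omega_{\dot{T}}\setminus\overline{\mathcal{S}^{\delta}}$ with the fact that $\left\{\Sigma_{t}^{k}\right\}$ has no singular points in $\Omega_{0}\setminus\Omega_{\dot{T}}$ for $k$ large (Proposition~\ref{time of smooth existence}), we conclude that all singular points of $\left\{\Sigma_{t}^{k}\right\}$ lie in $\overline{\mathcal{S}^{\delta}}$, as claimed.

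The main obstacle is essentially bookkeeping rather than a deep point: one must be careful that the relevant balls genuinely sit inside $\Omega_{0}^{k}$ for large $k$ (so that $u^{k}$ and $\nabla u^{k}$ are defined and the gradient-convergence of Proposition~\ref{C1 compactness} applies), and that the subsequential limit point $p$ does not escape to $\mathcal{S}$ or to $\partial\Omega_{0}$ — both of which are handled by the compactness of $\bar{U}$, its strict containment in $\Omega_{0}\setminus\mathcal{S}$, and the $\Omega_{\dot{T}}$-localization of the perturbed singular sets. No new estimate beyond Propositions~\ref{C0 compactness}, \ref{C1 compactness}, \ref{Hessian estimate}, \ref{time of smooth existence}, and \ref{moving distance} is needed.
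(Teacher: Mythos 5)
Your argument is correct and is essentially the paper's proof: both rest on the compactness of $\bar{U}$, the positivity of $\left|\nabla u\right|$ there, and the locally uniform convergence $\nabla u^{k}\to\nabla u$ from Proposition \ref{C1 compactness}; you merely phrase it as a contradiction via a subsequence of critical points, whereas the paper directly bounds $\min_{\bar{U}}\left|\nabla u^{k}\right|\geq\frac{1}{2}\min_{\bar{U}}\left|\nabla u\right|>0$ for large $k$. The handling of the illustrative case also matches the paper's (implicit) reasoning.
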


\begin{proof}
Let $U$ be an open set such that $\bar{U}\subset\Omega_{0}\setminus\mathcal{S}$.
Firstly, choose $k_{0}\in\mathbb{N}$ sufficiently large so that $\bar{U}\subset\Omega_{0}^{k}$
for $k\geq k_{0}$. It follows from Proposition \ref{C1 compactness}
that there is $k_{U}\geq k_{0}$ so that 
\[
\min_{\bar{U}}\left|\nabla u^{k}\right|\,\geq\,\frac{1}{2}\min_{\bar{U}}\left|\nabla u\right|\,>\,0,
\]
which yields that the flow $\left\{ \Sigma_{t}^{k}\right\} $ is regular
in $U$.
\end{proof}
\begin{cor}
\label{regular time}Suppose that the flow $\left\{ \Sigma_{t}\right\} $
is regular during $t\in\left[a,b\right]$, where $0<a<b<T_{ext}$,
then there exists $k_{\left[a,b\right]}\in\mathbb{N}$ so that for
every $k\geq k_{\left[a,b\right]}$, the flow $\left\{ \Sigma_{t}^{k}\right\} $
is regular during $t\in\left[a,b\right]$.
\end{cor}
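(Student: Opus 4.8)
The plan is to trap the hypersurfaces $\Sigma^{k}_{t}$, $t\in[a,b]$, inside a fixed compact ``buffer region'' consisting entirely of regular points of $\{\Sigma_{t}\}$, and then quote Corollary \ref{regular space} on that region.

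First I would record that the set of singular times is closed: since $\mathcal{S}$ is compact (Lemma \ref{closedness of singular set}) and $u$ is continuous, $u(\mathcal{S})$ is compact, and a time $t$ is singular exactly when $t\in u(\mathcal{S})$ because $\Sigma_{t}=\{u=t\}$. By hypothesis $[a,b]\cap u(\mathcal{S})=\emptyset$, so $\mathrm{dist}([a,b],u(\mathcal{S}))>0$; hence I may choose $\varepsilon>0$ with $\varepsilon<a$ and $[a-\varepsilon,b+\varepsilon]\cap u(\mathcal{S})=\emptyset$, and set
\[
V:=\{x\in\Omega_{0}:\ a-\varepsilon<u(x)<b+\varepsilon\}.
\]
Every point of $V$ has $u$-value outside $u(\mathcal{S})$ and is therefore a regular point; moreover $\bar V\subset\{a-\varepsilon\le u\le b+\varepsilon\}$, which is compact, disjoint from $\Sigma_{0}=\{u=0\}$ (as $\varepsilon<a$), and disjoint from $\mathcal{S}$ (as $[a-\varepsilon,b+\varepsilon]\cap u(\mathcal{S})=\emptyset$). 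Thus $V$ is an open set with $\bar V\subset\Omega_{0}\setminus\mathcal{S}$, so Corollary \ref{regular space} yields $k_{V}\in\mathbb{N}$ such that $\{\Sigma^{k}_{t}\}$ is regular in $V$ for all $k\ge k_{V}$.

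Next I would use the uniform convergence $u^{k}\to u$ on $\mathbb{R}^{n}$ (Proposition \ref{C0 compactness}) to locate the $\Sigma^{k}_{t}$: pick $k_{1}$ so that $\sup_{\mathbb{R}^{n}}|u^{k}-u|<\varepsilon$ for $k\ge k_{1}$. If $t\in[a,b]$ and $x\in\Sigma^{k}_{t}=\{u^{k}=t\}$, then $u(x)\in(t-\varepsilon,t+\varepsilon)\subset(a-\varepsilon,b+\varepsilon)$; in particular $u(x)>a-\varepsilon>0$ forces $x\in\Omega_{0}$, hence $x\in V$. Therefore $\Sigma^{k}_{t}\subset V$ for every $t\in[a,b]$ and every $k\ge k_{[a,b]}:=\max\{k_{V},k_{1}\}$. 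Since for $k\ge k_{V}$ every point of $V$ is a regular point of $\{\Sigma^{k}_{t}\}$, the flow $\{\Sigma^{k}_{t}\}$ is regular during $[a,b]$ for all $k\ge k_{[a,b]}$, which is the assertion.

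There is no serious obstacle; the corollary is essentially a repackaging of Corollary \ref{regular space} together with Proposition \ref{C0 compactness}. The two points deserving a little care are: (i) that a slightly enlarged time interval $[a-\varepsilon,b+\varepsilon]$ still misses all singular times — this is exactly where closedness of $u(\mathcal{S})$, hence compactness of $\mathcal{S}$, enters; and (ii) that mere uniform closeness of $u^{k}$ to $u$ genuinely confines the level sets $\Sigma^{k}_{t}$ to $V$, which works only because $V$ was chosen as an honest open neighborhood of $\{a\le u\le b\}$ (a buffer) rather than that set itself.
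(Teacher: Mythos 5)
Your proof is correct and follows essentially the same route as the paper's: both use compactness of the set of singular times to enlarge $[a,b]$ to a buffer interval whose $u$-preimage is an open set compactly contained in $\Omega_{0}\setminus\mathcal{S}$, then combine Proposition \ref{C0 compactness} (to confine $\{a\le u^{k}\le b\}$ to that set) with Corollary \ref{regular space}. No issues.
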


\begin{proof}
By Lemma \ref{closedness of singular set} and the continuity of $u$,
the set of singular times is compact (and hence closed). So there
exists 
\[
0<a'<a<b<b'<T_{ext}
\]
such that 
\[
\left\{ a'\leq u\leq b'\right\} \,\subset\,\Omega_{0}\setminus\mathcal{S}.
\]
Then $U\coloneqq\left\{ a'<u<b'\right\} $ is an open set strictly
contained in $\Omega_{0}\setminus\mathcal{S}$. By Proposition \ref{C0 compactness},
there is $k_{0}\in\mathbb{N}$ so that for $k\geq k_{0}$, 
\[
\left\{ a\leq u^{k}\leq b\right\} \,\subset\,\left\{ a'<u<b'\right\} \,=\,U.
\]
It follows from Corollary \ref{regular space} that there exists $k_{U}\geq k_{0}$
so that for every $k\geq k_{U}$, the flow $\left\{ \Sigma_{t}^{k}\right\} $
is regular in $U$ and so $\left\{ \Sigma_{t}^{k}\right\} $ is regular
during $t\in\left[a,b\right]$.
\end{proof}
On account of the uniform estimates for the flows (see Proposition
\ref{interior estimate}, Proposition \ref{smooth estimate}, and
Corollary \ref{local graph estimate}) and the local $C^{1}$ convergence
of the arrival time functions (see Proposition \ref{C0 compactness}
and Proposition \ref{C1 compactness}), we are now able to prove the
following lemma, which is the very essence of Theorem \ref{stability theorem}.
\begin{lem}
\label{local convergence}The flow $\left\{ \Sigma_{t}^{k}\right\} $
converges locally smoothly to $\left\{ \Sigma_{t}\right\} $ in $\left(\Omega_{\dot{T}}\setminus\mathcal{S}\right)\times\left(0,\infty\right)$
as $k\rightarrow\infty$.
\end{lem}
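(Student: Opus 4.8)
The plan is to localize: by a covering argument it suffices to prove smooth convergence on a fixed spacetime neighborhood of each point $\left(p,u\left(p\right)\right)$ with $p\in\Omega_{\dot{T}}\setminus\mathcal{S}$, and to observe that away from the spacetime track of $\left\{\Sigma_{t}\right\}$ there is nothing to prove. Indeed, if $\left(p,t_{0}\right)\in\left(\Omega_{\dot{T}}\setminus\mathcal{S}\right)\times\left(0,\infty\right)$ with $t_{0}\neq u\left(p\right)$, then by Proposition~\ref{gradient estimate} every $u^{k}$ is $\kappa^{-1}$-Lipschitz, so together with the uniform convergence $u^{k}\to u$ of Proposition~\ref{C0 compactness} we get $\left|u^{k}-t_{0}\right|\geq\tfrac{1}{2}\left|u\left(p\right)-t_{0}\right|>0$ on a fixed ball about $p$ for all large $k$; hence both $\Sigma_{t_{0}}^{k}$ and $\Sigma_{t_{0}}$ are empty near $p$, and convergence holds vacuously there.

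So fix $p\in\Omega_{\dot{T}}\setminus\mathcal{S}$, a regular point of $\left\{\Sigma_{t}\right\}$ with $\left|\nabla u\left(p\right)\right|>0$. By Proposition~\ref{C0 compactness} and Proposition~\ref{C1 compactness}, for all large $k$ we have $u^{k}\left(p\right)>\dot{T}$ and $\left|\nabla u^{k}\left(p\right)\right|\geq\tfrac{1}{2}\left|\nabla u\left(p\right)\right|>0$, so $p\in\Omega_{\dot{T}}^{k}$ is a regular point of $\left\{\Sigma_{t}^{k}\right\}$ as well, and $\tfrac{\nabla u^{k}\left(p\right)}{\left|\nabla u^{k}\left(p\right)\right|}\to\tfrac{\nabla u\left(p\right)}{\left|\nabla u\left(p\right)\right|}$. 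Since the standing assumptions~(\ref{uniform parameters}) hold uniformly for every $\Sigma_{0}^{k}$, Proposition~\ref{interior estimate}, Proposition~\ref{smooth estimate}, and Corollary~\ref{local graph estimate} apply to $\left\{\Sigma_{t}^{k}\right\}$ with constants $\gamma,\Lambda$ independent of $k$. Put $\Pi=T_{p}\Sigma_{u\left(p\right)}$. Inside $B_{\gamma\left|\nabla u^{k}\left(p\right)\right|}\left(p\right)$ the level sets of $u^{k}$ are $\varepsilon\left(n\right)$-Lipschitz graphs over $T_{p}\Sigma_{u^{k}\left(p\right)}^{k}$ (Definition~\ref{smooth scale}, Proposition~\ref{interior estimate}); as the tilt of $T_{p}\Sigma_{u^{k}\left(p\right)}^{k}$ from $\Pi$ tends to $0$, for large $k$ these are $2\varepsilon\left(n\right)$-Lipschitz graphs over $\Pi$, of functions $f^{k}\left(\cdot,t\right)$. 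Applying Corollary~\ref{local graph estimate} with $r=\tfrac{1}{4}\gamma\left|\nabla u^{k}\left(p\right)\right|$ (and to $\left\{\Sigma_{t}\right\}$ with $r=\tfrac{1}{4}\gamma\left|\nabla u\left(p\right)\right|$) and then restricting to a common ball $B_{\bar{r}}\left(p\right)$ with $\bar{r}$ a fixed fraction of $\gamma\left|\nabla u\left(p\right)\right|$, yields uniform bounds $\bar{r}^{m+2l-1}\left|\partial_{t}^{l}\nabla^{m}f^{k}\right|\leq C\left(n,m,l\right)$ for $m+2l\geq2$, together with $\partial_{t}f^{k}\geq c>0$; since also $\left|f^{k}\right|\leq2\bar{r}$, $\left|\nabla f^{k}\right|\leq2\varepsilon\left(n\right)$, and $u^{k}\left(p\right)\to u\left(p\right)$, all the $f^{k}$ and $f$ are defined, with these uniform bounds, on one fixed spacetime neighborhood $\mathcal{N}=\left(B_{\bar{r}}\left(p\right)\cap\Pi\right)\times\left(u\left(p\right)-\tau,u\left(p\right)+\tau\right)$ for suitable fixed $\bar{r},\tau>0$.

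By the Arzel\`{a}--Ascoli theorem, every subsequence of $\left\{f^{k}\right\}$ has a further subsequence converging in $C_{loc}^{\infty}\left(\mathcal{N}\right)$ to a limit $f^{\infty}$, which again solves the graphical mean curvature flow equation~(\ref{local graph estimate: MCF equation}). To identify $f^{\infty}$, note that every point of the graph of $f^{k}\left(\cdot,t\right)$ lies in $\Sigma_{t}^{k}=\left\{u^{k}=t\right\}$; letting $k\to\infty$ and using $u^{k}\to u$ uniformly (Proposition~\ref{C0 compactness}) together with $f^{k}\to f^{\infty}$ gives $u\equiv t$ on the graph of $f^{\infty}\left(\cdot,t\right)$, so that graph is contained in $\Sigma_{t}$. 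But $\Sigma_{t}$ is graphical over the relevant region of $\Pi$ near $p$, as the graph of $f\left(\cdot,t\right)$ (Definition~\ref{smooth scale}, Proposition~\ref{interior estimate}); hence $f^{\infty}=f$ on $\mathcal{N}$. As the limit does not depend on the subsequence, the whole sequence $f^{k}$ converges to $f$ in $C_{loc}^{\infty}\left(\mathcal{N}\right)$, i.e.\ $\left\{\Sigma_{t}^{k}\right\}$ converges smoothly to $\left\{\Sigma_{t}\right\}$ on the spacetime neighborhood of $\left(p,u\left(p\right)\right)$ corresponding to $\mathcal{N}$. Covering a given compact subset of $\left(\Omega_{\dot{T}}\setminus\mathcal{S}\right)\times\left(0,\infty\right)$ by finitely many such neighborhoods finishes the proof.

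I expect the main difficulty to be bookkeeping rather than anything deep: one must ensure the graph functions $f^{k}$ are all defined on, and uniformly estimated over, a \emph{single} fixed spacetime neighborhood, and that $\Pi=T_{p}\Sigma_{u\left(p\right)}$ is a legitimate common reference plane for all large $k$ --- both of which rest on the uniformity of the geometric parameters~(\ref{uniform parameters}) across the $\Sigma_{0}^{k}$ and on the $C^{1}$ convergence $\nabla u^{k}\to\nabla u$ (Proposition~\ref{C1 compactness}). The conceptual crux is the limit identification: the $C^{0}$ convergence of the arrival time functions is precisely what pins down the spatial position of the limiting graph, forcing it to coincide with the track of $\left\{\Sigma_{t}\right\}$.
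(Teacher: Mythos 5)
Your proposal is correct and follows essentially the same route as the paper: uniform interior graph estimates from Corollary \ref{local graph estimate} (valid for all $\left\{ \Sigma_{t}^{k}\right\} $ since (\ref{uniform parameters}) hold uniformly), Arzel\`{a}--Ascoli on a fixed spacetime neighborhood, and identification of the limit via the uniform convergence $u^{k}\rightarrow u$ of Proposition \ref{C0 compactness}. The only (harmless) variation is in the identification step: you evaluate $u$ directly on the limit graph to place it inside $\Sigma_{t}$, whereas the paper sandwiches $f^{k}\left(x',t_{0}\right)$ between $f\left(x',t_{0}-\epsilon\right)$ and $f\left(x',t_{0}+\epsilon\right)$ using $\partial_{t}f>0$ and the squeeze theorem --- both arguments rest on the same $C^{0}$ convergence.
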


\begin{proof}
Fix $p\in\Omega_{\dot{T}}\setminus\mathcal{S}$. By Proposition \ref{C0 compactness}
and Proposition \ref{C1 compactness} we have 
\begin{equation}
\lim_{k\rightarrow\infty}u^{k}\left(p\right)=u\left(p\right)>\dot{T}\label{local convergence: time}
\end{equation}
\begin{equation}
\lim_{k\rightarrow\infty}\nabla u^{k}\left(p\right)=\nabla u\left(p\right)\neq0.\label{local convergence: gradient}
\end{equation}
So when $k$ is large, $p\in\Omega_{\dot{T}}^{k}$ is a regular point
of $\left\{ \Sigma_{t}^{k}\right\} $. Let $r=\frac{1}{3}\gamma\left|\nabla u\left(p\right)\right|$.
By Definition \ref{smooth scale}, Proposition \ref{interior estimate},
and (\ref{local convergence: gradient}), there exists $k_{0}\in\mathbb{N}$
so that when $k\geq k_{0}$, 
\[
\frac{2}{3}\left|\nabla u\left(p\right)\right|\,\leq\,\left|\nabla u^{k}\left(p\right)\right|\,\leq\,\frac{4}{3}\left|\nabla u\left(p\right)\right|
\]
and every level hypersurface $\Sigma_{t}^{k}=\left\{ u^{k}=t\right\} $
in $B_{2r}\left(p\right)$ is a $2\varepsilon\left(n\right)$-Lipschitz
graph of $f^{k}\left(\cdot,t\right)$ over $T_{p}\Sigma_{u\left(p\right)}$.
Also, $\Sigma_{t}=\left\{ u=t\right\} $ in $B_{2r}\left(p\right)$
is certainly a $\varepsilon\left(n\right)$-Lipschitz graph of $f\left(\cdot,t\right)$
over $T_{p}\Sigma_{u\left(p\right)}$. Then by Corollary \ref{local graph estimate}
the graph of $f^{k}\left(\cdot,t\right)$ restricted in $B_{r}\left(p\right)$
would satisfy 
\begin{equation}
r^{m+2l-1}\left|\partial_{t}^{l}\nabla^{m}f^{k}\right|\,\leq\,C\left(n,m,l\right)\quad\textrm{whenever}\,\,m+2l\geq2,\label{local convergence: derivatives}
\end{equation}
\begin{equation}
\left|\nabla f^{k}\right|\,\leq\,2\varepsilon\left(n\right)\,\leq\,2,\label{local convergence: small Lipschitz}
\end{equation}
\begin{equation}
a\,\leq\,r\,\partial_{t}f^{k}\,\leq\,b,\label{local convergence: increasing}
\end{equation}
where $a=\frac{\gamma}{4\Lambda}$ and $b=b\left(n\right)$ are positive
constants. 

For ease of notations, let us assume that $p=0$, $T_{p}\Sigma_{u\left(p\right)}=\mathbb{R}^{n-1}\times\left\{ 0\right\} $,
and $\frac{\nabla u\left(p\right)}{\left|\nabla u\left(p\right)\right|}=\left(0,1\right)$.
So for $k\geq k_{0}$, $\Sigma_{t}^{k}$ in $B_{2r}\left(p\right)$
can be described as $x^{n}=f^{k}\left(x',t\right)$, where $x'=\left(x^{1},\cdots,x^{n-1}\right)$.
For instance, since $0\in\Sigma_{u^{k}\left(0\right)}^{k}$, we have
\begin{equation}
f^{k}\left(0,u^{k}\left(0\right)\right)=0.\label{local convergence: initial point}
\end{equation}
From (\ref{local convergence: small Lipschitz}), (\ref{local convergence: increasing}),
and (\ref{local convergence: initial point}), we deduce that the
domain of $f^{k}\left(\cdot,t\right)$ contains $\bar{B}_{\frac{a}{16b}r}^{n-1}$
for every $t\in\left[u^{k}\left(0\right)-\frac{r^{2}}{4b},\,u^{k}\left(0\right)+\frac{r^{2}}{4b}\right]$;
moreover, 
\begin{equation}
\frac{a}{8b}r\,\leq\,f^{k}\left(x',\,u^{k}\left(0\right)+\frac{r^{2}}{4b}\right)\,\leq\,\frac{r}{2},\label{local convergence: upper and lower bound}
\end{equation}
\[
-\frac{r}{2}\,\leq\,f^{k}\left(x',\,u^{k}\left(0\right)-\frac{r^{2}}{4b}\right)\,\leq\,-\frac{a}{8b}r,
\]
for every $x'\in\bar{B}_{\frac{a}{16b}r}^{n-1}$ . Notice that by
(\ref{local convergence: increasing}) we have
\begin{equation}
f^{k}\left(x',\,u^{k}\left(0\right)-\frac{r^{2}}{4b}\right)\,\leq\,f^{k}\left(x',t\right)\,\leq\,f^{k}\left(x',\,u^{k}\left(0\right)+\frac{r^{2}}{4b}\right).\label{local convergence: bounds}
\end{equation}
for every $x'\in\bar{B}_{\frac{a}{16b}r}^{n-1}$ and $t\in\left[u^{k}\left(0\right)-\frac{r^{2}}{4b},\,u^{k}\left(0\right)+\frac{r^{2}}{4b}\right]$. 

For each $\delta\in\left(0,\frac{1}{4b}\right)$, let
\[
\mathcal{D}_{\delta}\,=\,\bar{B}_{\frac{a}{16b}r}^{n-1}\times\left[u\left(0\right)+\delta r^{2}-\frac{r^{2}}{4b},\,u\left(0\right)-\delta r^{2}+\frac{r^{2}}{4b}\right].
\]
It follows from (\ref{local convergence: time}), (\ref{local convergence: derivatives}),
(\ref{local convergence: small Lipschitz}), (\ref{local convergence: upper and lower bound}),
(\ref{local convergence: bounds}) that any subsequence of $\left\{ \left.f^{k}\right|_{\mathcal{D}_{\delta}}\right\} $
has a further subsequence that converges smoothly to some function
$\tilde{f}$ on $\mathcal{D}_{\delta}$. We claim that $\tilde{f}=\left.f\right|_{\mathcal{D}_{\delta}}$.
To prove that, fix
\[
t_{0}\in\left[u\left(0\right)+\delta r^{2}-\frac{r^{2}}{4b},\,u\left(0\right)-\delta r^{2}+\frac{r^{2}}{4b}\right]
\]
and let $\epsilon>0$ be any sufficiently small number. Note that
(\ref{local convergence: increasing}) holds with $f$ in place of
$f^{k}$, so we have 
\[
\left\{ x^{n}>f\left(x',t_{0}+\epsilon\right)\,:\,x'\in\bar{B}_{\frac{a}{16b}r}^{n-1}\right\} \cap B_{r}\,\subset\,\left\{ u>t_{0}+\epsilon\right\} ,
\]
\[
\left\{ x^{n}<f\left(x',t_{0}-\epsilon\right)\,:\,x'\in\bar{B}_{\frac{a}{16b}r}^{n-1}\right\} \cap B_{r}\,\subset\,\left\{ u<t_{0}-\epsilon\right\} .
\]
By Proposition \ref{C0 compactness}, there is $k_{\epsilon}\in\mathbb{N}$
so that for $k\geq k_{\epsilon}$ we have
\[
\left\{ u>t_{0}+\epsilon\right\} \,\subset\,\left\{ u^{k}>t_{0}\right\} ,\quad\left\{ u<t_{0}-\epsilon\right\} \,\subset\,\left\{ u^{k}<t_{0}\right\} ,
\]
which yields that
\[
f\left(x',t_{0}-\epsilon\right)\,\leq\,f^{k}\left(x',t_{0}\right)\,\leq\,f\left(x',t_{0}+\epsilon\right)\quad\forall\,x'\in\bar{B}_{\frac{a}{16b}r}^{n-1},\,\,k\geq k_{\epsilon}.
\]
By the squeeze theorem, $f^{k}\left(x',t_{0}\right)\rightarrow f\left(x',t_{0}\right)$
for every $\,x'\in\bar{B}_{\frac{a}{16b}r}^{n-1}$, proving the claim.

Therefore, we obtain 
\[
f^{k}\,\stackrel{C^{\infty}}{\rightarrow}\,f\quad\textrm{on}\,\,\mathcal{D}_{\delta}.
\]
Note that (\ref{local convergence: derivatives}), (\ref{local convergence: small Lipschitz}),
(\ref{local convergence: increasing}), (\ref{local convergence: initial point}),
(\ref{local convergence: upper and lower bound}), and (\ref{local convergence: bounds})
hold with $f$ and $u$ in place of $f^{k}$ and $u^{k}$, respectively.
Now choose $\delta>0$ sufficiently small so that 
\[
\frac{a}{12b}r\,\leq\,f\left(x',\,u\left(0\right)-\delta r^{2}+\frac{r^{2}}{4b}\right)\,\leq\,\frac{r}{2},
\]
\[
-\frac{r}{2}\,\leq\,f\left(x',\,u\left(0\right)+\delta r^{2}-\frac{r^{2}}{4b}\right)\,\leq\,-\frac{a}{12b}r
\]
for every $x'\in\bar{B}_{\frac{a}{16b}r}^{n-1}$. Then when $k$ is
large we have 
\[
\frac{a}{16b}r\,\leq\,f^{k}\left(x',\,u\left(0\right)-\delta r^{2}+\frac{r^{2}}{4b}\right)\,\leq\,\frac{r}{2},
\]
\[
-\frac{r}{2}\,\leq\,f^{k}\left(x',\,u\left(0\right)+\delta r^{2}-\frac{r^{2}}{4b}\right)\,\leq\,-\frac{a}{16b}r
\]
for every $x'\in\bar{B}_{\frac{a}{16b}r}^{n-1}$. 

Finally, we conclude that the flow $\left\{ \Sigma_{t}^{k}\right\} $
converges smoothly to the flow $\left\{ \Sigma_{t}\right\} $ in $\bar{B}_{\frac{a}{16b}r}^{n-1}\times\left[-\frac{r}{2},\frac{r}{2}\right]$
for $t\in\left[u\left(0\right)+\delta r^{2}-\frac{r^{2}}{4b},\,u\left(0\right)-\delta r^{2}+\frac{r^{2}}{4b}\right]$
as $k\rightarrow\infty$; when $k$ is large, the flows $\left\{ \Sigma_{t}^{k}\right\} $
and $\left\{ \Sigma_{t}\right\} $ are empty in $\bar{B}_{\frac{a}{16b}r}^{n-1}\times\left[-\frac{a}{16b}r,\frac{a}{16b}r\right]$
for $t\notin\left[u\left(0\right)+\delta r^{2}-\frac{r^{2}}{4b},\,u\left(0\right)-\delta r^{2}+\frac{r^{2}}{4b}\right]$. 
\end{proof}
Now we are in a position to prove Theorem \ref{stability theorem}. 
\begin{proof}
(\textit{of Theorem \ref{stability theorem}}) Note that $\mathcal{S}\subset\Omega_{\dot{T}}$
by Proposition \ref{time of smooth existence}. In view of Lemma \ref{local convergence},
it suffices to show that $\left\{ \Sigma_{t}^{k}\right\} $ converge
locally smoothly to $\left\{ \Sigma_{t}\right\} $ in $\Omega_{0}\setminus\Omega_{\dot{T}}$
as $k\rightarrow\infty$. 

Fix $p\in\Omega_{0}\setminus\Omega_{\dot{T}}$, let 
\[
\tau=\frac{1}{2}u\left(p\right)\,\in\,\left(0,\dot{T}\right)
\]
and choose $r>0$ so that 
\[
B_{2r}\left(p\right)\,\subset\,\left\{ \frac{3}{2}\tau<u<\frac{3}{2}\dot{T}\right\} .
\]
By Proposition \ref{C0 compactness}, when $k$ is large we have
\[
\left\{ \frac{3}{2}\tau<u<\frac{3}{2}\dot{T}\right\} \,\subset\,\left\{ \tau<u^{k}<2\dot{T}\right\} ;
\]
thus, by Corollary \ref{smooth convergence near initial time} the
flow $\left\{ \Sigma_{t}^{k}\right\} $ converges locally smoothly
to $\left\{ \Sigma_{t}\right\} $ in $B_{r}\left(p\right)$ as $k\rightarrow\infty$. 
\end{proof}
As a consequence, Corollary \ref{regular time} can be improved as
follows.
\begin{cor}
\label{closedness during regular times}If the flow $\left\{ \Sigma_{t}\right\} $
is regular during $t\in\left[a,b\right]$, where $0<a<b<T_{ext}$,
then $\left\{ \Sigma_{t}^{k}\right\} _{a\leq t\leq b}$ is a MCF when
$k$ is large and converges smoothly to $\left\{ \Sigma_{t}\right\} _{a\leq t\leq b}$
as $k\rightarrow\infty$.
\end{cor}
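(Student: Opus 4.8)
The plan is to deduce both assertions from results already in place: the first is essentially Corollary \ref{regular time} (together with Corollary \ref{extinction time}), and the second follows by covering the compact spacetime track of $\{\Sigma_t\}_{t\in[a,b]}$ by finitely many of the local graphical patches furnished by Theorem \ref{stability theorem}.

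First I would dispose of the MCF claim. Since $[a,b]$ consists of regular times of $\{\Sigma_t\}$, Corollary \ref{regular time} yields $k_{[a,b]}$ such that for every $k\ge k_{[a,b]}$ the flow $\{\Sigma_t^k\}$ is regular during $t\in[a,b]$; by the discussion following (\ref{mean curvature}) a regular LSF is locally a smooth MCF, so $\{\Sigma_t^k\}_{t\in[a,b]}$ is a smooth MCF. It is nonempty on $[a,b]$ because $b<T_{ext}$ and, by Corollary \ref{extinction time}, $T_{ext}^k\to T_{ext}$, so $b<T_{ext}^k$ once $k$ is large.

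For the smooth convergence I would work with the spacetime track
\[
\mathcal{M}\;=\;\bigl\{(x,t)\in\bar{\Omega}_0\times[a,b]\,:\,u(x)=t\bigr\}\;=\;\bigcup_{t\in[a,b]}\Sigma_t\times\{t\}.
\]
As $u$ is continuous, $\mathcal{M}$ is a closed subset of the compact set $\bar{\Omega}_0\times[a,b]$, hence compact; since $a$ and $b$ are regular values of $u$ lying in $(0,\infty)$, no point of $\mathcal{M}$ projects into $\mathcal{S}$ (by (\ref{critical points})) and none onto $\Sigma_0$, so $\mathcal{M}\subset(\Omega_0\setminus\mathcal{S})\times(0,\infty)$. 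Around each $(p,u(p))\in\mathcal{M}$, the argument in the proof of Lemma \ref{local convergence} (or, when $u(p)\le\dot{T}$, Corollary \ref{smooth convergence near initial time} via the normal-graph description) produces a spacetime box — after a rotation, $\bar{B}^{n-1}_{\rho}(p)\times[-h,h]$ in space over a time interval $(u(p)-\ell,u(p)+\ell)$ — on which, for all large $k$, both $\{\Sigma_t\}$ and $\{\Sigma_t^k\}$ are graphs $x^n=f(x',t)$ and $x^n=f^k(x',t)$ over $T_p\Sigma_{u(p)}$ with $f^k\to f$ in $C^\infty$. By compactness of $\mathcal{M}$, finitely many such boxes cover $\mathcal{M}$; since each box is open in the time variable about $u(p)$, this finite subcover in particular contains neighborhoods of the endpoint slices $t=a$ and $t=b$. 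Passing to an index $k$ that works on all boxes simultaneously and assembling the local $C^\infty$ convergences over a fixed reference parametrization, one obtains that $\{\Sigma_t^k\}_{t\in[a,b]}$ converges to $\{\Sigma_t\}_{t\in[a,b]}$ smoothly, uniformly in $t\in[a,b]$.

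The only work beyond citing the above is bookkeeping: checking that the finitely many graphical patches glue into a single parametrization over which the convergence is $C^\infty$ and uniform in $t$ up to and including the endpoints $a$ and $b$, and that one index $k$ suffices on all patches at once. No new analytic estimate is needed — everything rests on Theorem \ref{stability theorem} and the interior estimates of Section \ref{interior estimates for mean-convex LSF}.
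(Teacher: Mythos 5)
Your proposal is correct and follows essentially the same route as the paper: the paper likewise invokes the sandwich $a'<a<b<b'<T_{ext}$ with $\{a'\le u\le b'\}\subset\Omega_0\setminus\mathcal{S}$ (as in Corollary \ref{regular time}) and then cites the local smooth convergence of Theorem \ref{stability theorem} on that compact set. Your version merely makes explicit the finite covering of the compact spacetime track by the graphical patches from Lemma \ref{local convergence}, which is the content the paper leaves implicit.
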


\begin{proof}
As in the proof of Corollary \ref{regular time}, choose 
\[
0<a'<a<b<b'<T_{ext}
\]
and $k_{0}\in\mathbb{N}$ such that 
\[
\left\{ a\leq u^{k}\leq b\right\} \,\subset\,\left\{ a'<u<b'\right\} \,\subset\,\Omega_{0}\setminus\mathcal{S}
\]
for every $k\geq k_{0}$. Then the conclusion follows from Theorem
\ref{stability theorem} that the flow $\left\{ \Sigma_{t}^{k}\right\} $
converges locally smoothly to $\left\{ \Sigma_{t}^{k}\right\} $ in
$\left\{ a'\leq u\leq b'\right\} $.
\end{proof}

\section{\uuline{Singular set of two-convex LSF}\label{singular set of two-convex LSF}}

\uline{From now on, the initial hypersurface \mbox{$\Sigma_{0}$}
is assumed to be two-convex.} It follows from \cite{CHN} that the
singular set $\mathcal{S}$ of $\left\{ \Sigma_{t}\right\} $ would
consist of round points and/or cylindrical points (i.e., $1$-cylindrical
points, see \cite{G2} for the definitions). 

In this section we aim to establish Theorem \ref{structure of singular set}.
Firstly, in Section \ref{round point} and Section \ref{cylindrical point}
we apply the theories in \cite{CM2} and \cite{CM3} to investigate
the asymptotic behavior of the flow $\left\{ \Sigma_{t}\right\} $
and the singular set $\mathcal{S}$ near a round point and a cylindrical
point, respectively. Then on the basis of these findings, in Section
\ref{singular set at the first singular time} we study the structure
of the singular set at the first singular time $T_{1}$ under the
assumption that that $T_{1}$ is an isolated singular time. Lastly,
in Section \ref{singular set at subsequent singular times} we give
Proposition \ref{domain after first singular time}, by which we can
apply the result in Section \ref{singular set at the first singular time}
recursively to prove Theorem \ref{structure of singular set}.

\subsection{Round points\label{round point}}

In this subsection we will briefly review the behavior of mean-convex
LSF near a round point and then conclude with Corollary \ref{isolated round point}.

Now let $p$ be a round point of $\left\{ \Sigma_{t}\right\} $. By
Brakke's regularity theorem (cf. \cite{B}, \cite{I1}), in a small
neighborhood $U\subset\Omega_{0}$ of $p$, $\Sigma_{t}$ would be
asymptotically spherical around $p$ for every $t<u\left(p\right)$.
Choose $t_{0}$ less than and sufficiently close to $u\left(p\right)$
such that 
\[
\hat{\Sigma}_{t_{0}}\coloneqq\Sigma_{t_{0}}\cap U
\]
is a  convex closed connected hypersurface. Let $\hat{\Omega}_{t_{0}}$
be the region bounded by $\hat{\Sigma}_{t_{0}}$.\footnote{Note that $\hat{\Omega}_{t_{0}}$ is an open neighborhood of $p$
in $\Omega_{0}$.} As $u$ satisfy (\ref{level set flow}) in the viscosity sense on
$\hat{\Omega}_{t_{0}}$ with the boundary condition $u=t_{0}$ on
$\partial\hat{\Omega}_{t_{0}}=\hat{\Sigma}_{t_{0}}$, it follows from
the existence and uniqueness theorem in \cite{ES} for solutions to
the Dirichlet problem (\ref{level set flow}) that 
\[
\left\{ \Sigma_{t}\cap\hat{\Omega}_{t_{0}}\right\} _{t>t_{0}}
\]
is indeed the mean-convex LSF starting from $\hat{\Sigma}_{t_{0}}$
at time $t_{0}$.

On the other hand, by \cite{H1} the MCF $\left\{ \tilde{\Sigma}_{t}\right\} _{t\geq t_{0}}$
starting from $\hat{\Sigma}_{t_{0}}$ at time $t_{0}$ would shrink
monotonically into a point as $t\nearrow\tilde{T}$, where $\tilde{T}$
is the first singular time of $\left\{ \tilde{\Sigma}_{t}\right\} $.
By \cite{ES}, 
\[
\Sigma_{t}\cap\hat{\Omega}_{t_{0}}=\tilde{\Sigma}_{t}
\]
for every $t\in\left[t_{0},\tilde{T}\right)$. Thus, we infer that
$\Sigma_{t}\cap\hat{\Omega}_{t_{0}}$ shrinks to the round point $p$
as $t\nearrow u\left(p\right)=\tilde{T}$. 

Furthermore, when $u\left(p\right)=T_{1}$ (i.e., the first singular
time of the flow), then by the connectedness of $\Sigma_{t}$ for
$t\in\left[0,T_{1}\right)$, $\Sigma_{t}\cap\hat{\Omega}_{t_{0}}$
must be the only component of $\Sigma_{t}$ for every $t$ less than
and sufficiently close to $T_{1}$. In other words, the whole $\Sigma_{t}$
would be asymptotically spherical around $p$ and shrinks to $p$
as $t\nearrow T_{1}.$
\begin{cor}
\label{isolated round point}The round point $p$ of $\left\{ \Sigma_{t}\right\} $
is an isolated singular point of the flow and a local maximum point
of $u$. Moreover, if $u\left(p\right)$ is the first singular time
of the flow, then $p$ would be the unique singular point of $\left\{ \Sigma_{t}\right\} $
and a global maximum point of $u$; that is to say, $\left\{ \Sigma_{t}\right\} $
would shrink to the point $p$ at time $u\left(p\right)$ and then
vanish.
\end{cor}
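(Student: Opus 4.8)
The plan is to read everything off the local picture established just above the corollary: fixing a round point $p$, there are a time $t_{0}<u(p)$ and a neighborhood $\hat{\Omega}_{t_{0}}$ of $p$ in $\Omega_{0}$, bounded by the convex closed connected hypersurface $\hat{\Sigma}_{t_{0}}=\Sigma_{t_{0}}\cap U$, such that $\{\Sigma_{t}\cap\hat{\Omega}_{t_{0}}\}_{t\geq t_{0}}$ is the mean-convex LSF starting from $\hat{\Sigma}_{t_{0}}$; by \cite{H1} and \cite{ES} this flow coincides with the smooth convex MCF $\{\tilde{\Sigma}_{t}\}_{t\in[t_{0},\tilde{T})}$ out of $\hat{\Sigma}_{t_{0}}$, and the latter contracts monotonically to the single point $p$ as $t\nearrow\tilde{T}=u(p)$. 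In particular the slices $\tilde{\Sigma}_{t}$ sweep out $\hat{\Omega}_{t_{0}}\setminus\{p\}$, and the arrival time function of this local flow, which is $u$ restricted to $\hat{\Omega}_{t_{0}}$, takes values in $(t_{0},\tilde{T}]$ with the value $\tilde{T}$ attained only at $p$.

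From this I would first conclude that $p$ is an isolated singular point and a local maximum of $u$. Each $q\in\hat{\Omega}_{t_{0}}\setminus\{p\}$ lies on a slice $\tilde{\Sigma}_{u(q)}$ of a smooth, monotone MCF, so $q$ is a regular point of $\{\Sigma_{t}\}$; hence $\mathcal{S}\cap\hat{\Omega}_{t_{0}}=\{p\}$ and $p$ is isolated in $\mathcal{S}$. Moreover $u\leq u(p)$ throughout the neighborhood $\hat{\Omega}_{t_{0}}$, with equality only at $p$, so $p$ is a (strict) local maximum of $u$.

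For the ``moreover'' clause, suppose $u(p)=T_{1}$. Then $\{\Sigma_{t}\}$ is a smooth MCF for $t\in[0,T_{1})$, and since $\Sigma_{0}$ is connected so is every $\Sigma_{t}$ in this range. Now $\hat{\Sigma}_{t_{0}}=\Sigma_{t_{0}}\cap U$ is a nonempty compact hypersurface without boundary contained in $\Sigma_{t_{0}}$, hence a nonempty subset of $\Sigma_{t_{0}}$ that is both open (intersection with the open set $U$) and closed (being compact) in $\Sigma_{t_{0}}$; by connectedness $\hat{\Sigma}_{t_{0}}=\Sigma_{t_{0}}$ and $\hat{\Omega}_{t_{0}}=\Omega_{t_{0}}$. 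For $t\in(t_{0},T_{1})$ one has $\Sigma_{t}=\{u=t\}\subset\Omega_{t_{0}}=\hat{\Omega}_{t_{0}}$, so $\Sigma_{t}=\tilde{\Sigma}_{t}$ and the entire flow contracts to $p$ as $t\nearrow T_{1}$. Applying the previous paragraph with $\hat{\Omega}_{t_{0}}=\Omega_{t_{0}}$, every point of $\Omega_{t_{0}}\setminus\{p\}$ is regular, while every point of $\Omega_{0}\setminus\Omega_{t_{0}}$ has arrival time $\leq t_{0}<T_{1}$ and is therefore a regular point too; thus $\mathcal{S}=\{p\}$. Combining $u\leq u(p)$ on $\Omega_{t_{0}}$ with $u\leq t_{0}<u(p)$ on $\Omega_{0}\setminus\Omega_{t_{0}}$ gives $\max_{\Omega_{0}}u=u(p)=T_{ext}$, so $p$ is the global maximum of $u$ and $\{\Sigma_{t}\}$ vanishes after shrinking to $p$ at time $u(p)$.

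The argument is essentially bookkeeping once the convex-MCF description above is in hand; the only places requiring a little care are the clopen/connectedness step identifying $\hat{\Sigma}_{t_{0}}$ with all of $\Sigma_{t_{0}}$ when $u(p)=T_{1}$, and keeping track of arrival times (that $u\leq\tilde{T}$ on $\hat{\Omega}_{t_{0}}$ and $u\leq t_{0}$ outside $\Omega_{t_{0}}$) so that the local maximality upgrades to global maximality correctly.
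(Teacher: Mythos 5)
Your proposal is correct and takes essentially the same route as the paper: the corollary is stated there as a direct consequence of the local convex-MCF description of the flow near a round point, and your argument reproduces that description, with your clopen/connectedness step making precise the paper's remark that $\Sigma_{t}\cap\hat{\Omega}_{t_{0}}$ is the only component of $\Sigma_{t}$ when $u\left(p\right)=T_{1}$.
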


\subsection{Cylindrical points\label{cylindrical point}}

We shall begin this subsection with extracting some crucial facts
concerning cylindrical points from \cite{CM2} and \cite{CM3} (see
also Section 2 in \cite{G2}), and then proceed to prove the main
results of this subsection, Proposition \ref{one-sided structure of singular set}
and Corollary \ref{local structure of singular set}, which give the
local structure of the singular set at the first singular time near
a cylindrical point under certain assumptions. The key to establish
Proposition \ref{one-sided structure of singular set} is to prove
two critical lemmas: Lemma \ref{local max scale} and Lemma \ref{saddle scale},
which are based upon the properties of cylindrical points referred
at the beginning of the subsection.

Now let $p$ be a cylindrical point. \uline{For ease of notations
(which is valid only for this subsection), let us perform a rigid
motion to the flow in space and do a translation in time so as to
assume that \mbox{$p$} is the origin, the singular time \mbox{$u\left(p\right)$}
is \mbox{$0$}, and that the tangent flow at \mbox{$0$} is \mbox{$\left\{ \sqrt{-t}\,\mathcal{C}\right\} _{t<0}$}},
where 
\[
\mathcal{C}\coloneqq S_{\sqrt{2\left(n-2\right)}}^{n-2}\times\mathbb{R}.
\]
In this case, it is convenient to adopt the following coordinates:
\[
x=\left(y,z\right)\,\in\,\mathbb{R}^{n-1}\times\mathbb{R}.
\]
By \cite{CM2} (see also \cite{CM3}), given small positive constants
$\phi$ and $\epsilon$, there are $\delta>0$ and $L\geq1$ (depending
on $n$, $\lambda$,\footnote{See Proposition \ref{entropy}.} $\phi$,
$\epsilon$) so that if for some $t_{0}<0$ the rescaled level set
\begin{equation}
\left(\frac{1}{\sqrt{-2\,t_{0}}}\Sigma_{2t_{0}}\right)\cap B_{L}\label{initial approximately cylindrical}
\end{equation}
is $\delta$-close in the $C^{\dot{m}}$ topology to some cylinder
congruent to $\mathcal{C}$ in $B_{L}$, where $\dot{m}\geq2$ is
an absolute constant (depending on $n$, $\lambda$), then for every
$t\in\left[t_{0},0\right)$, the rescaled level set
\[
\left(\frac{1}{\sqrt{-t}}\Sigma_{t}\right)\cap B_{M}
\]
would be $\epsilon$-close in the $C^{\dot{m}}$ topology\footnote{The closeness in the higher order topology would follow from \cite{EH}
and interpolation.} to $\mathcal{C}$ in $B_{M}$, where 
\[
M=\frac{\sqrt{2\left(n-2\right)}}{\sin\phi};
\]
namely, $\frac{1}{\sqrt{-t}}\Sigma_{t}$ in $B_{M}$ can be parametrized
as a normal graph of a function $f_{t}\left(\omega,z\right)$ over
$\mathcal{C}$ whose $C^{\dot{m}}$ norm is less than $\epsilon$,
where $\left(\omega,z\right)\in S^{n-2}\times\mathbb{R}$ are the
cylindrical coordinates of $\mathcal{C}$.\footnote{That is, $\mathcal{C}$ can be typically parametrized as 
\[
x\left(\omega,z\right)=\left(\sqrt{2\left(n-2\right)}\,\omega,\,z\right),\quad\omega\in S^{n-2},\,z\in\mathbb{R}.
\]
} After undoing the rescaling, $\Sigma_{t}$ in $B_{M\sqrt{-t}}$ can
be parametrized as 
\begin{equation}
x_{t}\left(\omega,z\right)=\left(\left(\sqrt{2\left(n-2\right)}+f_{t}\left(\omega,z\right)\right)\sqrt{-t}\,\omega,\,z\right),\quad\omega\in S^{n-2},\,\,\left|z\right|\lesssim M\sqrt{-t}.\label{cylindrical coordinates}
\end{equation}
Note that such $t_{0}<0$ exists due to Brakke's regularity theorem
(cf. \cite{B},  \cite{I1}).

Let 
\[
r=\sqrt{2\left(n-2\right)\left(-t_{0}\right)}.
\]
It follows that in $B_{r}\setminus\mathscr{C}_{\phi}$, where\footnote{$\mathscr{C}_{\phi}$ is a solid double cone with apex $0$, axis
$\left\{ 0\right\} \times\mathbb{R}$ (i.e., the $z$-axis), and angle
$\phi$.} 
\begin{equation}
\mathscr{C}_{\phi}=\left\{ \,\left|y\right|\leq\left|z\right|\tan\phi\right\} ,\label{cone}
\end{equation}
$\Sigma_{t}=\left\{ u=t\right\} $ would be $\epsilon$-close ``relative
to the scale $\sqrt{-t}$'' \footnote{In the sense that after rescaling by the factor $\frac{1}{\sqrt{-t}}$,
the two ``normalized'' hypersurfaces would be $\epsilon$-close
in the $C^{\dot{m}}$ topology.}in the $C^{\dot{m}}$ topology to 
\[
\sqrt{-t}\,\mathcal{C}=\left\{ \left|y\right|=\sqrt{2\left(n-2\right)\left(-t\right)}\right\} 
\]
for every $t<0$. Such a number $r$ is called a $\left(\phi,\epsilon\right)$-\textbf{cylindrical
scale} of the flow $\left\{ \Sigma_{t}\right\} $ at the cylindrical
point $0$. 

Before getting into more details about the cylindrical points from
\cite{CM2} and \cite{CM3}, let us digress for a moment to discuss
the cylindrical points as critical points of the arrival time function
$u$. This is indispensable for Section \ref{types of singular components}.
\begin{cor}
\label{saddle criterion}The cylindrical point $0$ would never be
a local minimum point of $u$ in view of
\begin{equation}
\sup_{B_{r}\setminus\mathscr{C}_{\phi}}\,u\,\leq\,u\left(0\right)=0;\label{saddle criterion: max}
\end{equation}
thus, it is either a local maximum point or a saddle point of $u$. 

As a consequence, the cylindrical point $0$ is a saddle point of
$u$ if and only if it is not a local maximum point of $u$, that
is,  for every $\varepsilon>0$ there is $x\in B_{\varepsilon}$ such
that $u\left(x\right)>u\left(0\right)=0$ (in other words, the cylindrical
point $0$ can be approached by points from the superlevel set $\left\{ u>u\left(0\right)=0\right\} $). 
\end{cor}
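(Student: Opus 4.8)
The plan is to extract everything from the $\left(\phi,\epsilon\right)$-cylindrical scale $r$ constructed just above. Recall that in $B_{r}\setminus\mathscr{C}_{\phi}$, for every $t<0$ the level set $\Sigma_{t}=\left\{u=t\right\}$ is, after rescaling by $\left(-t\right)^{-1/2}$, a normal graph over $\mathcal{C}$ of small $C^{\dot{m}}$ norm, so by the parametrization \eqref{cylindrical coordinates} every $x=\left(y,z\right)\in\Sigma_{t}\cap\left(B_{r}\setminus\mathscr{C}_{\phi}\right)$ satisfies $\bigl|\,\left|y\right|-\sqrt{2\left(n-2\right)\left(-t\right)}\,\bigr|\leq\epsilon\sqrt{-t}$, and moreover $\Sigma_{t}$ splits $B_{r}\setminus\mathscr{C}_{\phi}$ into the small-$\left|y\right|$ side, on which $u>t$ (this side is the part near $0$ of the shrinking solid cylinder $\Omega_{t}=\left\{u>t\right\}$), and the large-$\left|y\right|$ side, on which $u\leq t$. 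First I would use this to prove the estimate, slightly stronger than \eqref{saddle criterion: max},
\[
u\left(x\right)\,\leq\,-\,\frac{\left|y\left(x\right)\right|^{2}}{2\left(n-2\right)c_{\epsilon}^{2}}\,<\,0\qquad\textrm{for every }x=\left(y,z\right)\in B_{r}\setminus\mathscr{C}_{\phi},
\]
where $c_{\epsilon}=1+\epsilon/\sqrt{2\left(n-2\right)}$; here $\left|y\left(x\right)\right|>0$ because $x\notin\mathscr{C}_{\phi}$ forces $\left|y\right|>\left|z\right|\tan\phi\geq0$. Indeed, whenever $t<0$ satisfies $c_{\epsilon}\sqrt{2\left(n-2\right)\left(-t\right)}<\left|y\left(x\right)\right|$, i.e. $t>-\left|y\left(x\right)\right|^{2}/\bigl(2\left(n-2\right)c_{\epsilon}^{2}\bigr)$, the point $x$ lies strictly on the large-$\left|y\right|$ side of $\Sigma_{t}$ within $B_{r}\setminus\mathscr{C}_{\phi}$, hence $u\left(x\right)\leq t$; letting $t$ decrease to $-\left|y\left(x\right)\right|^{2}/\bigl(2\left(n-2\right)c_{\epsilon}^{2}\bigr)$ yields the displayed bound, and in particular $\sup_{B_{r}\setminus\mathscr{C}_{\phi}}u\leq 0=u\left(0\right)$, which is \eqref{saddle criterion: max}.

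Next, since $\phi<\pi/2$ the cone $\mathscr{C}_{\phi}$ is a proper subset of $\mathbb{R}^{n}$, so $B_{r}\setminus\mathscr{C}_{\phi}$ contains points --- for instance $\left(y,0\right)$ with $0<\left|y\right|<r$ --- arbitrarily close to the origin, and at each of them $u<0=u\left(0\right)$ by the estimate above; therefore $0$ is not a local minimum point of $u$. On the other hand $0\in\mathcal{S}$, so $\nabla u\left(0\right)=0$ by \eqref{critical points}, i.e. $0$ is a critical point of $u$. A critical point is a local maximum point, a local minimum point, or a saddle point; ruling out the middle case, $0$ is a local maximum point or a saddle point of $u$.

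Finally, for the concluding equivalence I would simply recall that a saddle point is by definition a critical point that is neither a local maximum point nor a local minimum point; since we have just shown $0$ is not a local minimum point, $0$ is a saddle point of $u$ if and only if $0$ is not a local maximum point of $u$, and unwinding the negation of the local-maximum condition this says precisely that for every $\varepsilon>0$ there is $x\in B_{\varepsilon}$ with $u\left(x\right)>u\left(0\right)=0$, i.e. $0$ can be approached by points of the superlevel set $\left\{u>0\right\}$. The only nontrivial part is Step 1, and within it the only point needing a word of care is that, inside $B_{r}\setminus\mathscr{C}_{\phi}$, the graph \eqref{cylindrical coordinates} genuinely separates $\left\{u>t\right\}$ from $\left\{u<t\right\}$ with the superlevel set on the small-$\left|y\right|$ side --- this is exactly the statement that to leading order $\Omega_{t}$ near the cylindrical point is the shrinking solid cylinder, which follows from the mean-convex LSF structure together with the cylindrical-scale closeness.
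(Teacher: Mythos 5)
Your proposal is correct and follows essentially the same route the paper intends: the corollary is stated as an immediate consequence of the asymptotically cylindrical behavior of $\left\{ \Sigma_{t}\right\} $ in $B_{r}\setminus\mathscr{C}_{\phi}$ within the cylindrical scale, which is exactly what you use to obtain (\ref{saddle criterion: max}) and to exhibit points arbitrarily close to $0$ with $u<0$. Your Step 1 even sharpens the paper's inequality to a strict quantitative bound, but the underlying mechanism (the level sets $\Sigma_{t}\approx\sqrt{-t}\,\mathcal{C}$ separating the superlevel set onto the small-$\left|y\right|$ side) is the same.
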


Recall that from the asymptotically cylindrical behavior of the flow
$\left\{ \Sigma_{t}\right\} $ in $B_{r}\setminus\mathscr{C}_{\phi}$,
we have
\[
\left\{ u>u\left(0\right)=0\right\} \cap B_{r}\,\subset\,\mathscr{C}_{\phi}\setminus\left\{ 0\right\} .
\]
Note that $\mathscr{C}_{\phi}\setminus\left\{ 0\right\} $ has two
components: the upside and the downside. When the cylindrical points
$0$ is a saddle point, depending on whether it can be approached
by points in $\left\{ u>u\left(0\right)=0\right\} $ from only one
side or both sides, we have the following definition:
\begin{defn}
\label{one-sided saddle}When the cylindrical point $0$ is a saddle
point of $u$, we call it a \textbf{one-sided saddle point} of $u$
provided it can be approached by points in $\left\{ u>u\left(0\right)=0\right\} $
from only one side; in other words, it must be a local maximum point
on either the upside or the downside, say the upside, in the sense
that there exists $\varepsilon_{0}>0$ so that $u\left(0\right)=0$
is the maximum value of $u$ on $\bar{B}_{\varepsilon_{0}}^{n-1}\times\left[0,\varepsilon_{0}\right]$. 

When the saddle point $0$ can be approached by points in $\left\{ u>u\left(0\right)=0\right\} $
from both sides, it would be called a \textbf{two-sided saddle point}
of $u$.
\end{defn}

Below we have a remark, which will be used in the discussion of singular
components of the bumpy type in Section \ref{types of singular components}.
\begin{rem}
\label{one-sided isolated local max}Suppose that the cylindrical
point $0$ is a local maximum point of $u$ on the upside, namely,
there exists $\varepsilon_{0}\in\left(0,r\right)$ so that $u\left(0\right)=0$
is the maximum value of $u$ on $\bar{B}_{\varepsilon_{0}}^{n-1}\times\left[0,\varepsilon_{0}\right]$.
By (\ref{saddle criterion: max}) we actually have 
\[
u\leq u\left(0\right)=0\quad\textrm{on}\,\,B_{r}\cap\left\{ 0\leq z\leq\varepsilon_{0}\right\} .
\]
Assume further that there is no other singular points of $\left\{ \Sigma_{t}\right\} $
in $B_{r}\cap\left\{ 0\leq z\leq\varepsilon_{0}\right\} $ except
the cylindrical point $0$. Then we have
\[
u<u\left(0\right)=0\quad\textrm{on}\,\,B_{r}\cap\left\{ 0\leq z<\varepsilon_{0}\right\} .
\]
Because otherwise there would exist $\left(\check{y},\check{z}\right)\in\left(B_{r}\setminus\left\{ 0\right\} \right)\cap\left\{ 0\leq z<\varepsilon_{0}\right\} $
such that $u\left(\check{y},\check{z}\right)=0$. In view of the asymptotically
cylindrical behavior of $\left\{ \Sigma_{t}\right\} $, we infer that
\[
\left(\check{y},\check{z}\right)\in\left(B_{r}\setminus\left\{ 0\right\} \right)\cap\left\{ 0\leq z<\varepsilon_{0}\right\} \cap\mathscr{C}_{\phi}\,\,\subset\,\,B_{r}\cap\left\{ 0<z<\varepsilon_{0}\right\} ,
\]
which yields that $\left(\check{y},\check{z}\right)$ is an interior
local maximum point of $u$ (and hence a critical point of $u$).
This is in contradiction to the assumption that the cylindrical point
$0$ is the only singular point of $\left\{ \Sigma_{t}\right\} $
in $B_{r}\cap\left\{ 0\leq z\leq\varepsilon_{0}\right\} $. Therefore,
(in view of the continuity of $u$ at the cylindrical point $0$)
we have
\begin{equation}
\max_{B_{r}\cap\left\{ z=z_{0}\right\} }u<u\left(0\right)\quad\forall\,z_{0}\in\left(0,\varepsilon_{0}\right);\quad\max_{B_{r}\cap\left\{ z=z_{0}\right\} }u\,\rightarrow u\left(0\right)\quad\textrm{as}\,\,z_{0}\searrow0.\label{one-sided isolated local max: level}
\end{equation}
\end{rem}

Now let us proceed to the further details regarding the cylindrical
points from \cite{CM2} and \cite{CM3}. Firstly, owing to Brakke's
regularity theorem, we can rechoose $t_{0}$ in (\ref{initial approximately cylindrical})
even closer to $0$ so that 
\[
\left(\frac{1}{\sqrt{-2\,t_{0}}}\left\{ u=2\,t_{0}\right\} \right)\cap B_{2L}
\]
is $\frac{\delta}{2}$-close in the $C^{\dot{m}}$ topology to $\mathcal{C}$
in $B_{2L}$. Applying the theorems in \cite{CM2} to every cylindrical
point near $0$ (if any) yields the following: there exists $\rho>0$
(depending on $n$, $\lambda$, $\phi$, $\epsilon$, $\left|t_{0}\right|$,
and the Lipschitz constant of $u$)\footnote{\label{scale of cylinder neighborhood}The intention is to ensure
that for any cylindrical point $q\in\bar{B}_{\rho}^{n-1}\times\left[-\rho,\rho\right]$
there holds 
\[
\frac{\left|q\right|+\sqrt{\left|u\left(q\right)\right|}}{\sqrt{-t_{0}}}
\]
is sufficiently small (depending on $n$, $\dot{m}$, $\delta$).} so that every cylindrical point in $\bar{B}_{\rho}^{n-1}\times\left[-\rho,\rho\right]$
has a uniform $\left(\phi,\epsilon\right)$-cylindrical scale $r=\frac{1}{2}\sqrt{2\left(n-2\right)\left(-t_{0}\right)}$.\footnote{By Footnote \ref{scale of cylinder neighborhood}, $r$ is much larger
than $\rho$, so we may assume that $\bar{B}_{2\rho}^{n-1}\times\left[-2\rho,2\rho\right]\,\subset B_{r}$.}

By \cite{CM3} (see also \cite{CM5}), let $\theta=\theta\left(n\right)\in\left(0,1\right)$
be an appropriately small constant, there would be small constants
$\phi=\phi\left(n\right)$ and $\epsilon=\epsilon\left(n\right)$
so that every cylindrical point $\left(y_{0},z_{0}\right)$ in $\bar{B}_{\rho}^{n-1}\times\left[-\rho,\rho\right]$
has a uniform $\left(\phi\left(n\right),\epsilon\left(n\right)\right)$-cylindrical
scale\footnote{In the rest of the paper, this would be simply referred to as a cylindrical
scale.}and is contained in a graph $y=\psi\left(z\right)$, where 
\[
\psi\,:\,\left[-\rho,\rho\right]\rightarrow\bar{B}_{\rho}^{n-1}
\]
is a $\theta$-Lipschitz function with $\psi\left(0\right)=0$,\footnote{\label{contained in the interior}With the $\theta$-Lipschitz condition
of $\psi$ and $\psi\left(0\right)=0$, we actually have that 
\[
\psi\left(z\right)\in\bar{B}_{\theta\rho}^{n-1}\subset B_{\rho}^{n-1}
\]
for every $z\in\left[-\rho,\rho\right]$.}and the direction of the $1$-dimensional axis of the tangent cylinder
at $\left(y_{0},z_{0}\right)$ would be $\theta$-close to the direction
of the $z$-axis. 

Moreover, in case there exist $-\rho\leq a<b\leq\rho$ so that every
point on the curve segment 
\begin{equation}
\left\{ \left(y,z\right)\,:\,y=\psi\left(z\right),\,z\in\left[a,b\right]\right\} \label{singular curve}
\end{equation}
is a cylindrical point,\footnote{By the last paragraph, it is equivalent to say that for every $z\in\left[a,b\right]$,
there exists a cylindrical point in $B_{\rho}^{n-1}\times\left\{ z\right\} $.} then $\psi\in C^{1}\left[a,b\right]$ and that there are no other
singular points in $\bar{B}_{\rho}^{n-1}\times\left[a,b\right]$.
Furthermore, in this case the tangent line to the $C^{1}$ curve segment
at each point would be the $1$-dimensional axis of the tangent cylinder
at the corresponding point. Notice that 
\[
u\left(\psi\left(z\right),z\right)=u\left(0\right)\quad\forall\,z\in\left[a,b\right]
\]
since $\nabla u$ vanishes along the curve (\ref{singular curve}).
Consequently, $u\left(0\right)=0$ is the unique singular time of
the flow in $\bar{B}_{\rho}^{n-1}\times\left[a,b\right]$.
\begin{rem}
\label{one-sided local max}In the above scenario, for each $z_{0}\in\left[a,b\right]$,
the flow is asymptotically cylindrical in $B_{r}\left(\psi\left(z_{0}\right),z_{0}\right)\setminus\mathscr{C}_{\phi}^{z_{0}}$,
where $\mathscr{C}_{\phi}^{z_{0}}$ is the solid cone with apex $\left(\psi\left(z_{0}\right),z_{0}\right)$,
axis aligned with the axis of the tangent cylinder at $\left(\psi\left(z_{0}\right),z_{0}\right)$
(whose direction is $\theta$-close to the direction of the $z$-axis),
and angle $\phi$. Particularly, we deduce that
\[
\max_{\bar{B}_{\rho}^{n-1}}\,u\left(\cdot,z_{0}\right)=u\left(0\right)=0\quad\forall\,z_{0}\in\left[a,b\right].
\]
Therefore, $u\left(0\right)=0$ is the maximum value of $u$ on $\bar{B}_{\rho}^{n-1}\times\left[a,b\right].$
In the case where $a=0$ and $b\in\left(0,\rho\right]$, the cylindrical
point $0$ would be either a local maximum point or a one-sided saddle
point of $u$ such that it is a local maximum point on the upside
(see Definition \ref{one-sided saddle}).
\end{rem}

Next, we are going to prove two critical lemmas, Lemma \ref{local max scale}
and Lemma \ref{saddle scale}, from which the main results of this
subsection (Proposition \ref{one-sided structure of singular set}
and Corollary \ref{local structure of singular set}) follow. The
intuition behind Lemma \ref{local max scale} is that the ``neck''
between two cylindrical points should not break before the first singular
time.
\begin{lem}
\label{local max scale}Let the notations be as stated in the above.
Note that the parameters $\phi$, $\epsilon$, $\theta$ are small
and that the scale $r\sim\sqrt{-t_{0}}\gg\rho$. 

Suppose that $u\left(0\right)=0$ is the first singular time of the
flow $\left\{ \Sigma_{t}\right\} $ and that the cylindrical point
$0$ is a maximum point of $u$ in $\bar{B}_{\varrho}^{n-1}\times\left[0,\varrho\right]$
for some $\varrho\in\left(0,\rho\right]$. If there exists another
cylindrical point $\left(y_{*},z_{*}\right)\in\bar{B}_{\varrho}^{n-1}\times\left(0,\varrho\right]$,
then the singular set of $\left\{ \Sigma_{t}\right\} $ in $\bar{B}_{\varrho}^{n-1}\times\left[0,z_{*}\right]$
would be a $C^{1}$ embedded curve that is composed of cylindrical
points and is given by 
\[
\left\{ \left(y,z\right)\,:\,y=\psi\left(z\right),\,z\in\left[0,z_{*}\right]\right\} \,\subset\,\left\{ u=0\right\} .
\]
\end{lem}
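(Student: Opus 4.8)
The plan is to argue by contradiction on the "neck," using the asymptotically cylindrical structure to reduce the problem to a statement about the function $z\mapsto\max_{\bar B_\rho^{n-1}}u(\cdot,z)$, and then invoke the first-singular-time hypothesis together with the connectedness of the level sets $\Sigma_t$ for $t<0$. First I would set up notation: write $h(z)=\max_{\bar B_\varrho^{n-1}}u(\cdot,z)$ for $z\in[0,z_*]$, a continuous function by continuity of $u$, with $h(0)=0$ by the maximum-point hypothesis and $h(z)\le u(0)=0$ on $[0,\varrho]$ for the same reason. Since $(y_*,z_*)$ is a cylindrical point it lies on the Lipschitz graph $y=\psi(z)$ (from the \cite{CM3} structure recalled above), so $\psi(z_*)=y_*$ and $u(\psi(z_*),z_*)=u(0)=0$, hence $h(z_*)=0$ as well. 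The content to extract is that $h\equiv 0$ on the whole interval $[0,z_*]$: once we know that, every $z\in[0,z_*]$ has a point of $B_\varrho^{n-1}\times\{z\}$ where $u$ attains its (interior, since $\varrho<\rho$ and the asymptotically cylindrical behavior confines $\{u\ge 0\}$ to $\mathscr C_\phi$) maximum value $0$, which is therefore a critical point of $u$, i.e. a singular point; by the dichotomy in the \cite{CM3} structure a singular point in $B_\rho^{n-1}\times\{z\}$ forces the whole slab over $[0,z_*]$ to carry the $C^1$ graph $\psi$ as its singular set with $u\equiv 0$ there, which is exactly the conclusion.

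So the crux is: \emph{show $h(z)=0$ for all $z\in[0,z_*]$}. Suppose not; since $h\le 0$, $h(0)=h(z_*)=0$, and $h$ is continuous, the open set $\{z\in(0,z_*):h(z)<0\}$ is nonempty and I can pick a component $(a,b)$ of it, so $h(a)=h(b)=0$ while $h<0$ strictly on $(a,b)$. Interpreting this dynamically: at times $t<0$ the level set $\Sigma_t$, inside the large ball $B_r$, is $\epsilon$-close relative to scale $\sqrt{-t}$ to the shrinking cylinder $\sqrt{-t}\,\mathcal C$ away from the thin cone $\mathscr C_\phi$, so outside $\mathscr C_\phi$ the slab region $B_\varrho^{n-1}\times\{z\}$ is swept by $\Sigma_t$ at a single time $\approx$ the "radius-squared" time, and $h(z)<0$ says that the slab at height $z$ is already fully in the region $\{u\le h(z)<0\}$, i.e. the flow has \emph{finished passing through} that slab strictly before time $0$; meanwhile at heights $a$ and $b$ the flow reaches the axis exactly at time $0$. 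The plan is to feed this into the connectedness of $\Sigma_t$: for $t$ slightly negative, $\Sigma_t$ still contains cylinder-like pieces near heights $a$ and $b$ (because $h(a)=h(b)=0$), but near the component $(a,b)$ the superlevel set $\Omega_t$ has pinched off, so the two cylinder pieces near $a$ and near $b$ cannot be joined \emph{through} the slab $[a,b]$ within $B_r$; combined with the asymptotically cylindrical picture (which controls $\Sigma_t\cap(B_r\setminus\mathscr C_\phi)$ on the whole height range $[-\rho,\rho]$), this disconnects $\Sigma_t$, or more precisely produces two distinct connected local components each shrinking to a point — contradicting that $0$ is the \emph{first} singular time, since at the first singular time the flow is still a smooth connected MCF for $t<0$ and a neck pinch producing a disconnection would have to occur at or after $T_1$. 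I would phrase the disconnection carefully using Corollary \ref{isolated round point}/the one-sided minimization and the fact that a pinching neck in a two-convex mean-convex flow at its first singular time must itself be the singularity.

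An alternative, and probably cleaner, route to $h\equiv0$ avoids the global connectedness argument and instead uses a barrier/monotonicity comparison purely within the cylindrical region. On $B_r\setminus\mathscr C_\phi$ the flow is, after rescaling, a graph $f_t(\omega,z)$ over $\mathcal C$ with small $C^{\dot m}$ norm; in these coordinates the arrival-time function restricted to each height is, to leading order, $u(y,z)\approx-\tfrac{|y|^2}{2(n-2)}$-type behavior of a shrinking round cylinder, whose slices all become singular at the \emph{same} time $0$. The point is that $h(z)$, the largest arrival time over the slab at height $z$, is governed by how close the rescaled profile $f_t$ stays to $\mathcal C$, and the uniform closeness on the \emph{whole} window $|z|\le M\sqrt{-t}$ (which eventually covers any fixed $z\in[0,z_*]$ since $\rho\ll r\sim\sqrt{-t_0}$ and we may take $|t_0|$ as small as we like by Brakke regularity, then use the cylindrical scale at $0$) forces $h(z)\to0$ as $t_0\to0$ uniformly, hence $h(z)=0$. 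I expect the main obstacle to be making the neck-pinch-disconnection rigorous — i.e. converting "$h<0$ on a subinterval" into a genuine topological separation of $\Sigma_t$ for $t<0$ that contradicts first-singular-time smoothness — so in the writeup I would lean on the barrier version: compare $\{\Sigma_t\cap\hat\Omega_{t_0}\}$ with the exact shrinking cylinder via the \cite{ES} comparison principle on the domain $B_\varrho^{n-1}\times[a,b]$, whose lateral and top/bottom boundary data are $\ge$ the cylinder's by the asymptotically cylindrical estimate, to conclude $u\ge0$ (hence $=0$) throughout, contradicting $h<0$ on $(a,b)$. Once $h\equiv0$ is in hand, the remaining deductions are immediate from the \cite{CM3} structure already quoted and from \eqref{critical points}.
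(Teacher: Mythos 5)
Your reduction is the same as the paper's: everything comes down to showing that $u$ attains the value $0$ somewhere on each slice $\bar{B}_{\varrho}^{n-1}\times\left\{ z\right\} $, $z\in\left(0,z_{*}\right)$ (equivalently, your $h\equiv0$), after which the point is automatically an interior local maximum, hence a critical point, hence a cylindrical point, and the \cite{CM3} structure finishes the argument. But neither of your two routes to $h\equiv0$ closes the gap, and this is precisely where the content of the lemma lies. For the disconnection route: if $h<0$ on a subinterval $\left(a,b\right)$, the superlevel set $\Omega_{t}$ does pinch off \emph{inside the tube} for $t$ close to $0$, but this does not disconnect the global hypersurface $\Sigma_{t}$ — the pieces near heights $a$ and $b$ can perfectly well be joined by paths that exit the tube through the top or bottom caps (the lateral boundary is $\Sigma_{t_{1}}$, which $\Sigma_{t}$ never touches for $t>t_{1}$, but the caps impose no such constraint). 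Since $\Sigma_{t}$ remains a connected smooth closed hypersurface for every $t<T_{1}$ regardless, the first-singular-time hypothesis alone yields no contradiction. What is actually needed, and what the paper proves, is the persistence of a bottom-to-top path \emph{within} the truncated tube $\Theta$ for all $t\in\left[t_{1},0\right)$; this is a continuity-in-time argument carried out by transporting a path along the gradient flow of $\nabla u/\left|\nabla u\right|^{2}$ and retracting it back into $\Theta$ near the two caps using the conical layers around the two cylindrical points. That construction is the heart of the proof and is absent from your proposal.

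The barrier route fails for a more basic reason: the comparison with the exact shrinking cylinder $v\left(y,z\right)=-\left|y\right|^{2}/\left(2\left(n-2\right)\right)$ on $B_{\varrho}^{n-1}\times\left[a,b\right]$ requires the pointwise inequality $u\geq v$ on the top and bottom caps, but on those slices you only know $\max u=0$; inside the cone $\mathscr{C}_{\phi}$ there is no lower bound on $u$ whatsoever — indeed the whole difficulty is that the asymptotically cylindrical estimates say nothing about the flow inside $\mathscr{C}_{\phi}$, which is exactly where an early pinch-off would occur. (Even outside the cone the $\epsilon$-closeness only gives $u\geq\left(1-C\epsilon\right)v$, not $u\geq v$.) Your second variant, that "uniform closeness on the window $\left|z\right|\leq M\sqrt{-t}$ forces $h\left(z\right)\rightarrow0$," confuses scales: for fixed $z>0$ the rescaled window $B_{M\sqrt{-t}}$ eventually excludes $\left(y,z\right)$ as $t\nearrow0$, and the fixed-scale control in $B_{r}\setminus\mathscr{C}_{\phi}$ again omits the cone. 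So the proposal correctly locates the crux but does not prove it; the paper's path-persistence argument (or an equivalent substitute) is required.
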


\begin{proof}
First of all, note that the hypotheses implies that $t=0$ would be
the unique singular time of the flow in $\bar{B}_{\varrho}^{n-1}\times\left[0,\varrho\right]$;
particularly, $u\left(y_{*},z_{*}\right)=0$. In addition, in view
of the paragraphs  between Remark \ref{one-sided isolated local max}
and Remark \ref{one-sided local max}, to prove the lemma, it suffices
to show that for every $z\in\left(0,z_{*}\right)$ there exists a
cylindrical point in $B_{\varrho}^{n-1}\times\left\{ z\right\} $. 

Observe that if $u\left(y_{0},z_{0}\right)=0$ for some $\left(y_{0},z_{0}\right)\in B_{\varrho}^{n-1}\times\left(0,z_{*}\right)$,
then it must be a local maximum point of $u$ since $u\left(0\right)=0$
is the maximum value of $u$ on $\bar{B}_{\varrho}^{n-1}\times\left[0,\varrho\right]$;
hence $\left(y_{0},z_{0}\right)$ would be a singular point of the
flow, which by Corollary \ref{isolated round point} is indeed a cylindrical
point. Thus, our goal is to show that for every $z\in\left(0,z_{*}\right)$
there exists a point in $B_{\varrho}^{n-1}\times\left\{ z\right\} $
at which $u$ vanishes. 

Next, since $\Sigma_{t}$ approximates to the cylinder $\sqrt{-t}\,\mathcal{C}$
in $B_{r}\setminus\mathscr{C}_{\phi}$ for $t<0$, we can choose $t_{1}\in\left(t_{0},0\right)$
so that 
\begin{equation}
\Sigma_{t_{1}}\cap\left(\bar{B}_{\varrho}^{n-1}\times\left\{ z\right\} \right)\,\subset\,\left(B_{\varrho}^{n-1}\setminus\bar{B}_{z\tan\phi}^{n-1}\right)\times\left\{ z\right\} \,\,\quad\forall\,z\in\left[-2\varrho,2\varrho\right].\label{local max scale: lateral boundary}
\end{equation}
Let $\Theta$ be the ``tubular'' closed region in $B_{\varrho}^{n-1}\times\left[0,z_{*}\right]$
that is bounded from below, above, and lateral by respectively $z=0$,
$z=z_{*}$, and $\Sigma_{t_{1}}\approx\sqrt{-t_{1}}\,\mathcal{C}$.
Consider the set $\mathfrak{T}$ consisting of $t\in\left[t_{1},0\right)$
for which there exists a continuous curve on $\Sigma_{t}\cap\Theta$
that goes from the bottom $z=0$ to the top $z=z_{*}$. It is clear
that $t_{1}\in\mathfrak{T}$ since $\Sigma_{t_{1}}\approx\sqrt{-t_{1}}\,\mathcal{C}$
in $\Theta$. More generally, notice that we can actually find some
$t_{2}\in\left(t_{1},0\right)$ so that for every $t\in\left[t_{1},t_{2}\right]$,
the condition (\ref{local max scale: lateral boundary}) holds with
$t$ in place of $t_{1}$ (based on the asymptotically cylindrical
behavior of the flow in $B_{r}\setminus\mathscr{C}_{\phi}$), which
yields that $\left[t_{1},t_{2}\right]\subset\mathfrak{T}$. 

Let
\begin{equation}
\hat{T}=\sup\left\{ t\in\left[t_{1},0\right)\,:\,\left[t_{1},t\right]\subset\mathfrak{T}\right\} .\label{local max scale: method of continuity}
\end{equation}
We claim that $\hat{T}=0$. If so, fix $z_{0}\in\left(0,z_{*}\right)$,
then for every $t\in\left[t_{1},0\right)\subset\mathfrak{T}$, as
there exits a continuous path on $\Sigma_{t}\cap\Theta$ going from
the bottom to the top and hence must passing through 
\begin{equation}
\Theta_{z_{0}}\,\coloneqq\,\Theta\cap\left\{ z=z_{0}\right\} ,\label{local max scale: level}
\end{equation}
we can find a point $\left(y_{t},z_{0}\right)\in\Sigma_{t}\cap\Theta_{z_{0}}$,
which gives that $u\left(y_{t},z_{0}\right)=t$. By the compactness
of $\Theta_{z_{0}}$ and the continuity of $u$, we then obtain a
point 
\[
\left(y_{0},z_{0}\right)\,\in\,\Theta_{z_{0}}\,\subset\,B_{\varrho}^{n-1}\times\left\{ z_{0}\right\} 
\]
 satisfying $u\left(y_{0},z_{0}\right)=0$, proving the lemma. In
the rest of the proof, we shall verify the claim. 

Suppose the contrary that $\hat{T}<0$. Let $\hat{\Theta}$ be the
extended region of $\Theta$ by adding two conical layers $\Delta_{0}$
and $\Delta_{*}$ in the bottom and top, respectively. The two layers
are defined as follows: let $\varphi\in\left(0,\phi\right]$ be a
small constant (to be determined); let $\Delta_{0}$ be the closed
region that is bounded from below, above, and lateral by respectively
$\left\{ z=-\left|y\right|\tan\varphi\right\} ,$ $\left\{ z=\left|y\right|\tan\varphi\right\} $,
and $\Sigma_{t_{1}}\approx\sqrt{-t_{1}}\,\mathcal{C}$;\footnote{Because $\Sigma_{t_{1}}\cap\Delta_{0}\,\subset\,\Sigma_{t_{1}}\cap\left(B_{r}\setminus\mathscr{C}_{\phi}\right)$.}
let $\Delta_{*}$ be the closed region that is bounded from below,
above, and lateral by respectively $\left\{ z-z_{*}=-\left|y-y_{*}\right|\tan\varphi\right\} ,$
$\left\{ z-z_{*}=\left|y-y_{*}\right|\tan\varphi\right\} $, and $\Sigma_{t_{1}}\approx\sqrt{-t_{1}}\,\mathcal{C}$.\footnote{Because $\Sigma_{t_{1}}\cap\Delta_{*}\,\subset\,\Sigma_{t_{1}}\cap\left(B_{r}\setminus\mathscr{C}_{\phi}\right)$
by (\ref{local max scale: lateral boundary}).} Now let us choose $\varphi\in\left(0,\phi\right]$ sufficiently small
(depending also on $z_{*}$) so that $\Delta_{0}$ and $\Delta_{*}$
are disjoint.

Recall that both of the cylindrical points $0$ and $\left(y_{*},z_{*}\right)$
have cylindrical scale $r$, so the flow is asymptotically cylindrical
in both $B_{r}\setminus\mathscr{C}_{\phi}$ and $B_{r}\left(y_{*},z_{*}\right)\setminus\mathscr{C}_{\phi}^{*}$,
where $\mathscr{C}_{\phi}^{*}$ is the counterpart of $\mathscr{C}_{\phi}$
for the cylindrical point $\left(y_{*},z_{*}\right)$, namely, $\mathscr{C}_{\phi}^{*}$
is the the solid cone with apex $\left(y_{*},z_{*}\right)$, axis
aligned with the axis of the tangent cylinder at $\left(y_{*},z_{*}\right)$
(whose direction is $\theta$-close to the direction of the $z$-axis),
and angle $\phi$. Particularly, the flow is asymptotically cylindrical
in both $\Delta_{0}$ and $\Delta_{*}$. 

In view of $\Sigma_{t}\cap\Theta_{0}\approx\sqrt{-t}\,\mathcal{C}\cap\Theta_{0}$
for every $t\in\left[t_{1},\hat{T}/2\right]$, and that $\partial\Delta_{0}\setminus\Sigma_{t_{1}}$
\footnote{It is the union of the top and bottom boundaries of $\Delta_{0}$.}is
contained in $z=\pm\left|y\right|\tan\varphi,$ there is a uniform
distance between $\Sigma_{t}\cap\Theta_{0}$ and $\partial\Delta_{0}\setminus\Sigma_{t_{1}}$
for every $t\in\left[t_{1},\hat{T}/2\right]$. The same is true if
we replace $\Theta_{0}$ and $\Delta_{0}$ by respectively $\Theta_{z_{*}}$
and $\Delta_{*}$. Specifically, there is $\sigma>0$ so that 
\begin{equation}
\textrm{dist}\left(\Sigma_{t}\cap\Theta_{0},\,\partial\Delta_{0}\setminus\Sigma_{t_{1}}\right)\,\geq\,\sigma,\quad\textrm{dist}\left(\Sigma_{t}\cap\Theta_{z_{*}},\,\partial\Delta_{*}\setminus\Sigma_{t_{1}}\right)\,\geq\,\sigma\label{local max scale: distance}
\end{equation}
for every $t\in\left[t_{1},\hat{T}/2\right]$.

In addition, the asymptotically cylindrical behavior of the flow in
$\Delta_{0}$ and $\Delta_{*}$ implies that there is not any singular
points in 
\[
\left(\Delta_{0}\cup\Delta_{*}\right)\cap\left\{ t_{1}\leq u\leq\hat{T}/2\right\} .
\]
Also, recall that the only singular time of the flow in 
\[
\Theta\,\subset\,\bar{B}_{\varrho}^{n-1}\times\left[0,\varrho\right]
\]
is $0$. Thus, we infer that $\hat{\Theta}\cap\left\{ t_{1}\leq u\leq\hat{T}/2\right\} $
is a compact set consisting of regular points (so $\nabla u\neq0)$,
which yields that 
\begin{equation}
K\coloneqq\max_{\hat{\Theta}\cap\left\{ t_{1}\leq u\leq\hat{T}/2\right\} }\left|\nabla u\right|^{-1}\label{local max scale: speed}
\end{equation}
is a finite positive number.

Now choose $t_{3}\in\left[t_{2},\hat{T}\right)$ and $t_{4}\in\left(\hat{T},\hat{T}/2\right)$
so that 
\begin{equation}
t_{4}-t_{3}\,<\,\frac{\sigma}{K}.\label{local max scale: time}
\end{equation}
Since $t_{3}\in\mathfrak{T}$, there exists a continuous curve 
\[
\gamma_{t_{3}}\,:\,\left[0,1\right]\rightarrow\Sigma_{t_{3}}\cap\Theta
\]
such that 
\begin{equation}
\gamma_{t_{3}}\left(0\right)\in\Sigma_{t_{3}}\cap\Theta_{0},\quad\gamma_{t_{3}}\left(1\right)\in\Sigma_{t_{3}}\cap\Theta_{z_{*}}.\label{local max scale: initial point}
\end{equation}
Let $\Phi_{\tau}$ be the (local) flow generated by the vector field
$\frac{\nabla u}{\left|\nabla u\right|^{2}}$.\footnote{For a regular point $x$, $\tau\mapsto\Phi_{\tau}\left(x\right)$
is the unique integral curve of the vector field $\frac{\nabla u}{\left|\nabla u\right|^{2}}$
such that $\Phi_{0}\left(x\right)=x$, which has short time existence.
Note that if $u\left(x\right)=t$, then $u\left(\Phi_{\tau}\left(x\right)\right)=t+\tau$.} Because $\gamma_{t_{3}}\left[0,1\right]\subset\Sigma_{t_{3}}\cap\Theta$
is a compact set consisting of regular points (see the last paragraph),
$\Phi_{\tau}$ acts on $\gamma_{t_{3}}\left[0,1\right]$ for $\tau\geq0$
sufficiently small with $\Phi_{\tau}\circ\gamma_{t_{3}}$ being a
continuous curve lying on $\Sigma_{t_{3}+\tau}\cap\,\textrm{int}\,\hat{\Theta}$.
In fact, by noticing that the curve $\Phi_{\tau}\circ\gamma_{t_{3}}\left(s\right)$
will never hit the lateral boundary of $\hat{\Theta}$ (which is contained
in $\Sigma_{t_{1}}$) and taking (\ref{local max scale: distance}),
(\ref{local max scale: speed}), and (\ref{local max scale: time})
into account, we infer that so long as $\tau\in\left[0,t_{4}-t_{3}\right]$,
$\Phi_{\tau}$ can act on $\gamma_{t_{3}}\left[0,1\right]$ with 
\[
\gamma_{t_{3}+\tau}^{\left(0\right)}\coloneqq\Phi_{\tau}\circ\gamma_{t_{3}}\,:\,\left[0,1\right]\rightarrow\,\Sigma_{t_{3}+\tau}\cap\,\textrm{int}\,\hat{\Theta}
\]
being a continuous curve that, in view of (\ref{local max scale: distance}),
(\ref{local max scale: speed}), (\ref{local max scale: time}), and
(\ref{local max scale: initial point}), satisfies
\begin{equation}
\gamma_{t_{3}+\tau}^{\left(0\right)}\left(0\right)\,\in\,\Delta_{0},\quad\gamma_{t_{3}+\tau}^{\left(0\right)}\left(1\right)\,\in\,\Delta_{*}.\label{local max scale: moved initial point}
\end{equation}
To finish the proof, we need to modify $\gamma_{t}^{\left(0\right)}$
for each $t\in\left(t_{3},t_{4}\right]$ so as to make it stay on
$\Sigma_{t}\cap\Theta$ (instead of $\Sigma_{t}\cap\hat{\Theta}$),
start from somewhere on $\Sigma_{t}\cap\Theta_{0}$, and end up on
$\Sigma_{t}\cap\Theta_{z_{*}}$. If so, we would have $\left[t_{1},t_{4}\right]\subset\mathfrak{T}$,
which gives the desired contradiction that $t_{4}>\hat{T}$ (see (\ref{local max scale: method of continuity})). 

The aforementioned modification of $\gamma_{t}^{\left(0\right)}$
for each $t\in\left(t_{3},t_{4}\right]$ will be done in two steps.
In the first step we would deal with the ``bottom'' part. Specifically,
we shall find a continuous map $\mathfrak{I}{}_{t}:\Sigma_{t}\cap\hat{\Theta}\rightarrow\Sigma_{t}\cap\left(\Theta\cup\Delta_{*}\right)$
(as will be seen in the next paragraph) that pushes the portions of
$\gamma_{t}^{\left(0\right)}$ lying on $\Sigma_{t}\cap\Delta_{0}\setminus\Theta$
continuously to $\Sigma_{t}\cap\Delta_{0}\cap\Theta$ and keeps the
rest invariant, so we would obtain a new continuous curve:
\[
\mathfrak{I}{}_{t}\circ\gamma_{t}^{\left(0\right)}\,:\,\left[0,1\right]\rightarrow\,\Sigma_{t}\cap\left(\Theta\cup\Delta_{*}\right).
\]
Notice that $\mathfrak{I}{}_{t}\circ\gamma_{t}^{\left(0\right)}\left(0\right)$
might not be on $\Sigma_{t}\cap$$\Theta_{0}$; instead, by (\ref{local max scale: moved initial point})
we have 
\[
\mathfrak{I}{}_{t}\circ\gamma_{t}^{\left(0\right)}\left(0\right)\,\in\,\Sigma_{t}\cap\Theta\cap\Delta_{0}.
\]
Since $\Sigma_{t}\cap\Theta\cap\Delta_{0}\approx\sqrt{-t}\,\mathcal{C}\cap\Theta\cap\Delta_{0}$,
we can find a curve $\varsigma_{t}:\left[0,1\right]\rightarrow\Sigma_{t}\cap\Theta\cap\Delta_{0}$
so that 
\[
\varsigma_{t}\left(0\right)\,\in\,\Sigma_{t}\cap\Theta_{0},\quad\varsigma\left(1\right)\,=\,\mathfrak{I}{}_{t}\circ\gamma_{t}^{\left(0\right)}\left(0\right).
\]
Then jointing this two paths together gives a new curve
\[
\gamma_{t}^{\left(1\right)}\left(s\right)=\left\{ \begin{array}{c}
\varsigma_{t}\left(2s\right),\quad s\in\left[0,\frac{1}{2}\right]\\
\mathfrak{I}{}_{t}\circ\gamma_{t}^{\left(0\right)}\left(2s-1\right),\quad s\in\left[\frac{1}{2},1\right]
\end{array}\right.
\]
satisfying 
\[
\gamma_{t}^{\left(1\right)}\,:\,\left[0,1\right]\rightarrow\,\Sigma_{t}\cap\left(\Theta\cup\Delta_{*}\right)
\]
\[
\gamma_{t}^{\left(1\right)}\left(0\right)\,\in\,\Sigma_{t}\cap\Theta_{0},\quad\gamma_{t}^{\left(1\right)}\left(1\right)\,\in\,\Sigma_{t}\cap\Delta_{*},
\]
completing the first step. In the second step, we deal with the ``top''
part using essentially the same trick as in the first step, so we
are able to transform $\gamma_{t}^{\left(1\right)}$ into a new curve
$\gamma_{t}^{\left(2\right)}$ satisfying 
\[
\gamma_{t}^{\left(2\right)}\,:\,\left[0,1\right]\rightarrow\,\Sigma_{t}\cap\Theta
\]
\[
\gamma_{t}^{\left(2\right)}\left(0\right)\,\in\,\Sigma_{t}\cap\Theta_{0},\quad\gamma_{t}^{\left(2\right)}\left(1\right)\,\in\,\Sigma_{t}\cap\Theta_{z_{*}}.
\]
Finally, $\gamma_{t}=\gamma_{t}^{\left(2\right)}$ is the desired
continuous curve. 

The last piece to complete the entire proof is to construct the map
\[
\mathfrak{I}{}_{t}\,:\,\Sigma_{t}\cap\hat{\Theta}\rightarrow\,\Sigma_{t}\cap\left(\Theta\cup\Delta_{*}\right)
\]
for each $t\in\left(t_{3},t_{4}\right]$ as mentioned in the last
paragraph. As $\Sigma_{t}\cap\Delta_{0}\approx\sqrt{-t}\,\mathcal{C}\cap\Delta_{0}$,
we can parametrize $\Sigma_{t}\cap\Delta_{0}$ using the cylindrical
coordinates $\left(\omega,z\right)$ defined in (\ref{cylindrical coordinates}).
Then we define a map (between submanifolds)
\[
\mathfrak{I}{}_{t}\,:\,\Sigma_{t}\cap\Delta_{0}\rightarrow\,\Sigma_{t}\cap\Delta_{0}\cap\Theta
\]
in terms of local coordinates as 
\[
\mathfrak{I}{}_{t}\left(\omega,z\right)=\left(\omega,z_{+}\right),
\]
where $z_{+}=\max\left\{ z,0\right\} $. It is not hard to see that
the map $\mathfrak{I}{}_{t}$ can be extended on $\Sigma_{t}\cap\hat{\Theta}$
in a continuous manner such that $\mathfrak{I}{}_{t}=\textrm{id}$
on $\Sigma_{t}\cap\hat{\Theta}\setminus\Delta_{0}$.
\end{proof}
In contrast to Lemma \ref{local max scale}, in which the cylindrical
point $0$ is assumed to be a local maximum point of $u$ on the upside,
in Lemma \ref{saddle scale} we consider the case where the cylindrical
point $0$ is not a local maximum point on the upside. The additional
assumption that $u\left(0\right)=0$ is locally the unique singular
time on the upside is of vital importance for the lemma.
\begin{lem}
\label{saddle scale}Let notations be as defined in Lemma \ref{local max scale}.
Suppose that $0$ is the first singular time of the flow $\left\{ \Sigma_{t}\right\} $
and that the cylindrical point $0$ is not a local maximum point of
$u$ in $\bar{B}_{\rho}^{n-1}\times\left[0,\rho\right]$ in the sense
that for every $\varepsilon>0$, there is $x\in B_{\varepsilon}\cap\left(\bar{B}_{\rho}^{n-1}\times\left[0,\rho\right]\right)$
such that $u\left(x\right)>u\left(0\right)=0$. 

If there exists $\varrho\in\left(0,\rho\right]$ so that $u\left(0\right)=0$
is the unique singular time of the flow in $\bar{B}_{\varrho}^{n-1}\times\left[0,\varrho\right]$
for some $\varrho\in\left(0,\rho\right]$, then the cylindrical point
$0$ would be the unique singular point of the flow in $\bar{B}_{\varrho}^{n-1}\times\left[0,\varrho\right]$.
\end{lem}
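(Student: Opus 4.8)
The plan is to argue by contradiction. Suppose $\{\Sigma_t\}$ had a singular point $q\neq 0$ in $\bar B^{n-1}_\varrho\times[0,\varrho]$. Since $0$ is the only singular time of the flow in this region, $u(q)=0$; and $q$ cannot be a round point, since by Corollary~\ref{isolated round point} a round point at the first singular time would be the \emph{unique} singular point of $\{\Sigma_t\}$, contradicting that $0$ is also singular. Hence $q$ is a cylindrical point, so by the graph structure from \cite{CM3} recalled above it lies on $\{y=\psi(z)\}$; as $\psi(0)=0$ we may write $q=(\psi(z_\ast),z_\ast)$ with $z_\ast\in(0,\varrho]$. Set $Z=\{z\in[0,\varrho]:(\psi(z),z)\in\mathcal{S}\}$, a closed set (the singular set is closed and $\psi$ continuous) containing $0$ and $z_\ast$; by the previous sentences $\mathcal S\cap\big(\bar B^{n-1}_\varrho\times[0,\varrho]\big)=\{(\psi(z),z):z\in Z\}$ and every such point is cylindrical with $u$-value $0$. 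Now either (Case A) $[0,z]\subset Z$ for some $z\in(0,z_\ast]$, or (Case B) $[0,z]\not\subset Z$ for every $z\in(0,z_\ast]$.

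In Case A, every point of the segment $\{y=\psi(s):s\in[0,z]\}$ is cylindrical, so by \cite{CM3} $\psi\in C^1[0,z]$ and $\{\Sigma_t\}$ has no other singular point in $\bar B^{n-1}_\rho\times[0,z]$; Remark~\ref{one-sided local max} then gives $u\le u(0)=0$ on $\bar B^{n-1}_\rho\times[0,z]$, whence for $\varepsilon\le\min\{\rho,z\}$ every $x\in B_\varepsilon\cap(\bar B^{n-1}_\rho\times[0,\rho])$ satisfies $u(x)\le0$ --- contradicting the hypothesis that $0$ is not a local maximum of $u$ on the upside. In Case B, pick $w\in(0,z_\ast)\setminus Z$ (nonempty in Case B) and set $z_1=\max\big(Z\cap[0,w]\big)$, $z_2=\min\big(Z\cap[w,z_\ast]\big)$, so that $0\le z_1<z_2\le z_\ast$, $z_1,z_2\in Z$, and $(z_1,z_2)\cap Z=\emptyset$; then $p_i:=(\psi(z_i),z_i)$ are cylindrical points (hence both carry the uniform cylindrical scale $r$) and, since there are no round points, $\{\Sigma_t\}$ has no singular point in $\bar B^{n-1}_\varrho\times[z_1,z_2]$ other than $p_1,p_2$. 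I would now run, with $p_1$ playing the role of $0$ and $p_2$ that of $(y_\ast,z_\ast)$, the ``neck does not break before the first singular time'' continuity argument from the proof of Lemma~\ref{local max scale}: that argument uses only that $p_1,p_2$ are cylindrical points of a common cylindrical scale $r$ and that $0$ is the first singular time (so every point with $u<0$ is regular), not the local-maximum hypothesis imposed there. It yields a tubular region $\Theta\subset\bar B^{n-1}_\varrho\times[z_1,z_2]$, with lateral boundary on some $\Sigma_{t_1}$, $t_1<0$, such that for every $t\in[t_1,0)$ the slice $\Sigma_t\cap\Theta$ contains a continuous curve joining $\{z=z_1\}$ to $\{z=z_2\}$. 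For $t>0$ small, the cylindrical singularities of $\{\Sigma_t\}$ at $p_1$ and $p_2$ at time $0$ cause the flow to pinch off locally near $p_1$ and near $p_2$, so this piece persists as a closed embedded hypersurface $M_t\subset\bar B^{n-1}_\varrho\times[z_1,z_2]$, nonempty and of diameter bounded below as $t\downarrow0$. By mean-convexity of the LSF the region enclosed by $M_t$ is contracting, so $\{M_t\}$ stays inside $\bar B^{n-1}_\varrho\times[z_1,z_2]$ and, being a closed mean-convex flow, develops a singularity at some $t_{\mathrm{sing}}>0$ with singular point in $\bar B^{n-1}_\varrho\times[z_1,z_2]\subset\bar B^{n-1}_\varrho\times[0,\varrho]$ --- contradicting that $0$ is the unique singular time there. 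In either case we reach a contradiction, so no such $q$ exists.

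The $t<0$ portion (the continuity argument producing $\Theta$ and the connecting curves) is essentially the one already carried out for Lemma~\ref{local max scale}, and I expect it to be routine here apart from a translation/tilting of the coordinates at $p_1,p_2$. The main obstacle is the $t>0$ step: making rigorous that the two cylindrical singularities at time $0$ force the flow to ``pinch off'' locally for $t>0$, so that the neck between $p_1$ and $p_2$ reappears as a genuine nonempty closed component $M_t$ that remains confined to $\bar B^{n-1}_\varrho\times[z_1,z_2]$. This draws on the local structure of mean-convex level set flow just after a cylindrical singularity (cf. \cite{HK}, \cite{W1}) together with the mean-convexity/Jordan--Brouwer confinement of shrinking components, rather than only on the pre-singularity cylindrical asymptotics recalled in this subsection.
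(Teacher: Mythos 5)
Your Case A is fine, but Case B contains the decisive gap, and you have correctly identified it yourself: the claim that after the two cylindrical singularities at $p_{1},p_{2}$ the neck ``pinches off'' and persists as a nonempty closed hypersurface $M_{t}\subset\bar{B}_{\varrho}^{n-1}\times\left[z_{1},z_{2}\right]$ for small $t>0$ is never proved, and it is not even clear it is true. Since there are no singular points in the open gap, any interior point of the gap tube with $u=0$ and $u\leq0$ nearby would be a local maximum and hence a critical point, which is excluded; so it is entirely consistent with your setup that $u<0$ strictly throughout the interior of the gap region, in which case the flow simply vanishes there at time $0$ and $M_{t}=\emptyset$. Conversely, if $u>0$ somewhere in the gap tube, then one needs no pinch-off analysis at all: the maximum of $u$ over that compact tube is positive, it cannot be attained on the lateral boundary (where $u=t_{1}<0$) nor on the two caps (where $u\leq0$ by the asymptotically cylindrical behavior at $p_{1}$ and $p_{2}$, see (\ref{saddle criterion: max})), so it is attained at an interior critical point, producing a second singular time $>0$ in $\bar{B}_{\varrho}^{n-1}\times\left[0,\varrho\right]$ and contradicting the hypothesis. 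A telltale sign that Case B is off track is that you never invoke the standing assumption that $0$ is \emph{not} a local maximum point of $u$ there.

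The paper's proof is much shorter and needs neither your dichotomy on $Z$ nor the continuity argument of Lemma \ref{local max scale}. It forms a single tube $\Theta$ in $B_{\varrho}^{n-1}\times\left[0,z_{*}\right]$ bounded laterally by $\Sigma_{t_{1}}\approx\sqrt{-t_{1}}\,\mathcal{C}$ and capped by $\left\{ z=0\right\} $ and $\left\{ z=z_{*}\right\} $; the cylindrical asymptotics at the two singular points $0$ and $\left(y_{*},z_{*}\right)$ give $u\leq0$ on both caps, while $u=t_{1}<0$ on the lateral boundary. The hypothesis that $0$ is not a local maximum forces $\hat{T}=\max_{\Theta}u>0$, so the maximum is attained at an interior point of $\Theta$, which is a critical point of $u$ and hence yields a singular time $\hat{T}>0$ in $\bar{B}_{\varrho}^{n-1}\times\left[0,\varrho\right]$ --- contradicting uniqueness of the singular time. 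I would recommend replacing your argument by this one; if you wish to keep your structure, Case B can be closed by combining the $t<0$ continuity argument with the dichotomy on $\max u$ over the gap tube described above, but the post-singularity ``pinch-off'' route should be abandoned.
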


\begin{proof}
Suppose the contrary that there is a another singular point $\left(y_{*},z_{*}\right)$
in $\bar{B}_{\varrho}^{n-1}\times\left[0,\varrho\right]$. Note that
$u\left(y_{*},z_{*}\right)=0$ by hypothesis and that $\left(y_{*},z_{*}\right)$
must be a cylindrical point by Corollary \ref{isolated round point}.
Moreover, in view of the asymptotically cylindrical behavior of the
flow in $B_{r}\setminus\mathscr{C}_{\phi}$, we infer that $\left(y_{*},z_{*}\right)\in\mathscr{C}_{\phi}$
with $z_{*}>0$. 

Recall that every cylindrical point in $\bar{B}_{\varrho}^{n-1}\times\left[0,\varrho\right]$
is located on a $\theta$-Lipschitz graph $y=\psi\left(z\right)$
and has a cylindrical scale $r$. It follows that the flow is asymptotically
cylindrical in $B_{r}\left(y_{*},z_{*}\right)\setminus\mathscr{C}_{\phi}^{*}$,
where $\mathscr{C}_{\phi}^{*}$ is the the solid cone with apex $\left(y_{*},z_{*}\right)$,
axis aligned with the axis of the tangent cylinder at $\left(y_{*},z_{*}\right)$
(whose direction is $\theta$-close to the direction of the $z$-axis),
and angle $\phi$. Thus, we obtain
\begin{equation}
\bar{B}_{\varrho}^{n-1}\times\left\{ 0\right\} \,\subset\,\left(B_{r}\setminus\mathscr{C}_{\phi}\right)\cup\left\{ 0\right\} \,\subset\,\left\{ u\leq u\left(0\right)=0\right\} ,\label{saddle scale: bottom}
\end{equation}
\begin{equation}
\bar{B}_{\varrho}^{n-1}\times\left\{ z_{*}\right\} \,\subset\,\left(B_{r}\left(y_{*},z_{*}\right)\setminus\mathscr{C}_{\phi}^{*}\right)\cup\left\{ \left(y_{*},z_{*}\right)\right\} \,\subset\,\left\{ u\leq u\left(y_{*},z_{*}\right)=0\right\} .\label{saddle scale: top}
\end{equation}
Now (using the asymptotically cylindrical behavior of the flow in
$B_{r}\setminus\mathscr{C}_{\phi}$) choose $t_{1}\in\left(t_{0},0\right)$
such that 
\[
\Sigma_{t_{1}}\cap\left(\bar{B}_{\varrho}^{n-1}\times\left\{ z\right\} \right)\,\subset\,\left(B_{\varrho}^{n-1}\setminus\bar{B}_{z\tan\phi}^{n-1}\right)\times\left\{ z\right\} \quad\forall\,z\in\left[0,\varrho\right].
\]
Let $\Theta$ be the closed region in $B_{\varrho}^{n-1}\times\left[0,z_{*}\right]$
that is bounded from below, above, and lateral by respectively $z=0$,
$z=z_{*}$, and $\Sigma_{t_{1}}\approx\sqrt{-t_{1}}\,\mathcal{C}$.
Since $u\left(0\right)=0$ is not a local maximum value of $u$ on
$\bar{B}_{\varrho}^{n-1}\times\left[0,\varrho\right]$, we have
\[
\hat{T}=\max_{\Theta}\,u>0.
\]
Choose $p\in\Theta$ so that $u\left(p\right)=\hat{T}$. In view of
(\ref{saddle scale: bottom}), (\ref{saddle scale: top}), and that
the lateral boundary of $\Theta$ is contained in $\Sigma_{t_{1}}$,
$p$ is not on $\partial\Theta$ and therefore is an interior maximum
point (and hence a critical point) of $u$. Thus, $\hat{T}$ is another
singular time of the flow, contradicting the the hypothesis.
\end{proof}
Before moving on to the primary conclusions of this subsection, let
us make the following remark, which will be used in the discussion
of singular components of the splitting/bumpy type in Section \ref{types of singular components}.
\begin{rem}
\label{flow near saddle} In Lemma \ref{saddle scale}, note that
the cylindrical point $0$ must be either a two-sided saddle point
of $u$ or a one-sided saddle point of $u$ such that it is a local
maximum point on the downside (see Definition \ref{one-sided saddle}).
Because of the asymptotically cylindrical behavior of $\left\{ \Sigma_{t}\right\} $
in $B_{r}\setminus\mathscr{C}_{\phi}$, we have $\Sigma_{\tau}\approx\sqrt{-\tau}\,\mathcal{C}$
in $B_{r}$, where 
\[
\tau\,=\,u\left(0\right)-\frac{r^{2}}{4\left(n-2\right)}=-\frac{r^{2}}{4\left(n-2\right)}.
\]
By the continuity of $u$ at the cylindrical point $0$, we can find
$\mathring{r}\in\left(0,r\right)$ so that 
\begin{equation}
\min_{\mathscr{C}_{\phi}\cap\left\{ 0\leq z\leq\mathring{r}\right\} }u\,>\,\tau.\label{flow near saddle: lower bound in cone}
\end{equation}
Moreover, since the cylindrical point $0$ is not a local maximum
point of $u$ on the top, we can choose $\mathring{\varepsilon}\in\left(0,\mathring{r}\right)$
such that 
\[
\mathring{t}\coloneqq\max_{\hat{\mathcal{T}}\cap\left\{ z=\mathring{\varepsilon}\right\} }u\,>\,u\left(0\right)=0,
\]
where 
\begin{equation}
\hat{\mathcal{T}}\,\coloneqq\,\bar{\Omega}_{\tau}\cap B_{r}\cap\left\{ 0\leq z\leq\mathring{\varepsilon}\right\} \label{flow near saddle: tubular region}
\end{equation}
is a ``tubular'' closed region bounded by $\Sigma_{\tau}$, $\left\{ z=0\right\} $,
and $\left\{ z=\mathring{\varepsilon}\right\} $ from the lateral,
bottom, and top, respectively. By the asymptotically cylindrical behavior
of $\left\{ \Sigma_{t}\right\} $ in $B_{r}\setminus\mathscr{C}_{\phi}$
and the intermediate value theorem, we then deduce that
\begin{equation}
\Sigma_{t}\cap\left\{ z=\mathring{\varepsilon}\right\} \cap\hat{\mathcal{T}}\neq\emptyset\quad\forall\,t\in\left[\tau,\mathring{t}\right],\label{flow near saddle: intersection}
\end{equation}
\begin{equation}
\Sigma_{t}\cap\hat{\mathcal{T}}\,\subset\,\textrm{int}\,\mathscr{C}_{\phi}\quad\forall\,t\in\left(0,\mathring{t}\right].\label{flow near saddle: cone}
\end{equation}
In case where $u\left(0\right)=0$ is the unique singular time of
$\left\{ \Sigma_{t}\right\} $ in $\bar{B}_{r}^{+}$, $\Sigma_{t}\cap\hat{\mathcal{T}}$
would be a hypersurface contained in $\textrm{int}\,\mathscr{C}_{\phi}$
for every $t\in\left(0,\mathring{t}\right]$. 
\end{rem}

Now we are all set to prove the main result of this subsection.
\begin{prop}
\label{one-sided structure of singular set}With the same notations
as in Lemma \ref{local max scale} and Lemma \ref{saddle scale}.
Assume that $u\left(0\right)=0$ is the first singular time of the
flow $\left\{ \Sigma_{t}\right\} $ and also the unique singular time
of the flow in $\bar{B}_{\varrho}^{n-1}\times\left[0,\varrho\right]$
for some $\varrho\in\left(0,\rho\right]$.

Then either the cylindrical point $0$ is an isolated\footnote{It means that $0$ is the unique singular point in $B_{\varepsilon}\cap\left(\bar{B}_{\varrho}^{n-1}\times\left[0,\varrho\right]\right)$
for some $\varepsilon>0$.} singular point in $\bar{B}_{\varrho}^{n-1}\times\left[0,\varrho\right]$,
or there exists $z_{*}\in\left(0,\varrho\right]$ so that the singular
set of the flow in $\bar{B}_{\varrho}^{n-1}\times\left[0,z_{*}\right]$
is a $C^{1}$ embedded curve that is comprised of cylindrical points
and is defined by 
\[
\left\{ \left(y,z\right):y=\psi\left(z\right),\,z\in\left[0,z_{*}\right]\right\} .
\]
\end{prop}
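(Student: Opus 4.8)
The plan is to perform a case analysis on the behavior of $u$ near the cylindrical point $0$ on the upside, using the trichotomy already established by Corollary \ref{saddle criterion} together with the two key Lemmas. First I would observe that, by Corollary \ref{saddle criterion}, the cylindrical point $0$ is never a local minimum of $u$, so exactly one of the following holds on the half-neighborhood $\bar{B}_{\varrho}^{n-1}\times\left[0,\varrho\right]$: either $u\left(0\right)=0$ is the maximum value of $u$ there (the ``local-max-on-the-upside'' case), or it is not (the ``saddle-on-the-upside'' case, where for every $\varepsilon>0$ there is $x\in B_{\varepsilon}\cap\left(\bar{B}_{\varrho}^{n-1}\times\left[0,\varrho\right]\right)$ with $u\left(x\right)>0$). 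Shrinking $\varrho$ if necessary, this is precisely the dichotomy fed into Lemma \ref{local max scale} (first case) and Lemma \ref{saddle scale} (second case), and in both lemmas the hypothesis that $u\left(0\right)=0$ is the first singular time and the unique singular time of the flow in $\bar{B}_{\varrho}^{n-1}\times\left[0,\varrho\right]$ is already assumed in the statement of the Proposition.

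In the saddle-on-the-upside case, Lemma \ref{saddle scale} applies directly and yields that the cylindrical point $0$ is the unique singular point of the flow in $\bar{B}_{\varrho}^{n-1}\times\left[0,\varrho\right]$; in particular $0$ is an isolated singular point in $\bar{B}_{\varrho}^{n-1}\times\left[0,\varrho\right]$, which is the first alternative in the conclusion. In the local-max-on-the-upside case I would split once more: either $0$ is already isolated in $\bar{B}_{\varrho}^{n-1}\times\left[0,\varrho\right]$ (again the first alternative, and we are done), or there exists another singular point in that half-neighborhood. By Corollary \ref{isolated round point} that other singular point cannot be a round point, so it is a cylindrical point, say $\left(y_{*},z_{*}\right)$; and since $\Sigma_{t}$ is asymptotically cylindrical in $B_{r}\setminus\mathscr{C}_{\phi}$, while $u\le u(0)=0$ on $B_r\setminus\mathscr{C}_\phi$, this point must lie in $\mathscr{C}_{\phi}$ with $z_{*}>0$ (the case $z_*<0$ being excluded because we restrict to the upside $z\ge 0$; note $z_*\neq 0$ since the only cylindrical point on $\{z=0\}\cap\bar B_\varrho^{n-1}$ near the axis is $0$ itself, and off the axis on $\{z=0\}$ the flow is asymptotically cylindrical so $u<0$). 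Then Lemma \ref{local max scale} (with this $\varrho$ and this $\left(y_{*},z_{*}\right)$) gives that the singular set of the flow in $\bar{B}_{\varrho}^{n-1}\times\left[0,z_{*}\right]$ is exactly the $C^{1}$ embedded curve $\left\{\left(y,z\right):y=\psi\left(z\right),\,z\in\left[0,z_{*}\right]\right\}$ composed of cylindrical points, which is the second alternative in the conclusion.

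The only real subtlety, and the step I expect to require the most care, is checking that the hypotheses of Lemmas \ref{local max scale} and \ref{saddle scale} are met \emph{verbatim} after the possible shrinking of $\varrho$ — in particular that ``$u(0)=0$ is a maximum point of $u$ in $\bar B_{\varrho}^{n-1}\times[0,\varrho]$'' (needed by Lemma \ref{local max scale}) is exactly the negation of the hypothesis of Lemma \ref{saddle scale}, so that the two lemmas jointly cover all cases; this is immediate from Corollary \ref{saddle criterion} since the local-minimum case is ruled out. A second small point is the identification of the other singular point as a cylindrical point with $z_*>0$: this uses Corollary \ref{isolated round point} to exclude round points and the asymptotically cylindrical estimate in $B_r\setminus\mathscr{C}_\phi$ (equivalently $\{u>0\}\cap B_r\subset\mathscr{C}_\phi\setminus\{0\}$) to force the point into the cone with strictly positive $z$-coordinate. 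Once these bookkeeping checks are in place, the Proposition follows immediately by combining the two lemmas, with no further computation. Finally, I would remark that the stated ``isolated'' caveat (namely, $0$ is the unique singular point in $B_\varepsilon\cap(\bar B_\varrho^{n-1}\times[0,\varrho])$ for some $\varepsilon>0$) is precisely what Lemma \ref{saddle scale} delivers in the saddle case and what the first alternative of Lemma \ref{local max scale}'s hypothesis excludes in the local-max case, so the dichotomy is exhaustive.
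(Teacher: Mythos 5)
Your proposal is correct and follows essentially the same route as the paper: the paper's proof opens with the dichotomy ``isolated or not'' and invokes Lemma \ref{saddle scale} in contrapositive form to force the local-maximum alternative, whereas you open with ``local max on the upside or not'' and apply the two lemmas directly, but the ingredients (Lemma \ref{saddle scale}, Lemma \ref{local max scale}, Corollary \ref{isolated round point}, and the location of cylindrical points forcing $z_{*}>0$) and the logic are identical. The only cosmetic difference is that the paper deduces $z_{*}>0$ from the Lipschitz graph $y=\psi\left(z\right)$ with $\psi\left(0\right)=0$ rather than from the cone containment, which is an equivalent use of the same structure theory.
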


\begin{proof}
If the cylindrical point $0$ is an isolated singular point in $\bar{B}_{\varrho}^{n-1}\times\left[0,\varrho\right]$,
then we are done. So let us assume that $0$ is not an isolated singular
point in $\bar{B}_{\varrho}^{n-1}\times\left[0,\varrho\right]$; particularly,
$0$ is not the only singular points in $\bar{B}_{\varrho}^{n-1}\times\left[0,\varrho\right]$.
So by Lemma \ref{saddle scale} we infer that the cylindrical point
$0$ must be a local maximum point of $u$ in $\bar{B}_{\varrho}^{n-1}\times\left[0,\varrho\right]$
in the sense that for some $\tilde{\varrho}\in\left(0,\varrho\right]$
we have 
\[
u\left(x\right)\,\leq\,u\left(0\right)=0\quad\forall\,x\in\bar{B}_{\tilde{\rho}}^{n-1}\times\left[0,\tilde{\rho}\right].
\]
Namely, the cylindrical point $0$ is a maximum point of $u$ on $\bar{B}_{\tilde{\rho}}^{n-1}\times\left[0,\tilde{\rho}\right]$. 

Since $0$ is not an isolated singular point in $\bar{B}_{\varrho}^{n-1}\times\left[0,\varrho\right]$,
we can find a singular point $\left(y_{*},z_{*}\right)\neq0$ in $\bar{B}_{\tilde{\rho}}^{n-1}\times\left[0,\tilde{\rho}\right]$,
which has to be a cylindrical point by Corollary \ref{isolated round point}.
Recalling that all cylindrical points in $\bar{B}_{\varrho}^{n-1}\times\left[0,\varrho\right]$
must lie on the graph $y=\psi\left(z\right)$, we deduce that $z_{*}>0$
(because $\psi\left(0\right)=0$). Then it follows from Lemma \ref{local max scale}
that the singular set of the flow in $\bar{B}_{\varrho}^{n-1}\times\left[0,z_{*}\right]$
is a $C^{1}$ embedded curve consisting of purely cylindrical points
that is given by 
\[
\left\{ \left(y,z\right)\,:\,y=\psi\left(z\right),\,z\in\left[0,z_{*}\right]\right\} \,\subset\,\mathcal{S}\cap\Sigma_{0}.
\]
\end{proof}
Since the analogous result of Proposition \ref{one-sided structure of singular set}
holds for the flow on the downside $\bar{B}_{\rho}^{n-1}\times\left[-\rho,0\right]$
as well. Combining them together yields the following corollary.
\begin{cor}
\label{local structure of singular set}With the same notations as
in Proposition \ref{one-sided structure of singular set}. If $u\left(0\right)=0$
is the first singular time of the flow $\left\{ \Sigma_{t}\right\} $
and also the unique singular time of the flow in $\bar{B}_{\varrho}^{n-1}\times\left[-\varrho,\varrho\right]$
for some $\varrho>0$. 

Then either the cylindrical point $0$ is an isolated singular point
in $\bar{B}_{\varrho}^{n-1}\times\left[-\varrho,\varrho\right]$,
or there exist $z_{*}^{+}\in\left[0,\varrho\right]$ and $z_{*}^{-}\in\left[-\varrho,0\right]$,
at least one of which is nonzero, so that the singular set of the
flow in $\bar{B}_{\varrho}^{n-1}\times\left[z_{*}^{-},z_{*}^{+}\right]$
is a $C^{1}$ embedded curve consisting of purely cylindrical points
given by 
\[
\left\{ \left(y,z\right)\,:\,y=\psi\left(z\right),\,z\in\left[z_{*}^{-},z_{*}^{+}\right]\right\} ;
\]
moreover, in case that one of $\left\{ z_{*}^{-},z_{*}^{+}\right\} $
is $0,$ say $z_{*}^{-}=0$, then there exists $\varepsilon>0$ so
that the cylindrical point $0$ is the unique singular point in $B_{\varepsilon}\cap\left(\bar{B}_{\varrho}^{n-1}\times\left[-\varrho,0\right]\right)$.
\end{cor}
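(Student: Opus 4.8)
The plan is to derive Corollary \ref{local structure of singular set} by running Proposition \ref{one-sided structure of singular set} separately on the upside $\bar{B}_{\varrho}^{n-1}\times\left[0,\varrho\right]$ and on the downside $\bar{B}_{\varrho}^{n-1}\times\left[-\varrho,0\right]$, and then splicing the two conclusions together along the slice $z=0$. First I would observe that the hypothesis ``$u\left(0\right)=0$ is the first singular time and the unique singular time of the flow in $\bar{B}_{\varrho}^{n-1}\times\left[-\varrho,\varrho\right]$'' implies, a fortiori, that $u\left(0\right)=0$ is the unique singular time in each of the two half-boxes; so Proposition \ref{one-sided structure of singular set} applies on the upside as stated, and its mirror image (obtained by the reflection $z\mapsto-z$, which sends the tangent cylinder to a congruent one and the graph $y=\psi(z)$ to the graph $y=\psi(-z)$) applies on the downside. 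Each application yields a dichotomy: either the cylindrical point $0$ is an isolated singular point in that half-box, or the singular set there is, on a sub-box $\bar{B}_{\varrho}^{n-1}\times[0,z_*^+]$ (resp. $\bar{B}_{\varrho}^{n-1}\times[z_*^-,0]$) with $z_*^+>0$ (resp. $z_*^-<0$), exactly the $C^1$ curve $\{y=\psi(z)\}$ consisting of cylindrical points.

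Next I would combine the four possible pairings of these two dichotomies. If both sides give ``isolated,'' then $0$ is the unique singular point in some $B_\varepsilon\cap(\bar{B}_{\varrho}^{n-1}\times[-\varrho,\varrho])$, and we take $z_*^+=z_*^-=0$; this falls under the first alternative of the corollary (with the convention that a single point is a degenerate curve, or simply stated separately). If exactly one side is nontrivial, say the upside gives the curve on $[0,z_*^+]$ with $z_*^+>0$ while the downside gives ``isolated,'' then I set $z_*^-=0$; the curve $\{y=\psi(z):z\in[0,z_*^+]\}$ is the singular set in $\bar{B}_{\varrho}^{n-1}\times[0,z_*^+]$, and the ``isolated'' conclusion on the downside is precisely the extra clause that there is $\varepsilon>0$ with $0$ the unique singular point in $B_\varepsilon\cap(\bar{B}_{\varrho}^{n-1}\times[-\varrho,0])$. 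If both sides are nontrivial, with curves on $[0,z_*^+]$ and $[z_*^-,0]$ respectively, the two $C^1$ graph pieces agree near $z=0$ because both coincide with the globally defined $\theta$-Lipschitz graph $y=\psi(z)$ (the function $\psi$ from the cylindrical-point structure is the same on both sides), and by the last assertion of the material between Remark \ref{one-sided local max} and Remark \ref{one-sided local max} — that on a segment all of whose points are cylindrical, $\psi\in C^1$ and the tangent line is the cylinder axis — the concatenated curve $\{y=\psi(z):z\in[z_*^-,z_*^+]\}$ is $C^1$ across $z=0$ and is exactly the singular set in $\bar{B}_{\varrho}^{n-1}\times[z_*^-,z_*^+]$.

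The main obstacle I anticipate is the bookkeeping at the junction $z=0$: I must be sure that the curve produced by Proposition \ref{one-sided structure of singular set} on the upside and its mirror on the downside are literally parametrized by restrictions of one and the same $\psi$, so that no kink is introduced at the origin, and that the $C^1$ regularity across $z=0$ genuinely follows from the cited structure result (which gives $C^1$ on any closed segment consisting entirely of cylindrical points) rather than only one-sided $C^1$ regularity. Once that identification is in place, the remaining bookkeeping — verifying that ``at least one of $z_*^+,z_*^-$ is nonzero'' exactly when $0$ is not isolated, and reading off the extra clause in the mixed case — is routine. I would also note in passing that the intermediate regime ``$0$ isolated but not the only singular point in the half-box'' cannot occur beyond what Proposition \ref{one-sided structure of singular set} already packages, since that proposition's dichotomy is exhaustive.
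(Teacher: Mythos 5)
Your proposal is correct and is essentially the paper's argument: the paper likewise obtains the corollary by applying Proposition \ref{one-sided structure of singular set} on the upside and its mirror image on the downside, then combining the two dichotomies, with the $C^{1}$ regularity across $z=0$ coming from the fact that both pieces lie on the single graph $y=\psi\left(z\right)$ and that $\psi$ is $C^{1}$ on any segment consisting entirely of cylindrical points. (Minor slip: your citation should read ``between Remark \ref{one-sided isolated local max} and Remark \ref{one-sided local max},'' not the same remark twice.)
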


\subsection{Singular set at the first singular time\label{singular set at the first singular time}}

In this subsection we shall consider the structure of the singular
set at the first singular time $T_{1}$ under the assumption that
$T_{1}$ is an isolated singular time of the flow. The main result
(see the following proposition) is based on Corollary \ref{isolated round point}
(in Section \ref{round point}) and Corollary \ref{local structure of singular set}
(in Section \ref{cylindrical point}). This will be utilized in Section
\ref{singular set at subsequent singular times} to prove Theorem
\ref{structure of singular set}.
\begin{prop}
\label{structure of singular set at first singular time}Suppose either
$T_{1}=T_{ext}$, or that $T_{1}<T_{ext}$ is an isolated singular
time of the flow $\left\{ \Sigma_{t}\right\} $, that is, there exists
a ``second'' singular time $T_{2}\in\left(T_{1},T_{ext}\right]$
so that $\left\{ \Sigma_{t}\right\} $ is regular during $t\in\left(T_{1},T_{2}\right)$.
Then the singular set of $\left\{ \Sigma_{t}\right\} $ at the first
singular time, i.e.,
\[
\mathcal{S}\cap\Sigma_{T_{1}},
\]
has finitely many connected components, each of which is either a
single (round or cylindrical) point, or a compact $C^{1}$ embedded
curve (with or without boundary) consisting of cylindrical points. 
\end{prop}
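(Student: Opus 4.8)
The plan is to prove the statement by a compactness/covering argument combined with the local structure results already established. First I would fix the singular set at the first singular time, $\mathcal{S}_{1}\coloneqq\mathcal{S}\cap\Sigma_{T_{1}}$, and note that it is compact: it is closed by Lemma \ref{closedness of singular set}, and it is contained in the compact set $\Sigma_{T_{1}}=\{u=T_{1}\}$. Since $\Sigma_{0}$ is two-convex, by \cite{CHN} every point of $\mathcal{S}_{1}$ is either a round point or a cylindrical point. By Corollary \ref{isolated round point}, round points are isolated singular points, so only finitely many of them can lie in the compact set $\mathcal{S}_{1}$; moreover each is its own connected component (a single point). It remains to understand the part of $\mathcal{S}_{1}$ consisting of cylindrical points, call it $\mathcal{S}_{1}^{cyl}$, which is again compact.

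Next I would invoke the hypothesis that $T_{1}$ is an isolated singular time: either $T_{1}=T_{ext}$, or there is $T_{2}>T_{1}$ with $\{\Sigma_{t}\}$ regular on $(T_{1},T_{2})$. The point of this hypothesis is that it makes $T_{1}$ locally the unique singular time near every cylindrical point, which is exactly the standing assumption of Corollary \ref{local structure of singular set}. So for each cylindrical point $q\in\mathcal{S}_{1}^{cyl}$, after the rigid motion normalizing $q$ to the origin with the appropriate tangent cylinder, Corollary \ref{local structure of singular set} applies with some $\varrho=\varrho_{q}>0$: either $q$ is an isolated singular point in $\bar B_{\varrho_{q}}^{n-1}\times[-\varrho_{q},\varrho_{q}]$, or the singular set in a slab around $q$ is a $C^{1}$ embedded arc $\{y=\psi_{q}(z):z\in[z_{*}^{-},z_{*}^{+}]\}$ through $q$ consisting entirely of cylindrical points. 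In either case there is an open neighborhood $U_{q}$ of $q$ in which $\mathcal{S}$ is either the single point $\{q\}$ or a single $C^{1}$ embedded arc. Extract a finite subcover $U_{q_{1}},\dots,U_{q_{N}}$ of the compact set $\mathcal{S}_{1}$.

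Then I would run a standard patching argument: the singular set $\mathcal{S}_{1}$ is covered by finitely many charts, in each of which it is either a point or a $C^{1}$ embedded curve, and on overlaps the curve pieces coincide (both being $\mathcal{S}\cap(U_{q_{i}}\cap U_{q_{j}})$, and the $C^{1}$ structure in the overlap is forced since the tangent line at each point is the axis of the tangent cylinder there, which is intrinsic to the flow, not to the chart). Hence $\mathcal{S}_{1}$ is a compact $1$-dimensional $C^{1}$ manifold with boundary possibly together with finitely many isolated points; a compact $1$-manifold has finitely many components, each diffeomorphic to a circle or a closed interval. The isolated points (which include all round points, and possibly some isolated cylindrical points) account for the remaining components. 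The one subtlety to address carefully is that an isolated cylindrical point and a curve-endpoint cylindrical point must not be conflated: Corollary \ref{local structure of singular set} already dichotomizes these, and the "moreover" clause there guarantees that when an endpoint occurs at $z=0$ the point is isolated on the corresponding side, so the gluing is consistent.

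The main obstacle I expect is ensuring the finiteness of the number of components rather than the local structure — in principle one could worry about a sequence of distinct tiny arc-components or isolated points accumulating somewhere in $\mathcal{S}_{1}$. This is precisely what the compactness of $\mathcal{S}_{1}$ together with the uniform local pictures kills: each $U_{q}$ meets only one component of $\mathcal{S}$, so a finite subcover forces finitely many components. A secondary technical point is verifying that the finitely many $C^{1}$ arcs glue to a genuine $C^{1}$ embedded $1$-manifold (no branching, no self-intersection): branching is ruled out because at each cylindrical point the tangent cylinder is unique (\cite{CM2}) and its axis is one-dimensional, so at most one arc-direction passes through, and self-intersection is ruled out by embeddedness of the level sets together with the local graphical representation $y=\psi(z)$. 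Once these are in place, the conclusion — finitely many components, each a point or a compact $C^{1}$ embedded curve (with or without boundary) of cylindrical points — follows.
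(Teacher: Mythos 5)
Your proposal follows essentially the same route as the paper: compactness of $\mathcal{S}\cap\Sigma_{T_{1}}$, the local dichotomy from Corollary \ref{local structure of singular set} applied at each cylindrical point (made applicable because the isolated-singular-time hypothesis lets one find a ball around each such point in which $T_{1}$ is the unique singular time), and then a finite subcover. One caveat on your treatment of round points: ``isolated singular points in a compact set'' does not by itself yield finiteness (isolated points can accumulate at a non-isolated point of the set), but the ``moreover'' clause of Corollary \ref{isolated round point} states that a round point occurring at the first singular time is the \emph{unique} singular point of the entire flow, so either $\mathcal{S}\cap\Sigma_{T_{1}}$ is a single round point and the proposition is immediate, or it contains no round points at all --- which is exactly how the paper dispatches that case and what you should use to close this small gap.
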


\begin{proof}
Firstly, recall that $\mathcal{S}$ is a closed subset of $\Omega_{0}$
(see Lemma \ref{closedness of singular set}) and note that $\Sigma_{T_{1}}=\left\{ u=T_{1}\right\} $
is a closed subset of $\Omega_{0}$ owing to the continuity of $u$.
Consequently, $\mathcal{S}\cap\Sigma_{T_{1}}$ is a closed subset
of $\Omega_{0}$ and hence a compact set. 

Secondly, if $\mathcal{S}\cap\Sigma_{T_{1}}$ has a round point, then
by Corollary \ref{isolated round point}, this round point would be
the only element of $\mathcal{S}\cap\Sigma_{T_{1}}$ and hence the
proposition would be proved. So for the rest of the proof let us assume
that $\mathcal{S}\cap\Sigma_{T_{1}}$ comprises purely cylindrical
points.

Fix $p\in\mathcal{S}\cap\Sigma_{T_{1}}$. Note that we can find $r>0$
so that $T_{1}$ is the only singular time of the flow in $B_{r}\left(p\right)$.
This is obviously true when $T_{1}=T_{ext}$; in the case where $T_{1}<T_{ext}$,
since $\left\{ u\geq T_{2}\right\} $ is a compact set in $\Omega_{0}$
and $p\notin\left\{ u\geq T_{2}\right\} $, there exists $r>0$ such
that $B_{r}\left(p\right)\cap\left\{ u\geq T_{2}\right\} =\emptyset,$
which means that 
\begin{equation}
B_{r}\left(p\right)\subset\left\{ u<T_{2}\right\} ,\label{structure of singular set at first singular time: distance to next singualr time slice}
\end{equation}
and so there are no other singular times than $T_{1}$ in $B_{r}\left(p\right)$.
Thus, it follows from Corollary \ref{local structure of singular set}
that there is $\varepsilon\in\left(0,r\right]$ so that one of the
following holds:
\begin{enumerate}
\item $\mathcal{S}\cap B_{\varepsilon}\left(p\right)=\left\{ p\right\} $;
\item $\mathcal{S}\cap B_{\varepsilon}\left(p\right)$ is a $C^{1}$ embedded
curve with $p$ being an interior point;
\item $\mathcal{S}\cap B_{\varepsilon}\left(p\right)$ is a $C^{1}$ embedded
curve with $p$ being a boundary point.
\end{enumerate}
Notice that $\mathcal{S}\cap B_{\varepsilon}\left(p\right)=\mathcal{S}\cap\Sigma_{T_{1}}\cap B_{\varepsilon}\left(p\right)$
by (\ref{structure of singular set at first singular time: distance to next singualr time slice}). 

Therefore, by the compactness of $\mathcal{S}\cap\Sigma_{T_{1}}$,
we infer that $\mathcal{S}\cap\Sigma_{T_{1}}$ is a finite disjoint
union of points and/or compact $C^{1}$ embedded curves (with or without
boundary).
\end{proof}

\subsection{Singular set at subsequent singular times\label{singular set at subsequent singular times}}

We shall prove Theorem \ref{structure of singular set} in the end
of this subsection. The idea of the proof is as follows. By Proposition
\ref{domain after first singular time} (which will be proved in this
subsection), the superlevel set of the arrival time function $u$
at the first singular time $T_{1}$ is a finite disjoint union of
open connected sets, namely, 
\[
\Omega_{T_{1}}=\left\{ u>T_{1}\right\} =\bigsqcup_{i=1}^{m_{1}}\,\Omega_{T_{1}}^{\left(i\right)},
\]
and so 
\[
\Sigma_{t}=\bigsqcup_{i=1}^{m_{1}}\,\left(\Sigma_{t}\cap\Omega_{T_{1}}^{\left(i\right)}\right),\quad t>T_{1};
\]
moreover, each $\Sigma_{t}\cap\Omega_{T_{1}}^{\left(i\right)}$ is
itself a two-convex LSF for $t\in\left(T_{1},\infty\right)$ and a
MCF of closed connected hypersurfaces for $t\in\left(T_{1},T_{2}\right)$,
where $T_{2}$ is the second singular time of $\left\{ \Sigma_{t}\right\} $.
It follows that the singular set of $\left\{ \Sigma_{t}\right\} $
at time $T_{2}$ is indeed the union of the singular sets of $\left\{ \Sigma_{t}\cap\Omega_{T_{1}}^{\left(\hat{i}\right)}\right\} _{t>T_{1}}$'s
at their ``first'' singular time, where $\hat{i}$'s are those indices
such that $\left\{ \Sigma_{t}\cap\Omega_{T_{1}}^{\left(\hat{i}\right)}\right\} _{t>T_{1}}$
does become singular at $t=T_{2}$. In this way, Proposition \ref{structure of singular set at first singular time}
from Section \ref{singular set at the first singular time} can be
applied to each $\left\{ \Sigma_{t}\cap\Omega_{T_{1}}^{\left(\hat{i}\right)}\right\} _{t>T_{1}}$
to study the singular set of $\left\{ \Sigma_{t}\right\} $ at time
$T_{2}$. Such process can be repeated to study the singular set of
$\left\{ \Sigma_{t}\right\} $ at all subsequent singular times $T_{1}<T_{2}<\cdots<T_{ext}$. 

To prove Proposition \ref{domain after first singular time}, we need
the following lemma. 
\begin{lem}
\label{domain at regular time}For each regular time $t\in\left(0,T_{ext}\right)$,
$\Sigma_{t}=\left\{ x\in\Omega_{0}:u\left(x\right)=t\right\} $ is
a finite disjoint union of two-convex\footnote{With respect to the inward unit normal.}
closed connected hypersurfaces 
\[
\Sigma_{t}^{\left(1\right)},\cdots,\Sigma_{t}^{\left(m\right)};
\]
moreover, $\Omega_{t}=\left\{ x\in\Omega_{0}:u\left(x\right)>t\right\} $
is the disjoint union of 
\[
\Omega_{t}^{\left(1\right)},\cdots,\Omega_{t}^{\left(m\right)},
\]
where $\Omega_{t}^{\left(i\right)}$ is the open connected set bounded
by $\Sigma_{t}^{\left(i\right)}$ for $i\in\left\{ 1,\cdots,m\right\} $. 
\end{lem}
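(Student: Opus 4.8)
The plan is to show that $\Sigma_t = \{u=t\}$ is a smooth compact hypersurface (for $t$ a regular value, not too small), decompose it into its finitely many connected components, and then verify that each component is two-convex and bounds a region, with the regions pairwise disjoint. First I would observe that since $t$ is a regular value of $u$ and $t > \dot T$ (if $t \le \dot T$ the statement is the classical one, coming from Proposition \ref{time of smooth existence}), every point of $\Sigma_t$ is a regular point of the flow, so by Definition \ref{smooth scale} and Proposition \ref{interior estimate} the arrival time function $u$ is smooth near $\Sigma_t$ with $\nabla u \ne 0$ there; hence $\Sigma_t$ is a smooth embedded hypersurface. It is compact because $\Sigma_t \subset \bar\Omega_{\dot T}$ is closed (continuity of $u$) and bounded. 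A smooth compact embedded hypersurface has finitely many connected components, call them $\Sigma_t^{(1)},\dots,\Sigma_t^{(m)}$, and each is a closed connected hypersurface.

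Next I would address the topological decomposition. Each component $\Sigma_t^{(i)}$, being a smooth compact connected embedded hypersurface in $\mathbb R^n$, separates $\mathbb R^n$ into an inside and an outside by the Jordan–Brouwer separation theorem (as cited in the introduction via \cite{GP}); let $\Omega_t^{(i)}$ be the bounded component. The key point is to identify $\Omega_t = \{u > t\}$ with $\bigsqcup_i \Omega_t^{(i)}$. For this I would use the sweep-out structure of the mean-convex LSF: $\Omega_t$ is open, its topological boundary is contained in $\Sigma_t$ (since $u$ is continuous and $\Sigma_t = \{u=t\} \supset \partial\Omega_t$, while on $\Sigma_t$ the gradient $\nabla u \ne 0$ forces points of $\{u>t\}$ arbitrarily nearby, so actually $\partial\Omega_t = \Sigma_t$), and $\Omega_t$ is contained in $\Omega_0$. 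Because $\nabla u \ne 0$ on $\Sigma_t$, near each $\Sigma_t^{(i)}$ the set $\{u > t\}$ lies locally on one side; one then checks, using the contracting property \eqref{shrinking} (the superlevel sets are nested, $\Omega_{t'} \Subset \Omega_t$ for $t' > t$), that this side is exactly the bounded side $\Omega_t^{(i)}$. Conversely each $\Omega_t^{(i)}$ is connected with boundary $\Sigma_t^{(i)} \subset \Sigma_t$, and since $u$ has no critical points in $\{t < u < T_2'\}$ for the next singular value, $u > t$ throughout $\Omega_t^{(i)}$ by connectedness and the fact that $u \ne t$ in the interior. Disjointness of the $\Omega_t^{(i)}$ follows because their closures meet $\Sigma_t$ only in the respective (disjoint) components, and a point in $\Omega_t^{(i)} \cap \Omega_t^{(j)}$ with $i \ne j$ would give a path from $\Sigma_t^{(i)}$ to $\Sigma_t^{(j)}$ inside $\Omega_t$ not crossing $\Sigma_t$, contradicting that distinct $\Sigma_t^{(i)}$ bound disjoint insides once one knows the nesting.

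Finally, the two-convexity of each $\Sigma_t^{(i)}$: this is the one place where I would invoke external input rather than pure topology. By \cite{CHN} (cited in the introduction, see \eqref{uniformly two-convex}), a two-convex mean-convex LSF starting from a two-convex $\Sigma_0$ stays uniformly two-convex at every regular point with respect to the normal $\nabla u/|\nabla u|$; since every point of $\Sigma_t$ is regular, $\kappa_1 + \kappa_2 > 0$ holds on all of $\Sigma_t$, hence on each component, where the principal curvatures are taken with respect to the inward unit normal (which agrees with $\nabla u/|\nabla u|$ by \eqref{mean curvature}, as the flow is mean-convex in that direction). I expect the main obstacle to be the careful topological bookkeeping in the middle step — matching $\{u>t\}$ with the disjoint union of the bounded regions $\Omega_t^{(i)}$ and proving the regions are pairwise disjoint — which requires combining the implicit-function-theorem local picture at $\Sigma_t$ with the global monotonicity/nesting \eqref{shrinking} of the superlevel sets; the smoothness and two-convexity are essentially immediate from the cited results.
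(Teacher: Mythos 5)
Your overall route is the same as the paper's: implicit function theorem plus compactness for the finite decomposition into closed connected hypersurfaces, Jordan--Brouwer for each component, \cite{CHN} for the two-convexity, and then the topological identification of $\{u>t\}$ with the disjoint union of the bounded regions $\Omega_t^{(i)}$. The finiteness and two-convexity steps are fine as you present them (the paper adds one small point you omit: to know the relevant normal is the \emph{inward} one on each component, it uses that every closed hypersurface has a point of convexity with respect to its inward normal, so $\nabla u/|\nabla u|$ must point inward on all of $\Sigma_t^{(i)}$).

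The gap is in the step you yourself flag as the main obstacle, and your proposed justification does not close it. For $\Omega_t^{(i)}\subset\Omega_t$ you argue that ``$u>t$ throughout $\Omega_t^{(i)}$ by connectedness and the fact that $u\ne t$ in the interior''; but ``$u\ne t$ in the interior'' is precisely what must be proven --- a priori another component $\Sigma_t^{(j)}$ could be nested inside $\Omega_t^{(i)}$, or $u$ could dip to values $\le t$ there, and the absence of critical points in the band $\{t<u<T_2'\}$ says nothing about points of $\Omega_t^{(i)}$ where $u\le t$. The paper closes this by a minimum-principle argument: $u>t$ on a collar of $\Sigma_t^{(i)}$ inside $\Omega_t^{(i)}$ (since $\nabla u/|\nabla u|$ points inward), so if $\Omega_t^{(i)}\setminus\Omega_t\ne\emptyset$ then $u$ attains an interior local minimum in $\Omega_t^{(i)}$, which is impossible since the arrival time function has no local minimum points (Corollary \ref{isolated round point} and Corollary \ref{saddle criterion}, ultimately equation (\ref{level set flow})). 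Note also that in the paper the pairwise disjointness of the $\bar\Omega_t^{(i)}$ is a \emph{consequence} of $\Omega_t^{(i)}\subset\Omega_t$ (via $\Sigma_t^{(i)}\cap\Omega_t^{(j)}\subset\Sigma_t\cap\Omega_t=\emptyset$ and connectedness), not an input to it, so your ordering should be rearranged accordingly. Finally, the reverse inclusion $\Omega_t\subset\bigcup_i\Omega_t^{(i)}$, which you only gesture at via $\partial\Omega_t\subset\Sigma_t$ and local one-sidedness, is handled in the paper by inflating a ball about a putative point $p\in\Omega_t\setminus\bigcup_i\Omega_t^{(i)}$ until it first touches $\Sigma_t$ at some $q\in\Sigma_t^{(i_0)}$ and deriving a contradiction from the fact that $\nabla u(q)$ points into $\Omega_t^{(i_0)}$, away from the ball. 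With these two repairs your plan matches the paper's proof.
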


\begin{proof}
Let $t\in\left(0,T_{ext}\right)$ be a regular value of $u$. By the
implicit function theorem, $\Sigma_{t}$ is a closed  hypersurface
in $\Omega_{0}$. As $\Sigma_{t}$ is compact, it has finitely many
connected components, say $\Sigma_{t}^{\left(1\right)},\cdots,\Sigma_{t}^{\left(m\right)}$,
each of which is a closed connected hypersurface in $\Omega_{0}$.
By the Jordan-Brouwer separation theorem (see \cite{GP}), each $\Sigma_{t}^{\left(i\right)}$
would bound a region $\Omega_{t}^{\left(i\right)}$ contained\footnote{The mod 2 winding number of $\Sigma_{t}^{\left(i\right)}$ around
every point in $\mathbb{R}^{n}\setminus\bar{\Omega}_{0}$ is 0 (which
can be seen by taking a point far away from $\Sigma_{t}^{\left(i\right)}$
and noting that $\mathbb{R}^{n}\setminus\bar{\Omega}_{0}$ is path-connected
with $\left(\mathbb{R}^{n}\setminus\bar{\Omega}_{0}\right)\cap\Sigma_{t}^{\left(i\right)}=\emptyset$),
so $\Omega_{t}^{\left(i\right)}$ is contained in $\Omega_{0}$.} in $\Omega_{0}$. Because $\Sigma_{t}$ is  two-convex with respect
to $\frac{\nabla u}{\left|\nabla u\right|}$ (cf. \cite{CHN}), so
is $\Sigma_{t}^{\left(i\right)}$. Note that the unit normal $\frac{\nabla u}{\left|\nabla u\right|}$
is either entirely inward or entirely outward for each $\Sigma_{t}^{\left(i\right)}$.
In view of the fact that every closed hypersurface has a point where
it is  convex with respect to the inward unit normal vector, we conclude
that $\frac{\nabla u}{\left|\nabla u\right|}$ is entirely inward
for each $\Sigma_{t}^{\left(i\right)}$.

To see that $\Omega_{t}^{\left(i\right)}\subset\Omega_{t}$, firstly
note that because the vector field $\frac{\nabla u}{\left|\nabla u\right|}$
on $\Sigma_{t}^{\left(i\right)}$ points toward $\Omega_{t}^{\left(i\right)}$,
we are able to find a small tubular open neighborhood $U$ of $\Sigma_{t}^{\left(i\right)}$
so that $\Omega_{t}^{\left(i\right)}\cap U\subset\Omega_{t}$. Were
$\Omega_{t}^{\left(i\right)}\setminus\Omega_{t}\neq\emptyset$, we
would have 
\[
p\,\in\,\Omega_{t}^{\left(i\right)}\setminus\Omega_{t}\,\subset\,\bar{\Omega}_{t}^{\left(i\right)}\setminus U
\]
so that 
\[
u\left(p\right)=\min_{\bar{\Omega}_{t}^{\left(i\right)}\setminus U}\,u\,\leq\,t.
\]
It would follow that $p\in\Omega_{t}^{\left(i\right)}$ with 
\[
u\left(p\right)=\min_{\bar{\Omega}_{t}^{\left(i\right)}}\,u;
\]
particularly, $p$ is a critical point of $u$. However, $u$ has
no local minimum points (see Corollary \ref{isolated round point}
and Corollary \ref{saddle criterion}). Thus we obtain $\Omega_{t}^{\left(i\right)}\subset\Omega_{t}$. 

Recall that both $\Omega_{t}^{\left(i\right)}$ and $\mathbb{R}^{n}\setminus\bar{\Omega}_{t}^{\left(i\right)}$
are open connected (and hence path-connected) by the Jordan-Brouwer
separation theorem. When $m>1$, for every $j\neq i$,
\[
\Sigma_{t}^{\left(i\right)}\cap\Sigma_{t}^{\left(j\right)}=\emptyset,
\]
\[
\Sigma_{t}^{\left(i\right)}\cap\Omega_{t}^{\left(j\right)}\,\subset\,\Sigma_{t}\cap\Omega_{t}=\emptyset,
\]
giving that 
\[
\Sigma_{t}^{\left(i\right)}\,\subset\,\mathbb{R}^{n}\setminus\bar{\Omega}_{t}^{\left(j\right)}.
\]
Notice that $\mathbb{R}^{n}\setminus\bar{\Omega}_{t}^{\left(j\right)}$
is open connected. Then it follows from the path-connectedness of
$\Omega_{t}^{\left(i\right)}$ that 
\[
\bar{\Omega}_{t}^{\left(i\right)}\,\subset\,\mathbb{R}^{n}\setminus\bar{\Omega}_{t}^{\left(j\right)},
\]
that is, $\bar{\Omega}_{t}^{\left(i\right)}\cap\bar{\Omega}_{t}^{\left(j\right)}=\emptyset$.

To finish the proof, we have to show that 
\[
\Omega_{t}\subset\Omega_{t}^{\left(1\right)}\cup\cdots\cup\Omega_{t}^{\left(m\right)}.
\]
Suppose the contrary that there is $p\in\Omega_{t}$ such that $p\notin\Omega_{t}^{\left(i\right)}$
for every $i$. Let 
\[
R=\sup\left\{ r>0\,:\,B_{r}\left(p\right)\subset\Omega_{t}\right\} .
\]
Then $B_{R}\left(p\right)\cap\bar{\Omega}_{t}^{\left(i\right)}=\emptyset$
for every $i$ and $\partial B_{R}\left(p\right)\cap\Sigma_{t}\neq\emptyset$.
Choose $q\in\partial B_{R}\left(p\right)\cap\Sigma_{t}$. Then $q\in\partial B_{R}\left(p\right)\cap\Sigma_{t}^{\left(i_{0}\right)}$
for some $i_{0}$. Since $\nabla u\left(q\right)\neq0$ points toward
$\Omega_{t}^{\left(i_{0}\right)}$, there exists $\delta>0$ so that
\[
B_{\delta}\left(q\right)\setminus\bar{\Omega}_{t}^{\left(i_{0}\right)}\,\subset\,\left\{ u<t\right\} ,
\]
contradicting that $\emptyset\neq B_{R}\left(p\right)\cap B_{\delta}\left(q\right)\subset\Omega_{t}$.
\end{proof}
\begin{prop}
\label{domain after first singular time}Suppose that $T_{1}<T_{ext}$
is an isolated singular time of $\left\{ \Sigma_{t}\right\} $, that
is, there exists a singular time $T_{2}\in\left(T_{1},T_{ext}\right]$
so that the flow is regular during the time period $\left(T_{1},T_{2}\right)$.
Then $\Omega_{T_{1}}$ is a finite disjoint union of open connected
sets
\[
\Omega_{T_{1}}^{\left(1\right)},\cdots,\Omega_{T_{1}}^{\left(m_{1}\right)}.
\]
Moreover, for each $i\in\left\{ 1,\cdots,m_{1}\right\} $, 
\[
\Sigma_{t}^{\left(i\right)}\coloneqq\Sigma_{t}\cap\Omega_{T_{1}}^{\left(i\right)},\quad t>T_{1}
\]
is a  two-convex MCF of closed connected hypersurfaces for $t\in\left(T_{1},T_{2}\right)$
and a  two-convex LSF for $t\in\left(T_{1},\infty\right).$\footnote{In the sense that for every $t_{1}\in\left(T_{1},T_{2}\right)$, $\left\{ \Sigma_{t}^{\left(i\right)}\right\} _{t\geq t_{1}}$is
the LSF starting from $\Sigma_{t_{1}}^{\left(i\right)}$.} Notice that as $\Sigma_{t}\subset\Omega_{T_{1}}$ for $t>T_{1}$,
we have 
\[
\Sigma_{t}=\bigsqcup_{i=1}^{m_{1}}\,\Sigma_{t}^{\left(i\right)}\quad\forall\,t>T_{1}.
\]
\end{prop}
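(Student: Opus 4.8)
The plan is to combine Lemma \ref{domain at regular time} with the stability theory from Section \ref{stability of mean-convex LSF off singulariites}, taking a regular time slightly after $T_1$ and then passing to the limit $t \searrow T_1$. First I would fix any $t_1 \in (T_1, T_2)$. Since $T_1$ is isolated, $t_1$ is a regular time, so by Lemma \ref{domain at regular time} the level set $\Sigma_{t_1}$ splits as a finite disjoint union $\Sigma_{t_1}^{(1)}, \cdots, \Sigma_{t_1}^{(m_1)}$ of two-convex closed connected hypersurfaces, and $\Omega_{t_1}$ splits accordingly into $\Omega_{t_1}^{(1)}, \cdots, \Omega_{t_1}^{(m_1)}$ with $\bar\Omega_{t_1}^{(i)}$ pairwise disjoint. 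For each $i$, the restriction of the flow $\{\Sigma_t \cap \Omega_{t_1}^{(i)}\}_{t \geq t_1}$ is, by the existence and uniqueness theorem for viscosity solutions to the Dirichlet problem (\ref{level set flow}) in \cite{ES} (applied exactly as in Section \ref{round point}), the two-convex LSF starting from $\Sigma_{t_1}^{(i)}$ at time $t_1$; for $t$ in the regular interval $(t_1, T_2)$ it is a two-convex MCF of closed connected hypersurfaces by Lemma \ref{domain at regular time} again. Since $\Sigma_t \subset \Omega_{t_1}$ for $t > t_1$, we get $\Sigma_t = \bigsqcup_i (\Sigma_t \cap \Omega_{t_1}^{(i)})$ for $t > t_1$.

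The remaining point is to pass from $t_1$ down to $T_1$, i.e.\ to show that $\Omega_{T_1}$ itself — not merely $\Omega_{t_1}$ — is a finite disjoint union of open connected sets $\Omega_{T_1}^{(1)},\cdots,\Omega_{T_1}^{(m_1)}$ with the same index set, where $\Omega_{T_1}^{(i)} \coloneqq \bigcup_{t_1 < t} (\Sigma_t \cap \Omega_{t_1}^{(i)}) \cup (\text{appropriate part of } \Sigma_{t_1})$; more cleanly, one sets $\Omega_{T_1}^{(i)} = \{x : u(x) > T_1\} \cap V_i$ where $V_i$ is an open neighborhood built from the union of the $\Omega_{t_1}^{(i)}$-part of the later level sets together with $\Omega_{t_1}^{(i)}$. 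Because the superlevel sets are nested and contracting (see (\ref{shrinking})), $\Omega_s \cap \Omega_{t_1}^{(i)} = \Omega_s^{(i)}$ for all $s \in (T_1, t_1]$ by the uniqueness argument above, and $\Omega_{T_1} = \bigcup_{s > T_1} \Omega_s$. Thus $\Omega_{T_1} = \bigcup_i \big( \Omega_{t_1}^{(i)} \cup \bigcup_{T_1 < s < t_1} \Omega_s^{(i)} \big)$, and each term is open and connected (an increasing union of the connected nested open sets $\Omega_s^{(i)}$, each containing $\Omega_{t_1}^{(i)}$). Disjointness for distinct $i$ follows from $\bar\Omega_{t_1}^{(i)} \cap \bar\Omega_{t_1}^{(j)} = \emptyset$ together with the fact that $\Omega_s \subset \Omega_{t_1} = \bigsqcup_i \Omega_{t_1}^{(i)}$ for $s \geq t_1$, so each connected $\Omega_s^{(i)}$ lies in exactly one component of $\Omega_{t_1}$. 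Finiteness of the number of components is inherited from $m_1$ being finite.

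I would then verify that the decomposition is independent of the choice of $t_1 \in (T_1, T_2)$: if $t_1 < t_1'$ are two such choices, each component $\Omega_{t_1'}^{(i')}$ is contained in $\Omega_{t_1} = \bigsqcup_i \Omega_{t_1}^{(i)}$ by (\ref{shrinking}), hence in a unique $\Omega_{t_1}^{(i)}$, and conversely by connectedness of the flow restricted to each piece this correspondence is a bijection; so the limiting sets $\Omega_{T_1}^{(i)}$ are well-defined. Finally, the assertions about $\Sigma_t^{(i)} \coloneqq \Sigma_t \cap \Omega_{T_1}^{(i)}$ being a two-convex MCF of closed connected hypersurfaces for $t \in (T_1, T_2)$ and a two-convex LSF for $t \in (T_1, \infty)$ follow immediately: for any $t \in (T_1, T_2)$, choosing $t_1 \in (T_1, t)$, one has $\Omega_{T_1}^{(i)} \cap \Omega_t = \Omega_t^{(i)}$ (the component of $\Omega_t$ inside $\Omega_{t_1}^{(i)}$), so $\Sigma_t^{(i)} = \Sigma_t \cap \Omega_{t_1}^{(i)}$ is a single two-convex closed connected hypersurface by Lemma \ref{domain at regular time}, and the MCF/LSF statements are exactly what was established for $\{\Sigma_t \cap \Omega_{t_1}^{(i)}\}$ above via \cite{ES}. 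The identity $\Sigma_t = \bigsqcup_i \Sigma_t^{(i)}$ for $t > T_1$ is then clear since $\Sigma_t \subset \Omega_{T_1} = \bigsqcup_i \Omega_{T_1}^{(i)}$.

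The main obstacle is the passage to the limit $t \searrow T_1$ — i.e., proving that the number of connected components does not jump up as $t \to T_1^+$ and that the $\Omega_{T_1}^{(i)}$ are genuinely open and connected, rather than merely that $\Omega_{t_1}$ has this structure. This is handled by the nestedness (\ref{shrinking}) and the uniqueness theorem of \cite{ES}, which together force each $\Omega_s^{(i)}$ ($T_1 < s \leq t_1$) to be connected and to lie in a fixed component of $\Omega_{t_1}$; the delicate part is checking that no component of $\Omega_{t_1}$ fails to be "filled in" as $s \searrow T_1$, which follows because $\Omega_{t_1}^{(i)} \subset \Omega_s^{(i)}$ for all $s < t_1$ so the increasing union is nonempty and connected for every $i$.
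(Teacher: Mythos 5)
Your proposal is correct and follows essentially the same route as the paper: fix a regular time $t_{1}\in\left(T_{1},T_{2}\right)$, apply Lemma \ref{domain at regular time} and the existence/uniqueness theorem of \cite{ES} to split the flow into the components $\left\{ \Sigma_{t}\cap\Omega_{t_{1}}^{\left(i\right)}\right\} $, then let $t_{1}\searrow T_{1}$ and define each $\Omega_{T_{1}}^{\left(i\right)}$ as the increasing union of the nested connected sets $\Omega_{s}^{\left(i\right)}$, with disjointness coming from the contracting property. The only blemish is the displayed identity ``$\Omega_{s}\cap\Omega_{t_{1}}^{\left(i\right)}=\Omega_{s}^{\left(i\right)}$'' for $s\leq t_{1}$, which is written backwards (the left side equals $\Omega_{t_{1}}^{\left(i\right)}$ since $\Omega_{t_{1}}\subset\Omega_{s}$); what you need, and what your surrounding argument in fact delivers, is that $\Omega_{s}^{\left(i\right)}$ is the unique component of $\Omega_{s}$ containing $\Omega_{t_{1}}^{\left(i\right)}$.
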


\begin{proof}
Fix $t_{1}\in\left(T_{1},T_{2}\right)$. By Lemma \ref{domain at regular time},
$\Sigma_{t_{1}}$ is a finite disjoint union of closed connected hypersurfaces
$\Sigma_{t_{1}}^{\left(1\right)},\ldots,\Sigma_{t_{1}}^{\left(m_{1}\right)}$
that are  two-convex with respect to the inward normal; moreover,
$\Omega_{t_{1}}$ is the disjoint union of $\Omega_{t_{1}}^{\left(1\right)},\cdots,\Omega_{t_{1}}^{\left(m_{1}\right)}$,
where $\Omega_{t_{1}}^{\left(i\right)}$ is the open connected set
bounded by $\Sigma_{t_{1}}^{\left(i\right)}$. 

Since $u$ satisfies (\ref{level set flow}) in the viscosity sense
on $\Omega_{t_{1}}^{\left(i\right)}$ with the boundary condition
$u=t_{1}$ on $\partial\Omega_{t_{1}}^{\left(i\right)}=\Sigma_{t_{1}}^{\left(i\right)}$,
the existence and uniqueness theorem in \cite{ES} yields that 
\[
\Sigma_{t}^{\left(i\right)}\,\coloneqq\,\Sigma_{t}\cap\Omega_{t_{1}}^{\left(i\right)}\quad t>t_{1}
\]
is indeed the mean-convex LSF starting from $\Sigma_{t_{1}}^{\left(i\right)}$
at time $t_{1}$. In addition, since the flow $\left\{ \Sigma_{t}\right\} $
is a  two-convex MCF for $t\in\left(T_{1},T_{2}\right),$ the flow
$\left\{ \Sigma_{t}^{\left(i\right)}\right\} $ is also a  two-convex
MCF for $t\in\left[t_{1},T_{2}\right)$. 

It follows that for every $t\in\left[t_{1},T_{2}\right)$, each $\Sigma_{t}^{\left(i\right)}$
is a closed connected hypersurface and hence is exactly a connected
component of $\Sigma_{t}$. Let $\Omega_{t}^{\left(i\right)}$ be
the open connected set bounded by $\Sigma_{t}^{\left(i\right)}$ for
$t\in\left(t_{1},T_{2}\right)$. Since $\left\{ \Sigma_{t}^{\left(i\right)}\right\} _{t\in\left[t_{1},T_{2}\right)}$
is a  mean-convex MCF, $\left\{ \Omega_{t}^{\left(i\right)}\right\} _{t\in\left[t_{1},T_{2}\right)}$
is contracting. By Lemma \ref{domain at regular time}, $\Omega_{t}$
is the disjoint union of $\Omega_{t}^{\left(1\right)},\cdots,\Omega_{t}^{\left(m_{1}\right)}$
for $t\in\left[t_{1},T_{2}\right)$.

Replacing $t_{1}\in\left(T_{1},T_{2}\right)$ in the a above by a
decreasing sequence tending to $T_{1}$, we can extend\footnote{On the basis of the uniqueness theorem for MCF and LSF, and also the
semigroup property of LSF operator (cf. \cite{ES}).} the flow $\left\{ \Sigma_{t}^{\left(i\right)}\right\} $ backward
in time so as to obtain that $\left\{ \Sigma_{t}^{\left(i\right)}\right\} _{t\in\left(T_{1},T_{2}\right)}$
is a  two-convex MCF of closed connected hypersurfaces, that $\left\{ \Sigma_{t}^{\left(i\right)}\right\} _{t>T_{1}}$
is a  two-convex LSF, and that $\Sigma_{t}$ is the disjoint union
of $\Sigma_{t}^{\left(1\right)},\ldots,\Sigma_{t}^{\left(m_{1}\right)}$
for every $t\in\left(T_{1},T_{2}\right)$. Also, $\Omega_{t}^{\left(i\right)}$
is defined for every $t\in\left(T_{1},T_{2}\right)$ as the open connected
set bounded by $\Sigma_{t}^{\left(i\right)}$. We then have that $\left\{ \Omega_{t}^{\left(i\right)}\right\} _{t\in\left(T_{1},T_{2}\right)}$
is contracting and that $\Omega_{t}$ is the disjoint union of $\Omega_{t}^{\left(1\right)},\cdots,\Omega_{t}^{\left(m_{1}\right)}$
for $t\in\left(T_{1},T_{2}\right)$. 

For each $i\in\left\{ 1,\cdots,m_{1}\right\} $ let

\[
\Omega_{T_{1}}^{\left(i\right)}\,=\bigcup_{t\in\left(T_{1},T_{2}\right)}\Omega_{t}^{\left(i\right)}.
\]
Since $\Omega_{t}^{\left(i\right)}$ is open connected for every $t\in\left(T_{1},T_{2}\right)$,
so is $\Omega_{T_{1}}^{\left(i\right)}$. When $m_{1}>1$, for every
$j\neq i$ we would have $\Omega_{T_{1}}^{\left(i\right)}\cap\Omega_{T_{1}}^{\left(j\right)}=\emptyset$;
otherwise, choose $p\in\Omega_{T_{1}}^{\left(i\right)}\cap\Omega_{T_{1}}^{\left(j\right)}$,
then by the contracting property of $\Omega_{t}^{\left(i\right)}$
and $\Omega_{t}^{\left(j\right)}$, there would exist $t_{1}\in\left(T_{1},T_{2}\right)$
so that $p\in\Omega_{t_{1}}^{\left(i\right)}\cap\Omega_{t_{1}}^{\left(j\right)}$,
contradicting that $\Omega_{t_{1}}^{\left(i\right)}\cap\Omega_{t_{1}}^{\left(j\right)}=\emptyset$.
In addition, note that
\[
\Omega_{T_{1}}=\left\{ u>T_{1}\right\} \,=\bigcup_{t\in\left(T_{1},T_{2}\right)}\left\{ u>t\right\} \,=\bigcup_{t\in\left(T_{1},T_{2}\right)}\Omega_{t}
\]
\[
=\bigcup_{t\in\left(T_{1},T_{2}\right)}\,\bigcup_{i=1}^{m_{1}}\,\,\Omega_{t}^{\left(i\right)}=\,\bigcup_{i=1}^{m_{1}}\,\bigcup_{t\in\left(T_{1},T_{2}\right)}\Omega_{t}^{\left(i\right)}=\,\bigcup_{i=1}^{m_{1}}\,\,\Omega_{T_{1}}^{\left(i\right)}.
\]
Lastly, given a sequence $\left\{ t_{k}\right\} _{k\in\mathbb{N}}\subset\left(T_{1},T_{2}\right)$
such that $t_{k}\searrow T_{1}$, by construction $\left\{ \Sigma_{t}^{\left(i\right)}\right\} _{t\geq t_{k}}$
is a  mean-convex LSF starting from $\Sigma_{t_{k}}^{\left(i\right)}$
at time $t_{k}$, so we have 
\[
\Sigma_{t}^{\left(i\right)}\,\subset\,\Omega_{t_{k}}^{\left(i\right)}\quad\forall\,t>t_{k}
\]
for each $i\in\left\{ 1,\cdots,m_{1}\right\} $. Since $\Omega_{t_{k}}^{\left(i\right)}\cap\Omega_{t_{k}}^{\left(j\right)}=\emptyset$
whenever $i\neq j$ and that $\Sigma_{t}$ is the disjoint union of
$\Sigma_{t}^{\left(1\right)},\cdots,\Sigma_{t}^{\left(m_{1}\right)}$,
we infer that
\[
\Sigma_{t}^{\left(i\right)}\,=\,\Sigma_{t}\cap\Omega_{t_{k}}^{\left(i\right)}
\]
so long as $t>t_{k}$. Now given $t>T_{1}$, we can choose $k_{0}\in\mathbb{N}$
such that $t>t_{k_{0}}$, then we have 
\[
\Sigma_{t}^{\left(i\right)}\,=\,\bigcup_{k=k_{0}}^{\infty}\Sigma_{t}\cap\Omega_{t_{k}}^{\left(i\right)}\,=\,\Sigma_{t}\cap\Omega_{T_{1}}^{\left(i\right)}
\]
for every $i\in\left\{ 1,\cdots,m_{1}\right\} $.
\end{proof}
With Proposition \ref{structure of singular set at first singular time}
and Proposition \ref{domain after first singular time}, we are able
to prove Theorem \ref{structure of singular set}. 
\begin{proof}
(\textit{of Theorem \ref{structure of singular set}}) If $T_{1}<T_{ext}$,
choose $t_{1}\in\left(T_{1},T_{2}\right)$. Note that 
\[
\Sigma_{t}=\left\{ u=t\right\} \,\subset\,\left\{ u>T_{1}\right\} =\Omega_{T_{1}}\quad\forall\,t\geq t_{1}.
\]
By Proposition \ref{domain after first singular time}, $\Omega_{T_{1}}$
is a finite disjoint union of open connected sets 
\[
\Omega_{T_{1}}^{\left(1\right)},\cdots,\Omega_{T_{1}}^{\left(m_{1}\right)};
\]
moreover, for each $i\in\left\{ 1,\cdots,m_{1}\right\} $, 
\[
\Sigma_{t}\cap\Omega_{T_{1}}^{\left(i\right)},\quad t\geq t_{1}
\]
is the (two-convex) LSF\footnote{\label{restriction of arrival time}The restriction of $u$ on $\bar{\Omega}_{t_{1}}^{\left(i\right)}$
serves as the arrival time function of the two-convex LSF.} starting at time $t_{1}$ from $\Sigma_{t_{1}}\cap\Omega_{T_{1}}^{\left(i\right)}$,
which is a two-convex closed connected hypersurface. Note that for
each $i\in\left\{ 1,\cdots,m_{1}\right\} $, all the singular times
of the flow $\left\{ \Sigma_{t}\cap\Omega_{T_{1}}^{\left(i\right)}\right\} _{t\geq t_{1}}$
must be contained in $\left\{ T_{2}\leq\cdots\leq T_{ext}\right\} $,
and that at time $T_{2}$, there must be some $i\in\left\{ 1,\cdots,m_{1}\right\} $
so that $\Sigma_{T_{2}}\cap\Omega_{T_{1}}^{\left(i\right)}$ has singular
points.\footnote{For such $i$'s, $T_{2}$ is actually the first singular time of the
flow $\left\{ \Sigma_{t}\cap\Omega_{T_{1}}^{\left(i\right)}\right\} _{t\geq t_{1}}$.} Then it follows from Proposition \ref{structure of singular set at first singular time}
that for each $i\in\left\{ 1,\cdots,m_{1}\right\} $, the singular
set of the LSF $\left\{ \Sigma_{t}\cap\Omega_{T_{1}}^{\left(i\right)}\right\} _{t\geq t_{1}}$
at the time $T_{2}$, which is the set of singular points in $\Sigma_{T_{2}}\cap\Omega_{T_{1}}^{\left(i\right)}$,
is either empty or a finite disjoint union of points and/or compact
$C^{1}$ embedded curves (with or without boundary). Because the singular
points of $\Sigma_{T_{2}}$ is indeed the union of singular points
of $\Sigma_{T_{2}}\cap\Omega_{T_{1}}^{\left(i\right)}$ for $i\in\left\{ 1,\cdots,m_{1}\right\} $,
we infer that the singular set of the flow $\left\{ \Sigma_{t}\right\} $
at the second singular time $T_{2}$ is a finite disjoint union of
points and/or compact $C^{1}$ embedded curves.

If $T_{2}<T_{ext}$, choose $t_{2}\in\left(T_{2},T_{3}\right)$. Note
that $\Sigma_{t}\subset\Omega_{T_{2}}\subset\Omega_{T_{1}}$ for $t\geq t_{2}$.
For each $i\in\left\{ 1,\cdots,m_{1}\right\} $, by Proposition \ref{domain after first singular time},
$\Omega_{T_{2}}\cap\Omega_{T_{1}}^{\left(i\right)}$ would be a finite
disjoint union of open connected sets.\footnote{If the LSF $\left\{ \Sigma_{t}\cap\Omega_{T_{1}}^{\left(i\right)}\right\} _{t\geq t_{1}}$
does not become singular at time $T_{2}$, then $\Omega_{T_{2}}\cap\Omega_{T_{1}}^{\left(i\right)}$
is itself an open connected set, which is bounded by the closed connected
hypersurface $\Sigma_{T_{2}}\cap\Omega_{T_{1}}^{\left(i\right)}$.} Since $\Omega_{T_{1}}$ is the disjoint union of $\Omega_{T_{1}}^{\left(1\right)},\cdots,\Omega_{T_{1}}^{\left(m_{1}\right)}$,
the set $\Omega_{T_{2}}$ is a finite disjoint union of open connected
sets
\[
\Omega_{T_{2}}^{\left(1\right)},\cdots,\Omega_{T_{2}}^{\left(m_{2}\right)}.
\]
Note that for each $j\in\left\{ 1,\cdots,m_{2}\right\} $, there exists
$i\in\left\{ 1,\cdots,m_{1}\right\} $ such that $\Omega_{T_{2}}^{\left(j\right)}$
is a connected component of $\Omega_{T_{2}}\cap\Omega_{T_{1}}^{\left(i\right)}$;
it follows from Proposition \ref{domain after first singular time}
that 
\[
\left(\Sigma_{t}\cap\Omega_{T_{1}}^{\left(i\right)}\right)\cap\Omega_{T_{2}}^{\left(j\right)}\,=\,\Sigma_{t}\cap\Omega_{T_{2}}^{\left(j\right)},\quad t\geq t_{2}
\]
is the (two-convex) LSF starting at time $t_{2}$ from $\Sigma_{t_{2}}\cap\Omega_{T_{2}}^{\left(j\right)}$,
which is a two-convex closed connected hypersurface. Since 
\[
\Sigma_{t}=\bigsqcup_{j=1}^{m_{2}}\,\Sigma_{t}\cap\Omega_{T_{2}}^{\left(j\right)}\quad\forall\,t\geq t_{2},
\]
the singular times of the flow $\left\{ \Sigma_{t}\cap\Omega_{T_{2}}^{\left(j\right)}\right\} _{t\geq t_{2}}$
must be contained in $\left\{ T_{3}\leq\cdots\leq T_{ext}\right\} $
for every $j$; at time $T_{3}$, there must be some $j\in\left\{ 1,\cdots,m_{2}\right\} $
so that $\Sigma_{T_{3}}\cap\Omega_{T_{2}}^{\left(j\right)}$ has singular
points. So it follows from Proposition \ref{structure of singular set at first singular time}
that for each $j\in\left\{ 1,\cdots,m_{2}\right\} $, the singular
set of the LSF $\left\{ \Sigma_{t}\cap\Omega_{T_{2}}^{\left(j\right)}\right\} _{t\geq t_{2}}$
at the time $T_{3}$, which is the set of singular points in $\Sigma_{T_{3}}\cap\Omega_{T_{2}}^{\left(j\right)}$,
is either empty or a finite disjoint union of points and/or compact
$C^{1}$ embedded curves. Thus, the singular set of the flow $\left\{ \Sigma_{t}\right\} $
at the third singular time $T_{3}$ is a finite disjoint union of
points and/or compact $C^{1}$ embedded curves.

The conclusion would follow after repeating this process for finitely
many times.
\end{proof}

\section{\uuline{Types of singular components}\label{types of singular components}}

\uline{In this section the two-convex LSF \mbox{$\left\{ \Sigma_{t}\right\} $}
is assumed to have finitely many singular times in order that Theorem
\mbox{\ref{structure of singular set}} holds.} Let $\left\{ \Sigma_{t}^{k}\right\} $,
$k\in\mathbb{N}$, be the LSFs in Theorem \ref{stability theorem}.
In view of Proposition \ref{two-convexity of approximations} (in
Section \ref{appendix:two-convexity}), we may assume for simplicity
that $\left\{ \Sigma_{t}^{k}\right\} $ is a two-convex LSF for every
$k\in\mathbb{N}$. 

Recall that when $k$ is large, by Theorem \ref{stability theorem}
(see also Corollary \ref{regular space}) $\left\{ \Sigma_{t}^{k}\right\} $
is regular and close in the smooth topology to $\left\{ \Sigma_{t}\right\} $
away from $\mathcal{S}$. The target of this section is to analyze
the singular set of $\left\{ \Sigma_{t}^{k}\right\} $ near $\mathcal{S}$
with results in Theorem \ref{stability of singular types}. To achieve
that, we will firstly classify the singular components (i.e., connected
components of the singular set) in Definition \ref{singularity types}.
Then we will make adequate assumptions, including Assumption \ref{local singular times hypothesis},
Assumption \ref{not getting into} (in Section \ref{splitting type}),
and Assumption \ref{existence of singularities near bumpy} (in Section
\ref{bumpy type}), so as to ensure that near each singular component
of $\left\{ \Sigma_{t}\right\} $, the flow $\left\{ \Sigma_{t}^{k}\right\} $
would have exactly the same type of singular set as that singular
component. The proof of Theorem \ref{stability of singular types}
is composed of three parts: Proposition \ref{stability of vanishing type}
in Section \ref{vanishing type}, Proposition \ref{rule out bumpy type}
in Section \ref{splitting type}, and Proposition \ref{uniqueness of bumpy}
in Section \ref{bumpy type}.

To start with, let us write the singular set $\mathcal{S}$ of $\left\{ \Sigma_{t}\right\} $
as a finite disjoint union of singular components, namely,
\[
\mathcal{S}\,=\,\bigsqcup_{j}\,\mathcal{S}_{j},
\]
where each singular component $\mathcal{S}_{j}$ is either a point
or a compact $C^{1}$ embedded curve (with or without boundary). Note
that $u$ is constant on each $\mathcal{S}_{j}$,\footnote{When $\mathcal{S}_{j}=\left\{ x\left(t\right):t\in\left[a,b\right]\right\} $
is a curve, we have $\frac{d}{dt}\,u\left(x\left(t\right)\right)=\nabla u\left(x\left(t\right)\right)\cdot x'\left(t\right)=0$.} that is to say, every singularity of $\mathcal{S}_{j}$ occurs at
the same time. In addition, when $\mathcal{S}_{j}$ is a curve, by
Remark \ref{one-sided local max} every interior point of $\mathcal{S}_{j}$
is a local maximum point of $u$, and every boundary point (i.e.,
endpoint) of $\mathcal{S}_{j}$ is either a local maximum point or
a one-sided saddle point of $u$. We then classify singular components,
according to their endpoints, into the following three types:
\begin{defn}
\label{singularity types}A singular component of $\left\{ \Sigma_{t}\right\} $
belongs to 
\begin{enumerate}
\item the \textbf{vanishing type} - if it is a single local maximum point
of $u$, or a compact $C^{1}$ embedded curve whose endpoints are
both local maximum points of $u$, or a closed $C^{1}$ embedded curve
(i.e., no endpoints);
\item the \textbf{splitting type} - if it is either a single two-sided saddle
point of $u$, or a compact $C^{1}$ embedded curve whose endpoints
are both one-sided saddle points of $u$;
\item the \textbf{bumpy type} - if it is either a single one-sided saddle
point of $u$, or a compact $C^{1}$ embedded curve with one endpoint
being a one-sided saddle point while the other being a local maximum
point of $u$.
\end{enumerate}
\end{defn}

The reason why the two-sided saddle points and the one-sided saddle
points are classified as the splitting type and the bumpy type, respectively,
is as follows. Recall that by Definition \ref{one-sided saddle},
a saddle point $p$ is two-sided or one-sided depends on how many
``sides'' the point can be approached by the superlevel set $\Omega_{u\left(p\right)}=\left\{ u>u\left(p\right)\right\} $.
From this perspective, and considering the fact that a splitting curve
can be (and can only be) approached by the superlevel set near the
two ``ends,'' a two-sided saddle point can be regarded as a ``degenerate''
curve of the splitting type. Likewise, a one-sided saddle point can
be viewed as a degenerate curve of the bumpy type because the superlevel
set can be approached from only one side/end.

Now let $\hat{\delta}>0$ be a sufficiently small constant such that
the following hold:
\begin{itemize}
\item For every $j\neq j'$,
\begin{equation}
\mathcal{S}_{j}^{\hat{\delta}}\cap\mathcal{S}_{j'}^{\hat{\delta}}=\emptyset,\label{delta 1}
\end{equation}
where 
\[
\mathcal{S}_{j}^{\hat{\delta}}=\left\{ x\in\mathbb{R}^{n}\,:\,\textrm{dist}\left(x,\mathcal{S}_{j}\right)<\hat{\delta}\right\} 
\]
is the $\hat{\delta}$-neighborhood of $\mathcal{S}_{j}$.
\item For every $l\geq1$,
\begin{equation}
T_{l-1}+\hat{\delta}\,<\,T_{l}-\hat{\delta},\label{delta 2}
\end{equation}
where $0=T_{0}<T_{1}\leq\cdots\leq T_{m}=T_{ext}$ are the set of
singular times of $\left\{ \Sigma_{t}\right\} $.
\item If $u\left(\mathcal{S}_{j}\right)=T_{l}$ (i.e., $\mathcal{S}_{j}$
occurs at time $T_{l}$), then 
\begin{equation}
\overline{\mathcal{S}_{j}^{\hat{\delta}}}\,\subset\,\Omega_{T_{l-1}}^{\left(i\right)},\label{delta 3}
\end{equation}
where $\Omega_{T_{l-1}}^{\left(i\right)}$ is some connected component
of $\Omega_{T_{l-1}}$ (see Proposition \ref{domain after first singular time}). 
\end{itemize}
The following proposition is a collection of Corollary \ref{extinction time},
Corollary \ref{regular space} (see also (\ref{delta 1})), and Corollary
\ref{closedness during regular times} (see also Proposition \ref{domain after first singular time}
and (\ref{delta 2})).
\begin{prop}
\label{pre-stability}Given $\epsilon>0$ and $0<\delta<\hat{\delta}$,
there exists $k_{\epsilon,\delta}\in\mathbb{N}$ so that for every
$k\geq k_{\epsilon,\delta}$ the following hold: 
\begin{enumerate}
\item The extinction time $T_{ext}^{k}$ of $\left\{ \Sigma_{t}^{k}\right\} $
satisfies 
\[
T_{ext}^{k}\,\in\,\left(T_{ext}-\delta,\,T_{ext}+\delta\right).
\]
\item The singular set $\mathcal{S}^{k}$ of $\left\{ \Sigma_{t}^{k}\right\} $
satisfies 
\[
\mathcal{S}^{k}\,\subset\,\bigsqcup_{j}\,\mathcal{S}_{j}^{\delta}.
\]
\item For every $l\geq1$, $\left\{ \Sigma_{t}^{k}\right\} _{T_{l-1}+\delta\,\leq\,t\,\leq\,T_{l}-\delta}$
is a two-convex MCF that is $\epsilon$-close in the $C^{\dot{m}}$
topology to $\left\{ \Sigma_{t}\right\} _{T_{l-1}+\delta\,\leq\,t\,\leq\,T_{l}-\delta}$,
where $\dot{m}=\dot{m}\left(n,\lambda\right)$ is the constant in
Section \ref{cylindrical point}. 
\end{enumerate}
\end{prop}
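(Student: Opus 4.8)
The plan is to derive Proposition \ref{pre-stability} by patching together three already-established convergence results, each of which handles one of the three items. First I would fix the geometric data: recall that by Theorem \ref{structure of singular set} the singular set is the finite union $\mathcal{S}=\bigsqcup_{j}\mathcal{S}_{j}$, that $u$ is constant on each $\mathcal{S}_{j}$ with value some $T_{l}$, and that $\hat{\delta}$ was chosen so that (\ref{delta 1}), (\ref{delta 2}), (\ref{delta 3}) hold. Since the statement only asks for the existence of a single threshold $k_{\epsilon,\delta}$, the strategy is to produce three separate thresholds, one per item, and then take their maximum.

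For item (1), I would simply invoke Corollary \ref{extinction time}, which gives $\lim_{k\to\infty}T_{ext}^{k}=T_{ext}$; hence there is $k_{1}$ so that $|T_{ext}^{k}-T_{ext}|<\delta$ for $k\geq k_{1}$. For item (2), the natural choice is $U=\Omega_{\dot T}\setminus\overline{\bigsqcup_{j}\mathcal{S}_{j}^{\delta}}$. By (\ref{delta 1}) the sets $\mathcal{S}_{j}^{\delta}$ are pairwise disjoint, and since $\mathcal{S}=\bigsqcup_{j}\mathcal{S}_{j}\subset\Omega_{\dot T}$ (using Proposition \ref{time of smooth existence}), the closure $\overline{U}$ is contained in $\Omega_{0}\setminus\mathcal{S}$; moreover $\overline{U}$ is compact because it sits in $\overline{\Omega}_{\dot T}$. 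Applying Corollary \ref{regular space} to $U$ yields $k_{2}$ such that $\left\{\Sigma_{t}^{k}\right\}$ is regular in $U$ for $k\geq k_{2}$; combined with the fact (noted before Corollary \ref{regular space}) that $\mathcal{S}^{k}\subset\Omega_{\dot T}$ for large $k$, this forces $\mathcal{S}^{k}\subset\bigsqcup_{j}\mathcal{S}_{j}^{\delta}$ for $k$ large. For item (3), fix $l\geq1$; by (\ref{delta 2}) we have $0<T_{l-1}+\delta<T_{l}-\delta$, and by the definition of the singular times the flow $\left\{\Sigma_{t}\right\}$ is regular on $[T_{l-1}+\delta,T_{l}-\delta]$, i.e.\ this is a mean-convex MCF, which by two-convexity (see the discussion following Theorem \ref{structure of singular set}, using \cite{CHN}) is a two-convex MCF. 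Corollary \ref{closedness during regular times} then gives that $\left\{\Sigma_{t}^{k}\right\}_{T_{l-1}+\delta\leq t\leq T_{l}-\delta}$ is a MCF for $k$ large and converges smoothly to $\left\{\Sigma_{t}\right\}_{T_{l-1}+\delta\leq t\leq T_{l}-\delta}$; smooth convergence implies $C^{\dot m}$ closeness within $\epsilon$ once $k$ exceeds some $k_{3,l}$, and two-convexity of the limit MCF is inherited by $\left\{\Sigma_{t}^{k}\right\}$ for large $k$ by continuity of the principal curvatures under $C^{2}$ convergence (or, alternatively, by Proposition \ref{two-convexity of approximations}). Since there are only finitely many indices $l$, set $k_{3}=\max_{l}k_{3,l}$.

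Finally I would take $k_{\epsilon,\delta}=\max\{k_{1},k_{2},k_{3}\}$ and observe that for $k\geq k_{\epsilon,\delta}$ all three conclusions hold simultaneously. The one point that requires a little care rather than a bare citation is the promotion of smooth convergence in item (3) to uniform $C^{\dot m}$ closeness with the specified exponent $\dot m=\dot m(n,\lambda)$ from Section \ref{cylindrical point}: since $\dot m$ is a fixed finite integer, $C^{\infty}$ convergence on the compact time-interval trivially yields $C^{\dot m}$ convergence, so this is routine but worth a sentence. I do not anticipate a genuine obstacle here — the proposition is essentially a bookkeeping consolidation of Corollaries \ref{extinction time}, \ref{regular space}, and \ref{closedness during regular times}, with the neighborhoods arranged so that the disjointness condition (\ref{delta 1}) and the time-separation condition (\ref{delta 2}) make the sets $U$ and the intervals $[T_{l-1}+\delta,T_{l}-\delta]$ land in the regular region; the mildest subtlety is simply checking that the chosen $U$ has compact closure strictly inside $\Omega_{0}\setminus\mathcal{S}$, which follows from $\mathcal{S}\subset\Omega_{\dot T}$ and $\delta<\hat\delta$.
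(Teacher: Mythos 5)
Your proposal is correct and follows the paper's own route exactly: the paper presents this proposition as a direct consolidation of Corollary \ref{extinction time}, Corollary \ref{regular space}, and Corollary \ref{closedness during regular times} (together with Proposition \ref{two-convexity of approximations} and the choices (\ref{delta 1})--(\ref{delta 2})), which is precisely your three-threshold argument. The only cosmetic point is that regularity in $U=\Omega_{\dot{T}}\setminus\overline{\bigsqcup_{j}\mathcal{S}_{j}^{\delta}}$ literally yields $\mathcal{S}^{k}\subset\overline{\bigsqcup_{j}\mathcal{S}_{j}^{\delta}}$ rather than the open neighborhood; running the same argument with $\delta/2$ in place of $\delta$ gives the stated containment.
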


We will make good use of the above proposition in Section \ref{vanishing type},
Section \ref{splitting type}, and Section \ref{bumpy type} with
$\epsilon$ and $\delta$ chosen sufficiently small subject to the
asymptotic behavior of the flow $\left\{ \Sigma_{t}\right\} $ near
each of its singular component. Throughout this section we assume
that 
\begin{assumption}
\label{local singular times hypothesis}When $k$ is sufficiently
large, the flow $\left\{ \Sigma_{t}^{k}\right\} $ has at most one
singular time in $\mathcal{S}_{j}^{\hat{\delta}}$ for every $j$;
in particular, Theorem \ref{structure of singular set} and Definition
\ref{singularity types} are applicable to $\left\{ \Sigma_{t}^{k}\right\} $
as well.
\end{assumption}

Two supplementary assumptions will be made for the splitting case
(see Assumption \ref{not getting into}) in Section \ref{splitting type}
and the bumpy case (see Assumption \ref{existence of singularities near bumpy})
in Section \ref{bumpy type}.
\begin{rem}
\label{assuming first singular time}On account of (\ref{delta 3}),
each singular component $\mathcal{S}_{j}$ of $\left\{ \Sigma_{t}\right\} $
is indeed the singular set of the flow
\begin{equation}
\left\{ \Sigma_{t}\cap\Omega_{T_{l-1}}^{\left(i\right)}\right\} _{t>T_{l-1}}\label{component of flow}
\end{equation}
at its ``first'' singular time (see also the exposition at the beginning
of Section \ref{singular set at subsequent singular times}). \uline{Thus,
throughout Section \mbox{\ref{types of singular components}}, upon
replacing the flow by some of its connected component (e.g., (\mbox{\ref{component of flow}}))
if necessary, we may assume without loss of generality that every
singular component under discussion occurs at the first singular time.}
\end{rem}

\subsection{Vanishing type\label{vanishing type}}

In this subsection we shall give a criteria (see Proposition \ref{global maximum points })
to distinguish between the vanishing type and the splitting/bumpy
type. Then we will prove the ``stability'' of singular components
of the vanishing type in Proposition \ref{stability of vanishing type}.

Let us begin by recalling that a singular component $\mathcal{S}_{j}$
of $\left\{ \Sigma_{t}\right\} $ is of the vanishing type if and
only if it comprises local maximum points of $u$ (see Definition
\ref{singularity types}). In the following proposition, we show that
the flow $\left\{ \Sigma_{t}\right\} $ would shrink to $\mathcal{S}_{j}$
at time $T_{1}=u\left(\mathcal{S}_{j}\right)$ (see Remark \ref{assuming first singular time})
and then vanish completely. A typical example is when $\mathcal{S}_{j}$
is a single round point (see Corollary \ref{isolated round point}). 
\begin{prop}
\label{global maximum points }A singular component $\mathcal{S}_{j}$
of $\left\{ \Sigma_{t}\right\} $ is of the vanishing type if and
only if $T_{1}=T_{ext}$ (in the setting of Remark \ref{assuming first singular time});
in that case, $\mathcal{S}_{j}=\mathcal{S}$ (that is, $\mathcal{S}$
has only one component and hence is connected). 

In other words, a singular component $\mathcal{S}_{j}$ of $\left\{ \Sigma_{t}\right\} $
is of the splitting/bumpy type if and only if $T_{1}<T_{ext}$.
\end{prop}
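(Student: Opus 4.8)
The plan is to prove the biconditional $\mathcal{S}_j$ is of vanishing type $\iff$ $T_1 = T_{ext}$ (under the normalization of Remark \ref{assuming first singular time} that $u(\mathcal{S}_j) = T_1$, the first singular time of the flow $\{\Sigma_t\}$ being considered), and then to extract the consequence that $\mathcal{S}_j = \mathcal{S}$ in that case. I would treat the two implications separately and reduce everything to the dichotomy between local-maximum and saddle behavior of $u$ at the singular points.

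First I would prove the ``if'' direction: assume $T_1 = T_{ext}$. Then $T_1 = \max_{\Omega_0} u$, so every point of $\Sigma_{T_1} = \{u = T_1\}$ is a global maximum point of $u$; in particular every singular point in $\mathcal{S}\cap\Sigma_{T_1}$ is a local maximum point of $u$. By Corollary \ref{isolated round point}, if any point of $\mathcal{S}$ were a round point it would be the unique singular point and a global max, which is the vanishing type trivially; otherwise $\mathcal{S}\cap\Sigma_{T_1}$ consists of cylindrical points. By Corollary \ref{saddle criterion}, a cylindrical point that is a local maximum point of $u$ is \emph{not} a saddle point, hence (again by Corollary \ref{saddle criterion}, which says each cylindrical point is either a local max or a saddle) is a local maximum point. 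By Definition \ref{singularity types}, a singular component all of whose points are local maxima of $u$ (single point, or $C^1$ curve with local-max endpoints, or closed $C^1$ curve) is precisely of the vanishing type. So every singular component, and in particular $\mathcal{S}_j$, is of the vanishing type. Moreover, since $T_1 = T_{ext}$, \emph{all} singular points occur at time $T_1$, so $\mathcal{S} = \mathcal{S}\cap\Sigma_{T_1}$; and if $\mathcal{S}$ had two distinct components $\mathcal{S}_j, \mathcal{S}_{j'}$ with $u \equiv T_{ext}$ on both, I would need to rule this out — this is where I expect the only real subtlety. By Proposition \ref{structure of singular set at first singular time} the set $\mathcal{S}\cap\Sigma_{T_1}$ is a finite disjoint union of points and/or compact $C^1$ curves; a vanishing-type component, being a set of local maxima at the \emph{maximal} value $T_{ext} = \max u$, means the superlevel set $\Omega_{T_1} = \{u > T_1\}$ is empty. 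But one shows that each component of $\mathcal{S}$ that is of vanishing type has the flow $\Sigma_t$ (restricted to a neighborhood of that component) shrinking to it and then vanishing — which is exactly the content of Proposition \ref{global maximum points } being invoked, so I should be careful not to circularly cite it. Instead I would argue directly: if $T_1 = T_{ext}$ then $\Omega_{T_1} = \{u > T_{ext}\} = \emptyset$, so by Proposition \ref{domain after first singular time} (or rather the contrapositive of its hypothesis — $T_1$ is not an isolated singular time with a later one) there is no ``second'' stage; since $\{\Sigma_t\}$ for $t < T_1$ is a MCF of a \emph{connected} closed hypersurface (we have normalized to the first singular time), $\Sigma_t$ is connected for $t$ close to $T_1$, and by the structure of the singular set together with upper semicontinuity of Gaussian density / the shrinking behavior near each singular point, the whole connected $\Sigma_t$ must shrink into a single connected piece of $\mathcal{S}$. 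Concretely: if $p, q \in \mathcal{S}$ were in distinct components, pick $t < T_1$ close to $T_1$; $\Sigma_t$ is connected, hence there is a path in $\Sigma_t$ from a point near $p$ to a point near $q$, and $u = t < T_1$ on this path while $u \to T_1$ at the endpoints — running $t \nearrow T_1$ this forces, by a continuity/compactness argument, a point $x^*$ with $u(x^*) = T_1$ joining the two neighborhoods, contradicting that $p$'s component is isolated (open-and-closed) in $\mathcal{S}\cap\Sigma_{T_1}$. This gives $\mathcal{S}_j = \mathcal{S}$.

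Next the ``only if'' direction, equivalently the contrapositive: assume $T_1 < T_{ext}$; I must show $\mathcal{S}_j$ is \emph{not} of vanishing type, i.e., $\mathcal{S}_j$ is of splitting or bumpy type, i.e., $\mathcal{S}_j$ contains at least one saddle point of $u$. Since $T_1 < T_{ext} = \max u$, the superlevel set $\Omega_{T_1} = \{u > T_1\}$ is nonempty; by Proposition \ref{domain after first singular time} it is a finite disjoint union of open connected sets $\Omega_{T_1}^{(1)}, \dots, \Omega_{T_1}^{(m_1)}$, each nonempty. Now $\mathcal{S}_j \subset \Sigma_{T_1} = \partial \Omega_{T_1}$ in the sense that $\mathcal{S}_j$ lies on the boundary between $\{u < T_1\}$ and $\{u > T_1\}$ — more precisely, $\mathcal{S}_j$ being a component of $\mathcal{S}\cap\Sigma_{T_1}$, and $\Omega_{T_1} \neq \emptyset$, the flow does \emph{not} vanish at $T_1$, so there must be some component $\Omega_{T_1}^{(i)}$ whose closure meets $\mathcal{S}_j$: otherwise $\mathcal{S}_j$ would be an isolated component of $\mathcal{S}\cap\Sigma_{T_1}$ not adjacent to any surviving region, and one checks (using that $\Sigma_t$ for $t < T_1$ is connected and that each of the finitely many surviving regions $\Omega_{T_1}^{(i)}$ has boundary inside $\mathcal{S}\cap\Sigma_{T_1}$) that in fact \emph{every} component of $\mathcal{S}\cap\Sigma_{T_1}$ is adjacent to at least one $\Omega_{T_1}^{(i)}$ — this is the key geometric input and where I'd spend the most care. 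Once $\overline{\Omega_{T_1}^{(i)}} \cap \mathcal{S}_j \neq \emptyset$, pick $p \in \mathcal{S}_j \cap \overline{\Omega_{T_1}^{(i)}}$; then $p$ is a cylindrical point (a round point is impossible here by Corollary \ref{isolated round point}, which would force $T_1 = T_{ext}$) that is approached by points of $\{u > T_1\} = \{u > u(p)\}$, hence by Corollary \ref{saddle criterion} is a saddle point of $u$. Therefore $\mathcal{S}_j$ contains a saddle point, so by Definition \ref{singularity types} (interior points of a curve are always local maxima by Remark \ref{one-sided local max}, so the saddle must be an endpoint, making the component splitting or bumpy) $\mathcal{S}_j$ is of splitting or bumpy type, not vanishing.

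The main obstacle I anticipate is the ``adjacency'' bookkeeping: verifying that when $\Omega_{T_1} \neq \emptyset$ every singular component of $\mathcal{S}\cap\Sigma_{T_1}$ touches the closure of some surviving region $\Omega_{T_1}^{(i)}$ (and dually, for $T_1 = T_{ext}$, that $\mathcal{S}$ cannot split into two vanishing-type pieces). Both reduce to the same topological fact: for $t$ slightly below $T_1$, $\Sigma_t$ is a connected closed hypersurface (we have reduced to the first singular time, where the initial hypersurface of this stage is connected), the sublevel/superlevel decomposition $\Sigma_t = (\Sigma_t \cap \{u < T_1\}^c\text{-side}) \cup \cdots$ degenerates at $T_1$ only along $\mathcal{S}\cap\Sigma_{T_1}$, and connectedness of $\Sigma_t$ propagates: any two ``parts'' of the limit picture are joined through $\mathcal{S}\cap\Sigma_{T_1}$. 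I would make this rigorous by a Mayer--Vietoris / continuity-of-connectedness argument using Theorem \ref{stability theorem}-style local smooth convergence away from $\mathcal{S}$ together with Proposition \ref{domain after first singular time}; the finiteness of singular times and of the number of components makes the combinatorics terminate. Everything else is a direct application of Corollary \ref{isolated round point}, Corollary \ref{saddle criterion}, Definition \ref{singularity types}, and Proposition \ref{domain after first singular time}.
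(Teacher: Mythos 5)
Your ``if'' direction ($T_{1}=T_{ext}\Rightarrow$ vanishing type) is correct and matches the paper. The problem is the other direction. You correctly identify the crux --- that when $\Omega_{T_{1}}\neq\emptyset$ every component of $\mathcal{S}\cap\Sigma_{T_{1}}$ must meet $\overline{\Omega_{T_{1}}}$ (equivalently: a vanishing-type component forces $T_{1}=T_{ext}$) --- but you never actually prove it; ``a Mayer--Vietoris / continuity-of-connectedness argument'' is a placeholder for the entire content of the proposition. Moreover the one concrete ingredient you offer for it, that each surviving region $\Omega_{T_{1}}^{(i)}$ ``has boundary inside $\mathcal{S}\cap\Sigma_{T_{1}}$,'' is false: $\partial\Omega_{T_{1}}^{(i)}\subset\{u=T_{1}\}$, but away from the saddle points this boundary consists of \emph{regular} points of the flow, so it is not contained in $\mathcal{S}$.

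The missing argument is a trapping argument, and it is short. Let $\mathcal{S}_{j}$ be of vanishing type. Since $\mathcal{S}_{j}$ is compact and consists of local maxima, choose $\delta>0$ so that $\mathcal{S}_{j}^{\delta}\subset\subset\Omega_{0}$, $\mathcal{S}_{j}^{\delta}$ contains no singular points other than $\mathcal{S}_{j}$, and $u\leq T_{1}$ on $\mathcal{S}_{j}^{\delta}$. Then $u<T_{1}$ strictly on $\overline{\mathcal{S}_{j}^{\delta/2}}\setminus\mathcal{S}_{j}$ (a point there with $u=T_{1}$ would be an interior local maximum, hence a critical point, hence a singular point --- contradiction), so $\tau:=\max_{\partial\mathcal{S}_{j}^{\delta/2}}u<T_{1}$. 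For $t\in(\tau,T_{1})$, which is a regular value, $\Sigma_{t}\cap\mathcal{S}_{j}^{\delta/2}$ is a nonempty closed hypersurface disjoint from $\partial\mathcal{S}_{j}^{\delta/2}$; since $\Sigma_{t}$ is \emph{connected} (we are at the first singular time of a connected flow), this forces $\Sigma_{t}\subset\mathcal{S}_{j}^{\delta/2}$, hence $\Omega_{t}\subset\mathcal{S}_{j}^{\delta/2}$, hence $T_{ext}=\max_{\Omega_{t}}u\leq\max_{\mathcal{S}_{j}^{\delta/2}}u=T_{1}$. This also gives $\mathcal{S}=\mathcal{S}\cap\Omega_{t}\subset\mathcal{S}\cap\mathcal{S}_{j}^{\delta}=\mathcal{S}_{j}$ at once, which is cleaner and more robust than your Hausdorff-limit-of-paths argument for $\mathcal{S}_{j}=\mathcal{S}$ (where the convergence of the connecting paths and the location of their limit endpoints would still need justification). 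With this lemma in hand, your contrapositive formulation of the ``only if'' direction, and the deduction via Corollary \ref{saddle criterion} that an adjacent point of $\overline{\Omega_{T_{1}}}$ on $\mathcal{S}_{j}$ is a saddle, go through as you wrote them --- but they are then redundant, since the trapping argument already proves the implication directly.
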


\begin{proof}
Let $\mathcal{S}_{j}$ be a singular component of the vanishing type.
By Definition \ref{singularity types}, $\mathcal{S}_{j}$ consists
of local maximum points of $u$ and is compact, so there exists $\delta>0$
sufficiently small such that 
\begin{enumerate}
\item the $\delta$-neighborhood $\mathcal{S}_{j}^{\delta}$ of $\mathcal{S}_{j}$
is strictly contained in $\Omega_{0}$; 
\item in $\mathcal{S}_{j}^{\delta}$ there are no other singular points
of the flow $\left\{ \Sigma_{t}\right\} $ than $\mathcal{S}_{j}$;
\item $u\leq T_{1}$ in $\mathcal{S}_{j}^{\delta}$.
\end{enumerate}
Then the value of $u$ at every point in $\mathcal{\bar{S}}_{j}^{\delta/2}\setminus\mathcal{S}_{j}$
is strictly less than $T_{1}$; otherwise, that point would be a local
maximum point of $u$ and hence a singular point of $\left\{ \Sigma_{t}\right\} $,
contradicting the second condition in the above. In particular, we
have 
\[
\tau\coloneqq\max_{\partial\mathcal{S}_{j}^{\delta/2}}\,u\,<\,T_{1}.
\]
Note that for every $t\in\left(\tau,T_{1}\right)$, $\Sigma_{t}\cap\mathcal{S}_{j}^{\delta/2}$
is a nonempty (by the intermediate value theorem) closed hypersurface
since $t\in\left(0,T_{1}\right)$ is a regular value of $u$ and 
\[
\Sigma_{t}\,\cap\,\partial\mathcal{S}_{j}^{\delta/2}=\emptyset.
\]
As $\Sigma_{t}$ is connected, we infer that $\Sigma_{t}\subset\mathcal{S}_{j}^{\delta/2}$.
It follows that $\Omega_{t}\subset\mathcal{S}_{j}^{\delta/2}$ and
hence 
\[
T_{ext}=\max_{\Omega_{t}}\,u\,\leq\,\max_{\mathcal{S}_{j}^{\delta/2}}\,u=T_{1}.
\]
Thus, $T_{1}=T_{ext}$. Additionally, since 
\[
\mathcal{S}=\mathcal{S}\cap\Omega_{t}\,\subset\,\mathcal{S}\cap\mathcal{S}_{j}^{\delta}=\mathcal{S}_{j},
\]
we conclude that $\mathcal{S}_{j}=\mathcal{S}$.

Conversely, if $T_{1}=T_{ext}$, then every component of $\mathcal{S}$
is obviously of the vanishing type since $\mathcal{S}$ consists of
global maximum points of $u$. Let $\mathcal{S}_{j}$ be one of the
components of $\mathcal{S}$. Then by the above argument we obtain
$\mathcal{S}_{j}=\mathcal{S}$.
\end{proof}
The following result is based on Proposition \ref{pre-stability}
and Proposition \ref{global maximum points }.
\begin{prop}
\label{stability of vanishing type}If $\mathcal{S}_{j}$ is a singular
component of $\left\{ \Sigma_{t}\right\} $ that belongs to the vanishing
type, then for every sufficiently large $k$, in $\mathcal{S}_{j}^{\hat{\delta}}$
there is precisely one singular component of $\left\{ \Sigma_{t}^{k}\right\} $,
which is of the vanishing type.
\end{prop}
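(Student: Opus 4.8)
The plan is to combine Theorem \ref{structure of singular set} (applied to $\{\Sigma_t^k\}$ via Assumption \ref{local singular times hypothesis}), the criterion of Proposition \ref{global maximum points }, and the quantitative convergence in Proposition \ref{pre-stability}. First, by Remark \ref{assuming first singular time} we may assume $\mathcal{S}_j$ occurs at the first singular time $T_1$, and since $\mathcal{S}_j$ is of the vanishing type, Proposition \ref{global maximum points } gives $T_1 = T_{ext}$ and $\mathcal{S}_j = \mathcal{S}$. So the flow is regular on $(0,T_1)$ and shrinks to $\mathcal{S}_j$ at $T_1$. Choose $\delta \in (0,\hat\delta)$ small enough that $\overline{\mathcal{S}_j^{\delta}}$ is strictly contained in $\Omega_0$, contains no singular points other than $\mathcal{S}_j$, and satisfies $u \le T_1$ on $\mathcal{S}_j^{\delta}$; as in the proof of Proposition \ref{global maximum points }, set $\tau := \max_{\partial \mathcal{S}_j^{\delta/2}} u < T_1$, so that $\Sigma_t \subset \mathcal{S}_j^{\delta/2}$ and hence $\Omega_t \subset \mathcal{S}_j^{\delta/2}$ for all $t \in (\tau, T_1)$.

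Next, fix a regular time $t_0 \in (\tau, T_1)$ of $\{\Sigma_t\}$ with $\overline{\Omega_{t_0}} \subset \mathcal{S}_j^{\delta/2}$. Apply Proposition \ref{pre-stability} (with a suitable $\epsilon$ and $\delta$) together with Proposition \ref{C0 compactness}: for $k$ large, $\{\Sigma_t^k\}_{t \le t_0}$ is close in the smooth topology to $\{\Sigma_t\}_{t \le t_0}$ on the region where the latter is regular, $T_{ext}^k$ is close to $T_{ext} = T_1$, and the singular set $\mathcal{S}^k$ is contained in $\bigsqcup_{j'} \mathcal{S}_{j'}^{\delta}$. Since $\mathcal{S}_j = \mathcal{S}$, the first item already tells us that all of $\mathcal{S}^k$ lies in $\mathcal{S}_j^{\delta} \subset \mathcal{S}_j^{\hat\delta}$, so it suffices to analyze $\{\Sigma_t^k\}$ there. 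By the uniform convergence of $u^k \to u$ (Proposition \ref{C0 compactness}), for $k$ large we have $\max_{\partial \mathcal{S}_j^{\delta/2}} u^k < T_1 < T_{ext}^k$, and for a regular value $t$ of $u^k$ in the corresponding interval, $\Sigma_t^k \cap \partial \mathcal{S}_j^{\delta/2} = \emptyset$ while $\Sigma_t^k \cap \mathcal{S}_j^{\delta/2} \ne \emptyset$; since by Proposition \ref{pre-stability}(3) the flow $\{\Sigma_t^k\}$ is a connected MCF close to $\{\Sigma_t\}$ at such times, $\Sigma_t^k$ is connected, whence $\Sigma_t^k \subset \mathcal{S}_j^{\delta/2}$ and $\Omega_t^k \subset \mathcal{S}_j^{\delta/2}$.

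It follows, exactly as in Proposition \ref{global maximum points }, that the first singular time $T_1^k$ of $\{\Sigma_t^k\}$ equals $T_{ext}^k$ and that the singular set $\mathcal{S}^k$ is a single connected component $\mathcal{S}_1^k \subset \mathcal{S}_j^{\delta}$ consisting of global maximum points of $u^k$; by Theorem \ref{structure of singular set} (valid for $\{\Sigma_t^k\}$ under Assumption \ref{local singular times hypothesis}) it is a point or a compact $C^1$ embedded curve, and since all its points are local maxima of $u^k$, Definition \ref{singularity types} puts it in the vanishing type. Thus in $\mathcal{S}_j^{\hat\delta}$ the flow $\{\Sigma_t^k\}$ has exactly one singular component, of the vanishing type. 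The main obstacle is bookkeeping: one must ensure the regular time $t_0$ can be chosen uniformly for all large $k$ so that the smooth closeness from Proposition \ref{pre-stability}(3) genuinely forces $\Sigma_t^k$ to be connected and trapped inside $\mathcal{S}_j^{\delta/2}$ at all relevant times up to $T_{ext}^k$ — this is handled by fixing $\delta$ and the time interval first and then letting $k \to \infty$, using only the $C^0$ and regular-time estimates, which are uniform.
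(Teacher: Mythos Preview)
Your proof is correct and follows essentially the same approach as the paper: after reducing to the first singular time via Remark~\ref{assuming first singular time} and invoking Proposition~\ref{global maximum points } to get $T_1=T_{ext}$ and $\mathcal{S}_j=\mathcal{S}$, you trap $\Omega_t^k$ inside $\mathcal{S}_j^{\hat\delta}$ using Proposition~\ref{pre-stability}, then use Assumption~\ref{local singular times hypothesis} and the converse direction of Proposition~\ref{global maximum points } to conclude. The paper's argument is slightly more streamlined (it works directly with $\mathcal{S}_j^{\hat\delta}$ and the inclusion $\bar{\Omega}_{T_1-\delta}^k\subset\mathcal{S}_j^{\hat\delta}$ from smooth convergence rather than your $C^0$/connectedness trapping), and one small point of phrasing: your sentence ``It follows, exactly as in Proposition~\ref{global maximum points }, that $T_1^k=T_{ext}^k$'' really needs Assumption~\ref{local singular times hypothesis} at that step, not Proposition~\ref{global maximum points } itself --- but you invoke the assumption a line later, so this is only a minor ordering issue.
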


\begin{proof}
By the argument in the proof of Proposition \ref{global maximum points },
there is $\delta>0$ sufficiently small that 
\[
\bar{\Omega}_{T_{1}-\delta}\,\subset\,\mathcal{S}_{j}^{\hat{\delta}}\subset\,\overline{\mathcal{S}_{j}^{\hat{\delta}}}\,\subset\,\Omega_{\delta}.
\]
By Proposition \ref{pre-stability}, when $k$ is large, $\left\{ \Sigma_{t}^{k}\right\} _{\delta\,\leq\,t\,\leq\,T_{1}-\delta}$
is a two-convex MCF of closed connected hypersurfaces so that 
\[
\bar{\Omega}_{T_{1}-\delta}^{k}\,\subset\,\mathcal{S}_{j}^{\hat{\delta}}\,\subset\,\overline{\mathcal{S}_{j}^{\hat{\delta}}}\,\subset\,\Omega_{\delta}^{k}.
\]
In view of Assumption \ref{local singular times hypothesis}, 
\[
T_{ext}^{k}=\,\max_{\bar{\Omega}_{T_{1}-\delta}}\,u^{k}
\]
is the unique singular time of the flow $\left\{ \Sigma_{t}^{k}\right\} $
in $\mathcal{S}_{j}^{\hat{\delta}}$. Then applying Proposition \ref{global maximum points }
to the LSF $\left\{ \Sigma_{t}^{k}\right\} _{t\geq\delta}$ gives
that the singular set of $\left\{ \Sigma_{t}^{k}\right\} $ in $\mathcal{S}_{j}^{\hat{\delta}}$
is connected and of the vanishing type.
\end{proof}

\subsection{Splitting type\label{splitting type}}

In this subsection we will be devoted to show the ``stability''
of singular components of the splitting type (see Proposition \ref{classification of singularities near splitting type }
and Proposition \ref{rule out bumpy type}) under Assumption \ref{not getting into}.
In view of Proposition \ref{global maximum points }, it can be assumed
throughout this (and also the next) subsection that $T_{1}<T_{ext}$.
Additionally, note that a singular component of the splitting/bumpy
type consists of cylindrical points. 

Prior to beginning the subsection, let us give a brief overview as
follows. Let $\mathcal{S}_{j}$ be a singular component of $\left\{ \Sigma_{t}\right\} $
belonging to the splitting type. When $k$ is large, we firstly prove
in Proposition \ref{uniform cylindrical scale} that every cylindrical
point of $\left\{ \Sigma_{t}^{k}\right\} $ near $\mathcal{S}_{j}$
has a uniform (i.e., independent of position and $k$) cylindrical
scale and then in Remark \ref{tube-like hypersurface} and Remark
\ref{tubular region for splitting} show that $\Sigma_{\tau_{j}}^{k}$
locally looks like a ``tube'' around $\mathcal{S}_{j}$ for some
$\tau_{j}<T_{1}=u\left(\mathcal{S}_{j}\right)$. In Proposition \ref{characterization of splitting }
we demonstrate that $\Omega_{t}$ ``splits'' near $\mathcal{S}_{j}$
for $t>T_{1}$, which, under Assumption \ref{not getting into}, yields
that $\left\{ \Sigma_{t}^{k}\right\} $ must have singularities near
$\mathcal{S}_{j}$ (see Lemma \ref{existence of singular time}).
Lastly, in Proposition \ref{classification of singularities near splitting type }
and Proposition \ref{rule out bumpy type}, we conclude that $\left\{ \Sigma_{t}^{k}\right\} $
would have precisely one singular component near $\mathcal{S}_{j}$,
which is of the splitting type. 

Now let us start with the following proposition. Note that the ``cylindrical
scale'' therein refers to the $\left(\phi\left(n\right),\epsilon\left(n\right)\right)$-cylindrical
scale in Section \ref{cylindrical point} (see the exposition between
Remark \ref{one-sided isolated local max} and Remark \ref{one-sided local max}).
\begin{prop}
\label{uniform cylindrical scale}Let $\mathcal{S}_{j}$ be a singular
component of $\left\{ \Sigma_{t}\right\} $ consisting of cylindrical
points. Then there exists $0<r_{j}<\hat{\delta}$ with the following
properties:
\begin{itemize}
\item Every cylindrical point of $\left\{ \Sigma_{t}\right\} $ on $\mathcal{S}_{j}$
has a uniform cylindrical scale $r_{j}$.
\item When $k$ is large, every cylindrical point $q$ of $\left\{ \Sigma_{t}^{k}\right\} $
in $\mathcal{S}_{j}^{\hat{\delta}}$, if any, has a cylindrical scale
$r_{j}$; moreover, every other cylindrical points of $\left\{ \Sigma_{t}^{k}\right\} $
in $B_{r_{j}}\left(q\right)$ would be located in a small Lipschitz
graph over the axis of the tangent cylinder of $\left\{ \Sigma_{t}^{k}\right\} $
at $q$. 
\item In the case where $\mathcal{S}_{j}$ is a curve with endpoints, we
have
\begin{equation}
\bar{B}_{3r_{j}}^{+}\left(p\right)\cap\mathcal{S}_{j}=\left\{ p\right\} \label{uniform cylindrical scale: half ball}
\end{equation}
for each endpoint $p$ of $\mathcal{S}_{j}$, where $B_{3r_{j}}^{+}\left(p\right)$
is one of the half balls cut from $B_{3r_{j}}\left(p\right)$ by the
hyperplane $\mathcal{P}$ passing through $p$ and orthogonal to the
tangent cylinder of $\left\{ \Sigma_{t}\right\} $ at $p$, and $\bar{B}_{3r_{j}}^{+}\left(p\right)$
is the closure of $B_{3r_{j}}^{+}\left(p\right)$.
\end{itemize}
\end{prop}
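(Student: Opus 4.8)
The plan is to combine three ingredients: (i) the fixed ``$\left(\phi\left(n\right),\epsilon\left(n\right)\right)$-cylindrical scale'' machinery from Section~\ref{cylindrical point}, which gives a uniform scale for the cylindrical points of $\left\{\Sigma_t\right\}$ lying on a single singular component; (ii) the smooth convergence $\left\{\Sigma_t^k\right\}\to\left\{\Sigma_t\right\}$ away from $\mathcal{S}$ (Theorem~\ref{stability theorem}, Corollary~\ref{regular space}, Proposition~\ref{pre-stability}); and (iii) a compactness / contradiction argument to transfer the uniform scale from the limit flow to the approximating flows $\left\{\Sigma_t^k\right\}$. Throughout I will use Remark~\ref{assuming first singular time} to assume $u\left(\mathcal{S}_j\right)=T_1$.

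\textbf{Step 1: the scale $r_j$ for $\left\{\Sigma_t\right\}$.} By Theorem~\ref{structure of singular set} the singular component $\mathcal{S}_j$ is a point or a compact $C^1$ embedded curve, and by hypothesis it consists of cylindrical points. Recall from Section~\ref{cylindrical point} that every cylindrical point $q$ of $\left\{\Sigma_t\right\}$ admits a $\left(\phi\left(n\right),\epsilon\left(n\right)\right)$-cylindrical scale $r\left(q\right)>0$, and that near $q$ the cylindrical points form a $\theta$-Lipschitz graph over the tangent-cylinder axis. I would first show that a \emph{uniform} cylindrical scale can be chosen along $\mathcal{S}_j$. This follows from compactness of $\mathcal{S}_j$ together with the fact (explained between Remark~\ref{one-sided isolated local max} and Remark~\ref{one-sided local max}, applied to each cylindrical point of $\mathcal{S}_j$) that the scale $r$ produced for the ``neighborhood'' $\bar B_\rho^{n-1}\times[-\rho,\rho]$ of a cylindrical point is $r=\tfrac12\sqrt{2(n-2)(-t_0)}$ with $t_0$ controlled only by $n$, $\lambda$, $\phi$, $\epsilon$, the Lipschitz constant of $u$, and how close to the singular time one is --- all of which are uniform on the compact set $\mathcal{S}_j$ (recall $u$ is constant $=T_1$ on $\mathcal{S}_j$ and $\lambda$ is fixed by Proposition~\ref{entropy}). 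Shrinking, I pick $0<r_j<\hat\delta$ smaller than this common scale and small enough that in the curve case (\ref{uniform cylindrical scale: half ball}) holds: since $\mathcal{S}_j$ is a $C^1$ embedded compact curve, near an endpoint $p$ it is a $C^1$ arc with a well-defined tangent line (equal to the tangent-cylinder axis at $p$), so the half ball $B_{3r_j}^+(p)$ on the ``outward'' side of the orthogonal hyperplane $\mathcal{P}$ meets $\mathcal{S}_j$ only in $p$ once $r_j$ is small; this is where I must be careful, but it is a purely local $C^1$-graph statement.

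\textbf{Step 2: transferring the scale to $\left\{\Sigma_t^k\right\}$.} This is the main obstacle. Suppose, for contradiction, that for a subsequence $k_i\to\infty$ there are cylindrical points $q_{k_i}\in\mathcal{S}_j^{\hat\delta}$ of $\left\{\Sigma_{t}^{k_i}\right\}$ failing to have cylindrical scale $r_j$ (i.e.\ $\tfrac{1}{\sqrt{-t}}\Sigma_t^{k_i}$ is \emph{not} $\epsilon(n)$-close to $\mathcal{C}$ in $B_M$ at the relevant rescaled times, in the sense of Section~\ref{cylindrical point}). By Proposition~\ref{pre-stability}(2) the $q_{k_i}$ lie in $\bigsqcup_j\mathcal{S}_j^{\delta}$, and by (\ref{delta 1}) for $\delta<\hat\delta$ small they lie in $\mathcal{S}_j^{\delta}$; pass to a further subsequence so $q_{k_i}\to q_\infty\in\overline{\mathcal{S}_j^{\delta}}$, and by Proposition~\ref{C1 compactness} $\nabla u^{k_i}(q_{k_i})\to\nabla u(q_\infty)$, while $\nabla u^{k_i}(q_{k_i})=0$ (singular points are critical points, (\ref{critical points})), so $q_\infty\in\mathcal{S}$, hence $q_\infty\in\mathcal{S}_j$ (taking $\delta$ small, using (\ref{delta 1})). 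Now the cylindrical-scale property of $\left\{\Sigma_t\right\}$ at $q_\infty$ is an \emph{open} condition: it asserts $C^{\dot m}$-closeness of a rescaled level set to $\mathcal{C}$ in a fixed ball, strictly better than $\delta$ at the starting rescaled time (Section~\ref{cylindrical point} explicitly allows rechoosing $t_0$ to get $\tfrac\delta2$-closeness in the larger ball $B_{2L}$). By Proposition~\ref{pre-stability}(3), on the time interval $[T_1-\delta', T_1-\delta'']$ (with $\delta''<\delta'$ chosen after $t_0$) the flow $\left\{\Sigma_t^{k_i}\right\}$ is $C^{\dot m}$-close to $\left\{\Sigma_t\right\}$; combined with the spatial convergence $q_{k_i}\to q_\infty$ and the smooth convergence of the flows away from $\mathcal{S}$ near the relevant (pre-singular) times, the rescaled level sets of $\left\{\Sigma_t^{k_i}\right\}$ centered appropriately near $q_{k_i}$ are $\delta$-close to $\mathcal{C}$ in $B_L$ for $i$ large. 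The forward stability statement of Section~\ref{cylindrical point} (Colding--Minicozzi \cite{CM2}) then applies verbatim to $\left\{\Sigma_t^{k_i}\right\}$ and yields that $q_{k_i}$ \emph{does} have cylindrical scale $r_j$, contradicting the choice of $q_{k_i}$. The care needed here: checking that ``centered appropriately'' is legitimate, i.e.\ that the rigid motion/time translation normalizing $q_{k_i}$ can be taken to converge to the one normalizing $q_\infty$ --- this uses convergence of the tangent-cylinder axis, which follows from the $C^{\dot m}$ smooth convergence of the rescaled flows and uniqueness of tangent flows.

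\textbf{Step 3: local Lipschitz-graph structure of the cylindrical set for $\left\{\Sigma_t^k\right\}$.} Once $q_{k}$ (when it exists) has cylindrical scale $r_j$, the results recalled in Section~\ref{cylindrical point} (from \cite{CM2,CM3}) apply directly to the two-convex LSF $\left\{\Sigma_t^k\right\}$ at $q_k$ --- these are intrinsic statements about any two-convex mean-convex level set flow with a cylindrical point of given scale, with constants depending only on $n$ and the (uniformly bounded) entropy $\lambda$ (which $\left\{\Sigma_t^k\right\}$ inherits by Proposition~\ref{entropy}, since $\Sigma_0^k\to\Sigma_0$ in $C^4$ keeps $H$, area, diameter uniformly bounded). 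Thus every cylindrical point of $\left\{\Sigma_t^k\right\}$ in $B_{r_j}(q_k)$ lies on a $\theta$-Lipschitz graph $y=\psi^k(z)$ over the axis of the tangent cylinder of $\left\{\Sigma_t^k\right\}$ at $q_k$, with the direction of that axis $\theta$-close to the $z$-axis of $q_k$'s coordinates. This gives the second bullet. Finally, shrinking $r_j$ once more if necessary so that Assumption~\ref{local singular times hypothesis} is compatible and (\ref{uniform cylindrical scale: half ball}) survives, all three bullets hold simultaneously, completing the proof. The only genuinely nontrivial point is Step~2; Steps~1 and~3 are compactness bookkeeping plus direct citation of the already-recalled Colding--Minicozzi structure theory.
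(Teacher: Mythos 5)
Your proposal is correct and follows essentially the same route as the paper: a compactness argument over $\mathcal{S}_j$ to get a uniform cylindrical scale for $\left\{ \Sigma_{t}\right\} $ (shrinking $r_j$ once more for (\ref{uniform cylindrical scale: half ball})), then transferring the scale to $\left\{ \Sigma_{t}^{k}\right\} $ via the pre-singular-time closeness of Proposition \ref{pre-stability} together with the Colding--Minicozzi forward-stability and Lipschitz-graph statements recalled in Section \ref{cylindrical point}. The only difference is that you phrase the transfer step as a contradiction/subsequence argument while the paper runs it directly with a quantitative $\sigma$; the recentering issue you flag is exactly the point the paper handles by requiring $\bigl(\left|q-p\right|+\sqrt{\left|u^{k}\left(q\right)-T_{1}\right|}\,\bigr)/\sqrt{-t_{0}}$ to be small.
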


\begin{proof}
Firstly, let $\psi,\delta>0$ be small constants (to be determined),
we claim that there is $0<r_{j}<\hat{\delta}$ so that every cylindrical
point of $\left\{ \Sigma_{t}\right\} $ on $\mathcal{S}_{j}$ has
a uniform $\left(\psi,\delta\right)$-cylindrical scale $3r_{j}$.
This is obviously true if $\mathcal{S}_{j}$ is just a single point.
When $\mathcal{S}_{j}$ is a curve, say 
\[
\mathcal{S}_{j}=\left\{ \Gamma\left(s\right)\,:\,s\in\left[0,1\right]\right\} ,
\]
then by Section \ref{cylindrical point}, for every $s\in\left[0,1\right]$
there is an open neighborhood $\mathcal{O}_{s}$ of $\Gamma\left(s\right)$
so that every cylindrical point of $\left\{ \Sigma_{t}\right\} $
in $\mathcal{O}_{s}$ has a uniform $\left(\psi,\delta\right)$-cylindrical
scale $\mathcal{R}_{s}$. By the compactness of $\mathcal{S}_{j}$,
there exist 
\[
0\leq s_{1}<\cdots<s_{m}\leq1
\]
so that 
\[
\mathcal{S}_{j}\,\subset\,\mathcal{O}_{s_{1}}\cup\cdots\cup\mathcal{O}_{s_{m}}.
\]
Then every cylindrical point of $\left\{ \Sigma_{t}\right\} $ on
$\mathcal{S}_{j}$ has a uniform $\left(\psi,\delta\right)$-cylindrical
scale $3r_{j}$, where 
\[
r_{j}\,\coloneqq\,\frac{1}{3}\min\left\{ \mathcal{R}_{s_{1}},\cdots,\mathcal{R}_{s_{m}},\hat{\delta}\right\} ,
\]
proving the claim. Note also that in the case where $\mathcal{S}_{j}$
is a curve with endpoints, (\ref{uniform cylindrical scale: half ball})
holds by choosing $r_{j}$ even smaller.

By Proposition \ref{C0 compactness} and Proposition \ref{pre-stability},
given a small constant $\sigma\in\left(0,1\right)$ (to be determined),
there exists $k_{0}\in\mathbb{N}$ so that for every $k\geq k_{0}$
the following hold:
\begin{itemize}
\item the singular set of $\left\{ \Sigma_{t}^{k}\right\} $ in $\mathcal{S}_{j}^{\hat{\delta}}$
is contained in $\mathcal{S}_{j}^{\sigma r_{j}}$;
\item $\left|T_{1,j}^{k}-T_{1}\right|^{\frac{1}{2}}\leq\sigma r_{j}$, where
$T_{1,j}^{k}$ is the unique singular time of $\left\{ \Sigma_{t}^{k}\right\} $
in $\mathcal{S}_{j}^{\hat{\delta}}$;
\item $\frac{1}{\sqrt{T_{1}-t}}\left(\Sigma_{t}^{k}-p\right)$ is $\sigma$-close
in $C^{\dot{m}}$-topology to $\frac{1}{\sqrt{T_{1}-t}}\left(\Sigma_{t}-p\right)$
in $B_{1/\sigma}$ for every $p\in\mathcal{S}_{j}$ and $t\in\left[T_{1}-\frac{r_{j}^{2}}{n-2},\,T_{1}-\frac{r_{j}^{2}}{8\left(n-2\right)}\right]$.
\end{itemize}
Then by Section \ref{cylindrical point} there are small positive
constants $\psi,\delta,\sigma$ (depending on $n$ and $\lambda$)\footnote{See Proposition \ref{entropy} for the uniform estimate of the entropy
of $\left\{ \Sigma_{t}^{k}\right\} $ when $k$ is large.} so that 
\begin{itemize}
\item every cylindrical point of $\left\{ \Sigma_{t}\right\} $ on $\mathcal{S}_{j}$
has a uniform $\left(\phi\left(n\right),\epsilon\left(n\right)\right)$-cylindrical
scale $r_{j}$;
\item when $k\geq k_{0}$, every cylindrical point of $\left\{ \Sigma_{t}^{k}\right\} $
in $\mathcal{S}_{j}^{\hat{\delta}}$, if any, has a uniform $\left(\phi\left(n\right),\epsilon\left(n\right)\right)$-cylindrical
scale $r_{j}$; moreover, for each cylindrical point $q$ of $\left\{ \Sigma_{t}^{k}\right\} $
in $\mathcal{S}_{j}^{\hat{\delta}}$, every other cylindrical point
of $\left\{ \Sigma_{t}^{k}\right\} $ in $B_{r_{j}}\left(q\right)$
would be located in a small Lipschitz graph over the axis of the tangent
cylinder of $\left\{ \Sigma_{t}^{k}\right\} $ at $q$.
\end{itemize}
\end{proof}
Based on the asymptotically cylindrical behavior of the flow within
the cylindrical scale (see Section \ref{cylindrical point}), we then
have the following remarks. 
\begin{rem}
\label{tube-like hypersurface}Let $\mathcal{S}_{j}$ be a singular
component of $\left\{ \Sigma_{t}\right\} $ consisting of cylindrical
points. By Proposition \ref{uniform cylindrical scale}, every cylindrical
point of $\left\{ \Sigma_{t}\right\} $ on $\mathcal{S}_{j}$ has
a cylindrical $r_{j}$. It follows that for every $p\in\mathcal{S}_{j}$,
$\Sigma_{\tau_{j}}$ is asymptotically cylindrical in $B_{r_{j}}\left(p\right)$,
where 
\[
\tau_{j}\,\coloneqq\,T_{1}-\frac{r_{j}^{2}}{4\left(n-2\right)};
\]
consequently, $\Sigma_{\tau_{j}}\cap\mathcal{S}_{j}^{r_{j}}$ is a
\textbf{tube-like hypersurface} around $\mathcal{S}_{j}$. By Proposition
\ref{pre-stability}, when $k$ is large, $\Sigma_{\tau_{j}}^{k}$
would be close to $\Sigma_{\tau_{j}}$ so that in $\mathcal{S}_{j}^{r_{j}}$
it is also a tube-like hypersurface around $\mathcal{S}_{j}$ (and
also every singular component of $\left\{ \Sigma_{t}^{k}\right\} $
in $\mathcal{S}_{j}^{r_{j}}$, if any). 
\end{rem}

\begin{rem}
\label{tubular region for splitting}Suppose in Remark \ref{tube-like hypersurface}
we assume that 
\[
\mathcal{S}_{j}=\left\{ \Gamma\left(s\right)\,:\,s\in\left[-1,1\right]\right\} 
\]
is a curve of the splitting type, namely, its endpoints are one-sided
saddle points. Since $\mathcal{S}_{j'}^{\hat{\delta}}\cap\mathcal{S}_{j}^{\hat{\delta}}=\emptyset$
for $j'\neq j$ and that $r_{j}<\hat{\delta}$ (see Proposition \ref{uniform cylindrical scale}),
the first singular time $T_{1}$ is indeed the unique singular time
of $\left\{ \Sigma_{t}\right\} $ in $\mathcal{S}_{j}^{r_{j}}$. Following
Remark \ref{flow near saddle}, near the endpoints\footnote{Here $\Gamma\left(1\right)$ and $\Gamma\left(-1\right)$ correspond
to the origin in Remark \ref{flow near saddle}. Note that the orientation
is chosen in such a way that the curve is on the downside $z\leq0$.} we can choose two hyperplanes $\mathcal{P}_{+}$ and $\mathcal{P}_{-}$\footnote{Here $\mathcal{P}_{+}$ and $\mathcal{P}_{-}$ correspond to $z=\mathring{\varepsilon}$
in (\ref{flow near saddle: tubular region}).} that are orthogonal to the tangent cylinders of $\left\{ \Sigma_{t}\right\} $
at $\Gamma\left(1\right)$ and $\Gamma\left(-1\right)$, respectively,
such that
\[
\mathcal{P}_{+}\cap\mathcal{S}_{j}^{r_{j}}\cap\mathcal{S}_{j}=\emptyset,\,\,\,\mathcal{P}_{-}\cap\mathcal{S}_{j}^{r_{j}}\cap\mathcal{S}_{j}=\emptyset
\]
and that the tube-like connected hypersurface $\Sigma_{\tau_{j}}$
together with $\mathcal{P}_{+}$ and $\mathcal{P}_{-}$ bound a closed
\textbf{tubular region} $\mathcal{T}_{j}$ in $\mathcal{S}_{j}^{r_{j}}$
with the following properties:
\begin{enumerate}
\item $\mathcal{S}_{j}\subset\textrm{int}\,\mathcal{T}_{j}$. 
\item Let $\mathcal{\tilde{P}}_{+}$ and $\mathcal{\tilde{P}}_{-}$\footnote{Here $\mathcal{\tilde{P}}_{+}$ and $\mathcal{\tilde{P}}_{-}$ correspond
to $z=0$ in (\ref{flow near saddle: tubular region}).} parallel to $\mathcal{P}_{+}$ and $\mathcal{P}_{-}$, respectively,
so that 
\[
\mathcal{\tilde{P}}_{+}\cap\mathcal{T}_{j}\cap\mathcal{S}_{j}=\left\{ \Gamma\left(1\right)\right\} ,\quad\mathcal{\tilde{P}}_{-}\cap\mathcal{T}_{j}\cap\mathcal{S}_{j}=\left\{ \Gamma\left(-1\right)\right\} .
\]
Note that the tube $\mathcal{T}_{j}$ is cut by $\mathcal{\tilde{P}}_{+}$
and $\mathcal{\tilde{P}}_{-}$ into three closed pieces,\footnote{Here $\mathcal{\hat{T}}_{j}^{+}$ and $\mathcal{\hat{T}}_{j}^{-}$
correspond to $\mathcal{\hat{T}}$ in (\ref{flow near saddle: tubular region}).} 
\[
\mathcal{T}_{j}=\mathcal{\hat{T}}_{j}^{+}\cup\mathcal{T}_{j}^{*}\cup\mathcal{\hat{T}}_{j}^{-},
\]
where $\mathcal{T}_{j}^{*}$ is the truncated tubular closed region
bounded by $\Sigma_{\tau_{j}}$ (from the lateral) and the two caps
$\mathcal{\tilde{P}}_{+}\cap\mathcal{T}_{j}$ and $\mathcal{\tilde{P}}_{-}\cap\mathcal{T}_{j}$
(from the two ends); $\mathcal{\hat{T}}_{j}^{+}$ is the tubular region
bounded by $\Sigma_{\tau_{j}}$ and the caps $\mathcal{P}_{+}\cap\mathcal{T}_{j}$
and $\mathcal{\tilde{P}}_{+}\cap\mathcal{T}_{j}$; and $\mathcal{\hat{T}}_{j}^{-}$
is the tubular region bounded by $\Sigma_{\tau_{j}}$ and the caps
$\mathcal{P}_{-}\cap\mathcal{T}_{j}$ and $\mathcal{\tilde{P}}_{-}\cap\mathcal{T}_{j}$.
Note also that $\mathcal{S}_{j}\subset\mathcal{T}_{j}^{*}$ and that
\[
\Gamma\left(1\right)\,\in\,\mathcal{\hat{T}}_{j}^{+}\,\subset\,\bar{B}_{r_{j}}^{+}\left(\Gamma\left(1\right)\right),
\]
\[
\Gamma\left(-1\right)\,\in\,\mathcal{\hat{T}}_{j}^{-}\,\subset\,\bar{B}_{r_{j}}^{-}\left(\Gamma\left(-1\right)\right),
\]
where $\bar{B}_{r_{j}}^{+}\left(\Gamma\left(1\right)\right)$ and
$\bar{B}_{r_{j}}^{-}\left(\Gamma\left(-1\right)\right)$ are the closures
of the open half-balls\footnote{Here $B_{r_{j}}^{+}\left(\Gamma\left(1\right)\right)$ and $B_{r_{j}}^{-}\left(\Gamma\left(-1\right)\right)$
correspond to $B_{r}\cap\left\{ z>0\right\} $ in Remark \ref{flow near saddle}.} $B_{r_{j}}^{+}\left(\Gamma\left(1\right)\right)$ and $B_{r_{j}}^{-}\left(\Gamma\left(-1\right)\right)$,
respectively, and are defined analogously as in (\ref{uniform cylindrical scale: half ball}). 
\item There exists a time $\mathring{t}_{j}>T_{1}$ so that for every $t\in\left[\tau_{j},\mathring{t}_{j}\right]$,\footnote{See (\ref{flow near saddle: intersection}).}
\[
\Sigma_{t}\cap\mathcal{P}_{+}\cap\mathcal{T}_{j}\neq\emptyset,\quad\Sigma_{t}\cap\mathcal{P}_{-}\cap\mathcal{T}_{j}\neq\emptyset.
\]
\item For every $t\in\left(T_{1},\mathring{t}_{j}\right]$, 
\[
\Sigma_{t}\cap\mathcal{\hat{T}}_{j}^{+}\subset\textrm{int}\,\mathscr{C}_{\phi}^{+},\quad\Sigma_{t}\cap\mathcal{\hat{T}}_{j}^{-}\subset\textrm{int}\,\mathscr{C}_{\phi}^{-},
\]
where $\mathscr{C}_{\phi}^{+}$ and $\mathscr{C}_{\phi}^{-}$ are
the cones\footnote{Here $\mathscr{C}_{\phi}^{+}$ and $\mathscr{C}_{\phi}^{-}$ correspond
to $\mathscr{C}_{\phi}$ in (\ref{flow near saddle: cone}).} defined analogously as in (\ref{cone}) for the endpoints.
\item There hold\footnote{See (\ref{flow near saddle: lower bound in cone}).}
\begin{equation}
\min_{\mathscr{C}_{\phi}^{+}\cap\mathcal{\hat{T}}_{j}^{+}}u\,>\,\tau_{j},\quad\min_{\mathscr{C}_{\phi}^{-}\cap\mathcal{\hat{T}}_{j}^{-}}u\,>\,\tau_{j}.\label{larger time in cone}
\end{equation}
\end{enumerate}
By the asymptotically cylindrical behavior of each point on $\mathcal{S}_{j}$
within the cylindrical scale $r_{j}$ and (\ref{larger time in cone}),
there exists $\tau_{j}'>\tau_{j}$ so that for $t\in\left[\tau_{j},\tau_{j}'\right]$,
$\bar{\Omega}_{t}\cap\mathcal{T}_{j}$ is also a tubular closed connected
region bounded by the tube-like hypersurface $\Sigma_{t}\cap\mathcal{T}_{j}$
and the two caps $\mathcal{P}_{+}\cap\mathcal{T}_{j},\mathcal{P}_{-}\cap\mathcal{T}_{j}$.

Note that even in the case where $\mathcal{S}_{j}$ is a single two-sided
saddle points (which can be regarded as a degenerate curve with one-sided
endpoints, see Definition \ref{singularity types}), the aforementioned
tubular region can be defined analogously.\footnote{In that case, $\mathcal{\tilde{P}}_{+}=\mathcal{\tilde{P}}_{-}$ and
$\mathcal{T}_{j}^{*}=\mathcal{\hat{T}}_{j}^{+}\cap\mathcal{\tilde{P}}_{+}$.} 

Furthermore, by Proposition \ref{C0 compactness}, Proposition \ref{pre-stability},
and Remark \ref{tube-like hypersurface}, when $k$ is large, $\Sigma_{\tau_{j}}^{k}$
and the two hyperplanes $\mathcal{P}_{+}$ and $\mathcal{P}_{-}$
would also bound a closed tubular region $\mathcal{T}_{j}^{k}$ in
$\mathcal{S}_{j}^{r_{j}}$; in addition, for $t\in\left[\tau_{j},\,\frac{1}{2}\left(\tau_{j}+\tau_{j}'\right)\right]$,
$\bar{\Omega}_{t}^{k}\cap\mathcal{T}_{j}^{k}$ is a tubular closed
connected region bounded by $\Sigma_{t}^{k}\cap\mathcal{T}_{j}^{k}$,
$\mathcal{P}_{+}\cap\mathcal{T}_{j}^{k}$, and $\mathcal{P}_{-}\cap\mathcal{T}_{j}^{k}$.
\end{rem}

The following proposition indicates that when $\mathcal{S}_{j}$ is
a singular component of the splitting type, the superlevel set $\Omega_{t}=\left\{ u>t\right\} $
becomes separated in a tubular neighborhood of $\mathcal{S}_{j}$
for $t>T_{1}=u\left(\mathcal{S}_{j}\right)$. 
\begin{prop}
\label{characterization of splitting } Let the notations be as defined
in Remark \ref{tubular region for splitting}. If 
\[
\mathcal{S}_{j}=\left\{ \Gamma\left(s\right)\,:\,s\in\left[-1,1\right]\right\} 
\]
is a singular component of $\left\{ \Sigma_{t}\right\} $ that belongs
to the splitting type, then there exists $\mathring{t}_{j}\in\left(T_{1},T_{2}\right)$
so that for every $t\in\left(T_{1},\mathring{t}_{j}\right]$, 
\[
\bar{\Omega}_{t}\cap\mathcal{T}_{j}\,\,\subset\,\,B_{\frac{3}{2}r_{j}}^{+}\left(\Gamma\left(1\right)\right)\,\cup\,B_{\frac{3}{2}r_{j}}^{-}\left(\Gamma\left(-1\right)\right),
\]
\[
\Sigma_{t}\cap\mathcal{P}_{+}\cap\mathcal{T}_{j}\neq\emptyset,\quad\Sigma_{t}\cap\mathcal{P}_{-}\cap\mathcal{T}_{j}\neq\emptyset.
\]
Note that 
\[
B_{\frac{3}{2}r_{j}}^{+}\left(\Gamma\left(1\right)\right)\,\cap\,B_{\frac{3}{2}r_{j}}^{-}\left(\Gamma\left(-1\right)\right)=\emptyset,
\]
\[
\mathcal{P}_{+}\cap\mathcal{T}_{j}\,\subset\,B_{\frac{3}{2}r_{j}}^{+}\left(\Gamma\left(1\right)\right),\quad\mathcal{P}_{-}\cap\mathcal{T}_{j}\,\subset\,B_{\frac{3}{2}r_{j}}^{-}\left(\Gamma\left(-1\right)\right).
\]
Moreover, the same results hold when $\mathcal{S}_{j}=\left\{ p\right\} $
is a single two-sided saddle point. In that case, set
\[
\Gamma\left(1\right)=\Gamma\left(-1\right)=p
\]
and let $B_{\frac{3}{2}r_{j}}^{+}\left(\Gamma\left(1\right)\right)$
and $B_{\frac{3}{2}r_{j}}^{-}\left(\Gamma\left(-1\right)\right)$
be the upper and lower open half-balls of $B_{\frac{3}{2}r_{j}}\left(p\right)$,
respectively (see Remark \ref{flow near saddle} and Remark \ref{tubular region for splitting}).
\end{prop}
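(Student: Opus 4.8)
The plan is to exploit the asymptotically cylindrical behavior of $\left\{ \Sigma_{t}\right\} $ near each endpoint of $\mathcal{S}_{j}$ (recorded in Remark \ref{flow near saddle} and transcribed to the tubular picture in Remark \ref{tubular region for splitting}), together with the fact that $\mathcal{S}_{j}$ occurs at the first singular time $T_{1}$ (by Remark \ref{assuming first singular time}) and that $T_{1}$ is the unique singular time of $\left\{ \Sigma_{t}\right\} $ in $\mathcal{S}_{j}^{r_{j}}$. First I would take $\mathring{t}_{j}$ to be the time produced by Remark \ref{tubular region for splitting}, which already gives, for every $t\in\left(T_{1},\mathring{t}_{j}\right]$,
\[
\Sigma_{t}\cap\mathcal{\hat{T}}_{j}^{+}\subset\textrm{int}\,\mathscr{C}_{\phi}^{+},\quad\Sigma_{t}\cap\mathcal{\hat{T}}_{j}^{-}\subset\textrm{int}\,\mathscr{C}_{\phi}^{-},
\]
and the nonemptiness of $\Sigma_{t}\cap\mathcal{P}_{\pm}\cap\mathcal{T}_{j}$ on $\left[\tau_{j},\mathring{t}_{j}\right]$; shrinking $\mathring{t}_{j}$ if necessary we keep $\mathring{t}_{j}<T_{2}$.

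The heart of the argument is to show $\bar{\Omega}_{t}\cap\mathcal{T}_{j}^{*}=\emptyset$ for $t\in\left(T_{1},\mathring{t}_{j}\right]$, i.e. that the middle truncated tube $\mathcal{T}_{j}^{*}$ (which contains $\mathcal{S}_{j}$) has been entirely swept out by time $T_{1}$. Here I would use the asymptotically cylindrical behavior of the flow at each point of the $C^{1}$ curve $\mathcal{S}_{j}$ within the uniform cylindrical scale $r_{j}$: for each $p\in\mathcal{S}_{j}$ and each $\phi>0$, the superlevel set $\left\{ u>u\left(p\right)=T_{1}\right\} \cap B_{r_{j}}\left(p\right)$ is contained in the double cone $\mathscr{C}_{\phi}\left(p\right)$ with axis the tangent line of $\mathcal{S}_{j}$ at $p$; combined with Remark \ref{one-sided local max} (all interior points of a splitting curve are local maxima of $u$, so $u\le T_1$ on $\mathcal{T}_j$ near $\mathcal{S}_j$), one deduces $u<T_{1}$ on $\mathcal{T}_{j}^{*}\setminus\mathcal{S}_{j}$ exactly as in Remark \ref{one-sided isolated local max} — any point of $\mathcal{T}_{j}^{*}$ at level $T_1$ other than those on $\mathcal S_j$ would be an interior local max and hence an extra singular point in $\mathcal{S}_{j}^{r_{j}}$, contradicting uniqueness of $T_1$ there. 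Hence for $t>T_{1}$ the tube $\mathcal{T}_{j}^{*}$ lies in $\{u<t\}$ and $\bar{\Omega}_{t}\cap\mathcal{T}_{j}$ is confined to the two end caps $\mathcal{\hat{T}}_{j}^{\pm}$; invoking the inclusions $\mathcal{\hat{T}}_{j}^{+}\subset\bar{B}_{r_{j}}^{+}(\Gamma(1))$ and $\mathcal{\hat{T}}_{j}^{-}\subset\bar{B}_{r_{j}}^{-}(\Gamma(-1))$ from Remark \ref{tubular region for splitting}(2), and enlarging the radius slightly from $r_{j}$ to $\tfrac{3}{2}r_{j}$ to absorb the closure and the thin neighborhood of $\mathcal{S}_{j}$ just beyond $\mathcal{\tilde P}_\pm$, yields
\[
\bar{\Omega}_{t}\cap\mathcal{T}_{j}\,\subset\,B_{\frac{3}{2}r_{j}}^{+}\left(\Gamma\left(1\right)\right)\cup B_{\frac{3}{2}r_{j}}^{-}\left(\Gamma\left(-1\right)\right).
\]

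The ancillary disjointness and containment claims are then routine: $B_{\frac{3}{2}r_{j}}^{+}(\Gamma(1))$ and $B_{\frac{3}{2}r_{j}}^{-}(\Gamma(-1))$ are disjoint because $\mathcal{S}_{j}$ being a curve of positive length (together with $\bar B^+_{3r_j}(\Gamma(1))\cap\mathcal S_j=\{\Gamma(1)\}$, cf. \eqref{uniform cylindrical scale: half ball}) forces the two endpoints to be more than $3r_{j}$ apart, and the half-balls of radius $\tfrac32 r_j$ sit on opposite sides of the well-separated cutting hyperplanes $\mathcal{P}_{\pm}$; the containments $\mathcal{P}_{+}\cap\mathcal{T}_{j}\subset B_{\frac{3}{2}r_{j}}^{+}(\Gamma(1))$ and $\mathcal{P}_{-}\cap\mathcal{T}_{j}\subset B_{\frac{3}{2}r_{j}}^{-}(\Gamma(-1))$ follow since $\mathcal{P}_{\pm}\cap\mathcal{T}_{j}$ are the ``far'' end caps of the tube, of diameter comparable to $r_{j}$, centered near $\Gamma(\pm1)$ on the correct side. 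For the degenerate case $\mathcal{S}_{j}=\{p\}$ a two-sided saddle, the same argument applies verbatim with $\Gamma(1)=\Gamma(-1)=p$, $\mathcal{T}_{j}^{*}=\{p\}$, and $B^{\pm}_{\frac{3}{2}r_j}$ the upper/lower open half-balls of $B_{\frac{3}{2}r_j}(p)$ relative to $\mathcal{P}_\pm$; the two-sidedness of the saddle is exactly what guarantees, via Remark \ref{flow near saddle}, that $\Sigma_t$ meets both $\mathcal{\hat T}_j^{\pm}$ (hence both $\mathcal{P}_\pm\cap\mathcal{T}_j$) for $t$ slightly past $T_1$, and $u<T_1=u(p)$ on $\mathcal{T}_j\setminus\{p\}$ away from the cones follows from asymptotic cylindricity plus absence of other singular times in $\mathcal{S}_j^{r_j}$. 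The main obstacle I anticipate is the bookkeeping needed to pass cleanly from the ``relative to scale $\sqrt{T_1-t}$'' cylindrical estimates to uniform (in $t\in(T_1,\mathring t_j]$) cone containments near each of the two endpoints simultaneously while controlling how the axes of the tangent cylinders — only $\theta$-close to a fixed direction — tilt along the $C^1$ curve; this is precisely what the uniform cylindrical scale $r_j$ of Proposition \ref{uniform cylindrical scale} and the $\theta$-Lipschitz graph structure from Section \ref{cylindrical point} are designed to handle, so the step is conceptually safe but requires care in choosing $\mathring t_j$ small enough (depending on $n$, $\lambda$, and the geometry of $\mathcal{S}_j$) that the cones $\mathscr{C}_\phi^{\pm}$ do in fact contain $\Sigma_t\cap\mathcal{\hat T}_j^{\pm}$ throughout.
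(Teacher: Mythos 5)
Your proposal is correct and follows essentially the same route as the paper's proof: take $\mathring{t}_{j}$ from Remark \ref{tubular region for splitting} (giving the cap nonemptiness and the cone containments $\Sigma_{t}\cap\mathcal{\hat{T}}_{j}^{\pm}\subset\textrm{int}\,\mathscr{C}_{\phi}^{\pm}$), show $\bar{\Omega}_{t}\cap\mathcal{T}_{j}^{*}=\emptyset$ for $t>T_{1}$ via the asymptotically cylindrical behavior along $\mathcal{S}_{j}$ and Remark \ref{one-sided local max}, and then confine $\bar{\Omega}_{t}\cap\mathcal{T}_{j}$ to $\mathcal{\hat{T}}_{j}^{+}\cup\mathcal{\hat{T}}_{j}^{-}\subset\bar{B}_{r_{j}}^{+}\left(\Gamma\left(1\right)\right)\cup\bar{B}_{r_{j}}^{-}\left(\Gamma\left(-1\right)\right)$, with the disjointness coming from the half-ball property (\ref{uniform cylindrical scale: half ball}).
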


\begin{proof}
The proofs for the case where $\mathcal{S}_{j}$ is a single two-sided
saddle point and the case where $\mathcal{S}_{j}$ is a curve with
one-sided saddle endpoints are in the same spirit, so without loss
of generality let us assume that 
\[
\mathcal{S}_{j}=\left\{ \Gamma\left(s\right)\,:\,s\in\left[-1,1\right]\right\} 
\]
is a curve with one-sided saddle endpoints .

By Proposition \ref{uniform cylindrical scale} we have 
\begin{equation}
\bar{B}_{3r_{j}}^{+}\left(\Gamma\left(1\right)\right)\cap\mathcal{S}_{j}\,=\,\left\{ \Gamma\left(1\right)\right\} ,\quad\bar{B}_{3r_{j}}^{-}\left(\Gamma\left(-1\right)\right)\cap\mathcal{S}_{j}\,=\,\left\{ \Gamma\left(-1\right)\right\} ,\label{characterization of splitting: half balls}
\end{equation}
where the half balls are defined as in Remark \ref{tubular region for splitting}.
Now let us apply the results in Remark \ref{tubular region for splitting}.
There exists a time $\mathring{t}_{j}\in\left(T_{1},T_{2}\right)$
so that for every $t\in\left(\tau_{j},\mathring{t}_{j}\right]$, 
\[
\Sigma_{t}\cap\mathcal{P}_{+}\cap\mathcal{T}_{j}\neq\emptyset,\quad\Sigma_{t}\cap\mathcal{P}_{-}\cap\mathcal{T}_{j}\neq\emptyset.
\]
Note that the caps satisfy
\[
\mathcal{P}_{+}\cap\mathcal{T}_{j}\,=\,\mathcal{P}_{+}\cap\mathcal{\hat{T}}_{j}^{+}\,\subset\,\mathcal{P}_{+}\cap\bar{B}_{r_{j}}^{+}\left(\Gamma\left(1\right)\right)\,\subset\,B_{\frac{3}{2}r_{j}}^{+}\left(\Gamma\left(1\right)\right),
\]
\[
\mathcal{P}_{-}\cap\mathcal{T}_{j}\,=\,\mathcal{P}_{-}\cap\mathcal{\hat{T}}_{j}^{-}\,\subset\,\mathcal{P}_{-}\cap\bar{B}_{r_{j}}^{-}\left(\Gamma\left(-1\right)\right)\,\subset\,B_{\frac{3}{2}r_{j}}^{-}\left(\Gamma\left(-1\right)\right).
\]
Moreover, for every $t\in\left(T_{1},\mathring{t}_{j}\right]$, 
\begin{equation}
\bar{\Omega}_{t}\cap\mathcal{\hat{T}}_{j}^{+}\,\subset\,\textrm{int}\,\mathscr{C}_{\phi}^{+}\cap\bar{B}_{r_{j}}^{+}\left(\Gamma\left(1\right)\right)\,\subset\,\textrm{int}\,\mathscr{C}_{\phi}^{+}\cap B_{\frac{3}{2}r_{j}}^{+}\left(\Gamma\left(1\right)\right),\label{characterization of splitting: disjoint}
\end{equation}
\[
\bar{\Omega}_{t}\cap\mathcal{\hat{T}}_{j}^{-}\,\subset\,\textrm{int}\,\mathscr{C}_{\phi}^{-}\cap\bar{B}_{r_{j}}^{-}\left(\Gamma\left(-1\right)\right)\,\subset\,\textrm{int}\,\mathscr{C}_{\phi}^{-}\cap B_{\frac{3}{2}r_{j}}^{-}\left(\Gamma\left(-1\right)\right).
\]
In view of the asymptotically cylindrical behavior of $\left\{ \Sigma_{t}\right\} $
about each point on $\mathcal{S}_{j}$ within the cylindrical scale
$r_{j}$, we deduce that for every $t\in\left(T_{1},\mathring{t}_{j}\right]$,
\[
\bar{\Omega}_{t}\cap\mathcal{T}_{j}^{*}=\emptyset
\]
(see Remark \ref{one-sided local max}) and so 
\[
\bar{\Omega}_{t}\cap\mathcal{T}_{j}\,=\,\left(\bar{\Omega}_{t}\cap\mathcal{\hat{T}}_{j}^{+}\right)\cup\left(\bar{\Omega}_{t}\cap\mathcal{\hat{T}}_{j}^{-}\right);
\]
it follows from (\ref{characterization of splitting: disjoint}) that
\[
\bar{\Omega}_{t}\cap\mathcal{T}_{j}\,=\,\left(\bar{\Omega}_{t}\cap\mathcal{\hat{T}}_{j}^{+}\right)\cup\left(\bar{\Omega}_{t}\cap\mathcal{\hat{T}}_{j}^{-}\right)\,\subset\,\,B_{\frac{3}{2}r_{j}}^{+}\left(\Gamma\left(1\right)\right)\,\cup\,B_{\frac{3}{2}r_{j}}^{-}\left(\Gamma\left(-1\right)\right).
\]
Note that by (\ref{characterization of splitting: half balls}) we
have
\[
B_{\frac{3}{2}r_{j}}^{+}\left(\Gamma\left(1\right)\right)\,\cap\,B_{\frac{3}{2}r_{j}}^{-}\left(\Gamma\left(-1\right)\right)=\emptyset.
\]
\end{proof}
Recall that in the end of Remark \ref{tubular region for splitting},
it is mentioned that when $k$ is large, $\bar{\Omega}_{\tau_{j}}^{k}\cap\mathcal{T}_{j}^{k}$
is a connected solid tube. The following lemma says that if $\bar{\Omega}_{t_{*}}^{k}\cap\mathcal{T}_{j}^{k}$
becomes disconnected for some $t_{*}>\tau_{j}$, then under some conditions
such as (\ref{existence of singular time: boundary condition}), the
flow $\left\{ \Sigma_{t}^{k}\right\} $ would become singular in $\mathcal{T}_{j}^{k}$
at some time $\hat{t}\in\left(\tau_{j},t_{*}\right)$.
\begin{lem}
\label{existence of singular time}Let $k$ be a large integer so
that for $t$ close to $\tau_{j}$, $\Sigma_{t}^{k}\cap\mathcal{S}_{j}^{r_{j}}$
is a tube-like hypersurface around $\mathcal{S}_{j}$ and $\bar{\Omega}_{t}^{k}\cap\mathcal{T}_{j}^{k}$
is a tubular closed connected region (see Remark \ref{tubular region for splitting}).
If there exists $t_{*}>\tau_{j}$ so that the following hold:
\begin{enumerate}
\item $\Sigma_{t_{*}}^{k}\cap\mathcal{T}_{j}^{k}$ has no singular points
of the flow $\left\{ \Sigma_{t}^{k}\right\} .$
\item $\bar{\Omega}_{t_{*}}^{k}\cap\mathcal{T}_{j}^{k}$ is disconnected
in such a way that every connected component of $\bar{\Omega}_{t_{*}}^{k}\cap\mathcal{T}_{j}^{k}$
intersects at most one of the caps (i.e., $\mathcal{P}_{+}\cap\mathcal{T}_{j}^{k}$
and $\mathcal{P}_{-}\cap\mathcal{T}_{j}^{k}$); specifically, there
exists one component that intersects $\mathcal{P}_{+}\cap\mathcal{T}_{j}^{k}$,
and there exists another component that does not intersect $\mathcal{P}_{+}\cap\mathcal{T}_{j}^{k}$.
\item The unit normal vector $\frac{\nabla u^{k}}{\left|\nabla u^{k}\right|}$
satisfies
\begin{equation}
\frac{\nabla u^{k}}{\left|\nabla u^{k}\right|}\cdot\nu_{+}\neq-1\,\,\,\textrm{on}\,\,\,\mathcal{P}_{+}\cap\mathcal{T}_{j}^{k}\cap\left\{ \tau_{j}<u^{k}<t_{*}\right\} ,\label{existence of singular time: boundary condition}
\end{equation}
\[
\frac{\nabla u^{k}}{\left|\nabla u^{k}\right|}\cdot\nu_{-}\neq-1\,\,\,\textrm{on}\,\,\,\mathcal{P}_{-}\cap\mathcal{T}_{j}^{k}\cap\left\{ \tau_{j}<u^{k}<t_{*}\right\} ,
\]
where $\nu_{+}$ and $\nu_{-}$ are the outward (i.e., pointing toward
the outside of $\mathcal{T}_{j}^{k}$) unit normal vector of the caps
$\mathcal{P}_{+}\cap\mathcal{T}_{j}^{k}$ and $\mathcal{P}_{-}\cap\mathcal{T}_{j}^{k}$,
respectively. 
\end{enumerate}
Then there is $\hat{t}\in\left(\tau_{j},t_{*}\right)$ such that $\Sigma_{\hat{t}}^{k}\cap\mathcal{T}_{j}^{k}$
has singular points of $\left\{ \Sigma_{t}^{k}\right\} $. 
\end{lem}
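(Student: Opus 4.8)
The plan is to argue by contradiction, using a method-of-continuity argument in the spirit of the proof of Lemma \ref{local max scale}. Suppose $\Sigma_t^k\cap\mathcal{T}_j^k$ contained no singular point of $\{\Sigma_t^k\}$ for any $t\in(\tau_j,t_*)$. Together with hypothesis (1) and the tube-like behaviour near $\tau_j$, and recalling that the singular points of $\{\Sigma_t^k\}$ are exactly the critical points of $u^k$ (cf.\ (\ref{critical points}) and \cite{CM4}), this means $u^k$ is smooth with $\nabla u^k\neq0$ on $\mathcal{T}_j^k\cap\{u^k\leq t_*\}$; also $u^k\geq\tau_j$ on $\mathcal{T}_j^k$ with equality precisely on the lateral boundary $\Sigma_{\tau_j}^k\cap\mathcal{T}_j^k$ (Remark \ref{tubular region for splitting}). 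Write $W_t:=\bar\Omega_t^k\cap\mathcal{T}_j^k=\{u^k\geq t\}\cap\mathcal{T}_j^k$, a compact set, with $W_{t'}\subset W_t$ for $t\leq t'$. By the choice of $k$, $W_t$ is connected for $t$ close to $\tau_j$ (indeed $W_{\tau_j}=\mathcal{T}_j^k$), while $W_{t_*}$ is disconnected by hypothesis (2). Set $\hat t:=\inf\{\,t\in[\tau_j,t_*]:W_t\text{ is disconnected}\,\}$, so $\hat t\in(\tau_j,t_*]$ and $W_t$ is connected for all $t\in[\tau_j,\hat t)$.

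Next I would observe that $\hat t\neq t_*$. Since $\{u^k\geq\hat t\}=\bigcap_{\tau_j\leq t<\hat t}\{u^k\geq t\}$, we have $W_{\hat t}=\bigcap_{\tau_j\leq t<\hat t}W_t$, a decreasing intersection of connected compact sets, hence connected; were $\hat t$ equal to $t_*$ this would contradict hypothesis (2). So $\hat t\in(\tau_j,t_*)$, and hypothesis (3) applies at the level $\hat t$.

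Finally I would show that $\Sigma_{\hat t}^k\cap\mathcal{T}_j^k$ contains a singular point of $\{\Sigma_t^k\}$; then $\hat t$ is the required time. If it did not, $u^k$ would be smooth with $\nabla u^k\neq0$ on $\Sigma_{\hat t}^k\cap\mathcal{T}_j^k$, a hypersurface-with-boundary disjoint from the lateral boundary (where $u^k=\tau_j<\hat t$) and meeting the caps non-degenerately: hypothesis (3) forbids tangency of the level set to a cap from the $\mathcal{T}_j^k$-side, while the asymptotically cylindrical behaviour of $\{\Sigma_t^k\}$ near the endpoints of $\mathcal{S}_j$ (Remark \ref{tubular region for splitting}) makes $u^k$ on each cap $\mathcal{P}_\pm\cap\mathcal{T}_j^k$ a small perturbation of the radial arrival-time profile of a shrinking cylinder cross-section, which has a single interior maximum and no saddle points. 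Under these conditions one can, by modifying the normalized gradient flow of $u^k$ near the caps so that it stays tangent to $\partial\mathcal{T}_j^k$ — in the same spirit as the map $\mathfrak{I}_t$ in the proof of Lemma \ref{local max scale} — exhibit, for $t$ near $\hat t$, a homeomorphism between $W_t$ and $W_{\hat t}$, up to deleting components that contract to boundary maxima of $u^k|_{\mathcal{T}_j^k}$; in particular no component of $W_t$ splits as $t$ crosses $\hat t$. Since $W_t$ is connected for $t<\hat t$ and $W_{\hat t}$ is connected, $W_t$ would then be connected (or empty) for $t$ slightly greater than $\hat t$ as well, contradicting the definition of $\hat t$. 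Hence $\Sigma_{\hat t}^k\cap\mathcal{T}_j^k$ must contain a critical point of $u^k$, which is the sought singular point. The main obstacle is this last step: carrying out the Morse-theoretic isotopy rigorously on the manifold-with-corners $\mathcal{T}_j^k$, where one must push the gradient-like flow along the artificial caps, and where hypotheses (2) and (3) together with the cylindrical geometry of the ends are exactly what prevent a component of the superlevel set from pinching off at or detaching from a cap.
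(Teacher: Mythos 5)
Your forward-in-time reduction --- defining $\hat t$ as the first time $W_t:=\bar\Omega_t^k\cap\mathcal{T}_j^k$ disconnects and using the nested-intersection argument to get $\hat t<t_*$ --- is sound, but the final step, that no component of $W_t$ can split as $t$ crosses a level at which $\Sigma_t^k\cap\mathcal{T}_j^k$ is regular, is a genuine gap and not merely a routine Morse isotopy left to the reader. Hypothesis (3) only excludes critical points of $u^k|_{\mathcal{P}_\pm\cap\mathcal{T}_j^k}$ at which $\nabla u^k$ points perpendicularly \emph{into} the tube ($\tfrac{\nabla u^k}{|\nabla u^k|}\cdot\nu_\pm=-1$); it says nothing about the case $\tfrac{\nabla u^k}{|\nabla u^k|}\cdot\nu_\pm=+1$. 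At such a point $q$ with $u^k(q)=t_0$ the local model is $u^k\approx t_0+Q(x')-c\,y$ (with $y\geq0$ the inward coordinate, $c>0$, and $Q$ the Hessian of $u^k$ restricted to the cap), so near $q$ the set $W_t$ is a thickening of $\{Q\geq t-t_0\}$ inside the cap; if $Q$ has a saddle, $W_t$ splits locally at the cap as $t$ increases through $t_0$ with no interior critical point of $u^k$ anywhere. Your appeal to the ``radial arrival-time profile'' of the cylinder controls only the unperturbed flow, not $u^k$ on the caps (and the lemma is later invoked with caps such as $\mathcal{\tilde{P}}_{+}^{k}\cap\mathcal{T}_{j}^{k}$ passing through singular points of $\left\{ \Sigma_{t}^{k}\right\} $). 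Note also that your scheme uses only the weak consequence of hypothesis (2) that $W_{t_*}$ is disconnected; the finer structure in (2) --- the existence of a component \emph{not} meeting $\mathcal{P}_+\cap\mathcal{T}_j^k$ --- is precisely what excludes disconnections produced by such boundary saddles (which leave both local pieces attached to the cap), so discarding it makes the last step unprovable from the stated hypotheses.

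The paper's proof sidesteps all of this by running the continuity argument \emph{backward} from $t_*$: it lets $\bar\Omega_t^{k,+}$ be the union of components of $\bar\Omega_t^k\cap\mathcal{T}_j^k$ meeting $\mathcal{P}_+\cap\mathcal{T}_j^k$ and $\bar\Omega_t^{k,-}$ the rest, and takes $\hat t$ to be the infimum of times down to which these remain regular and separated from the opposite caps. Going backward in time components can only merge or appear, never split, so at $\hat t$ only two events need to be excluded: a designated piece reaching the opposite cap, whose first contact would be tangential with $\tfrac{\nabla u^k}{|\nabla u^k|}\cdot\nu_\mp=-1$ and is thus forbidden by (3); and the two pieces meeting at a regular interior point, which is impossible because the superlevel set is locally path-connected near a regular point. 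If you wish to retain a forward-in-time connectedness argument you would need an additional hypothesis controlling all critical points of $u^k$ restricted to the caps; otherwise the backward bookkeeping of the two designated pieces is the way to close the argument.
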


\begin{proof}
First of all, note that the set of regular values of $\left.u^{k}\right|_{\mathcal{T}_{j}^{k}}$,
namely,
\begin{equation}
\left\{ t\,:\,\nabla u^{k}\left(x\right)\neq0\,\,\,\textrm{whenever}\,\,\,x\in\mathcal{T}_{j}^{k}\,\,\,\textrm{with}\,\,\,u\left(x\right)=t\right\} \label{existence of singular time: regular values}
\end{equation}
is open because the set of critical values of $\left.u^{k}\right|_{\mathcal{T}_{j}^{k}}$
is compact (see Lemma \ref{closedness of singular set} or Assumption
\ref{local singular times hypothesis}). Note also that the minimum
of $u^{k}$ on any compact set can only be attained on the boundary
since $u^{k}$ has no interior local minimum points (see Corollary
\ref{isolated round point} and Corollary \ref{saddle criterion}).

Let $\bar{\Omega}_{t}^{k,+}$ be the union connected components of
$\bar{\Omega}_{t}^{k}\cap\mathcal{T}_{j}^{k}$ that intersect $\mathcal{P}_{+}\cap\mathcal{T}_{j}^{k}$
and let $\bar{\Omega}_{t}^{k,-}$ be the union the remaining connected
components of $\bar{\Omega}_{t}^{k}\cap\mathcal{T}_{j}^{k}$ . Define
$\mathfrak{T}$ as the set of time $t\in\left[\tau_{j},t_{*}\right)$
such that
\begin{itemize}
\item $\Sigma_{t}^{k}\cap\mathcal{T}_{j}^{k}$ has no singular points of
$\left\{ \Sigma_{t}^{k}\right\} $;
\item $\bar{\Omega}_{t}^{k,+}\cap\mathcal{P}_{-}\cap\mathcal{T}_{j}^{k}=\emptyset$
and $\bar{\Omega}_{t}^{k,-}\cap\mathcal{P}_{+}\cap\mathcal{T}_{j}^{k}=\emptyset.$
\end{itemize}
Let 
\begin{equation}
\hat{t}=\inf\left\{ \tilde{t}\in\left(\tau_{j},t_{*}\right]\,:\,\left[\tilde{t},t_{*}\right]\subset\mathfrak{T}\right\} .\label{existence of singular time: supremum}
\end{equation}
By the assumptions and (\ref{existence of singular time: regular values}),
it is not hard to see that $\hat{t}\in\left(\tau_{j},t_{*}\right)$.
We claim that $\Sigma_{\hat{t}}^{k}\cap\mathcal{T}_{j}^{k}$ must
have singular points of $\left\{ \Sigma_{t}^{k}\right\} $. 

Suppose the contrary that $\Sigma_{\hat{t}}^{k}\cap\mathcal{T}_{j}^{k}$
has no singular points of $\left\{ \Sigma_{t}^{k}\right\} $. Then
every time $t\in\left[\hat{t},t_{*}\right]$ would be a regular value
of $\left.u^{k}\right|_{\mathcal{T}_{j}^{k}}$; in fact, by (\ref{existence of singular time: regular values})
we can even find $\hat{t}'<\hat{t}$ so that
\begin{equation}
r\coloneqq\min_{\left\{ \hat{t}'\leq u^{k}\leq t_{*}\right\} \cap\mathcal{T}_{j}^{k}}\left|\nabla u^{k}\right|\,>\,0.\label{existence of singular time: scale}
\end{equation}
Note that $\bar{\Omega}_{\hat{t}}^{k}\cap\textrm{int}\,\mathcal{T}_{j}^{k}$
is contained in $\bar{\Omega}^{+}\cup\bar{\Omega}^{-}$, where 
\[
\bar{\Omega}^{+}\coloneqq\overline{\bigcup_{t\in\left(\hat{t},t_{*}\right]}\,\bar{\Omega}_{t}^{k,+}},\quad\bar{\Omega}^{-}\coloneqq\overline{\bigcup_{t\in\left(\hat{t},t_{*}\right]}\,\bar{\Omega}_{t}^{k,-}}.
\]
Moreover, it follows from (\ref{existence of singular time: scale}),
Definition \ref{smooth scale}, Proposition \ref{interior estimate},
Proposition \ref{smooth estimate}, and Corollary \ref{local graph estimate}
that there exists a sequence $t_{i}\searrow\hat{t}$ such that
\begin{equation}
\overline{\partial\bar{\Omega}_{t_{i}}^{k,+}\cap\mathcal{\textrm{int}\,}\mathcal{T}_{j}^{k}}\stackrel{C^{\infty}}{\longrightarrow}\,\Sigma^{+},\quad\overline{\partial\bar{\Omega}_{t_{i}}^{k,-}\cap\mathcal{\textrm{int}\,}\mathcal{T}_{j}^{k}}\stackrel{C^{\infty}}{\longrightarrow}\,\Sigma^{-}\,\,\,\textrm{as}\,\,i\rightarrow\infty,\label{existence of singular time: continuity}
\end{equation}
where $\Sigma^{+}$ and $\Sigma^{-}$ are hypersurfaces contained
in $\Sigma_{\hat{t}}^{k}\cap\mathcal{T}_{j}^{k}$ with boundary on
$\mathcal{P}_{+}\cap\mathcal{T}_{j}^{k}$ and $\mathcal{P}_{-}\cap\mathcal{T}_{j}^{k}$.
We have
\[
\overline{\partial\Omega^{+}\cap\mathcal{\textrm{int}\,}\mathcal{T}_{j}^{k}}\subset\Sigma^{+},\quad\overline{\partial\Omega^{-}\cap\mathcal{\textrm{int}\,}\mathcal{T}_{j}^{k}}\subset\Sigma^{-}.
\]
Furthermore, it is true that 
\begin{equation}
\bar{\Omega}^{+}\cap\mathcal{P}_{-}\cap\mathcal{T}_{j}^{k}=\emptyset,\quad\bar{\Omega}^{-}\cap\mathcal{P}_{+}\cap\mathcal{T}_{j}^{k}=\emptyset.\label{existence of singular time: away from caps}
\end{equation}
Because if one of them were false, say $\bar{\Omega}^{+}\cap\mathcal{P}_{-}\cap\mathcal{T}_{j}^{k}\neq\emptyset$,
then we would have $q\in\bar{\Omega}^{+}\cap\mathcal{P}_{-}\cap\mathcal{T}_{j}^{k}$.
By (\ref{existence of singular time: continuity}) and the condition
that 
\[
\bar{\Omega}_{t_{i}}^{k,+}\cap\mathcal{P}_{-}\cap\mathcal{T}_{j}^{k}=\emptyset\quad\forall\,i,
\]
$\bar{\Omega}^{+}$ would ``touch'' $\mathcal{P}_{-}\cap\mathcal{T}_{j}^{k}$
at $q$ since $\Sigma^{+}$ is on one side\footnote{The inside of the tube $\mathcal{T}_{j}^{k}$.}
of the cap $\mathcal{P}_{-}\cap\mathcal{T}_{j}^{k}$ with $q\in\Sigma^{+}\cap\mathcal{P}_{-}\cap\mathcal{T}_{j}^{k}$,
which yields that 
\[
\frac{\nabla u^{k}\left(q\right)}{\left|\nabla u^{k}\left(q\right)\right|}\cdot\nu_{-}=-1,
\]
which is in contradiction with (\ref{existence of singular time: boundary condition}).

Most importantly, we should have
\[
\liminf_{i\rightarrow\infty}\,\,\textrm{dist}\left(\bar{\Omega}_{t_{i}}^{k,+},\,\bar{\Omega}_{t_{i}}^{k,-}\right)=0;
\]
otherwise, upon passing to a subsequence, there would exist $\varrho>0$
such that
\[
\textrm{dist}\left(\bar{\Omega}_{t_{i}}^{k,+},\,\bar{\Omega}_{t_{i}}^{k,-}\right)\,\geq\,\varrho\quad\forall\,i,
\]
which yields that $\bar{\Omega}^{+}$ and $\bar{\Omega}^{-}$ would
be disjoint, which would lead to a contradiction with the choice of
$\hat{t}$ in view of (\ref{existence of singular time: scale}).
Thus, for each $i$ we can choose $p_{i}^{+}\in\partial\bar{\Omega}_{t_{i}}^{k,+}\cap\mathcal{\textrm{int}\,}\mathcal{T}_{j}^{k}$
and $p_{i}^{-}\in\partial\bar{\Omega}_{t_{i}}^{k,-}\cap\mathcal{\textrm{int}\,}\mathcal{T}_{j}^{k}$
so that 
\[
\liminf_{i\rightarrow\infty}\,\,\left|p_{i}^{+}-p_{i}^{-}\right|=0.
\]
By passing to a subsequence, we may assume that 
\begin{equation}
\lim_{i\rightarrow\infty}p_{i}^{+}=\lim_{i\rightarrow\infty}p_{i}^{-}=p.\label{existence of singular time: approximate sequence}
\end{equation}
By hypothesis $p$ is a regular point of $\left\{ \Sigma_{t}^{k}\right\} $
(since $u^{k}\left(p\right)=\hat{t}$ is a regular value of $\left.u^{k}\right|_{\mathcal{T}_{j}^{k}}$);
additionally, $p$ is neither on the lateral boundary of $\mathcal{T}_{j}^{k}$
(because $u^{k}\left(p\right)=\hat{t}>\tau_{j}$) nor on the caps
(in view of (\ref{existence of singular time: away from caps})).
Thus, we can choose $r>0$ sufficiently small so that $B_{r}\left(p\right)\subset\mathcal{\textrm{int}\,}\mathcal{T}_{j}^{k}$
and that every level set of $u^{k}$ in $B_{r}\left(p\right)$ is
a small Lipschitz graph over $T_{p}\Sigma_{\hat{t}}^{k}$ (see Definition
\ref{smooth scale}). 

By (\ref{existence of singular time: approximate sequence}), when
$i$ is large the points $p_{i}^{+}$ and $p_{i}^{-}$ would be in
$B_{r}\left(p\right)$. Since $B_{r}\left(p\right)\cap\left\{ u^{k}\geq t_{i}\right\} $
is path-connected, there exists a path in $B_{r}\left(p\right)\cap\left\{ u^{k}\geq t_{i}\right\} $
that joining $p_{i}^{+}$ and $p_{i}^{-}$ together. This is in contradiction
to the definitions of $\bar{\Omega}_{t_{i}}^{k,+}$ and $\bar{\Omega}_{t_{i}}^{k,-}$.
Therefore, we conclude that $\left\{ \Sigma_{t}^{k}\right\} $ must
have singular points on $\Sigma_{\hat{t}}^{k}\cap\mathcal{T}_{j}^{k}$. 
\end{proof}
Lemma \ref{existence of singular time} will be used in the proofs
of Proposition \ref{classification of singularities near splitting type }
and Proposition \ref{rule out bumpy type}. In order that the condition
(\ref{existence of singular time: boundary condition}) holds so as
to apply Lemma \ref{existence of singular time}, we make the following
assumption for every splitting component of $\left\{ \Sigma_{t}\right\} $.
\begin{assumption}
\label{not getting into}Whenever $\mathcal{S}_{j}$ is a singular
component of $\left\{ \Sigma_{t}\right\} $ that belongs to the splitting
type, we assume that the flow $\left\{ \Sigma_{t}\right\} $ ``does
not get into $\mathcal{S}_{j}$ near the endpoints'' in the sense
that
\[
\min_{\mathcal{P}_{+}\cap\mathcal{T}_{j}}\,\frac{\nabla u}{\left|\nabla u\right|}\cdot\nu_{+}>-1,\quad\min_{\mathcal{P}_{-}\cap\mathcal{T}_{j}}\,\frac{\nabla u}{\left|\nabla u\right|}\cdot\nu_{-}>-1,
\]
where $\nu_{+}$ and $\nu_{-}$ are the outward (i.e., pointing toward
the outside of $\mathcal{T}_{j}$) unit normal vector of the caps
$\mathcal{P}_{+}\cap\mathcal{T}_{j}$ and $\mathcal{P}_{-}\cap\mathcal{T}_{j}$,
respectively. Note that $\nabla u$ vanishes at no points on the caps
$\mathcal{P}_{+}\cap\mathcal{T}_{j}$ and $\mathcal{P}_{-}\cap\mathcal{T}_{j}$
(see Remark \ref{tubular region for splitting}) and that 
\[
\frac{\nabla u}{\left|\nabla u\right|}\cdot\nu_{\pm}\approx0
\]
on the part of $\mathcal{P}_{\pm}\cap\mathcal{T}_{j}$ that is close
to the lateral boundary $\Sigma_{\tau_{j}}\cap\mathcal{T}_{j}$ due
to the asymptotically cylindrical behavior of the flow $\left\{ \Sigma_{t}\right\} $. 

It follows from Proposition \ref{C1 compactness} that when $k$ is
large, 
\[
\min_{\mathcal{P}_{+}\cap\mathcal{T}_{j}^{k}}\,\frac{\nabla u^{k}}{\left|\nabla u^{k}\right|}\cdot\nu_{+}>-1,\quad\min_{\mathcal{P}_{-}\cap\mathcal{T}_{j}^{k}}\,\frac{\nabla u^{k}}{\left|\nabla u^{k}\right|}\cdot\nu_{-}>-1.
\]
\end{assumption}

We are now in a position to prove the main results of this subsection.
\begin{prop}
\label{classification of singularities near splitting type }Let $\mathcal{S}_{j}$
be a singular component of the flow $\left\{ \Sigma_{t}\right\} $
that belongs to the splitting type. Then for every sufficiently large
$k$, there exist singular components of $\left\{ \Sigma_{t}^{k}\right\} $
in $\mathcal{S}_{j}^{\hat{\delta}}$ and they all belong to the splitting/bumpy
type; moreover, if one of the singular components of $\left\{ \Sigma_{t}^{k}\right\} $
in $\mathcal{S}_{j}^{\hat{\delta}}$ is of the splitting type, then
it would be the unique singular component of $\left\{ \Sigma_{t}^{k}\right\} $
in $\mathcal{S}_{j}^{\hat{\delta}}$.
\end{prop}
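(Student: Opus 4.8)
The plan is to exploit the \textbf{splitting} structure of $\mathcal{S}_{j}$: near a splitting component the superlevel set enters the tube only from \emph{outside} and pinches off all along the component, leaving no room for a second singularity. By Remark~\ref{assuming first singular time} we may assume $\mathcal{S}_{j}$ — and, for $k$ large, every singular component of $\left\{ \Sigma_{t}^{k}\right\}$ in $\mathcal{S}_{j}^{\hat{\delta}}$ — occurs at the first singular time, so that Proposition~\ref{global maximum points } and the structure theory of Section~\ref{cylindrical point} apply; by Assumption~\ref{local singular times hypothesis} there is a single singular time $T_{1,j}^{k}\to T_{1}$ of $\left\{ \Sigma_{t}^{k}\right\}$ in $\mathcal{S}_{j}^{\hat{\delta}}$. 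I will use throughout the uniform cylindrical scale $r_{j}$, the tube-like slices $\Sigma_{\tau_{j}}$, $\Sigma_{\tau_{j}}^{k}$ at time $\tau_{j}=T_{1}-r_{j}^{2}/\bigl(4(n-2)\bigr)$, and the tubular regions $\mathcal{T}_{j}$, $\mathcal{T}_{j}^{k}$ with transverse caps $\mathcal{P}_{\pm}$ furnished by Proposition~\ref{uniform cylindrical scale}, Remark~\ref{tube-like hypersurface}, and Remark~\ref{tubular region for splitting}.

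\textbf{Existence of singularities, and no vanishing component.} Fix a regular time $t_{\ast}\in\left(T_{1},\mathring{t}_{j}\right)$ of $\left\{ \Sigma_{t}\right\}$ close to $T_{1}$. By Proposition~\ref{characterization of splitting } the set $\bar{\Omega}_{t_{\ast}}\cap\mathcal{T}_{j}$ lies in the two disjoint half-balls $B_{\frac{3}{2}r_{j}}^{+}\!\left(\Gamma(1)\right)$, $B_{\frac{3}{2}r_{j}}^{-}\!\left(\Gamma(-1)\right)$ and meets both caps. By Proposition~\ref{pre-stability}(3) and Proposition~\ref{C0 compactness}, for $k$ large $\Sigma_{t_{\ast}}^{k}\cap\mathcal{T}_{j}^{k}$ is regular and $\bar{\Omega}_{t_{\ast}}^{k}\cap\mathcal{T}_{j}^{k}$ is so close to $\bar{\Omega}_{t_{\ast}}\cap\mathcal{T}_{j}$ that it is disconnected, with every component meeting at most one cap, one component meeting $\mathcal{P}_{+}\cap\mathcal{T}_{j}^{k}$ and some component missing it; Assumption~\ref{not getting into} and Proposition~\ref{C1 compactness} supply (\ref{existence of singular time: boundary condition}). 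Lemma~\ref{existence of singular time} then yields $\hat{t}\in\left(\tau_{j},t_{\ast}\right)$ with $\Sigma_{\hat{t}}^{k}\cap\mathcal{T}_{j}^{k}$ singular, so $\left\{ \Sigma_{t}^{k}\right\}$ has a nonempty singular set in $\mathcal{S}_{j}^{\hat{\delta}}\supset\mathcal{T}_{j}^{k}$, occurring at $T_{1,j}^{k}=\hat{t}$. If some component $\mathcal{S}'\subset\mathcal{S}_{j}^{\hat{\delta}}$ of $\mathcal{S}^{k}$ were of the vanishing type, Proposition~\ref{global maximum points } (in the reduced setting) would force the relevant component of the perturbed domain — which contains $\mathcal{T}_{j}^{k}$ by (\ref{delta 3}) — to be extinct just after $T_{1,j}^{k}$, contradicting $\bar{\Omega}_{t_{\ast}}^{k}\cap\mathcal{T}_{j}^{k}\neq\emptyset$ with $t_{\ast}>T_{1}>T_{1,j}^{k}$. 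Since Definition~\ref{singularity types} is exhaustive, all components of $\mathcal{S}^{k}$ in $\mathcal{S}_{j}^{\hat{\delta}}$ are of the splitting/bumpy type.

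\textbf{Uniqueness once one component splits} (the main obstacle). Suppose $\mathcal{S}'\subset\mathcal{S}_{j}^{\hat{\delta}}$ is of the splitting type and, for contradiction, $\mathcal{S}''\subset\mathcal{S}_{j}^{\hat{\delta}}$ is another singular component; both occur at $T_{1,j}^{k}$. By Proposition~\ref{uniform cylindrical scale} all cylindrical points of $\left\{ \Sigma_{t}^{k}\right\}$ in $\mathcal{S}_{j}^{\hat{\delta}}$ lie on Lipschitz graphs over a common axis direction, so after re-coordinatizing $\mathcal{S}'$, $\mathcal{S}''$ are arcs $\left\{ (\psi(z),z):z\in I'\right\}$, $\left\{ (\psi(z),z):z\in I''\right\}$ over disjoint $z$-intervals; taking $\mathcal{S}''$ adjacent to $\mathcal{S}'$ we may assume $\sup I'<\inf I''$ with no singular point having $\sup I'<z<\inf I''$. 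Let $p'_{+}$ (at $z=\sup I'$), $q''$ (at $z=\inf I''$) be the facing endpoints, let $\widetilde{\mathcal{P}}'$, $\widetilde{\mathcal{P}}''$ be the hyperplanes through them orthogonal to the respective tangent-cylinder axes, and let $\Theta$ be the compact tube cut from $\bar{\Omega}_{\tau_{j}}^{k}$ between $\widetilde{\mathcal{P}}'$ and $\widetilde{\mathcal{P}}''$. On its lateral boundary $u^{k}=\tau_{j}<T_{1,j}^{k}$; and since $\widetilde{\mathcal{P}}'\cap\mathscr{C}_{\phi}(p'_{+})=\left\{ p'_{+}\right\}$ while $\left\{ \Sigma_{t}^{k}\right\}$ is asymptotically cylindrical in $B_{r_{j}}(p'_{+})\setminus\mathscr{C}_{\phi}(p'_{+})$, (\ref{saddle criterion: max}) gives $u^{k}\le u^{k}(p'_{+})=T_{1,j}^{k}$ on $\widetilde{\mathcal{P}}'\cap\Theta$, and likewise on $\widetilde{\mathcal{P}}''\cap\Theta$, so $\max_{\partial\Theta}u^{k}\le T_{1,j}^{k}$. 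But because $\mathcal{S}'$ is of \emph{splitting} type, $p'_{+}$ is a one-sided saddle whose superlevel set $\left\{ u^{k}>T_{1,j}^{k}\right\}$ is approached from the $\Theta$-side $z>\sup I'$; hence $\max_{\Theta}u^{k}>T_{1,j}^{k}$, so $u^{k}$ attains an interior maximum on $\Theta$, necessarily at a critical point (a regular point would have $\nabla u^{k}\neq0$), which by (\ref{critical points}) is a singular point of $\left\{ \Sigma_{t}^{k}\right\}$ in $\mathcal{S}_{j}^{\hat{\delta}}$ at a level $>T_{1,j}^{k}$ — contradicting Assumption~\ref{local singular times hypothesis}. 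The degenerate case $\mathcal{S}'=\left\{ p\right\}$ a two-sided saddle is identical. The delicate points to verify carefully are that $\Theta$ is a genuine compact region whose interior is free of singular points (both from Proposition~\ref{uniform cylindrical scale}) and that $\widetilde{\mathcal{P}}'$, $\widetilde{\mathcal{P}}''$ lie inside the cylindrical-scale balls $B_{r_{j}}(p'_{+})$, $B_{r_{j}}(q'')$ so that (\ref{saddle criterion: max}) is applicable there.
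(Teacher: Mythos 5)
Your proposal is correct and follows essentially the same route as the paper: existence via Proposition \ref{characterization of splitting }, Assumption \ref{not getting into}, and Lemma \ref{existence of singular time}; exclusion of vanishing components via Proposition \ref{global maximum points }; and uniqueness by trapping an interior critical point of $u^{k}$ at a level above $T_{1,j}^{k}$ inside a truncated tube whose boundary satisfies $u^{k}\leq T_{1,j}^{k}$ (lateral boundary at level $\tau_{j}$, caps controlled by (\ref{saddle criterion: max})), contradicting Assumption \ref{local singular times hypothesis}. The only cosmetic difference is that in the uniqueness step you place the second cap through the facing endpoint of the adjacent component and invoke the saddle property of the splitting endpoint directly to get $\max_{\Theta}u^{k}>T_{1,j}^{k}$, whereas the paper runs the tube to an arbitrary point of the other component and produces the later-time level set via Remark \ref{flow near saddle}; both yield the same contradiction.
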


\begin{proof}
Let $k$ be a sufficiently large integer. By Proposition \ref{C0 compactness},
Proposition \ref{pre-stability}, Proposition \ref{characterization of splitting },
Assumption \ref{not getting into}, and Lemma \ref{existence of singular time},
there exists 
\[
\hat{t}\in\left(\tau_{j},\,\,\frac{1}{2}\left(T_{1}+\mathring{t}_{j}\right)\right)
\]
so that $\Sigma_{\hat{t}}^{k}\cap\mathcal{T}_{j}^{k}$ has singular
points of $\left\{ \Sigma_{t}^{k}\right\} $, where $\mathring{t}_{j}\in\left(T_{1},T_{2}\right)$
is the time given in Proposition \ref{characterization of splitting }.
It follows from Assumption \ref{local singular times hypothesis}
that 
\[
\hat{t}=T_{1,j}^{k}
\]
is the unique singular time of $\left\{ \Sigma_{t}^{k}\right\} $
in $\mathcal{T}_{j}^{k}\subset\mathcal{S}_{j}^{\hat{\delta}}$. Note
that by Corollary \ref{regular space} and Remark \ref{tubular region for splitting},
all the singular points of $\left\{ \Sigma_{t}^{k}\right\} $ in $\mathcal{S}_{j}^{\hat{\delta}}$
would be contained in $\textrm{int}\,\mathcal{T}_{j}^{k}$.

By Assumption \ref{local singular times hypothesis} and Proposition
\ref{global maximum points } (see also Proposition \ref{pre-stability}),
there are finitely many connected components of the singular set of
$\left\{ \Sigma_{t}^{k}\right\} $ in $\mathcal{T}_{j}^{k}$, each
of which belongs to the splitting or bumpy types (in particular, there
are no round points of $\left\{ \Sigma_{t}^{k}\right\} $ in $\mathcal{T}_{j}^{k}$).

Now assume that there is a singular component $\mathcal{\dot{S}}_{j}^{k}$
of $\left\{ \Sigma_{t}^{k}\right\} $ in $\mathcal{T}_{j}^{k}$ which
belongs to the splitting type. By Proposition \ref{uniform cylindrical scale},
any point $q\in\mathcal{\dot{S}}_{j}^{k}$ has a cylindrical scale
$r_{j}$ and every other cylindrical point of $\left\{ \Sigma_{t}^{k}\right\} $
in $B_{r_{j}}\left(q\right)$ is located in a small Lipschitz graph
over the axis of the tangent cylinder of $\left\{ \Sigma_{t}^{k}\right\} $
at $q$. Applying Remark \ref{tube-like hypersurface} and Remark
\ref{tubular region for splitting} (see also Assumption \ref{local singular times hypothesis})
to the singular component $\mathcal{\dot{S}}_{j}^{k}$ of $\left\{ \Sigma_{t}^{k}\right\} $,
we can find two hyperplanes $\mathcal{\dot{P}}_{+}^{k}$ and $\mathcal{\dot{P}}_{-}^{k}$
near the endpoints\footnote{In the case where $\mathcal{\dot{S}}_{j}^{k}$ is a single point,
$\mathcal{\dot{P}}_{+}^{k}$ and $\mathcal{\dot{P}}_{-}^{k}$ would
two parallel hyperplanes on each ``side'' of the point, see Remark
\ref{flow near saddle} and Remark \ref{tubular region for splitting}.} of $\mathcal{\dot{S}}_{j}^{k}$ that are orthogonal to the tangent
cylinders of $\left\{ \Sigma_{t}^{k}\right\} $ at the endpoints,
respectively, such that
\begin{itemize}
\item there are no singular points of $\left\{ \Sigma_{t}^{k}\right\} $
on $\mathcal{\dot{P}}_{+}^{k}\cap\mathcal{T}_{j}^{k}$ and $\mathcal{\dot{P}}_{-}^{k}\cap\mathcal{T}_{j}^{k}$
(see Proposition \ref{uniform cylindrical scale});
\item $\Sigma_{\tau_{j}}^{k}$ is a tube-like hypersurface around $\mathcal{\dot{S}}_{j}^{k}$
and it together with the two caps, $\mathcal{\dot{P}}_{+}^{k}\cap\mathcal{T}_{j}^{k}$
and $\mathcal{\dot{P}}_{-}^{k}\cap\mathcal{T}_{j}^{k}$, bound a ``truncated''
tubular closed region $\mathcal{\dot{T}}_{j}^{k}\subset\mathcal{T}_{j}^{k}$
satisfying $\mathcal{\dot{S}}_{j}^{k}\subset\textrm{int}\,\mathcal{\dot{T}}_{j}^{k}$;
\item $\mathcal{\dot{S}}_{j}^{k}$ is the unique singular component of $\left\{ \Sigma_{t}^{k}\right\} $
in $\mathcal{\dot{T}}_{j}^{k}$ (see Proposition \ref{uniform cylindrical scale}). 
\end{itemize}
Next, we would like to prove (by contradiction) that $\mathcal{\dot{S}}_{j}^{k}$
is indeed the only singular component of $\left\{ \Sigma_{t}^{k}\right\} $
in $\mathcal{T}_{j}^{k}$ (instead of only in $\mathcal{\dot{T}}_{j}^{k}$).
For convenience, let us assume that the ``orientation'' is chosen
in such a way that $\mathcal{\dot{P}}_{+}^{k}$ is adjacent to $\mathcal{P}_{+}$
and $\mathcal{\dot{P}}_{-}^{k}$ is adjacent to $\mathcal{P}_{-}$.
Thus, the tube $\mathcal{T}_{j}^{k}$ would be cut by $\mathcal{\dot{P}}_{+}^{k}$
and $\mathcal{\dot{P}}_{-}^{k}$ into three closed pieces: 
\[
\mathcal{T}_{j}^{k}\,=\,\mathcal{\hat{T}}_{j}^{k+}\cup\mathcal{\dot{T}}_{j}^{k}\cup\mathcal{\hat{T}}_{j}^{k-},
\]
where $\mathcal{\hat{T}}_{j}^{k+}$ is the tubular closed region bounded
by $\Sigma_{\tau_{j}}^{k}$ (from the lateral) and the two caps $\mathcal{P}_{+}\cap\mathcal{T}_{j}^{k}$
and $\mathcal{\dot{P}}_{+}^{k}\cap\mathcal{T}_{j}^{k}$ (from the
two ends); likewise, $\mathcal{\hat{T}}_{j}^{k-}$ is the closed region
bounded by $\Sigma_{\tau_{j}}^{k}$, $\mathcal{P}_{-}\cap\mathcal{T}_{j}^{k}$,
and $\mathcal{\dot{P}}_{-}^{k}\cap\mathcal{T}_{j}^{k}$.

Suppose the contrary that there exists another singular component
$\mathcal{\ddot{S}}_{j}^{k}$ in $\mathcal{T}_{j}^{k}$. Since $\mathcal{\ddot{S}}_{j}^{k}\cap\mathcal{\dot{T}}_{j}^{k}=\emptyset$,
we may assume without loss of generality that $\mathcal{\ddot{S}}_{j}^{k}\subset\mathcal{\hat{T}}_{j}^{k+}$.
Following Remark \ref{tubular region for splitting}, let $\mathcal{\tilde{P}}_{+}^{k}$
be the hyperplane that passes through the endpoint of $\mathcal{\dot{S}}_{j}^{k}$
(the one that is near the side of $\mathcal{\dot{P}}_{+}^{k}$) and
is parallel to $\mathcal{\dot{P}}_{+}^{k}$ (i.e., orthogonal to the
tangent cylinder of $\left\{ \Sigma_{t}^{k}\right\} $ at the endpoint).
Fix a point $p\in\mathcal{\ddot{S}}_{j}^{k}$ and let $\mathcal{\ddot{P}}$
be the hyperplane passing through $p$ and orthogonal to the tangent
cylinder of $\left\{ \Sigma_{t}^{k}\right\} $ at $p$. Define $\mathcal{\ddot{T}}_{j}^{k}$
as the tubular closed region bounded by $\Sigma_{\tau_{j}}^{k}$ (from
the lateral) and the two caps $\mathcal{\tilde{P}}_{+}^{k}\cap\mathcal{T}_{j}^{k}$
and $\mathcal{\ddot{P}}\cap\mathcal{T}_{j}^{k}$ (from the two ends).
Notice that $u^{k}=\tau_{j}<T_{1,j}^{k}$ on the lateral boundary
of the tube $\mathcal{\ddot{T}}_{j}^{k}$ and that 
\[
\max_{\mathcal{\tilde{P}}_{+}^{k}\cap\mathcal{T}_{j}^{k}}u^{k}=\max_{\mathcal{\ddot{P}}\cap\mathcal{T}_{j}^{k}}u^{k}=T_{1,j}^{k},
\]
in light of the asymptotically cylindrical behavior of $\left\{ \Sigma_{t}^{k}\right\} $
(see (\ref{saddle criterion: max})). 

On the other hand, applying Remark \ref{flow near saddle} to the
saddle endpoint of $\mathcal{\dot{S}}_{j}^{k}$ on $\mathcal{\tilde{P}}_{+}^{k}\cap\mathcal{T}_{j}^{k}$,
we would find $\mathring{t}_{j}^{k}>T_{1,j}^{k}$ so that 
\[
\Sigma_{t}^{k}\cap\mathcal{\ddot{T}}_{j}^{k}\neq\emptyset\quad\forall\,t\in\left(T_{1,j}^{k},\,\mathring{t}_{j}^{k}\right].
\]
Let $\hat{\Sigma}_{\mathring{t}_{j}^{k}}^{k}$ be any connected component
of $\Sigma_{\mathring{t}_{j}^{k}}^{k}\cap\mathcal{\ddot{T}}_{j}^{k}$.
Note that $\hat{\Sigma}_{\mathring{t}_{j}^{k}}^{k}$ is away from
the boundary $\partial\mathcal{\ddot{T}}_{j}^{k}$ since 
\[
\max_{\partial\mathcal{\ddot{T}}_{j}^{k}}u^{k}=T_{1,j}^{k}.
\]
Thus, $\hat{\Sigma}_{\mathring{t}_{j}^{k}}^{k}$ would be a closed
hypersurface in $\mathcal{\ddot{T}}_{j}^{k}$ and hence enclose an
open region $\hat{\Omega}_{\mathring{t}_{j}^{k}}^{k}$ in $\mathcal{\ddot{T}}_{j}^{k}$.
As $u^{k}=\mathring{t}_{j}^{k}$ on $\partial\hat{\Omega}_{\mathring{t}_{j}^{k}}^{k}=\hat{\Sigma}_{\mathring{t}_{j}^{k}}^{k}$,
the maximum value
\[
\tilde{T}_{1,j}^{k}\coloneqq\max_{\overline{\hat{\Omega}_{\mathring{t}_{j}^{k}}^{k}}}\,u^{k}\,\geq\,\mathring{t}_{j}^{k}\,>\,T_{1,j}^{k}
\]
would be attained at some (interior) point in $\hat{\Omega}_{\mathring{t}_{j}^{k}}^{k}$,
which would be a critical point of $u^{k}$. It follows that $\tilde{T}_{1,j}^{k}$
is another singular time of $\left\{ \Sigma_{t}^{k}\right\} $ in
$\mathcal{T}_{j}^{k}\subset\mathcal{S}_{j}^{\hat{\delta}}$, contradicting
Assumption \ref{local singular times hypothesis}. Therefore, there
are no other singular components in $\mathcal{T}_{j}^{k}$.
\end{proof}
One last step to establish the ``stability'' of singular components
of the splitting type is to rule out the existence of the bumpy components
in Proposition \ref{classification of singularities near splitting type }.
\begin{prop}
\label{rule out bumpy type}In fact, in Proposition \ref{classification of singularities near splitting type }
there are no singular components of $\left\{ \Sigma_{t}^{k}\right\} $
that belong to the bumpy type. Therefore, $\left\{ \Sigma_{t}^{k}\right\} $
has precisely one singular component in $\mathcal{S}_{j}^{\hat{\delta}}$,
which is of the splitting type.
\end{prop}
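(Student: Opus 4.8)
The plan is to argue by contradiction: suppose some singular component of $\left\{\Sigma_{t}^{k}\right\}$ in $\mathcal{S}_{j}^{\hat{\delta}}$ is of the bumpy type. By Proposition \ref{classification of singularities near splitting type } we already know that every singular component of $\left\{\Sigma_{t}^{k}\right\}$ in $\mathcal{S}_{j}^{\hat{\delta}}$ is of the splitting or bumpy type, that all of them lie in $\mathrm{int}\,\mathcal{T}_{j}^{k}$, and that if even one is of the splitting type then it is the unique component there. Hence to derive the contradiction it suffices to assume that \emph{all} singular components of $\left\{\Sigma_{t}^{k}\right\}$ in $\mathcal{T}_{j}^{k}$ are of the bumpy type, and to show this forces a contradiction with the ``splitting'' behaviour of the limit flow near $\mathcal{S}_{j}$ recorded in Proposition \ref{characterization of splitting } together with Assumption \ref{not getting into}.

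The main step is to exploit the structure of a bumpy component: by Definition \ref{singularity types}, a bumpy component of $\left\{\Sigma_{t}^{k}\right\}$ is either a single one-sided saddle point or a $C^{1}$ curve with one endpoint a one-sided saddle and the other a local maximum point of $u^{k}$. At the local-maximum end (or, for a single one-sided saddle, on the ``maximum side''), the superlevel set $\left\{u^{k}>T_{1,j}^{k}\right\}$ does \emph{not} reach past the component: there is a cap hyperplane beyond which $u^{k}\le T_{1,j}^{k}$ strictly. So each bumpy component ``blocks'' the superlevel set on one side. First I would set up, exactly as in Remark \ref{tube-like hypersurface} and Remark \ref{tubular region for splitting} applied to the flow $\left\{\Sigma_{t}^{k}\right\}$, the family of cap hyperplanes orthogonal to the tangent cylinders at the endpoints of all these finitely many bumpy components, cutting $\mathcal{T}_{j}^{k}$ into finitely many sub-tubes, each containing at most one singular component. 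On the sub-tubes carrying a bumpy component, the local-max end gives (via (\ref{saddle criterion: max}) and the asymptotically cylindrical behaviour, cf. Remark \ref{one-sided local max}) that $\bar{\Omega}_{t}^{k}$ does not cross that cap for $t>T_{1,j}^{k}$; on the sub-tubes carrying no singular component, $\left\{\Sigma_{t}^{k}\right\}$ is a regular graphical MCF and $\bar{\Omega}_{t}^{k}$ restricted there is a connected solid sub-tube whose only way to change topology is through its caps. Chasing the ``connected through the caps'' relation across the chain of sub-tubes, and using that at the two extreme caps $\mathcal{P}_{+}\cap\mathcal{T}_{j}^{k}$ and $\mathcal{P}_{-}\cap\mathcal{T}_{j}^{k}$ the hypersurface $\Sigma_{t}^{k}$ still meets both caps for $t$ slightly past $T_{1,j}^{k}$ (Proposition \ref{characterization of splitting } passes to $\left\{\Sigma_{t}^{k}\right\}$ by Proposition \ref{C0 compactness} and Proposition \ref{pre-stability}), I would show that for $t\in\left(T_{1,j}^{k},\,\mathring{t}_{j}^{k}\right]$ the set $\bar{\Omega}_{t}^{k}\cap\mathcal{T}_{j}^{k}$ would have to be connected and meet \emph{both} extreme caps — i.e. the tube does not split. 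But by Proposition \ref{characterization of splitting } (which holds for $\left\{\Sigma_{t}^{k}\right\}$ since $\mathcal{S}_{j}$ is splitting and $\left\{\Sigma_{t}^{k}\right\}\to\left\{\Sigma_{t}\right\}$ off $\mathcal{S}$), for $t>T_{1,j}^{k}$ the set $\bar{\Omega}_{t}^{k}\cap\mathcal{T}_{j}^{k}$ is contained in the disjoint union $B_{\frac{3}{2}r_{j}}^{+}(\Gamma(1))\cup B_{\frac{3}{2}r_{j}}^{-}(\Gamma(-1))$, with $\mathcal{P}_{+}\cap\mathcal{T}_{j}^{k}$ in the first piece and $\mathcal{P}_{-}\cap\mathcal{T}_{j}^{k}$ in the second; hence $\bar{\Omega}_{t}^{k}\cap\mathcal{T}_{j}^{k}$ is disconnected with the two caps in different components. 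This is the desired contradiction.

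Alternatively — and this is probably the cleaner route to write — I would invoke Lemma \ref{existence of singular time} a second time. Having ruled out a splitting component, all components in $\mathcal{T}_{j}^{k}$ are bumpy; pick one, say $\mathcal{\dot{S}}_{j}^{k}$, adjacent to the $\mathcal{P}_{+}$ side with its one-sided saddle end facing $\mathcal{P}_{+}$ and its local-max end facing inward. The sub-tube $\mathcal{\dot{T}}_{j}^{k}$ between $\mathcal{P}_{+}$ and the cap $\mathcal{\dot{P}}^{k}$ past the local-max end of $\mathcal{\dot{S}}_{j}^{k}$ then contains exactly $\mathcal{\dot{S}}_{j}^{k}$; on the inner cap $\mathcal{\dot{P}}^{k}\cap\mathcal{T}_{j}^{k}$ we have $u^{k}\le T_{1,j}^{k}$, so $\bar{\Omega}_{t}^{k}$ for $t>T_{1,j}^{k}$ is confined strictly to the $\mathcal{P}_{+}$ side of $\mathcal{\dot{T}}_{j}^{k}$ and in fact into $\mathrm{int}\,\mathscr{C}_{\phi}^{+}$ near the saddle end (Remark \ref{flow near saddle}); this contradicts the statement of Proposition \ref{characterization of splitting } that $\bar{\Omega}_{t}^{k}\cap\mathcal{T}_{j}^{k}$ reaches across to \emph{both} of $B_{\frac{3}{2}r_{j}}^{+}(\Gamma(1))$ and $B_{\frac{3}{2}r_{j}}^{-}(\Gamma(-1))$ with the two extreme caps in separate pieces, since all the mass near $\mathcal{P}_{-}$ would be cut off by the local-max end of the first bumpy component. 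Either way, we conclude no bumpy component exists in $\mathcal{S}_{j}^{\hat{\delta}}$; combined with Proposition \ref{classification of singularities near splitting type } this gives precisely one singular component there, of the splitting type.

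\textbf{Main obstacle.} The delicate point is the bookkeeping of connectedness of $\bar{\Omega}_{t}^{k}\cap\mathcal{T}_{j}^{k}$ across the chain of sub-tubes: one must verify carefully that, on the regular sub-tubes, the solid region $\left\{u^{k}\ge t\right\}$ cannot spontaneously pinch or reconnect except at the caps (this uses that those sub-tubes contain no critical points of $u^{k}$, so the graphical MCF description of Corollary \ref{local graph estimate} and Definition \ref{smooth scale} applies), and that at a local-max end of a bumpy component the superlevel set genuinely stays on one side (this is the content of Remark \ref{one-sided local max} and (\ref{saddle criterion: max}), uniform in $k$ via Proposition \ref{uniform cylindrical scale} and Proposition \ref{pre-stability}). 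Making the ``confined to one side'' assertion quantitative and $k$-uniform, and reconciling it with the fixed cones $\mathscr{C}_{\phi}^{\pm}$ and the fixed radii $\frac{3}{2}r_{j}$ from Proposition \ref{characterization of splitting }, is where the real work lies; the topological contradiction itself is then immediate.
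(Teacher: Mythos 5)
You have assembled the right ingredients (the sub-tube decomposition, Remark \ref{one-sided isolated local max}, Assumption \ref{not getting into}, Lemma \ref{existence of singular time}), but both of your proposed contradictions aim at the wrong time regime, and neither goes through. In route 1, the claim that if all components are bumpy then $\bar{\Omega}_{t}^{k}\cap\mathcal{T}_{j}^{k}$ must be \emph{connected} for $t>T_{1,j}^{k}$ is false: a bumpy component whose local-maximum end faces $\mathcal{P}_{+}$ and whose saddle end faces $\mathcal{P}_{-}$ genuinely disconnects the superlevel set for $t>T_{1,j}^{k}$ (the piece attached to $\mathcal{P}_{+}\cap\mathcal{T}_{j}^{k}$ is separated from the piece in the cone at the saddle end by the cross-section just past the local-maximum endpoint, where $u^{k}<T_{1,j}^{k}$), and on a regular sub-tube there is no reason that the portion of the superlevel set past the singular time is connected. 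So no contradiction with the splitting behavior arises at times $t>T_{1,j}^{k}$. In route 2 you fix the orientation ``saddle end facing $\mathcal{P}_{+}$'' without justification (the opposite orientation is the problematic one), and the asserted contradiction --- confinement of $\bar{\Omega}_{t}^{k}$ to one side of a sub-tube versus ``reaching both caps'' --- is not a contradiction at all: the splitting picture precisely predicts two separated pieces, one attached to each cap, and a bumpy component sitting near an endpoint of $\mathcal{S}_{j}$ with its saddle end toward the nearby cap is compatible with that picture.

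The missing idea is that the contradiction must be manufactured \emph{before} the singular time, against Assumption \ref{local singular times hypothesis}. Take the local-maximum endpoint $p$ of the bumpy component and assume --- this is the genuine without-loss-of-generality step, by the symmetry of the two saddle ends of $\mathcal{S}_{j}$ --- that $p$ is adjacent to $\mathcal{P}_{+}$. By Remark \ref{one-sided isolated local max} there is a cross-section $\mathcal{\mathring{P}}_{+}^{k}\cap\mathcal{T}_{j}^{k}$ just past $p$ on which $\bar{t}\coloneqq\max u^{k}$ is \emph{strictly} less than $T_{1,j}^{k}$. Pick the regular value $t_{*}=\frac{1}{2}\left(\bar{t}+T_{1,j}^{k}\right)<T_{1,j}^{k}$. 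In the sub-tube between $\mathcal{P}_{+}$ and the hyperplane $\mathcal{\tilde{P}}_{+}^{k}$ through $p$, the set $\bar{\Omega}_{t_{*}}^{k}$ meets $\mathcal{P}_{+}\cap\mathcal{T}_{j}^{k}$ (by Proposition \ref{characterization of splitting } transferred via Propositions \ref{C0 compactness} and \ref{pre-stability}) and meets $\mathcal{\tilde{P}}_{+}^{k}\cap\mathcal{T}_{j}^{k}$ (by the asymptotically cylindrical behavior at $p$), yet misses the intermediate cross-section; hence its components separate the two end caps. Lemma \ref{existence of singular time} --- whose boundary hypothesis on $\mathcal{P}_{+}$ is exactly Assumption \ref{not getting into}, and on $\mathcal{\tilde{P}}_{+}^{k}$ follows from the cylindrical behavior --- then produces a singular time in $\left(\tau_{j},t_{*}\right)$, i.e.\ a second singular time in $\mathcal{S}_{j}^{\hat{\delta}}$ strictly before $T_{1,j}^{k}$, contradicting Assumption \ref{local singular times hypothesis}. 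It is the uniqueness-of-the-singular-time hypothesis, not the splitting geometry after $T_{1,j}^{k}$, that does the work here.
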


\begin{proof}
Suppose that there is a singular component $\mathcal{\mathring{S}}_{j}^{k}$
of $\left\{ \Sigma_{t}^{k}\right\} $ in $\mathcal{T}_{j}^{k}$ belonging
to the bumpy type. When $\mathcal{\mathring{S}}_{j}^{k}$ is a curve,
let $p$ be the local maximum endpoint of $\mathcal{\mathring{S}}_{j}^{k}$
(see Definition \ref{singularity types}) and let us assume (without
loss of generality) that $p$ is ``adjacent'' to $\mathcal{P}_{+}$
(so the other saddle endpoint is adjacent to $\mathcal{P}_{-}$).
When $\mathcal{\mathring{S}}_{j}^{k}$ is a single one-sided saddle
point, let $p$ be $\mathcal{\mathring{S}}_{j}^{k}$ itself and let
us assume that $u^{k}$ is a local maximum point ``on the side toward
$\mathcal{P}_{+}$'' (see Definition \ref{one-sided saddle}). 

Recall that $\mathcal{S}_{j}$ is a singular component of $\left\{ \Sigma_{t}\right\} $
that belongs to the splitting type. By Proposition \ref{characterization of splitting }
there exists $\mathring{t}_{j}\in\left(T_{1},T_{2}\right)$ so that
\[
\bar{\Omega}_{t}\cap\mathcal{P}_{+}\cap\mathcal{T}_{j}\,\supset\,\Sigma_{t}\cap\mathcal{P}_{+}\cap\mathcal{T}_{j}\,\neq\,\emptyset\quad\forall\,t\in\left(T_{1},\,\mathring{t}_{j}\right].
\]
By Proposition \ref{C0 compactness} and Proposition \ref{pre-stability},
if $k$ is sufficiently, we may assume that $\mathring{t}_{j}>T_{1,j}^{k}$
and that
\[
\bar{\Omega}_{\frac{1}{2}\left(T_{1,j}^{k}+\mathring{t}_{j}\right)}^{k}\cap\mathcal{P}_{+}\cap\mathcal{T}_{j}^{k}\,\neq\,\emptyset,
\]
where $T_{1,j}^{k}\approx T_{1}$ is the unique singular time of $\left\{ \Sigma_{t}^{k}\right\} $
in $\mathcal{T}_{j}^{k}\subset\mathcal{S}_{j}^{\hat{\delta}}$ (see
Proposition \ref{classification of singularities near splitting type }).
Then applying the intermediate value theorem to $\left.u^{k}\right|_{\mathcal{P}_{+}\cap\mathcal{T}_{j}^{k}}$
implies that
\begin{equation}
\Sigma_{t}^{k}\cap\mathcal{P}_{+}\cap\mathcal{T}_{j}^{k}\neq\emptyset\quad\forall\,t\in\left[\tau_{j},\,\,\frac{1}{2}\left(T_{1,j}^{k}+\mathring{t}_{j}\right)\right].\label{rule out bumpy type: upper intersection}
\end{equation}
In addition, by Assumption \ref{not getting into} there holds 
\begin{equation}
\min_{\mathcal{P}_{+}\cap\mathcal{T}_{j}^{k}}\,\frac{\nabla u^{k}}{\left|\nabla u^{k}\right|}\cdot\nu_{+}>-1\label{rule out bumpy type: upper normal}
\end{equation}
where $\nu_{+}$ is the unit normal vector of $\mathcal{P}_{+}\cap\mathcal{T}_{j}^{k}$
that points toward the outside of the tube $\mathcal{T}_{j}^{k}$.

On the other hand, using Remark \ref{one-sided isolated local max}
and Proposition \ref{uniform cylindrical scale}, we could find two
parallel hyperplanes, $\mathcal{\tilde{P}}_{+}^{k}$ and $\mathcal{\mathring{P}}_{+}^{k}$,
with the following properties:
\begin{itemize}
\item $\mathcal{\tilde{P}}_{+}^{k}$ passes through $p$ and is orthogonal
to the tangent cylinder of $\left\{ \Sigma_{t}^{k}\right\} $ at $p$;
$\mathcal{\mathring{P}}_{+}^{k}\cap\mathcal{T}_{j}^{k}$ is close
to $\mathcal{\tilde{P}}_{+}^{k}\cap\mathcal{T}_{j}^{k}$ and is ``between''
$\mathcal{\tilde{P}}_{+}^{k}\cap\mathcal{T}_{j}^{k}$ and $\mathcal{P}_{+}\cap\mathcal{T}_{j}^{k}$.
\item There are no singular points of $\left\{ \Sigma_{t}^{k}\right\} $
on $\mathcal{\mathring{P}}_{+}^{k}\cap\mathcal{T}_{j}^{k}$ and 
\[
\bar{t}\coloneqq\,\max_{\mathcal{\mathring{P}}_{+}^{k}\cap\mathcal{T}_{j}^{k}}u^{k}\,\in\,\left(\tau_{j},\,u^{k}\left(p\right)\right)
\]
(see (\ref{one-sided isolated local max: level})). Note that $u^{k}\left(p\right)=T_{1,j}^{k}$.
\end{itemize}
In view of the the asymptotically cylindrical behavior of $\left\{ \Sigma_{t}^{k}\right\} $
on $\mathcal{\tilde{P}}_{+}^{k}\cap\mathcal{T}_{j}^{k}\setminus\left\{ p\right\} $,
we infer that 
\begin{equation}
\Sigma_{t}^{k}\cap\mathcal{\tilde{P}}_{+}^{k}\cap\mathcal{T}_{j}^{k}\neq\emptyset\quad\forall\,t\in\left[\tau_{j},\,T_{1,j}^{k}\right)\label{rule out bumpy type: lower intersection}
\end{equation}
and that
\begin{equation}
\frac{\nabla u^{k}}{\left|\nabla u^{k}\right|}\cdot\tilde{\nu}\approx0\quad\textrm{on}\,\,\,\mathcal{\tilde{P}}_{+}^{k}\cap\mathcal{T}_{j}^{k}\cap\left\{ \tau_{j}\leq u^{k}<T_{1,j}^{k}\right\} ,\label{rule out bumpy type: lower normal}
\end{equation}
where $\tilde{\nu}$ is a unit normal vector of $\mathcal{\tilde{P}}_{+}^{k}\cap\mathcal{T}_{j}^{k}$. 

Now define $\mathcal{\mathring{T}}_{j}^{k}$ as the tubular closed
region bounded by $\Sigma_{\tau_{j}}^{k}$ (from the lateral) and
the two caps $\mathcal{P}_{+}\cap\mathcal{T}_{j}^{k}$ and $\mathcal{\tilde{P}}_{+}^{k}\cap\mathcal{T}_{j}^{k}$
(from the two ends). Note that $\mathcal{\mathring{P}}_{+}^{k}\cap\mathcal{T}_{j}^{k}$
would cut $\mathcal{\mathring{T}}_{j}^{k}$ into two pieces. Let 
\[
t_{*}=\frac{1}{2}\left(\bar{t}+T_{1,j}^{k}\right)\,<\,T_{1,j}^{k},
\]
which is a regular value of $\left.u^{k}\right|_{\mathcal{\mathring{T}}_{j}^{k}}$
by Assumption \ref{local singular times hypothesis}. Note that by
(\ref{rule out bumpy type: upper intersection}) and (\ref{rule out bumpy type: lower intersection})
we have
\[
\bar{\Omega}_{t_{*}}^{k}\cap\mathcal{P}_{+}\cap\mathcal{T}_{j}^{k}\neq\emptyset,\quad\bar{\Omega}_{t_{*}}^{k}\cap\mathcal{\tilde{P}}_{+}^{k}\cap\mathcal{\mathring{T}}_{j}^{k}\neq\emptyset.
\]
Moreover, since $u^{k}\leq\bar{t}$ on $\mathcal{\mathring{P}}_{+}^{k}\cap\mathcal{T}_{j}^{k}$,
we have
\[
\bar{\Omega}_{t_{*}}^{k}\cap\mathcal{\mathring{P}}_{+}^{k}\cap\mathcal{\mathring{T}}_{j}^{k}=\emptyset.
\]
Thus, every connected component of $\bar{\Omega}_{t_{*}}^{k}\cap\mathcal{\mathring{T}}_{j}^{k}$
intersects at most one of the caps, i.e., $\mathcal{P}_{+}\cap\mathcal{T}_{j}^{k}$
and $\mathcal{\tilde{P}}_{+}^{k}\cap\mathcal{T}_{j}^{k}$. Applying
Lemma \ref{existence of singular time} to the flow $\left\{ \Sigma_{t}^{k}\right\} $
on the tube $\mathcal{\mathring{T}}_{j}^{k}$ (noting that we have
(\ref{rule out bumpy type: upper normal}) and (\ref{rule out bumpy type: lower normal})),
we deduce that $\Sigma_{\hat{t}}^{k}\cap\mathcal{\mathring{T}}_{j}^{k}$
contains a singular point of $\left\{ \Sigma_{t}^{k}\right\} $ for
some $\hat{t}\in\left(\tau_{j},t_{*}\right)$. In particular, $\hat{t}<T_{1,j}^{k}$
is another singular time of $\left\{ \Sigma_{t}^{k}\right\} $ in
$\mathcal{\mathring{T}}_{j}^{k}\subset\mathcal{S}_{j}^{\hat{\delta}}$.
This is in contradiction with Assumption \ref{local singular times hypothesis}.
Therefore, there are no singular components of $\left\{ \Sigma_{t}^{k}\right\} $
that belong to the bumpy type.
\end{proof}

\subsection{Bumpy type\label{bumpy type}}

In this subsection we shall prove the ``stability'' of singular
components of the bumpy type under Assumption \ref{existence of singularities near bumpy}.
Specifically, let $\mathcal{S}_{j}$ be a singular component of $\left\{ \Sigma_{t}\right\} $
that belongs to the bumpy type, when $k$ is large we assume that
$\left\{ \Sigma_{t}^{k}\right\} $ has singular components near $\mathcal{S}_{j}$
(see Assumption \ref{existence of singularities near bumpy}) and
show that all these singular components must be of the bumpy type
(see Proposition \ref{no saddle toward right}) and that there is
indeed only one singular component of $\left\{ \Sigma_{t}^{k}\right\} $
near $\mathcal{S}_{j}$ (see Proposition \ref{uniqueness of bumpy}).
Note that $u\left(\mathcal{S}_{j}\right)=T_{1}<T_{ext}$ by Proposition
\ref{global maximum points }. 

Let us begin with the following remark, which is the counterpart of
Remark \ref{tubular region for splitting} in Section \ref{splitting type}
for the bumpy case. 
\begin{rem}
\label{tubular region for bumpy}When $\mathcal{S}_{j}$ is a curve
of the bumpy type, say
\[
\mathcal{S}_{j}=\left\{ \Gamma\left(s\right)\,:\,s\in\left[-1,1\right]\right\} .
\]
Let us assume without loss of generality that $\Gamma\left(1\right)$
is the local maximum point of $u$ and $\Gamma\left(-1\right)$ is
the one-sided saddle point of $u$. By Remark \ref{tube-like hypersurface},
$\Sigma_{\tau_{j}}\cap\mathcal{S}_{j}^{r_{j}}$ is a tube-like hypersurface
around $\mathcal{S}_{j}$. Near the endpoints $\Gamma\left(1\right)$
and $\Gamma\left(-1\right)$, by Remark \ref{one-sided isolated local max}
and Remark \ref{flow near saddle}, we can find two hyperplanes, $\mathcal{P}_{+}$
and $\mathcal{P}_{-}$, that are orthogonal to the tangent cylinders
of $\left\{ \Sigma_{t}\right\} $ at $\Gamma\left(1\right)$ and $\Gamma\left(-1\right)$,
respectively, such that\footnote{$\mathcal{P}_{+}$ corresponds to $\left\{ z=z_{0}\right\} $ in Remark
\ref{one-sided isolated local max} with $z_{0}>0$ sufficiently close
to $0$; $\mathcal{P}_{-}$ corresponds to $\left\{ z=\mathring{\varepsilon}\right\} $
in Remark \ref{flow near saddle}.}
\[
\mathcal{P}_{+}\cap\mathcal{S}_{j}^{r_{j}}\cap\mathcal{S}_{j}=\emptyset,\quad\mathcal{P}_{-}\cap\mathcal{S}_{j}^{r_{j}}\cap\mathcal{S}_{j}=\emptyset
\]
and that the tube-like connected hypersurface $\Sigma_{\tau_{j}}$
together with $\mathcal{P}_{+}$ and $\mathcal{P}_{-}$ bound a tubular
closed region $\mathcal{T}_{j}$ in $\mathcal{S}_{j}^{r_{j}}$ with
the following properties:
\begin{enumerate}
\item $\mathcal{S}_{j}\subset\textrm{int}\,\mathcal{T}_{j}$. 
\item Let $\mathcal{\tilde{P}}_{+}$ and $\mathcal{\tilde{P}}_{-}$ be two
hypersurfaces parallel to $\mathcal{P}_{+}$ and $\mathcal{P}_{-}$,
\footnote{$\mathcal{\tilde{P}}_{+}$ corresponds to $\left\{ z=0\right\} $
in Remark \ref{one-sided isolated local max}; $\mathcal{\tilde{P}}_{-}$
corresponds to $\left\{ z=0\right\} $ in Remark \ref{flow near saddle}.}respectively, such that 
\[
\mathcal{\tilde{P}}_{+}\cap\mathcal{T}_{j}\cap\mathcal{S}_{j}=\left\{ \Gamma\left(1\right)\right\} ,\quad\mathcal{\tilde{P}}_{-}\cap\mathcal{T}_{j}\cap\mathcal{S}_{j}=\left\{ \Gamma\left(-1\right)\right\} .
\]
\item On the cap $\mathcal{P}_{+}\cap\mathcal{T}_{j}$ we have\footnote{See (\ref{one-sided isolated local max: level}). }
\[
\bar{t}_{j}\coloneqq\max_{\mathcal{P}_{+}\cap\mathcal{T}_{j}}u\,\in\,\left(\tau_{j},\,T_{1}\right).
\]
\item There exists a time $\mathring{t}_{j}>T_{1}$ so that\footnote{See (\ref{flow near saddle: intersection}).}
\[
\Sigma_{t}\cap\mathcal{P}_{-}\cap\mathcal{T}_{j}\neq\emptyset\quad\forall\,t\in\left[\tau_{j},\,\mathring{t}_{j}\right].
\]
\end{enumerate}
In the case where $\mathcal{S}_{j}$ is a single one-sided saddle
point (which can be regarded as a ``degenerate'' curve with a local
maximum endpoint and a one-sided saddle endpoint, see the comment
following Definition \ref{singularity types}), the aforementioned
results still hold. In this case, the two hyperplanes, $\mathcal{P}_{+}$
and $\mathcal{P}_{-}$, are parallel (since both are orthogonal to
the tangent cylinder of $\left\{ \Sigma_{t}\right\} $ at $\mathcal{S}_{j}$)
and close to each other; $\mathcal{P}_{+}$ is assumed to be on the
side in which $u$ is a local maximum point (see Definition \ref{one-sided saddle}).
Note that $\mathcal{\tilde{P}}_{+}=\mathcal{\tilde{P}}_{-}$ is the
hyperplane that passes through $\mathcal{S}_{j}$ and orthogonal to
the tangent cylinder of $\left\{ \Sigma_{t}\right\} $ at $\mathcal{S}_{j}$. 

Furthermore, by Proposition \ref{C0 compactness}, Proposition \ref{pre-stability},
and Remark \ref{tube-like hypersurface}, when $k$ is large, $\Sigma_{\tau_{j}}^{k}$
and the two hyperplanes $\mathcal{P}_{+}$ and $\mathcal{P}_{-}$
would also bound a closed tubular region $\mathcal{T}_{j}^{k}$ in
$\mathcal{S}_{j}^{r_{j}}$; in addition, let 
\[
\bar{t}_{j}^{k}=\max_{\mathcal{P}_{+}\cap\mathcal{T}_{j}^{k}}u^{k}
\]
and $T_{1,j}^{k}$ be the unique singular time of $\left\{ \Sigma_{t}^{k}\right\} $
in $\mathcal{S}_{j}^{\hat{\delta}}$, then we have
\[
\left|\bar{t}_{j}^{k}-\bar{t}_{j}\right|\,+\,\left|T_{1,j}^{k}-T_{1}\right|\,\leq\,\frac{1}{3}\min\left\{ T_{1}-\bar{t}_{j},\,\bar{t}_{j}-\tau_{j}\right\} .
\]
It follows that 
\begin{equation}
\bar{t}_{j}^{k}\,\in\,\left(\tau_{j},\,T_{1,j}^{k}\right).\label{tubular region for bumpy: barrier}
\end{equation}
\end{rem}

Unlike the vanishing and splitting cases, we assume (instead of proving)
the existence of singularities of $\left\{ \Sigma_{t}^{k}\right\} $
near a bumpy component of $\left\{ \Sigma_{t}\right\} $ as follows.
\begin{assumption}
\label{existence of singularities near bumpy}In case $\mathcal{S}_{j}$
is a singular component of $\left\{ \Sigma_{t}\right\} $ that belongs
to the bumpy type, we assume that for every sufficiently large $k$,
the flow $\left\{ \Sigma_{t}^{k}\right\} $ has singularities in $\mathcal{S}_{j}^{\hat{\delta}}$. 
\end{assumption}

Next, for the singular components of $\left\{ \Sigma_{t}^{k}\right\} $
in $\mathcal{S}_{j}^{\hat{\delta}}$, we are going to characterize
their types in Proposition \ref{no saddle toward right} and prove
the uniqueness of the singular components in Proposition \ref{uniqueness of bumpy}.
To streamline the proofs, in the following remark we set up the requisite
notations and provide a preliminary description of the singular components
of $\left\{ \Sigma_{t}^{k}\right\} $ in $\mathcal{S}_{j}^{\hat{\delta}}$. 
\begin{rem}
\label{singularities near bumpy type}Let $\mathcal{S}_{j}$ be a
singular component of $\left\{ \Sigma_{t}\right\} $ that belongs
to the bumpy type. When $k$ is large, by Corollary \ref{regular space}
and Remark \ref{tubular region for bumpy}, all the singular points
of $\left\{ \Sigma_{t}^{k}\right\} $ in $\mathcal{S}_{j}^{\hat{\delta}}$
would be contained in $\textrm{int}\,\mathcal{T}_{j}^{k}$. Moreover,
by Assumption \ref{local singular times hypothesis} and Proposition
\ref{global maximum points } (see also Proposition \ref{pre-stability}),
there are finitely many connected components of the singular set of
$\left\{ \Sigma_{t}^{k}\right\} $ in $\mathcal{T}_{j}^{k}$, each
of which belongs to the splitting/bumpy type (in particular, there
are no round points of $\left\{ \Sigma_{t}^{k}\right\} $ in $\mathcal{T}_{j}^{k}$). 

Let $\mathcal{\dot{S}}_{j}^{k}$ be a singular component of $\left\{ \Sigma_{t}^{k}\right\} $
in $\mathcal{T}_{j}^{k}$. By Proposition \ref{uniform cylindrical scale},
any point $q\in\mathcal{\dot{S}}_{j}^{k}$ has a cylindrical scale
$r_{j}$ and every other cylindrical point of $\left\{ \Sigma_{t}^{k}\right\} $
in $B_{r_{j}}\left(q\right)$ is located in a small Lipschitz graph
over the axis of the tangent cylinder of $\left\{ \Sigma_{t}^{k}\right\} $
at $q$. Applying Remark \ref{tubular region for splitting} (if $\mathcal{\dot{S}}_{j}^{k}$
is of the splitting type) / Remark \ref{tubular region for bumpy}
(if $\mathcal{\dot{S}}_{j}^{k}$ is of the bumpy type) to the singular
component $\mathcal{\dot{S}}_{j}^{k}$ of $\left\{ \Sigma_{t}^{k}\right\} $,
we can find two hyperplanes $\mathcal{\dot{P}}_{+}^{k}$ and $\mathcal{\dot{P}}_{-}^{k}$
near the endpoints of $\mathcal{\dot{S}}_{j}^{k}$ that are orthogonal
to the tangent cylinders of $\left\{ \Sigma_{t}^{k}\right\} $ at
the endpoints, respectively, such that
\begin{enumerate}
\item there are no singular points of $\left\{ \Sigma_{t}^{k}\right\} $
on $\mathcal{\dot{P}}_{+}^{k}\cap\mathcal{T}_{j}^{k}$ and $\mathcal{\dot{P}}_{-}^{k}\cap\mathcal{T}_{j}^{k}$
(see Proposition \ref{uniform cylindrical scale});
\item $\Sigma_{\tau_{j}}^{k}$ is a tube-like hypersurface around $\mathcal{\dot{S}}_{j}^{k}$
and it together with the two caps, $\mathcal{\dot{P}}_{+}^{k}\cap\mathcal{T}_{j}^{k}$
and $\mathcal{\dot{P}}_{-}^{k}\cap\mathcal{T}_{j}^{k}$, bound a ``truncated''
tubular closed region $\mathcal{\dot{T}}_{j}^{k}\subset\mathcal{T}_{j}^{k}$
satisfying $\mathcal{\dot{S}}_{j}^{k}\subset\textrm{int}\,\mathcal{\dot{T}}_{j}^{k}$;
\item $\mathcal{\dot{S}}_{j}^{k}$ is the only singular component of $\left\{ \Sigma_{t}^{k}\right\} $
in $\mathcal{\dot{T}}_{j}^{k}$ (see Proposition \ref{uniform cylindrical scale}). 
\end{enumerate}
For convenience, let us assume that the ``orientation'' is chosen
in such a way that $\mathcal{\dot{P}}_{+}^{k}$ is adjacent to $\mathcal{P}_{+}$
and $\mathcal{\dot{P}}_{-}^{k}$ is adjacent to $\mathcal{P}_{-}$.
Thus, the tube $\mathcal{T}_{j}^{k}$ would be cut by $\mathcal{\dot{P}}_{+}^{k}$
and $\mathcal{\dot{P}}_{-}^{k}$ into three closed pieces: 
\[
\mathcal{T}_{j}^{k}\,=\,\mathcal{\hat{T}}_{j}^{k+}\cup\mathcal{\dot{T}}_{j}^{k}\cup\mathcal{\hat{T}}_{j}^{k-},
\]
where $\mathcal{\hat{T}}_{j}^{k+}$ is the tubular closed region bounded
by $\Sigma_{\tau_{j}}^{k}$ (from the lateral) and the two caps $\mathcal{P}_{+}\cap\mathcal{T}_{j}^{k}$
and $\mathcal{\dot{P}}_{+}^{k}\cap\mathcal{T}_{j}^{k}$ (from the
two ends); likewise, $\mathcal{\hat{T}}_{j}^{k-}$ is the closed region
bounded by $\Sigma_{\tau_{j}}^{k}$, $\mathcal{P}_{-}\cap\mathcal{T}_{j}^{k}$,
and $\mathcal{\dot{P}}_{-}^{k}\cap\mathcal{T}_{j}^{k}$.
\end{rem}

\begin{prop}
\label{no saddle toward right}Let $\mathcal{S}_{j}$ be a singular
component of $\left\{ \Sigma_{t}\right\} $ that belongs to the bumpy
type and let $\mathcal{\dot{S}}_{j}^{k}$ be any singular component
of $\left\{ \Sigma_{t}^{k}\right\} $ in $\mathcal{S}_{j}^{\hat{\delta}}$
as stated in Remark \ref{singularities near bumpy type}. Then 
\begin{itemize}
\item if the singular component $\mathcal{\dot{S}}_{j}^{k}$ is a curve,
the endpoint that is adjacent to $\mathcal{P}_{+}$ must be a local
maximum point of $u^{k}$; 
\item if $\mathcal{\dot{S}}_{j}^{k}$ is a single point, $\mathcal{\dot{S}}_{j}^{k}$
must be a local maximum point of $u^{k}$ ``on the side toward $\mathcal{P}_{+}$''
(see Definition \ref{one-sided saddle}).
\end{itemize}
In either case, $\mathcal{\dot{S}}_{j}^{k}$ is of the bumpy type. 
\end{prop}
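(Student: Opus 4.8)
The plan is to argue by contradiction, exploiting that on the ``local‑maximum cap'' $\mathcal{P}_{+}\cap\mathcal{T}_{j}^{k}$ the arrival time $u^{k}$ stays strictly below $T_{1,j}^{k}$ — by (\ref{tubular region for bumpy: barrier}) — so this cap acts as a barrier. Fix a singular component $\mathcal{\dot{S}}_{j}^{k}$ of $\left\{ \Sigma_{t}^{k}\right\} $ in $\mathcal{T}_{j}^{k}$ as in Remark~\ref{singularities near bumpy type}; let $p$ be its endpoint adjacent to $\mathcal{P}_{+}$ (or $p=\mathcal{\dot{S}}_{j}^{k}$ itself if $\mathcal{\dot{S}}_{j}^{k}$ is a point), and suppose toward a contradiction that $p$ is \emph{not} a local maximum point of $u^{k}$ on the side toward $\mathcal{P}_{+}$. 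Since $\mathcal{\dot{S}}_{j}^{k}$ consists of cylindrical points (Remark~\ref{singularities near bumpy type}), Remark~\ref{one-sided local max} applied to $\left\{ \Sigma_{t}^{k}\right\} $ — legitimate by Assumption~\ref{local singular times hypothesis} — together with Definition~\ref{one-sided saddle} force $p$ to be approachable by points of the superlevel set $\left\{ u^{k}>u^{k}(p)=T_{1,j}^{k}\right\} $ from the $\mathcal{P}_{+}$‑side; by the asymptotically cylindrical behavior of $\left\{ \Sigma_{t}^{k}\right\} $ within its cylindrical scale $r_{j}$ (Proposition~\ref{uniform cylindrical scale}, and the analog of (\ref{saddle criterion: max})), every such point lies in the solid cone $\mathscr{C}_{\phi}^{p}$ with apex $p$ and axis along the axis of the tangent cylinder of $\left\{ \Sigma_{t}^{k}\right\} $ at $p$. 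Hence I can pick $q\in\mathscr{C}_{\phi}^{p}$, as close to $p$ as desired, strictly on the $\mathcal{P}_{+}$‑side of the hyperplane $\mathcal{\tilde{P}}_{+}^{k}$ through $p$ orthogonal to that axis, with $u^{k}(q)>T_{1,j}^{k}$.

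Next I would localize inside the sub‑tube $\mathcal{\mathring{T}}_{j}^{k+}\subset\mathcal{T}_{j}^{k}$ cut off by $\mathcal{\tilde{P}}_{+}^{k}$ and $\mathcal{P}_{+}$, i.e.\ the region bounded laterally by part of $\Sigma_{\tau_{j}}^{k}$ and capped by $\mathcal{\tilde{P}}_{+}^{k}\cap\mathcal{T}_{j}^{k}$ and $\mathcal{P}_{+}\cap\mathcal{T}_{j}^{k}$. Three elementary bounds control $u^{k}$ on $\partial\mathcal{\mathring{T}}_{j}^{k+}$: on the lateral boundary $u^{k}\equiv\tau_{j}$; on $\mathcal{\tilde{P}}_{+}^{k}\cap\mathcal{T}_{j}^{k}$ one has $u^{k}\leq u^{k}(p)=T_{1,j}^{k}$, because (after shrinking $r_{j}$ relative to $\hat{\delta}$ and the tube, as in Proposition~\ref{uniform cylindrical scale}) this cap lies in $B_{r_{j}}(p)$ and meets $\mathscr{C}_{\phi}^{p}$ only at $p$, so (\ref{saddle criterion: max}) applies; and on $\mathcal{P}_{+}\cap\mathcal{T}_{j}^{k}$ one has $u^{k}\leq\bar{t}_{j}^{k}<T_{1,j}^{k}$ by (\ref{tubular region for bumpy: barrier}). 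Therefore, choosing a regular value $t_{*}$ of $\left.u^{k}\right|_{\mathcal{T}_{j}^{k}}$ with $T_{1,j}^{k}<t_{*}<u^{k}(q)$ — possible since, by Assumption~\ref{local singular times hypothesis}, $T_{1,j}^{k}$ is the only singular time of $\left\{ \Sigma_{t}^{k}\right\} $ in $\mathcal{S}_{j}^{\hat{\delta}}$ — the set $\left\{ u^{k}\geq t_{*}\right\} \cap\mathcal{\mathring{T}}_{j}^{k+}$ is disjoint from $\partial\mathcal{\mathring{T}}_{j}^{k+}$, hence a nonempty compact subset of $\textrm{int}\,\mathcal{\mathring{T}}_{j}^{k+}$ containing $q$. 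Its connected component containing $q$ is then enclosed by a closed hypersurface contained in $\Sigma_{t_{*}}^{k}$ and bounds a closed region $W\subset\textrm{int}\,\mathcal{\mathring{T}}_{j}^{k+}$ with $u^{k}\equiv t_{*}$ on $\partial W$; so $\max_{W}u^{k}\geq u^{k}(q)>t_{*}$ is attained at an interior point, which is a critical point of $u^{k}$, i.e.\ a singular point of $\left\{ \Sigma_{t}^{k}\right\} $ occurring at a time $>T_{1,j}^{k}$ and lying in $\mathcal{T}_{j}^{k}\subset\mathcal{S}_{j}^{\hat{\delta}}$. This contradicts Assumption~\ref{local singular times hypothesis}, so $p$ must be a local maximum point of $u^{k}$ on the side toward $\mathcal{P}_{+}$, which is the content of the two bulleted assertions.

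It then remains to see that $\mathcal{\dot{S}}_{j}^{k}$ is of the bumpy type, i.e.\ neither splitting nor vanishing. Splitting is ruled out by what was just proved, since a splitting component has a one‑sided saddle at its $\mathcal{P}_{+}$‑endpoint (curve case) or is a two‑sided saddle (point case), neither of which is a local maximum on the $\mathcal{P}_{+}$‑side. To exclude vanishing, suppose $\mathcal{\dot{S}}_{j}^{k}$ were of the vanishing type; then Proposition~\ref{global maximum points } (applied, via the reduction of Remark~\ref{assuming first singular time} and Assumption~\ref{local singular times hypothesis}, to the connected component of $\left\{ \Sigma_{t}^{k}\right\} $ whose first singular set is $\mathcal{\dot{S}}_{j}^{k}$) says this component shrinks to $\mathcal{\dot{S}}_{j}^{k}$ and vanishes exactly at $T_{1,j}^{k}$, so for $t$ slightly below $T_{1,j}^{k}$ it is a closed connected hypersurface confined to a small neighborhood of $\mathcal{\dot{S}}_{j}^{k}$, disjoint from $\mathcal{P}_{-}$. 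But Remark~\ref{tubular region for bumpy} together with Proposition~\ref{C0 compactness} and Proposition~\ref{pre-stability} give $\Sigma_{t}^{k}\cap\mathcal{P}_{-}\cap\mathcal{T}_{j}^{k}\neq\emptyset$ for all $t$ up to a time exceeding $T_{1,j}^{k}$, so $\Sigma_{t}^{k}\cap\mathcal{T}_{j}^{k}$ would have at least two connected components for such $t$ — impossible, since by Remark~\ref{tube-like hypersurface} it is a single tube‑like connected hypersurface at $t=\tau_{j}$ and, there being no singular time in $(\tau_{j},T_{1,j}^{k})$ by Assumption~\ref{local singular times hypothesis}, it cannot disconnect on $[\tau_{j},T_{1,j}^{k})$. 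Hence $\mathcal{\dot{S}}_{j}^{k}$ is bumpy (a curve with one local‑maximum endpoint and one one‑sided‑saddle endpoint, or a single one‑sided saddle point), with the local maximum on the $\mathcal{P}_{+}$‑side, by Definition~\ref{singularity types}.

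The step I expect to be most delicate is the trapping in the second paragraph: one must ensure that the cap $\mathcal{\tilde{P}}_{+}^{k}\cap\mathcal{T}_{j}^{k}$ sits inside the cylindrical scale of $p$ so that (\ref{saddle criterion: max}) genuinely applies there, that $q$ — and hence a full component of $\left\{ u^{k}\geq t_{*}\right\} $ — actually lies in the \emph{interior} of $\mathcal{\mathring{T}}_{j}^{k+}$, and that, $t_{*}$ being a regular value, $\Sigma_{t_{*}}^{k}\cap\mathcal{\mathring{T}}_{j}^{k+}$ is a bona fide closed hypersurface bounding a region; all three are dispatched exactly as in Proposition~\ref{uniform cylindrical scale} by taking $r_{j}$ small relative to $\hat{\delta}$ and the tube $\mathcal{T}_{j}^{k}$ thin. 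Note that, unlike in Proposition~\ref{rule out bumpy type}, Lemma~\ref{existence of singular time} is not available here — Assumption~\ref{not getting into} is not imposed on bumpy components — which is why the argument above uses the direct enclosing construction rather than that lemma.
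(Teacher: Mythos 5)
Your proof is correct and follows essentially the same route as the paper's: a barrier/enclosure argument in the sub-tube between $\mathcal{P}_{+}\cap\mathcal{T}_{j}^{k}$ (where $u^{k}\leq\bar{t}_{j}^{k}<T_{1,j}^{k}$ by (\ref{tubular region for bumpy: barrier})) and the cap through $p$ orthogonal to the tangent cylinder's axis (where $u^{k}\leq T_{1,j}^{k}$ by (\ref{saddle criterion: max})), trapping a superlevel set $\left\{ u^{k}\geq t_{*}\right\}$ with $t_{*}>T_{1,j}^{k}$ in the interior and producing an interior critical point of $u^{k}$ at a time exceeding $T_{1,j}^{k}$, contradicting Assumption \ref{local singular times hypothesis}. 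The only cosmetic differences are that the paper extracts the later-time slice from Remark \ref{flow near saddle} (via $\mathring{t}_{j}^{k}$) whereas you pick the point $q$ directly from the definition of ``not a local maximum,'' and that your final paragraph re-derives the exclusion of the vanishing type, which the paper has already folded into Remark \ref{singularities near bumpy type}.
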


\begin{proof}
Suppose the contrary that 
\begin{itemize}
\item when $\mathcal{S}_{j}$ is a curve: the endpoint $p$ that is adjacent
to $\mathcal{P}_{+}$ is a one-sided saddle point of $u^{k}$; 
\item when $\mathcal{\dot{S}}_{j}^{k}$ is a single point: $p=\mathcal{\dot{S}}_{j}^{k}$
is not a local maximum point of $u^{k}$ on the side toward $\mathcal{P}_{+}$.
\end{itemize}
Recall that the choice of the two hyperplanes $\mathcal{\dot{P}}_{+}^{k}$
and $\mathcal{\dot{P}}_{-}^{k}$ in Remark \ref{singularities near bumpy type}
is based on applying Remark \ref{tubular region for splitting}/Remark
\ref{tubular region for bumpy} to the singular component $\mathcal{\dot{S}}_{j}^{k}$
of $\left\{ \Sigma_{t}^{k}\right\} $. By either of the remarks, there
exists a time $\mathring{t}_{j}^{k}>T_{1,j}^{k}$ so that 
\begin{equation}
\Sigma_{t}^{k}\cap\mathcal{\dot{P}}_{+}^{k}\cap\mathcal{T}_{j}^{k}\,\neq\,\emptyset\quad\forall\,t\in\left[\tau_{j},\,\mathring{t}_{j}^{k}\right].\label{no saddle toward right: nonempty}
\end{equation}
On the other hand, recall that by Remark \ref{tubular region for bumpy}
we have 
\begin{equation}
\bar{t}_{j}^{k}=\max_{\mathcal{P}_{+}\cap\mathcal{T}_{j}^{k}}u^{k}\,\in\,\left(\tau_{j},\,T_{1,j}^{k}\right).\label{no saddle toward right: right barrier}
\end{equation}
Let $\mathcal{\tilde{P}}_{+}^{k}$ be the hyperplane passing through
$p$ and orthogonal to the tangent cylinder of $\left\{ \Sigma_{t}^{k}\right\} $
at $p$. Note that $\mathcal{\dot{P}}_{+}^{k}\cap\mathcal{T}_{j}^{k}$
is parallel (and close) to $\mathcal{\tilde{P}}_{+}^{k}\cap\mathcal{T}_{j}^{k}$
and is ``between'' $\mathcal{P}_{+}\cap\mathcal{T}_{j}^{k}$ and
$\mathcal{\tilde{P}}_{+}^{k}\cap\mathcal{T}_{j}^{k}$. In view of
the the asymptotically cylindrical behavior of $\left\{ \Sigma_{t}^{k}\right\} $
on $\mathcal{\tilde{P}}_{+}^{k}\cap\mathcal{T}_{j}^{k}\setminus\left\{ p\right\} $,
we have (see (\ref{saddle criterion: max}))
\begin{equation}
\max_{\mathcal{\tilde{P}}_{+}^{k}\cap\mathcal{T}_{j}^{k}}u^{k}=T_{1,j}^{k}.\label{no saddle toward right: left barrier}
\end{equation}
Now define $\mathcal{\check{T}}_{j}^{k}$ as the tubular closed region
bounded by $\Sigma_{\tau_{j}}^{k}$ (from the lateral) and the two
caps $\mathcal{P}_{+}\cap\mathcal{T}_{j}^{k}$ and $\mathcal{\tilde{P}}_{+}^{k}\cap\mathcal{T}_{j}^{k}$
(from the two ends). Notice that by (\ref{no saddle toward right: nonempty})
we have
\[
\Sigma_{\mathring{t}_{j}^{k}}^{k}\cap\mathcal{\check{T}}_{j}^{k}\,\neq\,\emptyset
\]
and that by (\ref{no saddle toward right: right barrier}), (\ref{no saddle toward right: left barrier}),
and that $u^{k}=\tau_{j}$ on the lateral, we have 
\[
\Sigma_{\mathring{t}_{j}^{k}}^{k}\cap\partial\mathcal{\check{T}}_{j}^{k}\,=\,\emptyset.
\]
As $\mathring{t}_{j}^{k}$ is a regular value\footnote{$T_{1,j}^{k}$ is the only critical value of $\left.u^{k}\right|_{\mathcal{\check{T}}_{j}^{k}}$
by Assumption \ref{local singular times hypothesis}.} of $\left.u^{k}\right|_{\mathcal{\check{T}}_{j}^{k}}$, $\Sigma_{\mathring{t}_{j}^{k}}\cap\mathcal{\check{T}}_{j}^{k}$
is a closed hypersurface, which encloses an open region $\check{\Omega}_{\mathring{t}_{j}^{k}}^{k}\subset\mathcal{\check{T}}_{j}^{k}$.
It follows that 
\[
\tilde{T}_{1,j}^{k}\coloneqq\max_{\overline{\check{\Omega}_{\mathring{t}_{j}^{k}}^{k}}}\,u^{k}\,\geq\,\mathring{t}_{j}^{k}\,>\,T_{1,j}^{k}
\]
would be attained at some interior point of $\mathcal{\check{T}}_{j}^{k}$,
which would be a critical point of $u^{k}$. Consequently, $\tilde{T}_{1,j}^{k}$
is another singular time of $\left\{ \Sigma_{t}^{k}\right\} $ in
$\mathcal{\check{T}}_{j}^{k}\subset\mathcal{S}_{j}^{\hat{\delta}}$,
contradicting Assumption \ref{local singular times hypothesis}. Therefore,
when $\mathcal{\dot{S}}_{j}^{k}$ is a curve, $p$ must be a local
maximum point of $u^{k}$; when $\mathcal{\dot{S}}_{j}^{k}$ is a
single point, $p$ is a local maximum point of $u^{k}$ on the side
toward $\mathcal{P}_{+}$.
\end{proof}
\begin{prop}
\label{uniqueness of bumpy}In Proposition \ref{no saddle toward right},
there is actually only one singular component of $\left\{ \Sigma_{t}^{k}\right\} $
in $\mathcal{T}_{j}^{k}$. Therefore, $\left\{ \Sigma_{t}^{k}\right\} $
has precisely one singular component in $\mathcal{S}_{j}^{\hat{\delta}}$,
which is of the bumpy type. 
\end{prop}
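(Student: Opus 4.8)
The plan is to argue by contradiction, exactly in the spirit of Proposition \ref{no saddle toward right} and Proposition \ref{classification of singularities near splitting type }: assuming $\left\{\Sigma_t^k\right\}$ had two or more singular components in $\mathcal{T}_j^k$, I would exhibit a second singular time of $\left\{\Sigma_t^k\right\}$ inside $\mathcal{S}_j^{\hat\delta}$, contradicting Assumption \ref{local singular times hypothesis}. By Remark \ref{singularities near bumpy type} the singular set of $\left\{\Sigma_t^k\right\}$ in $\mathcal{S}_j^{\hat\delta}$ consists of finitely many components, all lying in $\textrm{int}\,\mathcal{T}_j^k$; by Assumption \ref{existence of singularities near bumpy} there is at least one, and by Proposition \ref{no saddle toward right} each is of the bumpy type whose endpoint adjacent to $\mathcal{P}_+$ is a local maximum point of $u^k$. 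Were there two or more, I would pick two \emph{consecutive} ones along the tube, say $\mathcal{\dot S}_j^k$ (nearer the cap $\mathcal{P}_+$) and $\mathcal{\ddot S}_j^k$ (nearer $\mathcal{P}_-$), with no singular component strictly between them. Let $q$ be the endpoint of $\mathcal{\dot S}_j^k$ adjacent to $\mathcal{P}_-$ --- by Proposition \ref{no saddle toward right} applied to $\mathcal{\dot S}_j^k$ this is a one-sided saddle point of $u^k$, a local maximum on the $\mathcal{\dot S}_j^k$-side, and approached by the superlevel set $\left\{u^k>T_{1,j}^k\right\}$ only from the side toward $\mathcal{\ddot S}_j^k$ --- and let $p$ be the endpoint of $\mathcal{\ddot S}_j^k$ adjacent to $\mathcal{P}_+$, which by Proposition \ref{no saddle toward right} applied to $\mathcal{\ddot S}_j^k$ is a local maximum point of $u^k$. (When $\mathcal{\dot S}_j^k$ or $\mathcal{\ddot S}_j^k$ is a single point, one reads it as a degenerate curve as in Remark \ref{tubular region for bumpy}, and the argument is unchanged.)

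Next I would build a barrier region. Let $\mathcal{\tilde P}_-^k$ and $\mathcal{\tilde P}_+^k$ be the hyperplanes through $q$ and $p$ orthogonal to the tangent cylinders of $\left\{\Sigma_t^k\right\}$ there, and let $\mathcal{\check T}_j^k\subset\mathcal{T}_j^k$ be the tubular closed region they cut off together with the lateral piece of $\Sigma_{\tau_j}^k$; since $\mathcal{\dot S}_j^k$ and $\mathcal{\ddot S}_j^k$ are consecutive, $\mathcal{\check T}_j^k$ contains no singular points of $\left\{\Sigma_t^k\right\}$ except possibly $q$ and $p$ on its boundary. On the lateral boundary $u^k=\tau_j<T_{1,j}^k$; on $\mathcal{\tilde P}_-^k\cap\mathcal{T}_j^k$ the asymptotically cylindrical behaviour of $\left\{\Sigma_t^k\right\}$ together with (\ref{saddle criterion: max}) gives $u^k\le u^k(q)=T_{1,j}^k$ (equality only at $q$, since the orthogonal hyperplane meets the cone $\mathscr{C}_\phi$ only at its apex), and likewise $u^k\le u^k(p)=T_{1,j}^k$ on $\mathcal{\tilde P}_+^k\cap\mathcal{T}_j^k$; hence
\[
\max_{\partial\mathcal{\check T}_j^k}u^k=T_{1,j}^k .
\]
On the other hand, applying Remark \ref{flow near saddle} to the one-sided saddle $q$ on its $\mathcal{\ddot S}_j^k$-side --- legitimate since, by Assumption \ref{local singular times hypothesis}, $T_{1,j}^k$ is the unique singular time of $\left\{\Sigma_t^k\right\}$ in $\mathcal{S}_j^{\hat\delta}$, so $q$ is the only singular point of $\left\{\Sigma_t^k\right\}$ near $q$ on that side --- furnishes a cross-sectional hyperplane just past $q$ which, after shrinking the corresponding parameter, lies strictly between $\mathcal{\tilde P}_-^k$ and $\mathcal{\tilde P}_+^k$ with its slice of $\mathcal{T}_j^k$ inside $\textrm{int}\,\mathcal{\check T}_j^k$, and a time $\mathring t_j^k>T_{1,j}^k$ such that $\Sigma_t^k$ meets that slice for every $t\in[\tau_j,\mathring t_j^k]$ (cf. (\ref{flow near saddle: intersection})); in particular $\Sigma_{\mathring t_j^k}^k\cap\mathcal{\check T}_j^k\ne\emptyset$.

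Finally I would extract the contradiction by the maximum-principle argument used throughout this section. By Assumption \ref{local singular times hypothesis}, $\mathring t_j^k$ is a regular value of $\left.u^k\right|_{\mathcal{\check T}_j^k}$; combined with $\max_{\partial\mathcal{\check T}_j^k}u^k=T_{1,j}^k<\mathring t_j^k$, this makes $\Sigma_{\mathring t_j^k}^k\cap\mathcal{\check T}_j^k$ a nonempty closed hypersurface contained in $\textrm{int}\,\mathcal{\check T}_j^k$, which therefore encloses an open region $\check\Omega_j^k\subset\mathcal{\check T}_j^k$ with $u^k=\mathring t_j^k$ on $\partial\check\Omega_j^k$. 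Then
\[
\max_{\overline{\check\Omega_j^k}}u^k\ \ge\ \mathring t_j^k\ >\ T_{1,j}^k
\]
is attained at an interior point of $\check\Omega_j^k$, necessarily a critical point of $u^k$ and hence a singular point of $\left\{\Sigma_t^k\right\}$, so its $u^k$-value is a singular time of $\left\{\Sigma_t^k\right\}$ in $\mathcal{\check T}_j^k\subset\mathcal{S}_j^{\hat\delta}$ different from $T_{1,j}^k$ --- contradicting Assumption \ref{local singular times hypothesis}. Hence $\left\{\Sigma_t^k\right\}$ has exactly one singular component in $\mathcal{T}_j^k$; since all its singular points in $\mathcal{S}_j^{\hat\delta}$ lie in $\textrm{int}\,\mathcal{T}_j^k$ (Remark \ref{singularities near bumpy type}) and at least one exists (Assumption \ref{existence of singularities near bumpy}), it has exactly one singular component in $\mathcal{S}_j^{\hat\delta}$, which is of the bumpy type by Proposition \ref{no saddle toward right}. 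I expect the main obstacle to be the geometric bookkeeping in the middle step: arranging the caps $\mathcal{\tilde P}_\pm^k$ and the intermediate cross-section so that the entire ``bump'' of the superlevel set emanating from the one-sided saddle $q$ is trapped inside $\mathcal{\check T}_j^k$ while the whole boundary of $\mathcal{\check T}_j^k$ still carries only values $\le T_{1,j}^k$ of $u^k$; this forces one to combine the cylindrical description near $q$ (Remark \ref{flow near saddle}) with the local-maximum description near $p$ (Remark \ref{one-sided isolated local max}) and to keep careful track of the orientation of consecutive components.
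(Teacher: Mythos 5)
Your proposal is correct and follows essentially the same route as the paper: assuming a second singular component, you trap the superlevel-set ``bump'' emanating from the one-sided saddle endpoint facing the gap between two components inside a truncated tube whose caps (the orthogonal hyperplanes through the two facing endpoints) carry $u^{k}\leq T_{1,j}^{k}$, and then produce an interior critical point of $u^{k}$ with value $>T_{1,j}^{k}$, contradicting Assumption \ref{local singular times hypothesis}. The only cosmetic difference is that you select two consecutive components outright where the paper fixes $\mathcal{\dot{S}}_{j}^{k}$ and places the hypothetical second component in $\mathcal{\hat{T}}_{j}^{k+}$ without loss of generality; the construction and the contradiction are the same.
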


\begin{proof}
Let $\mathcal{\dot{S}}_{j}^{k}$ be a singular component of $\left\{ \Sigma_{t}^{k}\right\} $
in $\mathcal{T}_{j}^{k}$ as stated in Proposition \ref{no saddle toward right}.
Suppose the contrary that there exists another singular component
$\mathcal{\ddot{S}}_{j}^{k}$ of $\left\{ \Sigma_{t}^{k}\right\} $
in $\mathcal{T}_{j}^{k}$. Since $\mathcal{\ddot{S}}_{j}^{k}\cap\mathcal{\dot{T}}_{j}^{k}=\emptyset$,
we may assume without loss of generality that $\mathcal{\ddot{S}}_{j}^{k}\subset\mathcal{\hat{T}}_{j}^{k+}$
(see Remark \ref{singularities near bumpy type}). 

Let $q$ be the endpoint of $\mathcal{\ddot{S}}_{j}^{k}$ that is
adjacent to $\mathcal{\dot{P}}_{+}^{k}\cap\mathcal{T}_{j}^{k}$; in
case $\mathcal{\ddot{S}}_{j}^{k}$ is a single one-sided saddle point,
then $q$ is $\mathcal{\ddot{S}}_{j}^{k}$ itself. By Proposition
\ref{no saddle toward right}, $q$ is a one-sided saddle point of
$u^{k}$ such that it is not a local maximum point of $u^{k}$ on
the side toward $\mathcal{\dot{P}}_{+}^{k}\cap\mathcal{T}_{j}^{k}$. 

By Remark \ref{tubular region for bumpy} and Remark \ref{singularities near bumpy type},
we can find two hyperplanes $\mathcal{\ddot{P}}_{+}^{k}$ and $\mathcal{\ddot{P}}_{-}^{k}$
near the endpoints of $\mathcal{\ddot{S}}_{j}^{k}$ that are orthogonal
to the tangent cylinders of $\left\{ \Sigma_{t}^{k}\right\} $ at
the endpoints, respectively, such that the following hold:
\begin{enumerate}
\item $\mathcal{\ddot{P}}_{+}^{k}\cap\mathcal{T}_{j}^{k}$ and $\mathcal{\ddot{P}}_{-}^{k}\cap\mathcal{T}_{j}^{k}$
are between $\mathcal{P}_{+}\cap\mathcal{T}_{j}^{k}$ and $\mathcal{\dot{P}}_{+}^{k}\cap\mathcal{T}_{j}^{k}$;
the orientation is chosen in such a way that $\mathcal{\ddot{P}}_{+}^{k}\cap\mathcal{T}_{j}^{k}$
is adjacent to $\mathcal{P}_{+}\cap\mathcal{T}_{j}^{k}$ and $\mathcal{\ddot{P}}_{-}^{k}\cap\mathcal{T}_{j}^{k}$
is adjacent to $\mathcal{\dot{P}}_{+}^{k}\cap\mathcal{T}_{j}^{k}$.
\item There are no singular points of $\left\{ \Sigma_{t}^{k}\right\} $
on $\mathcal{\ddot{P}}_{+}^{k}\cap\mathcal{T}_{j}^{k}$ and $\mathcal{\ddot{P}}_{-}^{k}\cap\mathcal{T}_{j}^{k}$.
\item There exists a time $\ddot{t}_{j}^{k}>T_{1,j}^{k}$ so that 
\begin{equation}
\Sigma_{t}^{k}\cap\mathcal{\ddot{P}}_{-}^{k}\cap\mathcal{T}_{j}^{k}\,\neq\,\emptyset\quad\forall\,t\in\left[\tau_{j},\,\ddot{t}_{j}^{k}\right].\label{uniqueness of bumpy: nonempty}
\end{equation}
\end{enumerate}
Let $\mathcal{\widetilde{P}}_{-}^{k}$ be the hyperplane passing through
$q$ and orthogonal to the tangent cylinder of $\left\{ \Sigma_{t}^{k}\right\} $
at $q$. Note that $\mathcal{\widetilde{P}}_{-}^{k}$ is parallel
and close to $\mathcal{\ddot{P}}_{-}^{k}$ and that $\mathcal{\widetilde{P}}_{-}^{k}\cap\mathcal{T}_{j}^{k}$
is between $\mathcal{\ddot{P}}_{+}^{k}\cap\mathcal{T}_{j}^{k}$ and
$\mathcal{\ddot{P}}_{-}^{k}\cap\mathcal{T}_{j}^{k}$. In view of the
the asymptotically cylindrical behavior of $\left\{ \Sigma_{t}^{k}\right\} $
on $\mathcal{\widetilde{P}}_{-}^{k}\cap\mathcal{T}_{j}^{k}\setminus\left\{ q\right\} $,
we have (see (\ref{saddle criterion: max})) 
\begin{equation}
\max_{\mathcal{\widetilde{P}}_{-}^{k}\cap\mathcal{T}_{j}^{k}}u^{k}=T_{1,j}^{k}.\label{uniqueness of bumpy: right barrier}
\end{equation}
Let $p$ be the endpoint $\mathcal{\dot{S}}_{j}^{k}$ that is adjacent
to $\mathcal{\dot{P}}_{+}^{k}\cap\mathcal{T}_{j}^{k}$. Let $\mathcal{\tilde{P}}_{+}^{k}$
be the hyperplane passing through $p$ and orthogonal to the tangent
cylinder of $\left\{ \Sigma_{t}^{k}\right\} $ at $p$. Note that
$\mathcal{\tilde{P}}_{+}^{k}$ is parallel and close to $\mathcal{\dot{P}}_{+}^{k}$.
In view of the the asymptotically cylindrical behavior of $\left\{ \Sigma_{t}^{k}\right\} $
on $\mathcal{\tilde{P}}_{+}^{k}\cap\mathcal{T}_{j}^{k}\setminus\left\{ p\right\} $,
we have (see (\ref{saddle criterion: max})) 
\begin{equation}
\max_{\mathcal{\tilde{P}}_{+}^{k}\cap\mathcal{T}_{j}^{k}}u^{k}=T_{1,j}^{k}.\label{uniqueness of bumpy: left barrier}
\end{equation}
Now define $\mathcal{\widehat{T}}_{j}^{k}$ as the tubular closed
region bounded by $\Sigma_{\tau_{j}}^{k}$ (from the lateral) and
the two caps $\mathcal{\tilde{P}}_{+}^{k}\cap\mathcal{T}_{j}^{k}$
and $\mathcal{\widetilde{P}}_{-}^{k}\cap\mathcal{T}_{j}^{k}$ (from
the two ends). Note that $\mathcal{\ddot{P}}_{-}^{k}\cap\mathcal{T}_{j}^{k}$
is between $\mathcal{\tilde{P}}_{+}^{k}\cap\mathcal{T}_{j}^{k}$ and
$\mathcal{\widetilde{P}}_{-}^{k}\cap\mathcal{T}_{j}^{k}$. It follows
from (\ref{uniqueness of bumpy: nonempty}) that
\[
\Sigma_{\ddot{t}_{j}^{k}}^{k}\cap\mathcal{\widehat{T}}_{j}^{k}\,\neq\,\emptyset.
\]
Note that 
\[
\Sigma_{\ddot{t}_{j}^{k}}^{k}\cap\partial\mathcal{\widehat{T}}_{j}^{k}\,=\,\emptyset
\]
by (\ref{uniqueness of bumpy: right barrier}), (\ref{uniqueness of bumpy: left barrier}),
and that $u^{k}=\tau_{j}$ on the lateral. Since $\ddot{t}_{j}^{k}$
is a regular value\footnote{$T_{1,j}^{k}$ is the only critical value of $\left.u^{k}\right|_{\mathcal{\widehat{T}}_{j}^{k}}$.}
of $\left.u^{k}\right|_{\mathcal{\widehat{T}}_{j}^{k}}$, $\Sigma_{\ddot{t}_{j}^{k}}\cap\mathcal{\widehat{T}}_{j}^{k}$
is a closed hypersurface, which encloses an open region $\widehat{\Omega}_{\ddot{t}_{j}^{k}}^{k}$
in $\mathcal{\widehat{T}}_{j}^{k}$. Thus, 
\[
\widetilde{T}_{1,j}^{k}\coloneqq\max_{\overline{\widehat{\Omega}_{\ddot{t}_{j}^{k}}^{k}}}\,u^{k}\,\geq\,\ddot{t}_{j}^{k}\,>\,T_{1,j}^{k}
\]
would be attained at some interior point of $\mathcal{\widehat{T}}_{j}^{k}$,
which would be a critical point of $u^{k}$. Consequently, $\widetilde{T}_{1,j}^{k}$
is another singular time of $\left\{ \Sigma_{t}^{k}\right\} $ in
$\mathcal{\widehat{T}}_{j}^{k}\subset\mathcal{S}_{j}^{\hat{\delta}}$,
contradicting Assumption \ref{local singular times hypothesis}. Thus,
$\mathcal{\dot{S}}_{j}^{k}$ is the only one singular component of
$\left\{ \Sigma_{t}^{k}\right\} $ in $\mathcal{T}_{j}^{k}$.
\end{proof}

\subsection{Appendix: Two-convexity\label{appendix:two-convexity}}

The purpose of this appendix is to show that in Theorem \ref{stability of singular types},
as $\Sigma_{0}$ is two-convex, $\Sigma_{0}^{k}$ would also be two-convex
when $k$ is large (see Proposition \ref{two-convexity of approximations}).
To this end, let us begin with the following lemma, which is a direct
result of the min-max theorem in linear algebra. 
\begin{lem}
\label{min-max}Let $V$ be a finite dimensional vector space with
an inner product $g$. Let $S$ be a self-adjoint operator on the
inner product space $\left(V,g\right)$ with eigenvalues $\lambda_{1}\leq\lambda_{2}\leq\cdots$.
Let $A$ be the associated symmetric bilinear form of $S$, namely,
\[
A\left(v,w\right)=g\left(Sv,w\right)\quad\forall\,v,w\in V.
\]
Given a pair of linearly independent vectors $v_{1}$ and $v_{2}$
in $V$, let $\boldsymbol{G}\left(v_{1},v_{2}\right)$ and $\boldsymbol{A}\left(v_{1},v_{2}\right)$
be the $2\times2$ matrices whose $\left(k,l\right)$-components are
given by respectively
\[
\boldsymbol{G}_{kl}\left(v_{1},v_{2}\right)=g\left(v_{k},v_{l}\right),\quad\boldsymbol{A}_{kl}\left(v_{1},v_{2}\right)=A\left(v_{k},v_{l}\right),\quad\textrm{for}\,\,\,k,l\in\left\{ 1,2\right\} .
\]
Then 
\begin{enumerate}
\item the symmetric matrix $\boldsymbol{G}\left(v_{1},v_{2}\right)$ is
positive definite;
\item whenever $\textrm{span}\,\left\{ \tilde{v}_{1},\tilde{v}_{2}\right\} =\textrm{span}\,\left\{ v_{1},v_{2}\right\} $,
we have 
\[
\textrm{trace}\,\left(\boldsymbol{G}^{-1}\boldsymbol{A}\right)\left(\tilde{v}_{1},\tilde{v}_{2}\right)=\textrm{trace}\,\left(\boldsymbol{G}^{-1}\boldsymbol{A}\right)\left(v_{1},v_{2}\right).
\]
\end{enumerate}
Thus, $\boldsymbol{G}^{-1}\boldsymbol{A}$ can be regarded as a function
defined on the Grassmannian $Gr_{2}\left(V\right)$, i.e., the set
of all $2$-dimensional vector subspaces of $V$. Most importantly,
\[
\lambda_{1}+\lambda_{2}=\min_{Gr_{2}\left(V\right)}\,\,\textrm{trace}\,\left(\boldsymbol{G}^{-1}\boldsymbol{A}\right).
\]
\end{lem}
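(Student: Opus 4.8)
The plan is to dispatch the two structural claims first and then reduce the main identity to an elementary finite optimization. For claim~(1) I would simply note that for $c=\left(c_{1},c_{2}\right)\neq0$,
\[
\textstyle\sum_{k,l}c_{k}\,\boldsymbol{G}_{kl}\left(v_{1},v_{2}\right)\,c_{l}=g\left(c_{1}v_{1}+c_{2}v_{2},\,c_{1}v_{1}+c_{2}v_{2}\right)>0,
\]
because $v_{1},v_{2}$ are linearly independent and $g$ is an inner product; hence $\boldsymbol{G}\left(v_{1},v_{2}\right)$ is positive definite, in particular invertible. For claim~(2), writing $\tilde{v}_{k}=\sum_{l}P_{lk}v_{l}$ with $P\in GL_{2}\left(\mathbb{R}\right)$ (which is exactly what $\mathrm{span}\left\{\tilde{v}_{1},\tilde{v}_{2}\right\}=\mathrm{span}\left\{v_{1},v_{2}\right\}$ means), bilinearity of $g$ and of $A$ gives $\boldsymbol{G}(\tilde v)=P^{\mathsf{T}}\boldsymbol{G}(v)P$ and $\boldsymbol{A}(\tilde v)=P^{\mathsf{T}}\boldsymbol{A}(v)P$, so that $\boldsymbol{G}(\tilde v)^{-1}\boldsymbol{A}(\tilde v)=P^{-1}\bigl(\boldsymbol{G}(v)^{-1}\boldsymbol{A}(v)\bigr)P$ is conjugate to $\boldsymbol{G}(v)^{-1}\boldsymbol{A}(v)$; its trace is therefore unchanged. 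This is precisely what lets us regard $\mathrm{trace}\bigl(\boldsymbol{G}^{-1}\boldsymbol{A}\bigr)$ as a well-defined function on $Gr_{2}\left(V\right)$.

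For the identity $\lambda_{1}+\lambda_{2}=\min_{Gr_{2}\left(V\right)}\mathrm{trace}\bigl(\boldsymbol{G}^{-1}\boldsymbol{A}\bigr)$, I would first use (2) to evaluate the function on a given plane $W$ through a $g$-orthonormal basis $\left\{e_{1},e_{2}\right\}$ of $W$, which exists by Gram--Schmidt since $g|_{W}$ is still an inner product: then $\boldsymbol{G}=I_{2}$ and the value on $W$ is $A(e_{1},e_{1})+A(e_{2},e_{2})=g(Se_{1},e_{1})+g(Se_{2},e_{2})$. Next I would fix a $g$-orthonormal eigenbasis $\left\{f_{1},\dots,f_{n}\right\}$ of $S$ with $Sf_{i}=\lambda_{i}f_{i}$, expand $e_{a}=\sum_{i}c_{ai}f_{i}$, and set $w_{i}=c_{1i}^{2}+c_{2i}^{2}$, so that $A(e_{1},e_{1})+A(e_{2},e_{2})=\sum_{i}w_{i}\lambda_{i}$. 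Orthonormality of $\left\{e_{1},e_{2}\right\}$ says the $2\times n$ coefficient matrix $C=(c_{ai})$ satisfies $CC^{\mathsf{T}}=I_{2}$, hence $C^{\mathsf{T}}C$ is a $g$-orthogonal projection of rank $2$; since $w_{i}=\bigl(C^{\mathsf{T}}C\bigr)_{ii}$ this forces $0\le w_{i}\le1$ and $\sum_{i}w_{i}=\mathrm{trace}\bigl(C^{\mathsf{T}}C\bigr)=2$. Thus the value of $\mathrm{trace}\bigl(\boldsymbol{G}^{-1}\boldsymbol{A}\bigr)$ on any $W$ is a value of the linear functional $w\mapsto\sum_{i}w_{i}\lambda_{i}$ on the polytope $\left\{0\le w_{i}\le1,\ \sum_{i}w_{i}=2\right\}$, and conversely $w=(1,1,0,\dots,0)$ is realized by $W=\mathrm{span}\left\{f_{1},f_{2}\right\}$ (with basis $\left\{f_{1},f_{2}\right\}$), where the value is exactly $\lambda_{1}+\lambda_{2}$. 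It then remains to observe that, because $\lambda_{1}\le\lambda_{2}\le\cdots$, this linear functional is minimized by concentrating the total mass $2$ on the two smallest eigenvalues, so the minimum equals $\lambda_{1}+\lambda_{2}$.

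I expect no genuine obstacle here: the only step with any content is that last elementary optimization, which follows from a one-line exchange argument (if $w_{i}<1$ for some $i\le2$ or $w_{i}>0$ for some $i\ge3$, moving weight toward the smaller index cannot increase $\sum_{i}w_{i}\lambda_{i}$) together with the cap $w_{i}\le1$, which is forced by $C^{\mathsf{T}}C$ being a projection. In substance the lemma is the two-plane instance of the Ky Fan minimum principle, and the above is its self-contained derivation; the one point that needs care is keeping the change-of-basis invariance from (2) in mind, so that evaluating on $g$-orthonormal bases is legitimate and the reduction to the weights $w_{i}$ is meaningful.
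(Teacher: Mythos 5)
Your proof is correct. Note that the paper does not actually prove this lemma: it simply asserts it as ``a direct result of the min-max theorem in linear algebra'' (i.e.\ the Courant--Fischer / Ky Fan minimum principle) and moves straight on to Corollary \ref{sum of first two principal curvatures}. Your write-up therefore supplies a genuine self-contained derivation where the paper only cites a standard fact. All the steps check out: the positive definiteness of $\boldsymbol{G}$ from linear independence, the conjugation identity $\boldsymbol{G}(\tilde v)^{-1}\boldsymbol{A}(\tilde v)=P^{-1}\bigl(\boldsymbol{G}(v)^{-1}\boldsymbol{A}(v)\bigr)P$ giving well-definedness on $Gr_{2}(V)$, and the reduction via a $g$-orthonormal basis of $W$ to the weights $w_{i}=(C^{\mathsf T}C)_{ii}$. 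The one step that carries real content is exactly the one you isolate: $CC^{\mathsf T}=I_{2}$ forces $C^{\mathsf T}C$ to be a rank-two orthogonal projection, hence $0\le w_{i}\le 1$ and $\sum_{i}w_{i}=2$, and without the cap $w_{i}\le 1$ the linear program would only give $2\lambda_{1}$ rather than $\lambda_{1}+\lambda_{2}$. The exchange argument $\sum_i w_i\lambda_i\ge \lambda_1 w_1+\lambda_2(2-w_1)=2\lambda_2-w_1(\lambda_2-\lambda_1)\ge\lambda_1+\lambda_2$ closes the lower bound, and $W=\mathrm{span}\{f_1,f_2\}$ attains it. Nothing further is needed.
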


The following corollary follows from applying Lemma \ref{min-max}
to the Weingarten map of a hypersurface $\Sigma$ on the tangent space
at each point so as to obtain an expression of $\kappa_{1}+\kappa_{2}$,
which will be used in Proposition \ref{two-convexity of approximations}.
Note that as the function $\boldsymbol{G}^{-1}\boldsymbol{A}\left(v_{1},v_{2}\right)$
in Lemma \ref{min-max} is defined on the Grassmannian, we may assume
that the two linearly independent tangent vectors, 
\[
v_{1}\,=\,v_{1}^{1}\,\partial_{1}+\cdots+v_{1}^{n-1}\,\partial_{n-1}\,\simeq\,\left(v_{1}^{1},\cdots,v_{1}^{n-1}\right),
\]
\[
v_{2}\,=\,v_{2}^{1}\,\partial_{1}+\cdots+v_{2}^{n-1}\,\partial_{n-1}\,\simeq\,\left(v_{2}^{1},\cdots,v_{2}^{n-1}\right),
\]
are orthonormal with respect to the dot product in order to have the
compactness.
\begin{cor}
\label{sum of first two principal curvatures}Let $\Sigma$ be a hypersurface
and let $x=x\left(\xi\right)$ be a local parametrization of $\Sigma$.
Let $g_{ij}\left(\xi\right)$ and $A_{ij}\left(\xi\right)$ be the
components of the metric and the second fundamental form of $\Sigma$
with respect to the local parametrization $x\left(\xi\right)$, respectively.

Let $\mathbb{V}\subset\mathbb{R}^{n-1}\times\mathbb{R}^{n-1}$ be
the set of all pairs of orthonormal (with respect to the dot product)
vectors in $\mathbb{R}^{n-1}$. For each $\xi$, define two matrix-valued
functions $\boldsymbol{G}\left(\xi;\cdotp\right)$ and $\boldsymbol{A}\left(\xi;\cdot\right)$
on $\mathbb{V}$ as follows: Given 
\[
\left\{ v_{1}=\left(v_{1}^{1},\cdots,v_{1}^{n-1}\right),\quad v_{2}=\left(v_{2}^{1},\cdots,v_{2}^{n-1}\right)\right\} \,\in\,\mathbb{V},
\]
let $\boldsymbol{G}\left(\xi;v_{1},v_{2}\right)$ and $\boldsymbol{A}\left(\xi;v_{1},v_{2}\right)$
be the $2\times2$ matrices whose $\left(k,l\right)$-components are
given by respectively
\[
\boldsymbol{G}_{kl}\left(\xi;v_{1},v_{2}\right)=g_{ij}\left(\xi\right)v_{k}^{i}v_{l}^{j},\quad\boldsymbol{A}{}_{kl}\left(\xi;v_{1},v_{2}\right)=A_{ij}\left(\xi\right)v_{k}^{i}v_{l}^{j},\quad\textrm{for}\,\,\,k,l\in\left\{ 1,2\right\} .
\]
Then we have
\begin{equation}
\kappa_{1}\left(\xi\right)+\kappa_{2}\left(\xi\right)=\min_{\mathbb{V}}\,\,\textrm{trace}\,\left(\boldsymbol{G}^{-1}\boldsymbol{A}\right)\left(\xi;\cdot\right),\label{sum of first two principal curvatures: expression}
\end{equation}
where $\kappa_{1}\left(\xi\right)$ and $\kappa_{2}\left(\xi\right)$
are the smallest two principal curvatures of $\Sigma$ at $x\left(\xi\right)$.
Also, note that by the compactness of $\mathbb{V}$ and the continuity
of $\boldsymbol{G}^{-1}\boldsymbol{A}$, $\kappa_{1}+\kappa_{2}$
is a continuous function.
\end{cor}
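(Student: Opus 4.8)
The plan is to apply Lemma \ref{min-max} pointwise on $\Sigma$ with the shape operator in the role of $S$, and then to translate the resulting minimum over a Grassmannian into the minimum over the compact set $\mathbb{V}$ appearing in the statement. First I would fix $\xi$ and take $V=T_{x\left(\xi\right)}\Sigma$ equipped with the induced inner product $g=g_{ij}\left(\xi\right)d\xi^{i}d\xi^{j}$, and let $S$ be the Weingarten map of $\Sigma$ at $x\left(\xi\right)$; it is self-adjoint with respect to $g$, its eigenvalues are the principal curvatures $\kappa_{1}\left(\xi\right)\leq\kappa_{2}\left(\xi\right)\leq\cdots\leq\kappa_{n-1}\left(\xi\right)$, and its associated symmetric bilinear form is exactly the second fundamental form $A=A_{ij}\left(\xi\right)d\xi^{i}d\xi^{j}$. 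Lemma \ref{min-max} then yields
\[
\kappa_{1}\left(\xi\right)+\kappa_{2}\left(\xi\right)=\min_{P\in Gr_{2}\left(T_{x\left(\xi\right)}\Sigma\right)}\textrm{trace}\,\left(\boldsymbol{G}^{-1}\boldsymbol{A}\right)\left(P\right),
\]
where for a $2$-plane $P$ spanned by $\left\{ w_{1},w_{2}\right\}$ the quantity $\textrm{trace}\,\left(\boldsymbol{G}^{-1}\boldsymbol{A}\right)\left(w_{1},w_{2}\right)$ is formed from the $2\times2$ Gram and second-fundamental-form matrices of $w_{1},w_{2}$, and by part (2) of the lemma this quantity depends only on $P=\textrm{span}\left\{ w_{1},w_{2}\right\}$.

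Next I would identify $\mathbb{R}^{n-1}$ with $T_{x\left(\xi\right)}\Sigma$ via the coordinate frame $\partial_{1},\ldots,\partial_{n-1}$, so that $v=\left(v^{1},\cdots,v^{n-1}\right)$ corresponds to $v^{i}\partial_{i}$; under this identification the matrices $\boldsymbol{G}\left(\xi;v_{1},v_{2}\right)$ and $\boldsymbol{A}\left(\xi;v_{1},v_{2}\right)$ in the statement are precisely the Gram and second-fundamental-form matrices of $v_{1}^{i}\partial_{i}$ and $v_{2}^{i}\partial_{i}$. The one point that needs care is that the orthonormality cutting $\mathbb{V}$ out of $\mathbb{R}^{n-1}\times\mathbb{R}^{n-1}$ is with respect to the Euclidean dot product, which is an auxiliary structure unrelated to $g$; the reason this is harmless is that every $2$-dimensional subspace of $\mathbb{R}^{n-1}$ has a dot-product-orthonormal basis, while every element of $\mathbb{V}$ spans a $2$-plane, so the map $\mathbb{V}\to Gr_{2}\left(T_{x\left(\xi\right)}\Sigma\right)$, $\left(v_{1},v_{2}\right)\mapsto\textrm{span}\left\{ v_{1}^{i}\partial_{i},v_{2}^{i}\partial_{i}\right\}$, is surjective. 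Invoking part (2) of Lemma \ref{min-max} once more, the function $\left(v_{1},v_{2}\right)\mapsto\textrm{trace}\,\left(\boldsymbol{G}^{-1}\boldsymbol{A}\right)\left(\xi;v_{1},v_{2}\right)$ is constant along the fibers of this surjection, so its minimum over $\mathbb{V}$ equals the minimum over $Gr_{2}\left(T_{x\left(\xi\right)}\Sigma\right)$; combined with the displayed identity this gives (\ref{sum of first two principal curvatures: expression}).

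Finally, for the continuity claim I would observe that $\mathbb{V}$ is compact, that $g_{ij}$ and $A_{ij}$ are continuous in $\xi$, hence so are $\boldsymbol{G}\left(\xi;v_{1},v_{2}\right)$ and $\boldsymbol{A}\left(\xi;v_{1},v_{2}\right)$ jointly in $\left(\xi,v_{1},v_{2}\right)$, and $\boldsymbol{G}$ remains positive definite (part (1) of Lemma \ref{min-max}), so $\boldsymbol{G}^{-1}\boldsymbol{A}$ depends jointly continuously on $\left(\xi,v_{1},v_{2}\right)$; the minimum of a jointly continuous function over a fixed compact set depends continuously on the remaining parameter, whence $\kappa_{1}+\kappa_{2}$ is continuous. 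I do not expect a genuine obstacle in this corollary — it is essentially a dictionary translation of Lemma \ref{min-max} into coordinates — the only thing to be explicit about is that restricting to Euclidean-orthonormal pairs does not shrink the family of $2$-planes over which one optimizes, which is exactly what the span-invariance in part (2) of Lemma \ref{min-max} is for.
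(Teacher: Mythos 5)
Your proposal is correct and follows essentially the same route the paper indicates: apply Lemma \ref{min-max} to the Weingarten map (self-adjoint with respect to $g$, with bilinear form $A$ and eigenvalues the principal curvatures), then use the span-invariance in part (2) to replace the minimum over the Grassmannian by the minimum over Euclidean-orthonormal pairs, which is exactly how the paper secures compactness. Your explicit justification of the continuity claim via joint continuity and minimization over a fixed compact set fills in the detail the paper leaves implicit, but it is not a different argument.
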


Since $\Sigma_{0}$ is a two-convex closed hypersurface, the function
$\kappa_{1}+\kappa_{2}$ is bounded below by a positive constant on
$\Sigma_{0}$. By virtue of (\ref{sum of first two principal curvatures: expression}),
we get the following result.
\begin{prop}
\label{two-convexity of approximations}Let $\beta$ be a positive
constant such that 
\[
\min_{\Sigma_{0}}\,\left(\kappa_{1}+\kappa_{2}\right)\,>\,\beta\,\max_{\Sigma_{0}}\,H.
\]
Since 
\[
\Sigma_{0}^{k}\,\stackrel{C^{4}}{\rightarrow}\,\Sigma_{0}\quad\textrm{as}\,\,\,k\rightarrow\infty,
\]
when $k$ is large, $\Sigma_{0}^{k}$ would be $\beta$-uniformly
convex in the sense that 
\begin{equation}
\kappa_{1}+\kappa_{2}\,\geq\,\beta H\,>\,0.\label{uniformly two-convex}
\end{equation}
\end{prop}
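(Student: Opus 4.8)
The plan is to derive inequality (\ref{uniformly two-convex}) from the variational formula for $\kappa_1+\kappa_2$ in Corollary \ref{sum of first two principal curvatures}, combined with the fact that this formula is continuous under $C^4$ convergence of hypersurfaces. First I would fix a finite atlas: cover the compact hypersurface $\Sigma_0$ by finitely many open pieces $U_1,\dots,U_N$, each of which is a graph over (a domain in) a suitable hyperplane with graph function $h_a$. Because $\Sigma_0^k\stackrel{C^4}{\to}\Sigma_0$, for every sufficiently large $k$ the hypersurface $\Sigma_0^k$ can be written, over the same fixed domains, as graphs of functions $h_a^k$ with $h_a^k\to h_a$ in $C^4$ as $k\to\infty$. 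Consequently, in each of these fixed local parametrizations the metric coefficients $g_{ij}^k$ and the second-fundamental-form coefficients $A_{ij}^k$ of $\Sigma_0^k$ (which are algebraic expressions in $h_a^k$ and its first two derivatives) converge uniformly to the corresponding coefficients $g_{ij}$ and $A_{ij}$ of $\Sigma_0$.

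Next I would invoke Corollary \ref{sum of first two principal curvatures}. At each parameter $\xi$ one has $(\kappa_1+\kappa_2)(\xi)=\min_{\mathbb{V}}\textrm{trace}\,(\boldsymbol{G}^{-1}\boldsymbol{A})(\xi;\cdot)$, where $\mathbb{V}$ is the compact set of orthonormal pairs in $\mathbb{R}^{n-1}$ and $\boldsymbol{G},\boldsymbol{A}$ are built from $g_{ij},A_{ij}$ as in that corollary; by Lemma \ref{min-max} the matrix $\boldsymbol{G}$ is positive definite, so $(\boldsymbol{G},\boldsymbol{A})\mapsto\textrm{trace}\,(\boldsymbol{G}^{-1}\boldsymbol{A})$ is continuous. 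Since the minimum over a fixed compact set of a jointly continuous family is continuous in the parameters, the uniform convergence $g_{ij}^k\to g_{ij}$, $A_{ij}^k\to A_{ij}$ forces $(\kappa_1+\kappa_2)$ of $\Sigma_0^k$ to converge uniformly, chartwise and hence (via the finite atlas) over all of $\Sigma_0$, to $(\kappa_1+\kappa_2)$ of $\Sigma_0$. The same convergence of coefficients gives that the mean curvature $H^k=g^{ij}_k A^k_{ij}$ converges uniformly to $H$; moreover, by Proposition \ref{time of smooth existence}, $\Sigma_0^k$ is mean-convex for large $k$, so $H>0$ there.

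Finally I would close with an $\varepsilon/2$ estimate. Put $c\coloneqq\min_{\Sigma_0}(\kappa_1+\kappa_2)-\beta\max_{\Sigma_0}H>0$. By the uniform convergence just established there is $k_0$ so that for $k\geq k_0$ one has, pointwise on $\Sigma_0^k$, both $\kappa_1+\kappa_2>\min_{\Sigma_0}(\kappa_1+\kappa_2)-\tfrac{c}{2}$ and $\beta H<\beta\max_{\Sigma_0}H+\tfrac{c}{2}$; subtracting, $\kappa_1+\kappa_2-\beta H>c-c=0$ on $\Sigma_0^k$, while $\beta H>0$ as noted, which is precisely (\ref{uniformly two-convex}). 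The only genuinely delicate point is the first step — checking that the intrinsic notion of $C^4$ convergence of embedded hypersurfaces yields convergence of $g_{ij}$ and $A_{ij}$ in a common chart — but this is standard and amounts to writing the two families of graphs over a shared domain and differentiating; everything afterward is continuity of $\min$ over a compact set together with a two-line inequality.
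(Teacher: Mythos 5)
Your proposal is correct and follows essentially the same route the paper intends: the paper sets up Lemma \ref{min-max} and Corollary \ref{sum of first two principal curvatures} precisely so that $\kappa_{1}+\kappa_{2}$ becomes a continuous (min over a compact set) function of the chartwise coefficients $g_{ij},A_{ij}$, and then deduces the proposition from the $C^{4}$ convergence exactly as you do. Your graph-over-common-domains setup and the final $\varepsilon/2$ comparison are the standard way to fill in the details the paper leaves implicit.
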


It then follows from \cite{CHN} that the LSF $\left\{ \Sigma_{t}^{k}\right\} $
is $\beta$-uniformly two-convex in the sense that (\ref{uniformly two-convex})
holds at every regular point (with respect to the unit normal vector
field $\frac{\nabla u^{k}}{\left|\nabla u^{k}\right|}$). 

\bigskip{}
\bigskip{}
Department of Mathematics, National Taiwan University, Taipei 106,
Taiwan. 

\smallskip{}
\smallskip{}
\smallskip{}
\textit{E-mail:} \texttt{shguo@ntu.edu.tw}
\end{document}